\newcommand\cdotfill{%
    \leavevmode\cleaders\hb@xt@.44em{\hss$\cdot$\hss}\hfill\kern\z@
}
\theoremstyle{plain}
\newtheorem{thm}{Theorem}[section]
\newtheorem{prop}[thm]{Proposition}
\newtheorem{cor}[thm]{Corollary}
\newtheorem{lem}[thm]{Lemma}
\newtheorem*{claim}{Claim}
\newtheorem*{thm*}{Theorem}
\newtheorem*{openThm*}{Conjecture}
\newtheorem*{cor*}{Corollary}
\theoremstyle{definition}
\newtheorem{defn}[thm]{Definition}
\newtheorem{exmp}[thm]{Example}
\theoremstyle{remark}
\newtheorem{rem}[thm]{Remark}
\newtheorem{con}[thm]{Construction}
\DeclareMathOperator{\preimage}{preim}
\DeclareMathOperator{\image}{im}
\DeclareMathOperator{\cokernel}{coker}
\DeclareMathOperator{\kernel}{ker}
\DeclareMathOperator{\homom}{Hom}
\DeclareMathOperator{\id}{id}
\DeclareMathOperator{\derivation}{Der}
\newcommand{\rean}[1]{ {C^{\omega}_{#1}} }
\newcommand{\coan}[1]{ {\mc{O}_{#1}} }
\newcommand{\corean}[1]{ {\bb{C}^{\omega}_{#1}} }
\newcommand{\pard}[2]{{\frac{\partial #1}{\partial #2}}}
\newcommand{\rst}[2]{{
  \left.\kern-\nulldelimiterspace 
  #1 
  \vphantom{\big|} 
  \right|_{#2} 
  }}
\newcommand{\quo}[2]{{\left.\raisebox{.2em}{$#1$}\middle/\raisebox{-.2em}{$#2$}\right.}}
\newcommand{\preim}[2]{\preimage_{#1}\left( #2 \right)}
\newcommand{\bb}[1]{\mathbb{#1}}
\newcommand{\mc}[1]{\mathcal{#1}}
\newcommand{\im}[1]{\image\left( #1 \right)}
\renewcommand{\ker}[1]{\kernel\left( #1 \right)}
\newcommand{\coker}[1]{\cokernel\left( #1 \right)}
\newcommand{\homo}[1]{\homom\left( #1 \right)}
\newcommand{\der}[1]{\derivation \left( #1 \right)}
\mathchardef\ordinarycolon\mathcode`\:
\title[Analytic Relative Riemann-Hilbert theorem and the $\bar{\partial}$-equation]{Relative Riemann-Hilbert \& Newlander-Nirenberg theorems for torsion-free analytic sheaves on maximal and homogeneous spaces}
\author{Thomas Kurbach}
\begin{document}

\maketitle

\section*{Abstract}
In this paper it is shown that for locally trivial complex analytic morphisms between some reduced spaces the Relative Riemann-Hilbert Theorem still holds up to torsion, i.e. tame flat relative connections on torsion-free sheaves are in 1-to-1 correspondence with torsion-free relative local systems. Subsequently, it is shown that generalized $\bar{\partial}$-operators on real analytic sheaves over complex analytic spaces can be viewed as relative complex analytic connections on the complexification of the underlying real analytic space with respect to a canonical morphism. By means of complexification, the Relative Riemann-Hilbert Theorem then yields a Newlander-Nirenberg type theorem for $\bar{\partial}$-operators on torsion-free real analytic sheaves over some complex analytic varieties. In the non-relative case, this result shows that on all maximal and homogeneous analytic spaces tame flat analytic connections are in 1-to-1 correspondence with local systems, which in turn are in 1-to-1 correspondence with linear representations of the fundamental group (assuming connectedness).

\tableofcontents

\section{Introduction}

Flat connections on manifolds are interesting differential operators that surprisingly only encode topological information, i.e. they are entirely determined by their induced representation of the fundamental group and the underlying topological space. Relative connections encode nicely varying families of connections on the fibers of a morphism. Flat complex analytic relative connections on locally trivial morphisms with smooth fibers were shown to be entirely determined by their kernel by P. Deligne in~\cite{deligne}. In the following, this result is extended to the case of flat complex analytic relative connections on locally trivial morphisms with potentially singular fibers. However, some concessions need to be made: The spaces involved are assumed to be maximal or homogeneous and the connections need to be \emph{tame} (Definition~\ref{tamedefinition}).\\
The methods developed here culminate in two Relative Riemann-Hilbert Theorems about complex analytic flat relative connections:\\
\begin{thm*}[\ref{MaxRH} and \ref{HomCor}]
Let $f\colon X\to N$ be a reduced locally trivial morphism of complex analytic spaces with maximal fibers or let $f\colon X=N\times M\to N$ be the projection with $M\subseteq \bb{C}^n$ homogeneous. Then there is a 1-to-1 correspondence between
\begin{enumerate}[(i)]
\item pairs $\left( \mc{F},\nabla \right)$ of torsion-free $\coan{X}$-coherent modules with tame flat $f$-relative connections and
\item torsion-free relative local systems $V$.
\end{enumerate}
The correspondence takes pairs $\left(\mc{F},\nabla\right)$ and sends them to $\ker{\nabla}$ and conversely takes a relative local system $V$ and sends it to $\left(V\otimes_{f^{-1}\coan{N}} \coan{X},\id\otimes d_f\right)$.
\end{thm*}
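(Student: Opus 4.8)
The plan is to prove that the two assignments are well-defined and mutually inverse, treating the maximal case (\ref{MaxRH}) as the main theorem and deriving the homogeneous case (\ref{HomCor}) as a consequence. First I would check that each construction lands in the correct category. For a torsion-free relative local system $V$, the module $V\otimes_{f^{-1}\coan{N}}\coan{X}$ is $\coan{X}$-coherent, since $V$ is locally a finitely generated $f^{-1}\coan{N}$-module, and $\id\otimes d_f$ is visibly an $f$-relative connection that is flat because $d_f^2=0$; torsion-freeness and tameness then have to be verified locally, the latter by unwinding Definition~\ref{tamedefinition} on the model $N\times F$. Conversely, for a pair $(\mc{F},\nabla)$ I would show that $\ker{\nabla}$ is a torsion-free relative local system: flatness of $\nabla$ makes $\ker{\nabla}$ a sheaf of $f^{-1}\coan{N}$-modules, it is torsion-free as a subsheaf of the torsion-free $\mc{F}$, and its local constancy along the fibers is precisely the local solvability of the relative flat-section equation $\nabla s=0$.

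The heart of the argument is that the two composites are the identity. The composite $V\mapsto(V\otimes_{f^{-1}\coan{N}}\coan{X},\id\otimes d_f)\mapsto\ker{\id\otimes d_f}$ reduces to the relative Poincaré-type statement $\ker{d_f\colon\coan{X}\to\Omega^1_{X/N}}=f^{-1}\coan{N}$, i.e.\ that fiberwise-constant holomorphic functions are pulled back from the base; tensoring this identity up then yields $\ker{\id\otimes d_f}=V$. For the other composite, the crux is the evaluation morphism
\[
\varepsilon\colon\ker{\nabla}\otimes_{f^{-1}\coan{N}}\coan{X}\longrightarrow\mc{F},\qquad s\otimes g\longmapsto g\,s,
\]
which is compatible with the connections by construction; one must prove that $\varepsilon$ is an isomorphism. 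Injectivity follows from torsion-freeness of $\mc{F}$ together with the $f^{-1}\coan{N}$-linear independence of flat sections, while surjectivity is exactly the statement that flat sections generate $\mc{F}$ over $\coan{X}$ near each point.

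The main obstacle is precisely this surjectivity over singular fibers. On smooth fibers one produces a flat frame by integrating the connection, which is Deligne's argument in~\cite{deligne}, but this procedure breaks down when the fibers are singular. My plan is to exploit local triviality of $f$ to reduce to a product $U\times F\to U$, and then to invoke the maximality of the fiber $F$ to solve the relative flat-section equation with control near $\sing(F)$; tameness is what should guarantee that the solutions extend across the singular locus without producing torsion, so that no flat sections are lost and the generation statement holds for the torsion-free module $\mc{F}$. I expect this step to require the full strength of the paper's $\bar{\partial}$-machinery on maximal spaces. Finally, for \ref{HomCor} I would observe that a homogeneous $M\subseteq\bb{C}^n$ lies within the maximal class (or, alternatively, that its transitive symmetry lets one transport locally constructed flat sections globally along the fibers), so that the homogeneous statement follows directly from the maximal one applied to the projection $N\times M\to N$.
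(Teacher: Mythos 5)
Your overall skeleton (well-definedness of both assignments, mutual inverses via the evaluation map, surjectivity of $\varepsilon$ as the crux) is the right frame, and you have correctly located the difficulty in the generation of $\mc{F}$ by flat sections near singular fibers. But at that crux the proposal stops: ``invoke the maximality of the fiber to solve the flat-section equation with control near $\sing\left( F \right)$'' is not an argument, and the machinery you defer to is misidentified --- the paper's $\bar{\partial}$-material plays no role here (it is downstream, in the Newlander--Nirenberg application). What the proof actually requires is: (1) pull back to a fiberwise desingularization $\psi$, where Deligne's theorem applies, and extend parallel frames from a fiber of $\psi$ to a neighbourhood (Corollary~\ref{ExtPara}), so that properness of $\psi$ produces \emph{weakly} holomorphic parallel generators (Proposition~\ref{RelWeak}); (2) prove by induction on dimension --- using that the exceptional divisor of a strong desingularization is a normal crossing, hence maximal --- that the absolute case admits \emph{continuous} weakly holomorphic frames compatible with all iterated singular loci (Proposition~\ref{AbsWeak}), which upgrades the relative sections to continuous ones (Corollary~\ref{Czero}); (3) reduce joint holomorphicity to fiberwise holomorphicity via the Grauert--Remmert separate-holomorphicity theorem, which needs the double-dual embedding $\mc{F}\hookrightarrow \mc{F}^{**}$ and the tf-connection bookkeeping of Proposition~\ref{RelStrong}. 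Only after all of this does maximality of the fibers enter, converting continuous weakly holomorphic frames into genuinely holomorphic ones; none of these ideas appears in the proposal. A smaller but real gap on the easy composite: $\ker{\id\otimes d_f}=V$ does not follow by ``tensoring up'' the identity $\ker{d_f}=f^{-1}\coan{N}$, since kernels do not commute with $\otimes_{f^{-1}\coan{N}}\coan{X}$; the paper needs the local embedding $\mc{G}\hookrightarrow\coan{N}^{\oplus r}$ and Proposition~\ref{pullbackSec} to descend fiberwise-constant sections (Theorem~\ref{kernelofcanonical}).

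The homogeneous case is handled incorrectly. Homogeneous here means a cone ($p\in M\Rightarrow\lambda\cdot p\in M$), not a space with a transitive symmetry, and homogeneous spaces are \emph{not} in general maximal (three or more concurrent lines in $\bb{C}^2$ already fail) --- if they were, Subsection~\ref{RedToCurve} would be superfluous. The paper instead reduces the absolute problem to reduced curves using Becker's theorem that a curve can span the tangent space of the graph of the weakly holomorphic frame (Theorem~\ref{CurveSpan}, Proposition~\ref{RedToCurveGen}), specializes this reduction to homogeneous curves by slicing with hyperplanes through the origin (Proposition~\ref{RedCurve}), and settles the homogeneous curve case by blowing up the origin and applying the submersion Riemann--Hilbert theorem to the tautological line bundle (Proposition~\ref{HomCurve}). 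So Theorem~\ref{HomCor} is not a corollary of Theorem~\ref{MaxRH}; it requires its own reduction, which your proposal does not supply.
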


For an outline of the proof strategy employed in this paper see subsection~\ref{Strat}, where the philosophy and guiding ideas behind the proof are presented.\\
The assumption of the connections being \emph{tame} is very natural and indeed necessary, as the connections associated to a relative local system are always tame over submersions (see Proposition~\ref{PullConTor}). An example of a non-tame flat connection $\nabla$ is given in section~\ref{NonTame}, where the connection is such that $\nabla\left( s \right)$ has non-empty support and the support is entirely contained in the singular set of the underlying space. Therefore a flat connection on a singular complex analytic space is not necessarily tame.\\
The presented form of a Relative Riemann-Hilbert Theorem can then be applied to another situation: the case of integrable pseudo-holomorphic structures (Section~\ref{trans}). In this case it is highly important that the \emph{complexification} (Sections~\ref{two} and~\ref{compelxificationsofcomplexanalyticspaces}) and the notion of \emph{differential forms} (Sections~\ref{four} and~\ref{pq}) are firmly controlled. Therefore, both topics are discussed at great length in the beginning of this paper.\\
To connect Riemann-Hilbert type Theorems to pseudo-holomorphic structures one needs to recognize that the sheaf $\Omega^{0,1}M$ of $\left( 0,1 \right)$-forms on a complex analytic space $M$ can be seen as a sheaf of relative differential forms on the complexification of $M$. To this end, one constructs the relevant morphism $\Phi\colon M^{\bb{C}}\to M$ in Theorem~\ref{canonicalfibration}. The existence of such a locally trivial morphism is not surprising because locally the complexification of $M$ is isomorphic to $M\times \bar{M}$ and the constructed global morphism is locally equivalent to the projection to the first factor. It then turns out that the relative forms with respect to $\Phi$ yield the $\left( 0,1 \right)$-forms, i.e.
\[\rst{\Omega^1_{\Phi}\left( \coan{M^{\bb{C}}} \right)}{M}\cong \Omega^{0,1}M.\]
Again, this is not surprising in the local picture as the $\left( 0,1 \right)$-forms are essentially the holomorphic forms on $\bar{M}$.\\
With this idea in hand, the final proof of the following real analytic Newlander-Nirenberg type Theorem follows elegantly from Theorem~\ref{MaxRH}:
\begin{thm*}[\ref{holomorpseudo}]
Let $M$ be a maximal complex analytic space and $\left( \mc{F},\bar{\partial}_{\mc{F}} \right)$ a torsion-free coherent $\corean{M}$-module with a tame integrable pseudo-holomorphic structure. Then $\mc{G}:=\ker{\bar{\partial}_{\mc{F}}}$ is a torsion-free coherent $\coan{M}$-module and $\mc{F}=\mc{G}\otimes_{\coan{M}}\corean{M}$.\\
In particular, there is a 1-to-1 correspondence between torsion-free coherent $\coan{M}$-modules and torsion-free coherent $\corean{M}$-modules equipped with a tame integrable pseudo-holomorphic structure.
\end{thm*}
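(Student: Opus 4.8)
The plan is to transport the entire situation to the complexification $M^{\bb{C}}$ and to reduce the statement to the Relative Riemann-Hilbert Theorem~\ref{MaxRH} applied to the canonical morphism $\Phi\colon M^{\bb{C}}\to M$ of Theorem~\ref{canonicalfibration}. Since $\Phi$ is locally the projection $M\times\bar{M}\to M$, the restriction of its structure sheaf to the real points recovers the complex-valued real analytic functions, $\rst{\coan{M^{\bb{C}}}}{M}\cong\corean{M}$, and the inclusion $\iota\colon M\hookrightarrow M^{\bb{C}}$ of the real points is a section of $\Phi$ with $\Phi\circ\iota=\id_M$. Consequently a torsion-free coherent $\corean{M}$-module $\mc{F}$ is the $\iota$-restriction of a torsion-free coherent $\coan{M^{\bb{C}}}$-module $\tilde{\mc{F}}$ defined on a neighbourhood of $M$, and I would first fix such an extension.

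Next I would convert the pseudo-holomorphic structure into a relative connection. Using the identification $\rst{\Omega^1_{\Phi}(\coan{M^{\bb{C}}})}{M}\cong\Omega^{0,1}M$ established earlier, the operator $\bar{\partial}_{\mc{F}}\colon\mc{F}\to\mc{F}\otimes\Omega^{0,1}M$ is exactly the $\iota$-restriction of a $\Phi$-relative connection $\nabla\colon\tilde{\mc{F}}\to\tilde{\mc{F}}\otimes\Omega^1_{\Phi}(\coan{M^{\bb{C}}})$. Here integrability of the pseudo-holomorphic structure translates into flatness of $\nabla$, and tameness of $\bar{\partial}_{\mc{F}}$ into tameness of $\nabla$ in the sense of Definition~\ref{tamedefinition}; this dictionary is the substance of the translation carried out in Section~\ref{trans}, which I would cite rather than reprove.

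With the tame flat pair $(\tilde{\mc{F}},\nabla)$ in hand I would apply Theorem~\ref{MaxRH}. The hypothesis that $M$ is maximal guarantees that the fibers of $\Phi$, being locally copies of $\bar{M}$, are maximal, so $\Phi$ is a reduced locally trivial morphism with maximal fibers and the theorem applies. It then produces a torsion-free relative local system $V:=\ker{\nabla}$ together with the reconstruction $\tilde{\mc{F}}\cong V\otimes_{\Phi^{-1}\coan{M}}\coan{M^{\bb{C}}}$. Restricting along the section $\iota$ and using $\rst{\coan{M^{\bb{C}}}}{M}\cong\corean{M}$ together with $\Phi\circ\iota=\id_M$, the sheaf $\mc{G}:=\ker{\bar{\partial}_{\mc{F}}}$ is identified with $\iota^{-1}V$; hence $\mc{G}$ is a torsion-free coherent $\coan{M}$-module, and the reconstruction formula descends to $\mc{F}\cong\mc{G}\otimes_{\coan{M}}\corean{M}$.

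Finally, for the asserted bijection I would verify the reverse construction: given a torsion-free coherent $\coan{M}$-module $\mc{G}$, the module $\mc{G}\otimes_{\coan{M}}\corean{M}$ carries the canonical operator $\id\otimes\bar{\partial}$, which is integrable and, by Proposition~\ref{PullConTor}, tame, and that the two assignments are mutually inverse follows from the corresponding statement of Theorem~\ref{MaxRH} after restriction along $\iota$. The main obstacle, I expect, is not the concluding invocation of Riemann-Hilbert but the faithful transport across $\Phi$: one must check that $\tilde{\mc{F}}$, the relative connection $\nabla$, and above all the kernel sheaf are well defined on an honest neighbourhood of $M$, and that $\iota$-restriction exchanges $\ker{\bar{\partial}_{\mc{F}}}$ with $\ker{\nabla}$ without losing coherence over $\coan{M}$. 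This is precisely the point at which integrability (flatness) is genuinely needed to control the kernel near the singular locus, so that a differential operator whose kernel is \emph{a priori} only a sheaf of abelian groups in fact cuts out a coherent holomorphic sheaf.
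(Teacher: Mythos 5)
Your proposal is correct and follows essentially the same route as the paper: extend $\left( \mc{F},\bar{\partial}_{\mc{F}} \right)$ to a tame flat relative connection on the complexification via Theorem~\ref{pseudocomplex}, apply Theorem~\ref{MaxRH} to the canonical fibration $\Phi$ (whose fibers are locally copies of $\bar{M}$ and hence maximal), and restrict back along $\iota$, with faithful flatness of $\corean{M}$ over $\coan{M}$ handling the converse direction. The one slip is citing Proposition~\ref{PullConTor} for the tameness of $\id\otimes\bar{\partial}$: since $\Phi$ is not a submersion when $M$ is singular, the relevant result is Theorem~\ref{kernelofcanonical} on the tameness of the canonical connection of a torsion-free relative local system, which is what the paper invokes.
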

As a final application, in the non-relative case, the theorems show that tame complex analytic flat connections on coherent sheaves over maximal or homogeneous complex analytic spaces are equivalent to local systems. Local systems in turn are equivalent to representations of the fundamental group (see e.g.~\cite{deligne}). Therefore, tame complex analytic flat connections on maximal or homogeneous complex analytic spaces are entirely determined by their parallel transport around homotopy classes of loops. In particular:
\begin{cor*}
Tame complex analytic flat connections on simply connected maximal or homogeneous complex analytic spaces are trivial.
\end{cor*}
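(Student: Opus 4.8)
The plan is to assemble the final corollary directly from the two main theorems and the stated equivalences, reducing everything to a triviality statement about representations of a trivial group. First I would invoke the Relative Riemann-Hilbert Theorem (Theorems~\ref{MaxRH} and~\ref{HomCor}), specialized to the non-relative case $N = \mathrm{pt}$, so that a tame complex analytic flat connection $\left( \mc{F},\nabla \right)$ on a maximal or homogeneous complex analytic space $X$ corresponds to its kernel $V := \ker{\nabla}$, a torsion-free local system. The content of the theorem is precisely that the functor $\left( \mc{F},\nabla \right)\mapsto \ker{\nabla}$ is an equivalence with quasi-inverse $V\mapsto \left( V\otimes \coan{X}, \id\otimes d \right)$, so the pair is reconstructed from its local system of flat sections.

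Next I would pass from local systems to representations of the fundamental group, using the equivalence cited from~\cite{deligne}: on a connected space a local system is equivalent to a linear representation $\rho\colon \pi_1\left( X \right)\to \mathrm{GL}\left( W \right)$, with $W$ the stalk of the local system at a basepoint. Under the simple connectivity hypothesis we have $\pi_1\left( X \right) = 1$, so every such representation is the trivial representation; hence the corresponding local system $V$ is the constant local system with fiber $W$. The only remaining point is to check that simply connected maximal (or homogeneous) spaces are connected so that this dictionary applies — for homogeneous $M\subseteq\bb{C}^n$ and for the maximal case one takes connectedness as part of, or immediate from, simple connectivity, since the trivial fundamental group is defined relative to a connected space.

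Finally I would conclude that the constant local system corresponds under the correspondence to the pair $\left( W\otimes_{\bb{C}}\coan{X}, \id\otimes d \right)$, which is the trivial flat connection: the free module with the canonical de Rham differential. Thus every tame complex analytic flat connection on a simply connected maximal or homogeneous space is isomorphic to a direct sum of copies of $\left( \coan{X}, d \right)$, i.e. it is trivial, which is exactly the assertion of the corollary.

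The main obstacle I anticipate is not in any single deduction — each arrow in the chain $\left\{\text{tame flat connections}\right\} \leftrightarrow \left\{\text{local systems}\right\} \leftrightarrow \left\{\text{representations of } \pi_1\right\}$ is already supplied — but rather in making the composite functorial statement rigorous in the torsion-free coherent setting, where "local system" means \emph{torsion-free} local system rather than a locally constant sheaf of finite-dimensional vector spaces. I would need to verify that Deligne's representation-theoretic equivalence transfers cleanly to the torsion-free relative local systems produced by Theorem~\ref{MaxRH}, and that triviality of $\pi_1$ forces such a local system to be globally constant despite the potential singularities of $X$; this is where the homogeneity or maximality hypothesis, which controls the behavior near the singular set, does the real work.
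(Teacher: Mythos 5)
Your proposal is correct and follows essentially the same route the paper takes: specialize Theorems~\ref{MaxRH} and~\ref{HomCor} to the non-relative case $N=\mathrm{pt}$, pass to the local system of flat sections, invoke the equivalence with representations of $\pi_1$ cited from~\cite{deligne}, and use simple connectivity to force the constant local system and hence the trivial connection $\left( \coan{X}^{\oplus r}, d \right)$. The obstacle you flag about the torsion-free coherent setting dissolves here, since a coherent module over $\left( \{\mathrm{pt}\},\bb{C} \right)$ is just a finite-dimensional vector space and torsion-freeness is automatic, so the local systems in question are ordinary locally constant sheaves of finite-dimensional vector spaces and Deligne's dictionary applies verbatim.
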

The material in this paper is organized as follows: Section~\ref{two} introduces the sheaf of complex-valued and real-valued real analytic functions on an analytic space and shows that the associated functors are nicely behaved. The sheaf of complex-valued real analytic functions is then used to define the complexification of a real analytic space. Section~\ref{compelxificationsofcomplexanalyticspaces} discusses the complexification of a complex analytic space in detail and constructs the locally trivial fibration alluded to earlier. The analytic differential forms on a singular analytic space are constructed as universal objects in Section~\ref{four} and some basics on derivation on analytic spaces are proven. On a complex analytic space, one can split the complex-valued real analytic forms into the $(1,0)$- and $(0,1)$-forms, just as one expects from the manifold case. The construction of this splitting and recognizing the $(0,1)$-forms as relative differential forms on the complexification is presented in Section~\ref{pq}. The classical Relative Riemann-Hilbert Theorem on submersions is recalled in Section~\ref{RelSub}. Section~\ref{interlude} discusses aspects of torsion-free sheaves and introduces the notion of tame connections. For torsion-free sheaves on reduced locally trivial morphisms with maximal and homogeneous fibers the Relative Riemann-Hilbert Theorem is proven in Section~\ref{SingFib}. This section is split into multiple subsections. Subsection~\ref{WeakHoloMaxPropMod} introduces and discusses the basic concept of weakly holomorphic functions and the definition of maximal spaces is given (Definition~\ref{Normal}). The notions suffice to give an outline of the strategy and philosophy guiding the proof presented in this paper. This outline is contained in subsection~\ref{Strat}. Subsection~\ref{weakSol} shows that in the case of torsion-free sheaves weak solutions always exist. Then one can show that these weak solutions are holomorphic if and only if they are holomorphic along the fibers. The argument for this reduction to the ``absolute'' case is given in subsection~\ref{RedAbs} (Theorem~\ref{RelToAbs}) and the maximal fiber case follows immediately from these considerations (Theorem~\ref{MaxRH}). One can even further reduce the absolute case to the case of complex analytic curves (Theorem~\ref{RedToCurveGen}), which can be observed in subsection~\ref{RedToCurve}. From the methods developed there one also obtains the homogeneous fiber case (Theorem~\ref{HomCor}). Subsequently, a real analytic Newlander-Nirenberg type Theorem~\ref{holomorpseudo} is obtained by complexification in Section~\ref{trans}. Further research questions are then gathered in Section~\ref{Conc}. 

\subsection{Acknowledgements}

This manuscript represents the authors Ph.D.-Thesis which was completed at the University of Wuppertal in 2025 under the supervision of Prof. Dr. Jean Ruppenthal, Prof. Dr. Roger Bielawski, Prof. Dr. Stefan Nemirovski, Prof. Dr. Kay Rülling and Dr. Maxim Kukol.\\
The author whishes to thank the committee overseeing his Pd.D.-defense and also the Komplexe Analysis group at the University of Wuppertal for their support.\\
The author was partially supported by the ANR-DFG project ‘QuasiDy – Quantization, Singularities, and Holomorphic Dynamics’ (Project-ID 490843120).

\section{Complexification of real analytic spaces}
\label{two}

Before beginning the discussion a few notations are introduced. Throughout the entire paper the symbols $\coan{\cdot}$, $\corean{\cdot}$ and $\rean{\cdot}$ denote respectively the sheaf of holomorphic, complex-valued real analytic and real-valued real analytic functions. Moreover, $\bb{K}$ denotes either the field  $\bb{R}$ or $\bb{C}$. A $\bb{K}$-ringed space is a locally ringed space, such that the sheaf of rings is also a $\bb{K}$-algebra and the residue field at each point is isomorphic to $\bb{K}$. Whenever $\phi\colon \left( M,\mc{A} \right)\to \left( N,\mc{B} \right)$ is a morphism of ringed spaces, the topological component of $\phi$ is denoted by $\phi$ as well and the sheaf component is denoted by $\tilde{\phi}$. The canonical morphism on germs of ringed spaces at a point $p$ induced by a morphism of ringed spaces, is denoted by $\tilde{\phi}_p$. Monomorphisms and epimorphisms of $\bb{K}$-ringed spaces are defined by their usual cancellation properties.\\
It will be important that one has a firm grasp on complex analytic, real analytic and complex-valued real analytic functions on a complex analytic space. This has the advantage that the real analytic case essentially follows as a corollary of the complex analytic methods. Also, real analytic generalised $\bar{\partial}$-operators naturally act on sheaves of modules over the complex-valued real analytic functions.

\begin{con}
Let $\left( M,\coan{M} \right)$ be a complex analytic space. First, assume that $M$ is a closed subspace of an open subset $U\subseteq \bb{C}^n$ such that the defining ideal $J\subseteq \coan{U}$ is generated by $f_{1},\dots,f_k$ on $U$. One obtains the finitely generated ideal sheaves
\[J^{\bb{C}}:=J+\bar{J} \subseteq \corean{U}\; \text{and} \;J^{\bb{R}}:=\left( \mathrm{Re}\left( f_1 \right),\mathrm{Im}\left( f_1 \right),\dots,\mathrm{Re}\left( f_k \right), \mathrm{Im}\left( f_k \right) \right)\subseteq \rean{U}.\] Moreover, set
\[\corean{M}:=\iota^{-1}\left( \quo{\corean{U}}{J^{\bb{C}}} \right) \; \text{and} \; \rean{M}:=\iota^{-1}\left( \quo{\rean{U}}{J^{\bb{R}}} \right),\]
where $\iota\colon M\to U$ is the inclusion. It is clear that one obtains morphisms $\rean{M}\to \corean{M}$ and $\coan{M}\to \corean{M}$ by taking the quotient of the inclusion morphism. The first morphism is injective.

\begin{claim}
The morphism $\psi\colon \coan{M}\to \corean{M}$ is injective.
\end{claim}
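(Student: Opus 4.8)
The plan is to reduce the statement to a fact about ideals in convergent power series rings and to prove it by exhibiting a suitable retraction. Injectivity of a morphism of sheaves can be checked on stalks, and $\iota^{-1}$ commutes with the formation of stalks, so $\bigl(\corean{M}\bigr)_p = \quo{\corean{U,p}}{(J+\bar{J})_p}$ and $\bigl(\coan{M}\bigr)_p = \quo{\coan{U,p}}{J_p}$ for each $p\in M$, and $\psi_p$ is the map induced by the inclusion $\coan{U,p}\hookrightarrow\corean{U,p}$ of holomorphic germs into complex-valued real analytic germs. Its kernel is $\bigl(\coan{U,p}\cap(J+\bar{J})_p\bigr)/J_p$, so the whole claim comes down to the identity $\coan{U,p}\cap(J+\bar{J})_p = J_p$ inside $\corean{U,p}$.

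Next I would fix holomorphic coordinates $w=(w_1,\dots,w_n)$ centred at $p$ and use the standard local description of the complexification: real analytic germs at $p$ are convergent power series in the real and imaginary parts of $w$, and after the linear change of variables to $w,\bar{w}$ this identifies $\corean{U,p}$ with the ring $\bb{C}\{w,\bar{w}\}$ of convergent power series in the $2n$ independent variables $w_1,\dots,w_n,\bar{w}_1,\dots,\bar{w}_n$, with $\coan{U,p}=\bb{C}\{w\}$ sitting inside as the subring not involving $\bar{w}$. Under this identification $J_p$ is generated by $f_1,\dots,f_k\in\bb{C}\{w\}$ while $\bar{J}_p$ is generated by the conjugate series $\bar{f}_1,\dots,\bar{f}_k\in\bb{C}\{\bar{w}\}$; the essential structural point is that the two families of generators live in disjoint sets of variables.

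The key device is the $\bb{C}$-algebra retraction $\pi\colon \bb{C}\{w,\bar{w}\}\to\bb{C}\{w\}$ obtained by setting $\bar{w}=0$, which is a well-defined ring homomorphism on convergent series (setting variables to zero preserves convergence) and restricts to the identity on $\bb{C}\{w\}$. Because each defining function vanishes at $p$ — indeed $p\in M$ forces $f_j(p)=0$, hence $\bar{f}_j(0)=0$ after translating to $w$-coordinates — one gets $\pi(\bar{f}_j)=0$, so $\pi(\bar{J}_p)=0$, while $\pi$ fixes the $f_j$; expanding a general element of $(J+\bar{J})_p$ and applying $\pi$ then shows $\pi\bigl((J+\bar{J})_p\bigr)\subseteq J_p$. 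Finally, if $g\in\coan{U,p}\cap(J+\bar{J})_p$ then $g=\pi(g)\in J_p$, which is exactly the identity needed, and injectivity of $\psi$ follows. I expect the only delicate point to be justifying the local identification $\corean{U,p}\cong\bb{C}\{w,\bar{w}\}$ with $w$ and $\bar{w}$ independent, together with the consequent placement of the generators of $\bar{J}$ in the $\bar{w}$-variables; once that separation of variables is in place, the retraction argument is immediate, and it is precisely the vanishing of the $f_j$ at points of $M$ that makes it work.
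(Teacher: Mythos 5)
Your proposal is correct and is essentially the paper's own argument: the retraction $\pi\colon\bb{C}\{w,\bar{w}\}\to\bb{C}\{w\}$ given by $\bar{w}\mapsto 0$ is exactly the paper's "holomorphic part" operation $a\mapsto a^h$ with $a^h(z)=a(z,0)$, and your observation that $\pi$ kills $\bar{J}_p$ (using $f_j(p)=0$) while fixing $J_p$ is the paper's step that the two sides of $g-\sum f_i a_i^h=\sum f_i a_i^r+\sum\bar{f}_i b_i$ must vanish separately. Packaging the argument as a $\bb{C}$-algebra retraction is a slightly cleaner formulation of the same idea, not a different route.
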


\begin{proof}
One can check the claim germ-wise and for that, one may assume that $p=0\in M \subseteq U$. Let $\left[ g_p \right]\in \coan{M,p}$ be such that $\psi_p\left( \left[ g_p \right] \right)=0$. This is the same as saying

\[g= \sum_{i=1}^k f_i a_i + \sum_{i=1}^k \bar{f}_i b_i\]

with $a_i,b_i \in \corean{U}\left( U' \right)$, for some $U'\subseteq U$. Denote by $a_i^{h}$ the holomorphic part of $a_i$, for this note that the $a_i$ are analytic functions in $z_i$ and $\bar{z}_i$ and
\[a_i^h\left( z_1,\dots,z_n \right)=a_i\left( z_1,\dots,z_n,0,\dots,0 \right).\]
Set $a_i^r:=a_i - a_i^h$. Then, the left-hand and the right-hand side of the following equation need to vanish independently:

\[g - \sum_{i=1}^k f_i a_i^h = \sum_{i=1} f_i a_i^r + \sum_{i=1}^k \bar{f}_i b_i.\]

This is because the left-hand side only has terms containing powers of $z_i$ and products thereof and the right-hand side only contains terms proportional to some non-zero power of $\bar{z}_j$. Hence, 

\[g = \sum_{i=1}^k f_i a_i^h\]

and thus $g_p \in J_p$ and $\left[ g_p \right]=0\in \coan{M,p}$.
\end{proof}

Utilizing these morphisms, one obtains a morphism of $\bb{R}$-ringed spaces $\left( M,\corean{M} \right)\to \left( M,\rean{M} \right)$ and a morphism of $\bb{C}$-ringed spaces $\left( M,\corean{M} \right)\to \left( M,\coan{M} \right)$. These spaces are the \emph{associated analytic spaces} to a complex analytic space $M$. Note that the conjugation of complex-valued real analytic functions descends to $\corean{M}$.\\
Suppose $N\subseteq V\subseteq \bb{C}^l$ is another complex analytic space in a local model with $V\subseteq \bb{C}^l$ open and $I\subseteq \coan{V}$ the defining ideal. Suppose $\phi\colon M\to N$ is a holomorphic map.

\begin{claim}
There exists a unique morphism of $\bb{C}$-ringed spaces
\[\phi^{\bb{C}}\colon \left( M,\corean{M} \right)\to \left( N,\corean{N} \right)\]
such that the following diagram commutes:
\[
\begin{tikzcd}
\left( M,\corean{M} \right) \arrow[r,"\phi^{\bb{C}}"] \arrow[d] & \left( N,\corean{N} \right) \arrow[d]\\
\left( M,\coan{M} \right) \arrow[r,"\phi"] & \left( N,\coan{N} \right)
\end{tikzcd}.
\]
Moreover, the sheaf component of $\phi^{\bb{C}}$ respects the complex conjugation. If $\phi$ is an isomorphism, so is $\phi^{\bb{C}}$.
\end{claim}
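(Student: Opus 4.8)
The plan is to work entirely in the given local models, reduce to the stalks of analytic $\bb{C}$-algebras, and separate the holomorphic from the antiholomorphic coordinate directions, exactly as in the injectivity claim just proven. Throughout I fix $p\in M$, center coordinates so that $\phi(p)=0$, and use that locally $\corean{N,\phi(p)} = \bb{C}\{w_1,\dots,w_l,\bar w_1,\dots,\bar w_l\}/I^{\bb{C}}$, whose maximal ideal is generated by the germs of $w_1,\dots,w_l,\bar w_1,\dots,\bar w_l$, with an analogous description for $\corean{M,p}$.

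I would establish uniqueness first, since the argument is intrinsic and will also serve to glue the local constructions. Suppose $\rho\st\corean{N}\to\phi_*\corean{M}$ is any morphism of $\bb{C}$-ringed spaces over $\phi$ making the square commute. As a local homomorphism of analytic $\bb{C}$-algebras is determined by the images of a generating set of the maximal ideal, $\rho$ is determined on each stalk by $\rho(w_j)$ and $\rho(\bar w_j)$. Commutativity of the square forces $\rho(w_j)=\phi_j$, the $j$-th component of $\phi$, viewed in $\coan{M}\subseteq\corean{M}$. For the antiholomorphic generators I use that $\rho$ is a morphism of \emph{locally} $\bb{C}$-ringed spaces over $\phi$: the induced map on residue fields is the identity of $\bb{C}$, so the value of a pulled-back germ at $p$ equals the value of the original germ at $\phi(p)$; applying this to $\bar w_j$ gives $\rho(\bar w_j)(p)=\overline{\phi_j(p)}$ for every $p$. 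Since complex-valued real analytic functions are determined by their values, this forces $\rho(\bar w_j)=\overline{\phi_j}$, the conjugate taken in $\corean{M}$. Hence $\rho$ is unique, and because its values on the generators commute with conjugation it respects complex conjugation, which also yields the ``Moreover'' part.

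For existence I would construct the morphism directly by substitution rather than by lifting $\phi$ to the ambient open set. Define a $\bb{C}$-algebra homomorphism $\bb{C}\{w,\bar w\}\to\corean{M,p}$ by $w_j\mapsto\phi_j$ and $\bar w_j\mapsto\overline{\phi_j}$, extended by the usual substitution homomorphism of analytic algebras, which converges because $\phi_j,\overline{\phi_j}$ lie in the maximal ideal after centering; this map commutes with conjugation by construction. It remains to check descent modulo $I^{\bb{C}}$: on a generator $h$ of $I$ the substitution yields $h\circ\phi$, which is zero in $\coan{M}$ because $h$ already maps to zero in $\coan{N}$, and applying conjugation kills the generators of $\bar I$ as well. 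As $I^{\bb{C}}=I+\bar I$, the whole ideal is annihilated, and one obtains $\phi^{\bb{C}}\st\corean{N}\to\phi_*\corean{M}$ locally, respecting conjugation and making the square commute. These locally defined morphisms agree on overlaps and are independent of the chosen embedding by the intrinsic uniqueness above, so they glue to the desired global morphism of $\bb{C}$-ringed spaces.

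The isomorphism statement then follows formally: the construction is functorial, with $(\id)^{\bb{C}}=\id$ and $(\phi\circ\psi)^{\bb{C}}=\phi^{\bb{C}}\circ\psi^{\bb{C}}$ by uniqueness, so if $\phi$ is an isomorphism with inverse $\phi^{-1}$ then $(\phi^{-1})^{\bb{C}}$ is a two-sided inverse of $\phi^{\bb{C}}$. I expect the main obstacle to be the descent step together with the gluing of the local constructions into a genuine global sheaf morphism; both are controlled by the intrinsic uniqueness argument, so the real work lies in verifying that the substitution homomorphism is well defined and that it sends $I^{\bb{C}}$ into $J^{\bb{C}}$, which again rests on the clean separation of holomorphic and antiholomorphic parts used in the preceding claim.
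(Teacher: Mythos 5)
The existence half of your argument is sound and takes a genuinely different route from the paper's. You build $\phi^{\bb{C}}$ by direct substitution $w_j\mapsto\phi_j$, $\bar w_j\mapsto\overline{\phi_j}$ into the quotient analytic algebra and check that $I^{\bb{C}}=I+\bar I$ is annihilated; the paper instead lifts $\phi$ locally to a holomorphic map $\psi_p\colon U_p\to V$ between the ambient open subsets of $\bb{C}^n$ and $\bb{C}^l$, takes the induced morphism $\psi_p^{\bb{C}}$ of manifolds, descends it modulo the ideals, and verifies the commutative square by cancelling against the monomorphism $m$. Your construction avoids choosing the lift $\psi_p$ and makes conjugation-equivariance visible immediately; both versions of the existence argument work, and your functoriality argument for the isomorphism statement matches the paper's ``apply the construction to $\phi^{-1}$'' in substance.

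The gap is in uniqueness. You pin down $\rho(\bar w_j)$ by showing it has the same pointwise values as $\overline{\phi_j}$ and then invoking ``complex-valued real analytic functions are determined by their values.'' That assertion is precisely reducedness of $(M,\corean{M})$, and the Construction in the paper is carried out for an arbitrary local model, with no reducedness hypothesis (and the resulting machinery is later applied to general complex analytic spaces). For a non-reduced model, say $J=(z^2)$ in $\bb{C}$, one has $\corean{M,0}=\bb{C}\{z,\bar z\}/(z^2,\bar z^2)$, where $z\bar z$ is a nonzero germ all of whose values vanish; for $\phi=\id$ the assignment $\bar z\mapsto \bar z+z\bar z$ even yields a second square-commuting morphism, so pointwise evaluation cannot separate the candidates and uniqueness among \emph{all} square-commuting morphisms genuinely fails there. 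The repair --- and what the paper's gluing argument effectively does --- is to prove uniqueness within the class of conjugation-respecting morphisms: for such a $\rho$ one gets $\rho(\bar w_j)=\overline{\rho(w_j)}=\overline{\phi_j}$ for free, and a local homomorphism of analytic algebras is determined by the images of the maximal-ideal generators $w_j,\bar w_j$. Since your gluing and independence-of-embedding steps are routed through your uniqueness claim, they inherit this gap and should instead appeal to the conjugation-equivariant version.
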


\begin{proof}
Let $p\in M$. There exist open neighbourhoods $M_p\subseteq M$ and $U_p\subseteq U$ around $p$ and a holomorphic morphism $\psi_p\colon U_p \to V$ such that the following diagram commutes:
\[
\begin{tikzcd}
U_p \arrow[r,"\psi_p"] & V \\
M_p \arrow{u} \arrow[r,"\phi"] & N \arrow[u]
\end{tikzcd}.
\]
Since $\psi_p$ is a holomorphic morphism of manifolds, there exists a natural morphism $\psi^{\bb{C}}_p\colon \left( U_p, \corean{U_p} \right) \to \left( V,\corean{V} \right)$. It needs to be shown that
\[\left( \psi^{\bb{C}}_p \right)^{\sim}\left( I^{\bb{C}} \right)\subseteq J^{\bb{C}}.\]
This is true because $\left( \psi^{\bb{C}}_p \right)^{\sim}$ maps $I$ into $J$ and commutes with conjugation. Thereby, one obtains a morphism
\[\phi^{\bb{C}}_p\colon \left( M_p,\corean{M_p} \right) \to \left( N,\corean{N} \right),\]
for every $p\in M$ and these morphisms satisfy the necessary commutative diagram by definition because each $\psi_p$ does. To see this more directly, consider the following diagram:
\[
\begin{tikzcd}
\left( U_p,\corean{U_p} \right) \arrow[d,"a"] \arrow[rrr,"\psi^{\bb{C}}_p",bend left =15]& \arrow[l,hook',"i"] \left( M_p,\corean{M_p} \right) \arrow[d,"b"] \arrow[r,"\phi_p^{\bb{C}}",swap]  & \left( N,\corean{N} \right) \arrow[r,hook,"j",swap] \arrow[d,"c"] & \left( V,\corean{V} \right) \arrow[d,"d"]\\
\left( U_p,\coan{U_p} \right) \arrow[rrr,"\psi_p",bend right =15,swap]& \arrow[l,hook',"k",swap] \left( M_p,\coan{M_p} \right) \arrow[r,"\phi"] & \left( N,\coan{N} \right) \arrow[r,hook,"m"] & \left( V,\coan{V} \right)
\end{tikzcd}.
\]
The squares on the left and right are clearly commuting and moreover,
\[\psi_p\circ a = d\circ \psi^{\bb{C}}_p,\; j\circ \phi_p^{\bb{C}} = \psi^{\bb{C}}_p\circ i \text{ and } m\circ \phi= \psi_p\circ k.\]
Observe that
\[m\circ c \circ \phi^{\bb{C}}_p= d\circ j\circ \phi^{\bb{C}}_p=d\circ \psi^{\bb{C}}_p\circ i = \psi_p\circ a \circ i= \psi_p\circ k\circ b = m\circ \phi \circ b.\]
The morphism $m$ is a monomorphism of $\bb{C}$-ringed spaces and thus $c\circ \phi^{\bb{C}}_p= \phi\circ b$.\\
Now the question is: Do these morphisms $\phi_p^{\bb{C}}$ glue to a morphism on $M$? The topological components of the $\phi_p^{\bb{C}}$ are all the same, i.e. simply the restriction of $\phi$ and the sheaf components respect the complex conjugation. However, on $\coan{N,p}$ the canonical morphisms of germs are all the same, hence, they are the same on $\coan{N,p}$ and $\bar{\mc{O}}_{N,p}$ and thus on $\corean{N.p}$. Therefore, the morphisms glue to $\phi^{\bb{C}}\colon \left( M,\corean{M} \right)\to \left( N,\corean{N} \right)$. This argument also shows that $\phi^{\bb{C}} $ is unique.\\
If $\phi$ is an isomorphism, then the same construction applied to $\phi^{-1}$ yields an inverse map to $\phi^{\bb{C}}$.
\end{proof}

The preceding claim shows, in particular, that the sheaf $\corean{M}$ is independent of the local model as one obtains a natural isomorphism for every choice of local model.

\begin{claim}
Let $\phi\colon M\to N$, $\psi\colon N\to P$ be holomorphic morphisms between local models of complex analytic spaces. Then, $\left( \psi\circ \phi \right)^{\bb{C}}=\psi^{\bb{C}} \circ \phi^{\bb{C}}$.
\end{claim}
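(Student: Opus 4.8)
The plan is to reduce the statement to the \emph{uniqueness} clause established in the preceding claim. There, $(\psi\circ\phi)^{\bb{C}}$ is singled out as the unique morphism of $\bb{C}$-ringed spaces $\left( M,\corean{M} \right)\to\left( P,\corean{P} \right)$ whose sheaf component respects complex conjugation and which fits into the defining square over the holomorphic map $\psi\circ\phi$ (with the canonical vertical morphisms to the holomorphic structure sheaves). Note that $\psi\circ\phi$ is again a holomorphic morphism between local models, so its complexification is defined and the characterization applies. It therefore suffices to show that the composite $\psi^{\bb{C}}\circ\phi^{\bb{C}}$ enjoys both characterizing properties; equality will then follow immediately.

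First I would verify the commutative square. The morphisms $\phi^{\bb{C}}$ and $\psi^{\bb{C}}$ each come equipped with their own defining squares over $\phi$ and $\psi$, in which the vertical arrows are the canonical morphisms $\left( M,\corean{M} \right)\to\left( M,\coan{M} \right)$, and likewise for $N$ and $P$, all with identity topological component. Pasting these two squares along the shared middle column $\left( N,\corean{N} \right)\to\left( N,\coan{N} \right)$ produces a commutative rectangle whose top edge is $\psi^{\bb{C}}\circ\phi^{\bb{C}}$, whose bottom edge is $\psi\circ\phi$, and whose outer vertical arrows are the canonical morphisms for $M$ and $P$. This rectangle is precisely the square demanded of $(\psi\circ\phi)^{\bb{C}}$.

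Next I would check conjugation-compatibility. By the preceding claim the sheaf components of both $\phi^{\bb{C}}$ and $\psi^{\bb{C}}$ respect complex conjugation, and conjugation-respecting sheaf morphisms are closed under composition; hence the sheaf component of $\psi^{\bb{C}}\circ\phi^{\bb{C}}$ respects conjugation as well. On underlying topological spaces the maps are simply $\phi$, $\psi$ and their composite, so nothing further needs to be verified at the level of spaces. Combining the two observations, $\psi^{\bb{C}}\circ\phi^{\bb{C}}$ satisfies the full characterization of $(\psi\circ\phi)^{\bb{C}}$, and uniqueness forces the desired identity.

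I do not expect a genuine obstacle here: the statement is pure functoriality and falls out of uniqueness together with the elementary pasting of commutative squares. The single point requiring care is that the characterization rests on \emph{both} the commuting square and conjugation-compatibility, and neither condition alone pins the morphism down — the square constrains only the behaviour on holomorphic germs $\coan{N}$ and leaves the antiholomorphic part free, exactly as in the uniqueness argument of the previous claim. One must therefore confirm \emph{both} properties for the composite before invoking uniqueness, rather than arguing from the square in isolation.
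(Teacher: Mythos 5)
Your proof is correct and follows exactly the route the paper takes: the paper's entire proof is ``This follows from the uniqueness of the morphism,'' and you have simply filled in the details (pasting the two defining squares and noting that conjugation-compatibility is preserved under composition) before invoking uniqueness. Your closing remark that \emph{both} the square and conjugation-compatibility are needed to pin the morphism down is exactly the point of the paper's uniqueness argument, so the elaboration is faithful to the source.
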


\begin{proof}
This follows from the uniqueness of the morphism.
\end{proof}

Suppose that $\left( M,\coan{M} \right)$ is a complex analytic space. One may assume that it is given by some gluing system of local models of complex analytic spaces. The preceding claim shows that one naturally obtains a gluing system of the locally defined sheaves of $\bb{C}$-algebras $\corean{U}$. Thus, one also gets a $\bb{C}$-ringed space $\left( M,\corean{M} \right)$ that is locally defined by the ideal $J+\bar{J}$.

\begin{claim}
Let $\left( M,\coan{M} \right)$ be a complex analytic space. Then the locally defined inclusion maps $\coan{U}\to \corean{U}$ are independent of the local model and thus glue to a global morphism $\coan{M}\to \corean{M}$.
\end{claim}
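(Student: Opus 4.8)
The plan is to show that the locally defined inclusion maps $\coan{U}\to \corean{U}$ are compatible with the transition morphisms of any gluing system presenting $\left(M,\coan{M}\right)$, so that they descend to a global morphism $\coan{M}\to\corean{M}$. The strategy is essentially a naturality check: a family of local maps glues to a global one precisely when it commutes with the overlap identifications. Since the complexification of holomorphic morphisms was constructed in the preceding claims together with its functoriality $\left(\psi\circ\phi\right)^{\bb{C}}=\psi^{\bb{C}}\circ\phi^{\bb{C}}$, I expect the required compatibility to follow from the commuting square defining $\phi^{\bb{C}}$ rather than from any new computation.

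First I would fix a gluing system of local models $\{M_\alpha\}$ with transition isomorphisms $\phi_{\alpha\beta}\colon M_\beta\supseteq M_{\alpha\beta}\xrightarrow{\sim} M_{\beta\alpha}\subseteq M_\alpha$ (holomorphic isomorphisms onto the relevant overlaps), from which the complexification was built by gluing the $\corean{U_\alpha}$ along the $\phi_{\alpha\beta}^{\bb{C}}$. On each patch one has the locally defined inclusion $\iota_\alpha\colon\coan{U_\alpha}\to\corean{U_\alpha}$ of the earlier Construction. To verify that these glue, I would show that on each overlap the square built from $\iota_\alpha$, $\iota_\beta$ and the two transition maps commutes, i.e. that $\phi_{\alpha\beta}^{\bb{C}}$ restricted along the inclusion agrees with $\phi_{\alpha\beta}$ after applying the inclusions. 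This is exactly the content of the commutative diagram characterising $\phi^{\bb{C}}$ in the second claim, namely that $\phi^{\bb{C}}$ sits over $\phi$ with respect to the vertical maps $\coan{\cdot}\to\corean{\cdot}$; specialising that diagram to $\phi=\phi_{\alpha\beta}$ gives precisely the compatibility of the $\iota_\alpha$ with the transition data. Hence the local inclusions are a morphism of gluing data and descend to a single morphism $\coan{M}\to\corean{M}$.

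The remaining point is independence of the local model, which was already established for $\corean{M}$ by the second and third claims: any two choices of local model are related by a holomorphic isomorphism whose complexification is the induced isomorphism of the $\corean{\cdot}$, and the defining commutative square guarantees that the inclusion maps are carried to one another. Thus the global morphism $\coan{M}\to\corean{M}$ obtained by gluing does not depend on the presentation. I expect the main obstacle to be bookkeeping rather than substance, namely phrasing the gluing of \emph{morphisms} (as opposed to sheaves) cleanly and checking that the conjugation-compatibility and the monomorphism property of the vertical maps are used exactly where the earlier claims supply them; all the analytic input is already packaged in the functoriality statement, so no new estimate or computation should be needed.
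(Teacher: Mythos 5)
Your proposal is correct and follows essentially the same route as the paper: both arguments reduce the compatibility of the local inclusions $\coan{U}\to\corean{U}$ on overlaps to the commutative square characterising $\phi^{\bb{C}}$ together with the functoriality $\left(\psi\circ\phi\right)^{\bb{C}}=\psi^{\bb{C}}\circ\phi^{\bb{C}}$; the paper merely carries out the check at the level of the ambient local models $V_1\to V_2$ and cancels against the monomorphism $\iota_2$, whereas you invoke the already-established square directly for the transition maps of $M$. No substantive difference or gap.
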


\begin{proof}
Let $p\in M$ and let $\left( W,\coan{W} \right)\to\left( V_1,\coan{V_1} \right)$ and $\left( W,\coan{W} \right)\to \left( V_2,\coan{V_2} \right)$ be two local models around $p$. Then there exists a commutative diagram
\[
\begin{tikzcd}
\left( V_1,\coan{V_1} \right) \arrow[rr,"\phi"]  & & \left( V_2,\coan{V_2} \right)\\
& \left( W,\coan{W} \right) \arrow[lu,"\iota_1"] \arrow[ru,"\iota_2"] & 
\end{tikzcd},
\]
after potentially shrinking the local model, and then there exists the associated diagram of associated spaces
\[
\begin{tikzcd}
\left( V_1,\corean{V_1} \right) \arrow[rr,"\phi^{\bb{C}}"]  & & \left( V_2,\corean{V_2} \right)\\
& \left( W,\corean{W} \right) \arrow[lu,"\iota_1^{\bb{C}}"] \arrow[ru,"\iota_2^{\bb{C}}",swap] & 
\end{tikzcd}.
\]
One obtains a big diagram
\[
\begin{tikzcd}
\left( V_1,\corean{V_1} \right) \arrow[rrrr,"\phi^{\bb{C}}"] \arrow[rd,"a"] & & & & \left( V_2,\corean{V_2} \right) \arrow[ld,"b",swap]\\
 & \left( V_1,\coan{V_1} \right) \arrow[rr,"\phi"] & & \left( V_2,\coan{V_2} \right) & \\
 & & \left( W , \coan{W} \right) \arrow[ul,"\iota_1"] \arrow[ur,"\iota_2",swap] & & \\
 & & \left( W,\corean{W} \right) \arrow[u,"B",swap,bend right=30] \arrow[u,"A",bend left=30] \arrow[uuurr,"\iota_2^{\bb{C}}",bend right=10,swap] \arrow[uuull,"\iota_1^{\bb{C}}",bend left =10] & &
\end{tikzcd},
\]
and the question is whether $A=B$. Observe the following calculation
\[\iota_2\circ B = b\circ \iota_2^{\bb{C}}=b\circ \phi^{\bb{C}} \circ \iota_1^{\bb{C}} = \phi\circ a \circ \iota_1^{\bb{C}} = \phi\circ \iota_1\circ A = \iota_2\circ A.\]
Since $\iota_2$ is a monomorphism of $\bb{C}$-ringed spaces it follows that $B=A$. 
\end{proof}

\begin{claim}
Let $\left( M,\coan{M} \right)$ be a $\bb{C}$-analytic space. Then the locally defined conjugation maps on $\corean{M}$ are independent of the local model and thus glue to a global morphism of sheaves of rings.
\label{inclusionaninhol}
\end{claim}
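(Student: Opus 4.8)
The plan is to run the same diagram chase as in the preceding claim, the decisive new ingredient being the already-established fact that every complexified morphism respects complex conjugation. First I would fix a point $p\in M$ together with two local models $\left( V_1,\coan{V_1} \right)$ and $\left( V_2,\coan{V_2} \right)$ around $p$, a common model $\left( W,\coan{W} \right)$, embeddings $\iota_1\colon W\to V_1$ and $\iota_2\colon W\to V_2$, and a transition isomorphism $\phi\colon V_1\to V_2$ with $\phi\circ\iota_1=\iota_2$, exactly as before. Writing $\kappa_{V_i}\colon\corean{V_i}\to\corean{V_i}$ for the conjugation attached to the model $V_i$ and $\kappa^{(i)}\colon\corean{W}\to\corean{W}$ for the conjugation it induces on the overlap, the goal is reduced to showing $\kappa^{(1)}=\kappa^{(2)}$, since agreement on all such overlaps is exactly what is needed to glue.

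The verification is then purely formal. Applying the composition claim to $\phi\circ\iota_1=\iota_2$ gives $\phi^{\bb{C}}\circ\iota_1^{\bb{C}}=\iota_2^{\bb{C}}$. By construction the embedding intertwines the two conjugations, $\iota_i^{\bb{C}}\circ\kappa^{(i)}=\kappa_{V_i}\circ\iota_i^{\bb{C}}$, and by the claim asserting that $\phi^{\bb{C}}$ respects conjugation one has $\phi^{\bb{C}}\circ\kappa_{V_1}=\kappa_{V_2}\circ\phi^{\bb{C}}$. Chaining these together yields
\[\iota_2^{\bb{C}}\circ\kappa^{(1)}=\phi^{\bb{C}}\circ\iota_1^{\bb{C}}\circ\kappa^{(1)}=\phi^{\bb{C}}\circ\kappa_{V_1}\circ\iota_1^{\bb{C}}=\kappa_{V_2}\circ\phi^{\bb{C}}\circ\iota_1^{\bb{C}}=\kappa_{V_2}\circ\iota_2^{\bb{C}}=\iota_2^{\bb{C}}\circ\kappa^{(2)},\]
so that $\iota_2^{\bb{C}}\circ\kappa^{(1)}=\iota_2^{\bb{C}}\circ\kappa^{(2)}$. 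Cancelling $\iota_2^{\bb{C}}$ then forces $\kappa^{(1)}=\kappa^{(2)}$, and the locally defined conjugations glue to a global morphism of sheaves of rings on $M$.

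The one point needing care, and the step I expect to be the only genuine obstacle, is the final cancellation. Since complex conjugation is $\bb{C}$-antilinear it is merely a morphism of sheaves of rings rather than of $\bb{C}$-ringed spaces, so the monomorphism property used above cannot be quoted in the $\bb{C}$-ringed category but must be established for sheaves of rings. This causes no real trouble: because $\iota_2$ is a closed embedding, the sheaf component $\left(\iota_2^{\bb{C}}\right)^{\sim}$ is surjective, and a surjective map of sheaves of rings can be cancelled on the right in the resulting composite of sheaf components, which is precisely the cancellation required. Everything else is a formal consequence of the fact, proven earlier, that complexification commutes with conjugation.
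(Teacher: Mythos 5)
Your proposal is correct and follows essentially the same route as the paper: both arguments reduce to the same local-model diagram and hinge on the previously established fact that $\tilde{\Phi}^{\bb{C}}$ commutes with conjugation because it is induced from a morphism of manifolds. The only difference is presentational — the paper evaluates the chain of identities on an individual section $s$ with representatives $s_1,s_2$, whereas you phrase it as a morphism-level diagram chase and finish by cancelling the surjective sheaf component of the closed embedding, a point you correctly identify as the one place where the $\bb{C}$-ringed-space monomorphism argument of the preceding claim cannot be quoted verbatim.
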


\begin{proof}
Let $p\in M$ and let $\left( W,\coan{W} \right)\to\left( V_1,\coan{V_1} \right)$ and $\left( W,\coan{W} \right)\to \left( V_2,\coan{V_2} \right)$ be two local models around $p$. Then there exists a commutative diagram
\[
\begin{tikzcd}
\left( V_1,\coan{V_1} \right) \arrow[rr,"\Phi"]  & & \left( V_2,\coan{V_2} \right)\\
& \left( W,\coan{W} \right) \arrow[lu] \arrow[ru] & 
\end{tikzcd},
\]
after potentially shrinking the local model, and then there exists the associated diagram of associated spaces
\[
\begin{tikzcd}
\left( V_1,\corean{V_1} \right) \arrow[rr,"\Phi^{\bb{C}}"]  & & \left( V_2,\corean{V_2} \right)\\
& \left( W,\corean{W} \right) \arrow[lu,"\iota_1"] \arrow[ru,"\iota_2",swap] & 
\end{tikzcd}.
\]
Let $s\in \corean{W}$ be such that there exist $s_1\in \corean{V_1}$ and $s_2 \in \corean{V_2}$ with $\tilde{\iota}_1\left( s_1 \right)=s$ and $\tilde{\iota}_2\left( s_2 \right)=s$. Thus, $\tilde{\Phi}^{\bb{C}}\left( s_2 \right)=s_1 + J_1^{\bb{C}}$ and then $\tilde{\iota_2}\left( \bar{s}_2 \right) = \Phi^{\bb{C}}_*\tilde{\iota}_1\left( \tilde{\Phi}^{\bb{C}}\left( \bar{s}_2 \right) \right)$, but as $\Phi^{\bb{C}}$ is a morphism of manifolds it follows that $\tilde{\Phi}^{\bb{C}}\left( \bar{s}_2 \right)=\overline{\tilde{\Phi}^{\bb{C}}\left( s_2 \right)}$. Therefore it holds that
\[\Phi^{\bb{C}}_*\tilde{\iota}_1\left( \tilde{\Phi}^{\bb{C}}\left( \bar{s}_2 \right) \right)=\Phi^{\bb{C}}_*\tilde{\iota}_1\left( \bar{s}_1 + J_1^{\bb{C}} \right) = \tilde{\iota_1}\left( \bar{s}_1 \right).\]
This shows that the two definitions of conjugation lead to the same section and thus the definition is independent of the local model.
\end{proof}

\end{con}

The construction carried out above proves the following proposition.

\begin{prop}
Let $\left( M,\coan{M} \right)$ be a complex analytic space. Then, there exists a sheaf of $\bb{C}$-algebras $\corean{M}$, that in a local model in $V$ is given by $\quo{\corean{V}}{\left( J+\bar{J} \right)}$, together with a canonical inclusion $\coan{M}\hookrightarrow \corean{M}$, which gives a canonical morphism of $\bb{C}$-ringed spaces $\left( M,\corean{M} \right)\to \left( M,\coan{M} \right)$.\\
For every holomorphic morphism $\phi \colon M \to N$ of complex analytic spaces one obtains a unique morphism of $\bb{C}$-ringed spaces $\phi^{\bb{C}}\colon \left( M,\corean{M} \right)\to \left( N,\corean{N} \right)$ such that
\[
\begin{tikzcd}
\left( M,\corean{M} \right) \arrow[r,"\phi^{\bb{C}}"] \arrow[d] & \left( N,\corean{N} \right) \arrow[d]\\
\left( M,\coan{M} \right) \arrow[r,"\phi"] & \left( N,\coan{N} \right)
\end{tikzcd}
\]
commutes and the sheaf component of $\phi^{\bb{C}}$ respects the complex conjugation.
\label{associatedmorphcomplex}
\end{prop}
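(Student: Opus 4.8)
The plan is to read this proposition as the global assembly of the local statements established in the preceding Construction: every assertion has already been verified on local models, so what remains is to glue the local data along an atlas and to transport the local construction of $\phi^{\bb{C}}$ to holomorphic morphisms between globally defined spaces.

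First I would fix a covering of $\left( M,\coan{M} \right)$ by local models $M_\alpha \subseteq V_\alpha \subseteq \bb{C}^{n_\alpha}$ with defining ideals $J_\alpha$, and set $\corean{M_\alpha}$ to be the restriction to $M_\alpha$ of $\quo{\corean{V_\alpha}}{\left( J_\alpha+\bar{J}_\alpha \right)}$. On each overlap the transition maps of $\coan{M}$ are holomorphic isomorphisms between local models, so the claim establishing the existence and uniqueness of $\phi^{\bb{C}}$ on local models produces isomorphisms between the corresponding $\corean{\cdot}$ sheaves. To see that these assemble into a genuine sheaf I would verify the cocycle condition, which is exactly the functoriality claim $\left( \psi\circ \phi \right)^{\bb{C}}=\psi^{\bb{C}} \circ \phi^{\bb{C}}$ together with $\id^{\bb{C}}=\id$; the latter is immediate from local uniqueness applied to the identity. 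This produces the global sheaf of $\bb{C}$-algebras $\corean{M}$ with the asserted local description. The canonical inclusion $\coan{M}\hookrightarrow \corean{M}$ then glues by the claim that the locally defined inclusions are independent of the local model, and the conjugation glues by Claim~\ref{inclusionaninhol}; the associated canonical morphism of $\bb{C}$-ringed spaces $\left( M,\corean{M} \right)\to\left( M,\coan{M} \right)$ is then formal.

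For the global morphism, given a holomorphic $\phi\colon M\to N$ I would cover $M$ by local models $M_\alpha$ small enough that $\phi$ factors through a single local model of $N$, and apply the local existence claim on each piece to obtain $\phi^{\bb{C}}_\alpha\colon \left( M_\alpha,\corean{M_\alpha} \right)\to \left( N,\corean{N} \right)$ lying over $\phi$ and respecting conjugation. On each overlap $M_\alpha\cap M_\beta$ both restrictions are conjugation-respecting morphisms over the same $\phi$, hence coincide by the local uniqueness, so they glue to a global $\phi^{\bb{C}}$. Global uniqueness follows the same way: any two conjugation-respecting morphisms over $\phi$ agree on every $M_\alpha$ and therefore globally.

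The main obstacle is organizational rather than conceptual: one must handle the two independent atlases on $M$ and on $N$ at the same time. The delicate point is that each local piece $\phi^{\bb{C}}_\alpha$ is built from an arbitrary ambient extension of $\phi$ and an arbitrary chart of $N$, so one has to check that the pieces land in the same globally defined $\corean{N}$ and agree on overlaps. Both checks are already absorbed into the uniqueness half of the local existence claim and into the independence-of-local-model claims, so the entire global compatibility reduces to invoking those facts, and I expect no new computation beyond carefully bookkeeping this reduction.
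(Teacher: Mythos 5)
Your proposal is correct and follows essentially the same route as the paper: the proposition is stated as a summary of the preceding Construction, and the paper likewise obtains $\corean{M}$ by gluing the local models via the uniqueness and functoriality of the locally defined $\phi^{\bb{C}}$, glues the inclusion and conjugation using the independence-of-local-model claims, and builds the global $\phi^{\bb{C}}$ by patching conjugation-respecting local morphisms over $\phi$ and invoking uniqueness on germs. No substantive difference.
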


\begin{con}
If $\left( M,\coan{M} \right)$ is a complex analytic space, then one can consider the subsheaf $A$ of $\bb{R}$-algebras
\[U\mapsto A\left( U \right):=\left\{ s\in \corean{M}\left( U \right)\mid \bar{s}=s \right\}.\]
In a local model it is clear that $A$ is isomorphic to $C^{\omega}_M$ as defined earlier. Therefore, it is denoted by $C^{\omega}_M$, even if $M$ is not a local model. The $\bb{R}$-ringed space $\left( M,\rean{M} \right)$ is thus a real analytic space and referred to as the \emph{associated real analytic space}.\\
Moreover, observe that since $\phi^{\bb{C}}$ of a holomorphic morphism $\phi\colon M \to N$ commutes with complex conjugation one obtains a morphism
\[\phi^{\bb{R}}\colon \left( M,\rean{M} \right)\to \left( N,\rean{N} \right),\]
such that
\[
\begin{tikzcd}
\left( M,\corean{M} \right) \arrow[r,"\phi^{\bb{C}}"] \arrow[d] & \left( N,\corean{N} \right) \arrow[d]\\
\left( M,\rean{M} \right) \arrow[r,"\phi^{\bb{R}}"] & \left( N,\rean{N} \right)
\end{tikzcd}
\]
commutes. Further, $\corean{M}=\rean{M}\otimes_{\bb{R}}\bb{C}$ is immediate, because $s=\frac{1}{2}\left( s+\bar{s} \right)+ \frac{1}{2}\left( s-\bar{s} \right)$ and the second term is proportional to $i$ in each local model.\\
If $\left( M,\rean{M} \right)$ is a real analytic space, one defines $\corean{M}:= \rean{M}\otimes_{\bb{R}} \bb{C}$. Any real analytic morphism $\phi\colon \left( M,\rean{M} \right)\to \left( N,\rean{N} \right)$ naturally defines $\phi^{\bb{C}}\colon \left( M,\corean{M} \right)\to \left( N,\corean{N} \right)$ by setting
\[\tilde{\phi}^{\bb{C}}\left( s+it \right):=\tilde{\phi}\left( s \right)+i \tilde{\phi}\left( t \right).\]
Note that in both cases $\corean{M}=\rean{M}^{\oplus 2}$ as $\rean{M}$-modules and thus $\corean{M}$ is flat over $\rean{M}$.
\label{associatedmorphreal}
\end{con}

With this understanding of complex-valued real analytic functions on analytic spaces one is now in a position to define the complexification of a real analytic space and gather some useful, well-known facts about it. For more details on these constructions and complexifications in general see e.g.~\cite{realan}.

\begin{defn}
Let $\left( M,\coan{M} \right)$ be a complex analytic space and $\iota\colon \left( N,\rean{N} \right)\to \left( M,\rean{M} \right)$ a real analytic subspace. Then one naturally obtains a morphism
\[\coan{M}\to \corean{M}\to \iota_*\corean{N}\]
and this in turn yields
\[A\colon\iota^{-1}\coan{M}\to \corean{N},\]
as direct and inverse image are adjoint. The space $M$ is called \emph{complexification of $N$} if $A$ is an isomorphism.
\end{defn}

\begin{thm}
Let $M$ be a real analytic space. Then $M$ admits a complexification, that is denoted by $M^{\bb{C}}$ in the following.
\end{thm}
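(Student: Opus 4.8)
The plan is to construct $M^{\bb{C}}$ locally by holomorphically extending defining functions and then to glue the local pieces. Since the defining property of a complexification is that the adjunction map $A$ be an \emph{isomorphism}, a condition that can be checked on stalks, it suffices to complexify each member of a cover of $M$ by local models and patch. So I would first reduce to the case where $M$ is a closed real analytic subspace of an open set $\Omega\subseteq\bb{R}^n$ with $\rean{M}=\rean{\Omega}/\mc{I}$, the ideal sheaf $\mc{I}$ being generated (after shrinking, using that $\rean{\Omega}$ is coherent) by finitely many \emph{real-valued} real analytic functions $g_1,\dots,g_k$ on $\Omega$.

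Next I would build the local complexification. Each $g_i$, being real analytic, extends uniquely to a holomorphic function $\tilde{g}_i$ on some open $W\subseteq\bb{C}^n$ with $W\cap\bb{R}^n=\Omega$; after shrinking I may assume all extensions live on a common $W$. Define the complex analytic space $P:=V(\tilde{g}_1,\dots,\tilde{g}_k)\subseteq W$ with $\coan{P}=\coan{W}/(\tilde{g}_1,\dots,\tilde{g}_k)$. Because the $g_i$ are real-valued, the real points of $P$ are exactly the common zeros of the $g_i$ in $\Omega$, so $M=P\cap\bb{R}^n$ and the inclusion $\iota\colon M\hookrightarrow\left(P,\rean{P}\right)$ realises $M$ as a real analytic subspace of the associated real analytic space of $P$.

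The heart of the matter is verifying that $A\colon\iota^{-1}\coan{P}\to\corean{M}$ is an isomorphism, which I would check on stalks at a point $p\in M$. The ambient claim is the germ-level identity $\corean{\Omega,p}\cong\coan{W,p}$: a complex-valued real analytic germ $\sum_{\alpha}c_{\alpha}(x-p)^{\alpha}$ at $p\in\bb{R}^n$ converges on a full complex polydisc and is therefore precisely the restriction to $\Omega$ of a unique holomorphic germ at $p\in\bb{C}^n$, and conversely; these operations are mutually inverse. This restriction map is exactly $A$ on the ambient models, hence a $\bb{C}$-algebra isomorphism, and it sends each $\tilde{g}_i$ to $g_i$, so it carries the holomorphic defining ideal onto $J^{\bb{C}}=\mc{I}\otimes_{\bb{R}}\bb{C}$. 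Passing to quotients yields $\iota^{-1}\coan{P}\cong\corean{\Omega}/J^{\bb{C}}=\corean{M}$ near $p$, so $P$ is a complexification of the local model $M$.

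Finally I would glue. Covering $M$ by local models $\{M_i\}$ produces complexifications $P_i$; on each overlap $M_i\cap M_j$ both $P_i$ and $P_j$ complexify the same real analytic space, so I need canonical isomorphisms of the germs of $P_i$ and $P_j$ along $M_i\cap M_j$ satisfying the cocycle condition. This is where I expect the main difficulty to lie, and it amounts to \emph{uniqueness} of the complexification, equivalently the universal property that a real analytic morphism from $M$ into the associated real analytic space of any complex analytic space $Z$ extends uniquely to a holomorphic morphism $P\to Z$ on a neighbourhood of $M$. Granting this — which follows by combining the stalk-wise isomorphism $A$ with the unique holomorphic extendability of real analytic maps (the identity theorem) and the functoriality of the associated-space construction in Proposition~\ref{associatedmorphcomplex} — the transition isomorphisms between the $P_i$ are forced, automatically satisfy the cocycle condition, and hence glue the $P_i$ into a complex analytic space $M^{\bb{C}}$ together with $M\hookrightarrow\left(M^{\bb{C}},\rean{M^{\bb{C}}}\right)$ for which $A$ is an isomorphism. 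Alternatively, the classical existence and uniqueness of germs of complexifications (see~\cite{realan}) may be invoked for this last step, the content above being the identification of the resulting space as a complexification in the sense of the preceding definition.
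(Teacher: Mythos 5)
The paper offers no proof of this statement at all: it is quoted from the literature, with a bare reference to \cite[III.3.33]{realan}. Your proposal, by contrast, sketches the standard construction behind that citation, and the sketch is essentially sound: reduction to a local model $M\subseteq\Omega\subseteq\bb{R}^n$ with finitely many real-valued defining functions, holomorphic extension of those functions to $W\subseteq\bb{C}^n$, the stalk identification $\corean{\Omega,p}\cong\coan{W,p}$ via power series (which is exactly the map $A$ on the ambient models and carries $(\tilde{g}_1,\dots,\tilde{g}_k)_p$ onto $(g_1,\dots,g_k)\corean{\Omega,p}$), and passage to quotients. Two points deserve more care than you give them. First, you assert without verification that $\iota\colon M\hookrightarrow\left(P,\rean{P}\right)$ is a real analytic subspace; this requires checking that $\rean{P}$ modulo the ideal generated by $\mathrm{Im}(z_1),\dots,\mathrm{Im}(z_n)$ restricts to $\rean{M}$ on $M=P\cap\bb{R}^n$ (it does, using that $\mathrm{Im}(\tilde{g}_i)$ vanishes on $\Omega$, but it is part of the statement being proved). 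Second, and more seriously, the gluing step is where the real content of the theorem lives: the transition isomorphisms you obtain from uniqueness are only germs along the overlaps, the cocycle condition only holds as an identity of germs along triple overlaps, and gluing germs of complex spaces along $M$ need not a priori produce a Hausdorff space --- this is the classical difficulty resolved by Whitney--Bruhat-type arguments and is precisely what the cited result supplies. You correctly identify this as the main difficulty and defer it to \cite{realan}, which is a legitimate resolution but means your argument, like the paper's, ultimately rests on the reference for the hardest step. What your approach buys is an explicit description of the local models of $M^{\bb{C}}$ and of the isomorphism $A$, which is in fact what the paper's later Construction~\ref{conjugatecomplex} relies on; what the paper's citation buys is not having to address the gluing and separation issues at all.
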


\begin{proof}
See e.g.~\cite[III.3.33]{realan}.
\end{proof}

\begin{prop}
Suppose that $\phi\colon M\to N$ is a morphism of real analytic spaces. Then, there exists $\phi_{\bb{C}}\colon M^{\bb{C}} \to N^{\bb{C}}$ such that the following diagram commutes
\[
\begin{tikzcd}
\left(M^{\bb{C}}, \corean{M^{\bb{C}}}\right) \arrow[r,"\left( \phi_{\bb{C}} \right)^{\bb{C}}"] & \left(N^{\bb{C}},\corean{N^{\bb{C}}} \right)\\
\left( M,\corean{M} \right) \arrow[u,"\iota_1^{\bb{C}}"] \arrow[r,"\phi^{\bb{C}}"] & \left( N,\corean{N} \right) \arrow[u,"\iota_2^{\bb{C}}"]
\end{tikzcd},
\]
where $\iota_1\colon M\to M^{\bb{C}}$ and $\iota_2\colon N \to N^{\bb{C}}$ denote the embeddings, after potentially shrinking $M^{\bb{C}}$ around $M$.
\end{prop}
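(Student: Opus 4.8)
The plan is to build $\phi_{\bb{C}}$ as a holomorphic morphism defined on a neighbourhood of $M$ in $M^{\bb{C}}$ by extending the component functions of $\iota_2\circ\phi$, and then to obtain commutativity of the displayed square formally from the single real analytic identity $\phi_{\bb{C}}\circ\iota_1=\iota_2\circ\phi$. The conceptual engine is the defining isomorphism of the complexification $A\colon\iota_1^{-1}\coan{M^{\bb{C}}}\to\corean{M}$: it says precisely that each complex-valued real analytic function on $M$ extends uniquely to a holomorphic germ on $M^{\bb{C}}$ along $M$, and this is the property I exploit throughout.

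First I would reduce the square to the category of real analytic spaces. By Proposition~\ref{associatedmorphcomplex} and Construction~\ref{associatedmorphreal} all four morphisms $\iota_1^{\bb{C}}$, $\iota_2^{\bb{C}}$, $\phi^{\bb{C}}$ and $(\phi_{\bb{C}})^{\bb{C}}$ respect the complex conjugation, hence each is the $\otimes_{\bb{R}}\bb{C}$-complexification of its restriction to the associated real analytic spaces, and two such conjugation-respecting morphisms agree if and only if these real restrictions agree. Since $\iota_1^{\bb{C}}$, $\iota_2^{\bb{C}}$, $\phi^{\bb{C}}$ restrict to $\iota_1$, $\iota_2$, $\phi$, and $(\phi_{\bb{C}})^{\bb{C}}$ restricts to the underlying real analytic morphism of $\phi_{\bb{C}}$ (comparing the two complexification functors via the uniqueness clause of Proposition~\ref{associatedmorphcomplex}), the square commutes exactly when $\phi_{\bb{C}}\circ\iota_1=\iota_2\circ\phi$ as real analytic morphisms $M\to N^{\bb{C}}$. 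It therefore suffices to produce a holomorphic $\phi_{\bb{C}}$ with this property.

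To construct $\phi_{\bb{C}}$ I would work in local models and glue. Fix $p\in M$ and a local embedding $N^{\bb{C}}\subseteq V\subseteq\bb{C}^b$ near $\iota_2(\phi(p))$ whose ideal is generated by $g_1,\dots,g_r$. The real analytic morphism $\iota_2\circ\phi$ is given near $p$ by components $h_1,\dots,h_b\in\corean{M}$ satisfying $g_k(h_1,\dots,h_b)=0$, because it lands in $N^{\bb{C}}$. Applying $A^{-1}$ yields holomorphic germs $H_j:=A^{-1}(h_j)$, i.e. holomorphic functions on a neighbourhood of $M$ in $M^{\bb{C}}$ with $A(H_j)=h_j$, and these assemble into a holomorphic map into $\bb{C}^b$. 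The crucial point is that this map lands in $N^{\bb{C}}$: the holomorphic germ $g_k(H_1,\dots,H_b)$ restricts along $M$ to $g_k(h_1,\dots,h_b)=0$, so by the injectivity of $A$ it already vanishes on a neighbourhood of $M$. This produces a holomorphic morphism into $N^{\bb{C}}$ near $p$ whose restriction along $M$ is $\iota_2\circ\phi$ by construction.

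Finally I would glue the local pieces. Surjectivity of $A$ provides the lifts $H_j$, and injectivity of $A$ makes them unique as germs along $M$, so the locally defined morphisms coincide on overlaps; after shrinking $M^{\bb{C}}$ to a neighbourhood of $M$ on which all local data are defined and compatible, they glue to a global holomorphic $\phi_{\bb{C}}\colon M^{\bb{C}}\to N^{\bb{C}}$ satisfying $\phi_{\bb{C}}\circ\iota_1=\iota_2\circ\phi$, and this shrinking is precisely the hypothesis of the statement. I expect the main obstacle to be exactly this construction-and-gluing step: one must check that the holomorphic extensions of the coordinate functions still satisfy the defining equations of $N^{\bb{C}}$ on a full neighbourhood of $M$ (and not merely on $M$), and that the germ-wise lifts organise into an honest morphism on an open set, both of which are controlled by the bijectivity of $A$. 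Once $\phi_{\bb{C}}$ is available, commutativity of the stated square is immediate from the reduction of the first step.
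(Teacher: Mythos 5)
The paper gives no proof of its own for this proposition---it simply cites \cite[III.1.8]{realan}---and your argument is the standard one behind that reference: extend the component functions of $\iota_2\circ\phi$ holomorphically along $M$ via $A^{-1}$, use injectivity of $A$ to see that the defining relations $g_k$ of $N^{\bb{C}}$ persist on a neighbourhood and that the local extensions agree as germs along $M$, and glue after shrinking. This is sound and, in fact, mirrors exactly the techniques the author deploys in Theorem~\ref{canonicalfibration} and Lemma~\ref{extenddervcomplex}, so your reconstruction is both correct and consistent with the paper's own methods.
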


\begin{proof}
See e.g.~\cite[III.1.8]{realan}.
\end{proof}

\section{Complexification of complex analytic spaces}
\label{compelxificationsofcomplexanalyticspaces}

Recall that if one complexifies a complex manifold $M$ by complexifying the underlying real analytic manifold, then this complexification is locally very simple as it is $M\times \bar{M}$. In the following, it is shown that this is still true for singular spaces and that this locally trivial holomorphic fibration of the complexification over $M$ can be glued to a global fibration. In this work, this global morphism is not necessarily needed, as our question is local. However, it neatly describes the sheaf of $\left( 0,1 \right)$-forms 
on a complex analytic space in a global manner (Section~\ref{pq}). As such, this construction seems valuable enough to present.

\begin{con}
\label{conjugatecomplex}
Let $\left( M,\coan{M} \right)$ be a complex analytic space and let $\iota\colon \left( M,\rean{M} \right)\to \left( M^{\bb{C}},\rean{M^{\bb{C}}} \right)$ be a complexfication of $M$. Then, by definition $\coan{M^{\bb{C}}, p}\cong \corean{M,p}$. Now, in a local model one has
\[\corean{M,p}\cong \quo{\corean{W,p}}{J_p+\bar{J}_p}.\]
One may identify $\corean{\bb{C}^{n},p}$ with $\coan{\bb{C}^{2n},p}$ by mapping the $\bar{z_i}$ coordinates to $z'_{n+i}$. Under this identification the ideal $\bar{J}_p$ is generated by the functions
\[f'_i=\sum_{|I|=1}^{\infty}\bar{a}_{I} {z'}_{n+1}^{I_1}\cdot \dots \cdot {z'}_{2n}^{I_n},\]
where the functions
\[f_i=\sum_{|I|=1}^{\infty}a_{I} z_{1}^{I_1}\cdot \dots \cdot z_{n}^{I_n}\]
generate the ideal $J_p$. Denote by $\bar{M}$ the analytic space defined by $\bar{J}$ under the identification above in an open set $W\subseteq \bb{C}^n$. It follows that $\coan{M\times \bar{M},\left( p,\bar{p} \right)} \cong \corean{M,p}\cong \coan{M^{\bb{C}},p}$. Therefore\footnotemark\footnotetext{Recall, that the category of analytic algebras and germs of analytic spaces are contravariantly equivalent (see e.g.~\cite[0.21]{fischer}).}, there exists an open neighbourhood $U\subseteq M$ around $p$ and $V\subseteq M^{\bb{C}}$ around $\iota\left( p \right)$ such that
\[U\times \bar{U}\cong V\]
as complex analytic spaces. In particular, the sheaf component of the projection $\pi$ to the first factor is simply the inclusion of $\coan{U}$ into $\pi_* \coan{V}$. Thus $\tilde{\pi}_p\colon \coan{U,p}\to \coan{V,p}$ is equal to the concatenation of morphisms
\[\coan{M,p}\to \corean{M,p}\overset{A_p^{-1}}{\to} \coan{M^{\bb{C}},p}.\]
Here, $A_p$ is the germ of the isomorphism that exists for a complexification.
\end{con}

\begin{thm}
Let $ M$ be a complex analytic space and suppose $\iota\colon M \to M^{\bb{C}}$ is a complexification of $M$. This means the canonical morphism $A\colon \iota^{-1}\coan{M^{\bb{C}}}\to \corean{M}$ is an isomorphism and consider the morphism $B\colon \coan{M}\to \corean{M} \overset{A^{-1}}{\to} \iota^{-1}\coan{M^{\bb{C}}}$.\\
Then, after shrinking $M^{\bb{C}}$ around $M$, there exists a holomorphic morphism $\psi\colon M^{\bb{C}}\to M$ such that $\tilde{\psi}_p= B_p$ for every $p\in M$.\\
After shrinking $M^{\bb{C}}$ once again, one may assume that $\psi$ is locally equivalent to a projection.
\label{canonicalfibration}
\end{thm}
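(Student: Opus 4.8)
The plan is to assemble $\psi$ by gluing the local projections already furnished by Construction~\ref{conjugatecomplex}, and then to read the local--triviality assertion off that same construction. For each $p\in M$, Construction~\ref{conjugatecomplex} provides open neighbourhoods $U_p\subseteq M$ of $p$ and $V_p\subseteq M^{\bb{C}}$ of $\iota(p)$, an isomorphism $V_p\cong U_p\times \bar{U}_p$ under which $\iota$ identifies $M\cap V_p$ with the conjugate diagonal $\left\{ \left( q,\bar q \right)\mid q\in U_p \right\}$, and a projection $\pi_p\st V_p\to U_p$ onto the first factor whose sheaf-germ at $p$ is $\left( \tilde{\pi}_p \right)_p=B_p$. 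The first thing I would do is upgrade this pointwise identity to the whole chart: since the isomorphism $A$, the inclusion $\coan{M}\to \corean{M}$ and the local product structure are all natural, the germ of $\pi_p$ at an arbitrary point $q\in M\cap V_p$ is again the composite $\coan{M,q}\to\corean{M,q}\xrightarrow{A_q^{-1}}\coan{M^{\bb{C}},q}$. Indeed, a holomorphic germ $g$ on $U_p$ pulls back under $\pi_p$ to the germ $g\otimes 1$ on $U_p\times \bar{U}_p$ that depends only on the first factor, and under the identification $\coan{M^{\bb{C}},q}\cong\corean{M,q}$ this is exactly the image of $g$ under $\coan{M}\to \corean{M}$. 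Hence $\left( \tilde{\pi}_p \right)_q=B_q$ for every $q\in M\cap V_p$.

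The heart of the argument is the gluing. For two indices $p,p'$ the local morphisms $\pi_p$ and $\pi_{p'}$ have the same germ at every real point $q\in M\cap V_p\cap V_{p'}$, because both germs equal $B_q$ and $B$ is a single globally defined morphism. A morphism of complex analytic spaces into a fixed target is determined near a point by its germ there: embedding the target locally into some $\bb{C}^n$, the morphism is encoded by the pull-backs of the coordinate functions, and germs of these functions that agree at $q$ already agree on a whole neighbourhood of $q$. Consequently $\pi_p$ and $\pi_{p'}$ coincide, both topologically and on sheaves, on an open neighbourhood of each such real point, and therefore on an open neighbourhood of all of $M\cap V_p\cap V_{p'}$ inside $V_p\cap V_{p'}$. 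Using paracompactness of $M^{\bb{C}}$ I would then shrink $M^{\bb{C}}$ to a neighbourhood of $M$ small enough that every pairwise overlap $V_p\cap V_{p'}$ lies in the region where $\pi_p=\pi_{p'}$; on this shrunken space the $\pi_p$ patch to a single holomorphic morphism $\psi\st M^{\bb{C}}\to M$ with $\tilde{\psi}_p=B_p$ for all $p\in M$, and $\psi\circ\iota=\id$ by construction.

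The local--triviality statement then requires no extra work: by construction $\psi$ restricts on each shrunken chart $V_p$ to the first-factor projection $U_p\times\bar{U}_p\to U_p$, so after this final shrinking $\psi$ is locally equivalent to a projection. I expect the main obstacle to be precisely the gluing step---passing from agreement of the local projections along the real locus $M$ to agreement on a full neighbourhood in $M^{\bb{C}}$, and arranging the shrinking so that all pairwise compatibilities hold simultaneously. The germ-determines-morphism principle together with the paracompactness of $M^{\bb{C}}$ is what makes this passage go through.
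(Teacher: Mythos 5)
Your overall architecture coincides with the paper's: produce local morphisms whose sheaf germ at the centre point is $B_p$, show they agree along $M$, glue by a local-finiteness argument, and read off local triviality from the product charts. The genuine gap is in the step where you upgrade the identity $\left( \tilde{\pi}_p \right)_p = B_p$ at the single point $p$ to $\left( \tilde{\pi}_p \right)_q = B_q$ for all $q\in M\cap V_p$. Construction~\ref{conjugatecomplex} produces the isomorphism $V_p\cong U_p\times \bar{U}_p$ purely from the germ isomorphism $\coan{M\times \bar{M},\left( p,\bar{p} \right)}\cong \corean{M,p}\cong \coan{M^{\bb{C}},p}$ via the contravariant equivalence between analytic algebras and germs of analytic spaces, so it is normalized only at $p$. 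At a nearby real point $q$ there are two identifications of $\coan{M^{\bb{C}},q}$ with $\corean{M,q}$ in play --- the canonical one $A_q$ from the definition of complexification, and the one induced by the chart together with the standard identification $\coan{U_p\times \bar{U}_p,\left( q,\bar{q} \right)}\cong \corean{U_p,q}$ --- and your ``by naturality'' argument implicitly assumes they coincide (indeed, even the set-theoretic claim that the chart carries the conjugate diagonal onto $M\cap V_p$ is not part of what Construction~\ref{conjugatecomplex} establishes). If ``the identification'' in your sentence means $A_q$, then the assertion that $g\otimes 1$ corresponds to the image of $g$ under $\coan{M}\to \corean{M}$ is precisely the statement $\left( \tilde{\pi}_p \right)_q = B_q$ being proved, so the argument is circular.

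The paper closes exactly this gap with a coherence argument rather than naturality: it forms the difference of the two sheaf morphisms $\rst{B}{M\cap U_p}-C^p$, observes that its image is finitely generated with vanishing germ at $p$, and concludes that it vanishes on an open neighbourhood $V_p\subseteq M$ of $p$; after shrinking $U_p$ so that $M\cap U_p = V_p$, the germ identity holds along all of $M\cap U_p$. With that substitution, your gluing step (agreement of germs along $M$ propagates to agreement of morphisms on a neighbourhood, combined with a locally finite refinement and the openness of the set of points where all overlapping local morphisms agree) and your reading of the local-triviality claim both go through and match the paper's proof.
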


\begin{proof}
For every $p\in M$ there exists an open neighbourhood $U_p\subseteq M^{\bb{C}}$ around $p$ and a holomorphic morphism $\psi^p\colon U_p\to M$ such that $\tilde{\psi}^p_p = B_p$. After shrinking $U_p$ around $p$ one has $\psi^p\circ \rst{\iota}{U_p\cap M}=\id_{U_p\cap M}$ for every $p\in M$. Consider the morphism
\[C^p:=\left(\rst{\iota}{U_p}\right)^{-1}(\psi^p)^{\#}\colon \coan{U_p\cap M} \to \left(\rst{\iota}{U_p}\right)^{-1}\corean{U_p}\]
and notice that it agrees with the morphism $B$ at $p$, here
\[\left( \psi^p \right)^{\#}\colon \left( \psi^p \right)^{-1} \coan{M}\to \corean{U_p}\]
denotes the adjoint morphism of $\left( \psi^p \right)^{\sim}$. Since the image of the morphism
\[\rst{B}{M\cap U_p} - C^p\]
is finitely generated and its germ is zero at $p$, it follows that the image sheaf is zero in an open neighbourhood $V_p\subseteq M$ of $p$ and the morphisms agree on that open set. After shrinking $U_p$ around $p$ once again, one may assume $M\cap U_p=V_p$.\\
This implies that along $M$ all the canonical morphisms on germs of the morphisms $\psi^p$ are the same. Since $M^{\bb{C}}$ is paracompact one may assume that the $U_p$ give a locally finite covering $\left\{ W_i \right\}_{i\in I}$ of $M^{\bb{C}}$ after shrinking $M^{\bb{C}}$ around $M$. Denote by $\psi^{i}$ the restriction of $\psi^{p}$ to $W_i\subseteq U_p$ for every $i\in I$. Consider\footnotemark{}\footnotetext{This method is e.g. used in~\cite[p.66, II.9.5]{bredon}.} the set
\[X:=\left\{ q\in M^{\bb{C}}\mid q\in W_{i}\cap W_{i'} \implies \psi^{i}\left( q \right)=\psi^{i'}\left( q \right) \text{ and } \tilde{\psi}^i_q = \tilde{\psi}^{i'}_q \right\}.\]
First of all, the set $X$ is not empty as $M\subseteq X$. Moreover, it is open as the covering is locally finite and all canonical morphisms of germs of the $\psi^i$ are equal to $B$ along $M$. Over $X$ all the topological components and sheaf components of the $\psi^i$ agree and thus  one obtains a morphism of complex analytic spaces $\psi\colon X \to M$.\\
By Construction~\ref{conjugatecomplex}, it follows that around $p\in M$ the analytic space $M^{\bb{C}}$ is isomorphic to $U\times \bar{U}$ and the canonical morphisms on germs of both $\psi$ and the projection to the first factor are equal to $B_p$. Therefore, they agree in an open neighbourhood $V\times \bar{V}$. The second claim holds after sufficiently shrinking $M^{\bb{C}}$.
\end{proof}

\section{Analytic differential forms as universal objects}
\label{four}

In this paper it is of utmost importance that for all sheaves $\rean{\cdot}$, $\corean{\cdot}$ and $\coan{\cdot}$ the differential forms are chosen and constructed in the right way, as one needs to relate all three with each other. On a smooth manifold the switch from holomorphic to analytic (or smooth) forms or functions is seamless and that is one of the strengths of differential geometry. On singular spaces the transitions are not always immediately obvious and certain constructions or approaches can fail. As the differential forms play such a vital role in this paper, they are introduced from scratch and all relevant aspects are proven.\\
The method for introducing the differential forms is similar to the introduction of \emph{K\"ahler differentials} in algebraic geometry (see e.g.~\cite[§1.1.18]{kahler}). It should be noted that the Kähler differentials of analytic spaces are not finitely generated and are not the right notion of differential forms for this paper. Here, an entirely analogous construction of differential forms is carried out, except not in the category of all modules but rather only in the category of finitely generated modules. On an analytic manifold this finitely generated construction returns the usual analytic differential forms.\\
The basic idea is that differential forms should represent the derivation functor and that this characterisation uniquely determines the differential forms. 

\begin{defn}
Let $\phi\colon \left( M,\mc{A} \right)\to \left( N,\mc{B} \right)$ be a morphism of $\bb{K}$-ringed spaces and $\mc{F}$ an $\mc{A}$-module. Denote by $\mathrm{Der}_{\phi}\left( \mc{A}, \mc{F} \right)$ the sheaf of $\phi^{-1}\mc{B}$-linear sheaf morphisms $\delta\colon \rst{\mc{A}}{U}\to \rst{\mc{F}}{U}$ such that
\[\delta\left( f\cdot g \right) = \delta\left( f \right)\cdot g + f\cdot \delta\left( g \right)\]
for all $f,g\in \rst{\mc{A}}{U}$. Elements of this sheaf are referred to as \emph{$\phi$-derivations}.\\
If $\left( N,\mc{B} \right)=\left( \left\{ \text{pt.} \right\}, \bb{K} \right)$, then the $\phi$ may be dropped from the notation.
\end{defn}

\begin{defn}
Let $\phi\colon \left( M,\mc{A} \right)\to \left( N,\mc{B} \right)$ be a morphism of $\bb{K}$-ringed spaces. Then a pair $\left( \Omega,d \right)$, consisting of an $\mc{A}_M$-module $\Omega$ and a $\phi$-derivation $d\colon \mc{A}\to \Omega$, is called a \emph{differential module of $\mc{A}$ relative to $\phi$} if the following hold:
\begin{enumerate}[(i)]
\item the module $\Omega$ is finitely generated,
\item for every open $U\subseteq M$ and finitely generated $\rst{\mc{A}}{U}$-module $\mc{F}$ the morphism

\[\homo{\rst{\Omega}{U}, \mc{F}} \to \mathrm{Der}_{\rst{\phi}{U}}\left( \rst{\mc{A}}{U}, \mc{F} \right),\; h\mapsto h\circ \rst{d}{U}\]

is an isomorphism of $\rst{\mc{A}}{U}$-modules.
\end{enumerate}
\end{defn}

One should now verify that the notion of a differential module is unique up to an appropriate isomorphism. An appropriate isomorphism would be a morphism that relates the two derivations.

\begin{prop}
Let $\phi\colon \left( M,\mc{A} \right)\to \left( N,\mc{B} \right)$ be a morphism of $\bb{K}$-ringed spaces. Suppose that $\left( \Omega,d \right),\left( \Omega',d' \right)$ are both differential modules of $\mc{A}$ relative to $\phi$. Then there exists a unique isomorphism $A\colon \Omega \to \Omega'$ such that $A \circ d = d'$.
\label{DiffIso}
\end{prop}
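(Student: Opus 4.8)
The statement is the standard uniqueness of an object representing a functor, so the plan is to run the usual Yoneda-style argument, using property (ii) twice: once to produce the comparison morphism $A$, and once more to check that $A$ is invertible by comparing composites against the identity via the injectivity built into (ii).

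First I would exploit that, by property (i), the module $\Omega'$ is finitely generated and hence an admissible test module in property (ii) for $\left( \Omega, d \right)$. Applying that property with $U = M$ and $\mc{F} = \Omega'$ gives an isomorphism
\[
\homo{\Omega, \Omega'} \xrightarrow{\ \sim\ } \mathrm{Der}_{\phi}\left( \mc{A}, \Omega' \right), \qquad h \mapsto h \circ d.
\]
Since $d' \colon \mc{A} \to \Omega'$ lies in the target, there is a \emph{unique} morphism of $\mc{A}$-modules $A \colon \Omega \to \Omega'$ with $A \circ d = d'$. This $A$ is by construction the only morphism compatible with the two derivations, so the uniqueness asserted in the statement is immediate; it therefore only remains to show that $A$ is an isomorphism.

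Symmetrically, since $\Omega$ is finitely generated, property (ii) for $\left( \Omega', d' \right)$ with $\mc{F} = \Omega$ yields a unique morphism $B \colon \Omega' \to \Omega$ with $B \circ d' = d$. I would then form the composite $B \circ A \colon \Omega \to \Omega$ and compute
\[
\left( B \circ A \right) \circ d = B \circ \left( A \circ d \right) = B \circ d' = d = \id_{\Omega} \circ d.
\]
Because the assignment $h \mapsto h \circ d$ is a bijection, in particular injective, for the test module $\mc{F} = \Omega$, this forces $B \circ A = \id_{\Omega}$. The same computation with the roles of $\left( \Omega, d \right)$ and $\left( \Omega', d' \right)$ exchanged gives $A \circ B = \id_{\Omega'}$. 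Hence $A$ is the desired unique isomorphism with $A \circ d = d'$.

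There is no serious obstacle here: the argument is purely formal, a manifestation of the fact that a representing object of a functor is unique up to unique isomorphism. The only points requiring a little care are that one must invoke property (i) to know $\Omega$ and $\Omega'$ qualify as test modules $\mc{F}$ in part (ii), and that the morphisms produced by (ii) are genuine morphisms of sheaves of $\mc{A}$-modules, so that the compositions and the identity comparisons legitimately live in $\homo{\Omega, \Omega}$ and $\homo{\Omega', \Omega'}$; both are immediate from the hypotheses. One could equally phrase the whole proof as the observation that $d$ and $d'$ both represent the functor $\mc{F} \mapsto \mathrm{Der}_{\phi}\left( \mc{A}, \mc{F} \right)$ on finitely generated $\mc{A}$-modules and appeal to Yoneda directly.
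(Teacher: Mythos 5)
Your proposal is correct and follows essentially the same route as the paper: use the universal property twice to obtain $A$ and $B$, then apply the injectivity of $h\mapsto h\circ d$ (resp. $h\mapsto h\circ d'$) to the composites $B\circ A$ and $A\circ B$ to conclude they are identities. Your explicit remark that property (i) is what makes $\Omega$ and $\Omega'$ admissible test modules in property (ii) is a small but welcome clarification that the paper leaves implicit.
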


\begin{proof}
By the universal property there exists a unique morphism $A\colon \Omega \to \Omega'$ such that for $d'\in \mathrm{Der}_{\phi}\left( \mc{A}, \Omega' \right)$ one has $A \circ d = d'$. Conversely, by the same argument one obtains a unique morphism $B \colon \Omega' \to \Omega$ such that $B\circ d' = d$. Thus one has $B \circ A \circ d = d$ and $A \circ B\circ d' =d'$, which implies $B \circ A = \id_{\Omega}$ and $A \circ B = \id_{\Omega'}$, as $d$ and $d'$ are derivations and they are induced by a unique morphism (as $\Omega$ and $\Omega'$ are differential modules), i.e. $\id_{\Omega}$ and $\id_{\Omega'}$. Hence, $A$ and $B$ are unique isomorphisms and the claim follows.
\end{proof}

\begin{defn}
Let $\phi\colon \left( M,\mc{A}_M \right)\to \left( N,\mc{A}_N \right)$ be a morphism of $\bb{K}$-ringed spaces. Because of Proposition~\ref{DiffIso} the differential module of $\mc{A}_M$ with respect to $\phi$ is denoted by
\[\left( \Omega^1_{\phi}\left( \mc{A}_M \right),d_{\phi} \right),\]
whenever it exists.\\
Moreover, the derivation $d_{\phi}$ may be referred to as the \emph{canonical derivation} or the \emph{exterior derivative relative to $\phi$}, whenever they exist.\\
When the morphism $\phi$ is simply $\left( M,\mc{A}_M \right)\to \left( \left\{ p \right\}, \bb{K} \right)$, then the $\phi$ may be dropped from the definition.
\end{defn}

With these elementary properties in hand, one can show that open subsets $U\subseteq \bb{K}^n$ admit a differential module relative to $\phi\colon U \to \left( \left\{\mathrm{pt.}\right\},\bb{K} \right)$.

\begin{prop}
Let $\left( U, \mc{A} \right)$ be a $\bb{K}$-ringed space, with $U$ an open subset of $\bb{K}^n$ and $\mc{A}\in \left\{ \coan{U},\rean{U},\corean{U} \right\}$. Suppose moreover that $\left\{ x_1,\dots,x_n \right\}$ are the standard coordinates on $\bb{K}^n$. Define the following free module $\Omega:= \mc{A}^{\oplus n}$ and the derivation $d_{\mc{A}}\colon \mc{A}\to \Omega$ by

\[f\mapsto \left( \pard{f}{x_1},\dots,\pard{f}{x_n} \right).\]

Then $\left( \Omega,d_{\mc{A}} \right)$ is a differential module of $\mc{A}$ relative to $\phi\colon \left( M,\mc{A} \right)\to \left( \left\{ p \right\}, \bb{K}\right)$.
\end{prop}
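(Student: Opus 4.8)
The plan is to verify directly the two defining conditions of a differential module. Condition (i) is immediate, since $\Omega=\mc{A}^{\oplus n}$ is free of rank $n$ and hence finitely generated. All the content lies in condition (ii): for every open $V\subseteq U$ and every finitely generated $\rst{\mc{A}}{V}$-module $\mc{F}$ the map
\[\Theta\colon \homo{\rst{\Omega}{V},\mc{F}}\to \mathrm{Der}_{\phi}\left( \rst{\mc{A}}{V},\mc{F} \right),\quad h\mapsto h\circ \rst{d_{\mc{A}}}{V}\]
should be an isomorphism of $\rst{\mc{A}}{V}$-modules. That $\Theta$ is well defined and $\mc{A}$-linear is routine, as precomposing the derivation $d_{\mc{A}}$ with an $\mc{A}$-linear map again satisfies the Leibniz rule. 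The crucial structural observation is that, since $\pard{x_i}{x_j}$ is the Kronecker delta, $d_{\mc{A}}(x_i)$ is exactly the $i$-th standard basis vector $e_i$ of $\Omega=\mc{A}^{\oplus n}$; thus $\{d_{\mc{A}}(x_i)\}_i$ is a free $\mc{A}$-basis of $\Omega$.

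Injectivity of $\Theta$ then follows at once: if $h\circ d_{\mc{A}}=0$ then $h(e_i)=h(d_{\mc{A}}(x_i))=0$ for all $i$, and since the $e_i$ generate $\Omega$ we get $h=0$. For surjectivity I would start from a $\phi$-derivation $\delta$ and define $h\in\homo{\rst{\Omega}{V},\mc{F}}$ by $h(e_i):=\delta(x_i)$, extended $\mc{A}$-linearly; the whole problem then reduces to the ``chain rule'' identity
\[\delta(f)=\sum_{i=1}^{n}\pard{f}{x_i}\,\delta(x_i)\qquad\text{for every } f\in\mc{A},\]
which is precisely the statement $h\circ d_{\mc{A}}=\delta$.

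The main obstacle, and the heart of the proof, is establishing this identity for an arbitrary analytic germ $f$, not merely a polynomial. As it is an equality of sections of $\mc{F}$, it may be checked stalkwise, so I fix $a\in V$ and pass to the induced derivation $\delta_a\colon\mc{A}_a\to\mc{F}_a$. I would introduce the auxiliary operator $D_a(f):=\delta_a(f)-\sum_{i=1}^{n}\pard{f}{x_i}\,\delta_a(x_i)$, which is again a derivation (a difference of derivations) and which satisfies $D_a(x_i)=0$ for all $i$. Since any derivation kills constants (from $\delta_a(1)=\delta_a(1\cdot 1)=2\delta_a(1)$), the Leibniz rule forces $D_a$ to annihilate every polynomial in the $x_i-a_i$. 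Writing the germ $f_a$ as its degree-$N$ Taylor polynomial $P_N$ plus a remainder $R_N\in\fr{m}_a^{N+1}$, where $\fr{m}_a\subseteq\mc{A}_a$ is the maximal ideal and the truncation is exact in the ring of convergent power series $\mc{A}_a$, one gets $D_a(f)=D_a(R_N)$; and because a derivation maps $\fr{m}_a^{N+1}$ into $\fr{m}_a^{N}\mc{F}_a$ (again by Leibniz), this yields $D_a(f)\in\fr{m}_a^{N}\mc{F}_a$ for every $N$.

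The proof then concludes with Krull's intersection theorem: the stalk $\mc{A}_a$ is a Noetherian local ring, being an analytic algebra (cf.~\cite{fischer}), and $\mc{F}_a$ is finitely generated, so $\bigcap_{N}\fr{m}_a^{N}\mc{F}_a=0$. Hence $D_a(f)=0$ for every $a$, giving the desired identity and surjectivity of $\Theta$. The three cases $\mc{A}\in\{\coan{U},\rean{U},\corean{U}\}$ are treated uniformly by this argument, the only case-dependent inputs being the exactness of the analytic Taylor truncation and the Noetherianity of the stalk, both of which hold for convergent holomorphic, real analytic and complex-valued real analytic power series. I expect the finite generation of $\mc{F}$ to be the indispensable hypothesis, since it is used precisely to apply Krull's theorem; this is exactly where the restriction to finitely generated modules in the definition of a differential module is essential, and is the reason the Kähler-type construction must be carried out in that smaller category.
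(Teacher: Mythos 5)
Your proposal is correct and follows essentially the same route as the paper: identify $d_{\mc{A}}(x_i)=e_i$, define $h$ on the basis by $h(e_i):=\delta(x_i)$, and reduce everything to the identity $\delta(f)=\sum_i \pard{f}{x_i}\delta(x_i)$, which the paper justifies in a footnote by exactly your argument (vanishing on polynomials, Taylor expansion, Krull intersection). You merely spell out in full the step the paper compresses into that footnote, correctly isolating finite generation of $\mc{F}$ as the hypothesis needed for Krull's theorem.
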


\begin{proof}
Let $V\subseteq U$ be open and $\mc{F}$ a finitely generated $\rst{\mc{A}}{V}$-module with $\delta\colon \rst{\mc{A}}{V}\to \mc{F}$ a derivation. Now define $h\left( e_i \right):= \delta\left( x_i \right)$, where $e_i$ is the $i$-th basis vector of $\rst{\Omega}{V}$.  Since $\Omega$ is free this extends by linearity to a morphism $h\colon \rst{\Omega }{V}\to \mc{F}$. Now, one has for all $f\in \rst{\mc{A}}{V}$ that the following holds:
\[\delta\left( f \right)= \sum_{i=1}^n \pard{f}{x_i} \delta\left( x_i \right) =\sum_{i=1}^n \pard{f}{x_i} h\left( e_i \right) =h\left( d_{\mc{A}}f \right),\]
as derivations on $U$ are simply linear combinations of partial derivatives\footnotemark\footnotetext{To see this, note that the derivation $\delta - \sum_{i=1}^n \delta\left( x_i \right)\pard{}{x_i}$ vanishes on polynomials and by Taylor expansion and Krull intersection it follows  that $\delta = \sum_{i=1}^n\delta\left( x_i \right)\pard{}{x_i}$.}. Therefore, the morphism $\mathrm{Hom}\left( \rst{\Omega}{V}, \mc{F} \right) \to \mathrm{Der}\left( \rst{\mc{A}}{V},\mc{F} \right)$ is surjective.\\
Suppose that $h,h'\colon \rst{\Omega}{V}\to \mc{F}$ are such that $h\circ \rst{d_{\mc{A}}}{V}=h'\circ \rst{d_{\mc{A}}}{V}$. However, $d_{\mc{A}}\left( x_i \right)=e_i$ and therefore $h\left( e_i \right)=h'\left( e_i \right)$. Thus $h=h'$.\\
Hence the morphism is injective and $\left( \Omega,d_{\mc{A}} \right)$ is a differential module.
\end{proof}

From the preceding proof one can make the interesting observation that: If the image of a derivation $\delta$ generates the module, the morphism $h\mapsto h\circ \delta$ is injective.\\
Observe the following behaviour of finitely generated ideals with the differential module on an open subset $U\subseteq \bb{K}^n$.

\begin{lem}
Suppose that $M\subseteq \bb{K}^n$ is open and $\mc{A}\in \left\{ \coan{M},\rean{M},\corean{M} \right\}$. Let $J\subseteq \mc{A}$ be an ideal generated by $\left\{ f_1,\dots,f_k \right\}$ and denote by $\left\{ x_1,\dots,x_n \right\}$ the coordinates on $\bb{K}^n$. Then the submodule $dJ\cdot \mc{A}\subseteq \Omega^1\left( \mc{A} \right)$ is finitely generated by the elements
\[df_1,\dots,df_k,f_1dx_1,\dots,f_1dx_n,f_2dx_1,\dots,f_kdx_n.\]
\end{lem}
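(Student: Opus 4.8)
The plan is to prove the asserted equality of submodules by a double inclusion. Write $N\subseteq\Omega^1\left( \mc{A} \right)$ for the $\mc{A}$-submodule generated by the finite list
\[df_1,\dots,df_k,\;f_1\,dx_1,\dots,f_k\,dx_n;\]
the goal is then to identify $dJ\cdot\mc{A}$ with $N$, which is finitely generated by construction. Throughout I use only that $\Omega^1\left( \mc{A} \right)$ is free on $dx_1,\dots,dx_n$ with $df=\sum_{j=1}^n\pard{f}{x_j}\,dx_j$ and that $d$ satisfies the Leibniz rule, so the argument is uniform in the three choices of $\mc{A}$.

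First I would check $N\subseteq dJ\cdot\mc{A}$. Each $f_i$ is a section of $J$, so $df_i$ is a section of $dJ$ and hence of $dJ\cdot\mc{A}$. For the generators $f_i\,dx_j$ I would exploit that $J$ is an ideal: since $x_j f_i$ is again a section of $J$, the section $d\left( x_j f_i \right)$ lies in $dJ$, and the Leibniz rule gives
\[f_i\,dx_j=d\left( x_j f_i \right)-x_j\,df_i,\]
whose right-hand side lies in $dJ\cdot\mc{A}$ because $df_i\in dJ$ and $x_j\in\mc{A}$.

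For the reverse inclusion $dJ\cdot\mc{A}\subseteq N$, since $N$ is already an $\mc{A}$-submodule it suffices to show that every local section $dg$ with $g$ a section of $J$ lies in $N$. After shrinking to an open set on which $g=\sum_{i=1}^k a_i f_i$ with $a_i\in\mc{A}$, the Leibniz rule yields
\[dg=\sum_{i=1}^k a_i\,df_i+\sum_{i=1}^k f_i\,da_i,\]
and substituting $da_i=\sum_{j=1}^n\pard{a_i}{x_j}\,dx_j$ rewrites the second sum as $\sum_{i,j}\pard{a_i}{x_j}\left( f_i\,dx_j \right)$. Thus $dg$ is an $\mc{A}$-linear combination of the listed generators, so $dg\in N$.

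The computation itself is entirely routine; the only point needing care is the sheaf-theoretic bookkeeping. The equality $dJ\cdot\mc{A}=N$ is a statement about subsheaves of $\Omega^1\left( \mc{A} \right)$ and is checked locally, where both the representation $g=\sum a_i f_i$ and the expansions above are available; the fact that $N$ is an $\mc{A}$-submodule is what lets me reduce the reverse inclusion to the $\bb{K}$-generators $dg$ of $dJ$. Combining the two inclusions gives $dJ\cdot\mc{A}=N$, and the finite generating set is exactly the list in the statement.
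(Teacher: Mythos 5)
Your proof is correct and follows essentially the same route as the paper's: both directions rest on the Leibniz identity $d\left( x_j f_i \right)=f_i\,dx_j+x_j\,df_i$ to capture the generators $f_i\,dx_j$, and on expanding $dg$ for $g=\sum_i a_i f_i$ via $da_i=\sum_j\pard{a_i}{x_j}\,dx_j$. You merely organize it more explicitly as a double inclusion and spell out the local/sheaf-theoretic reduction, which the paper leaves implicit.
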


\begin{proof}
Consider the product $x_if_j$ and note that
\[d\left( x_i f_j \right) = f_jdx_i + x_idf_j\in dJ\]
which implies that $f_jdx_i\in dJ\cdot \mc{A}$ as $x_idf_j\in dJ\cdot \mc{A}$. Moreover,
\[d\left( g\cdot f_j \right) = f_j dg + g df_j = f_j \sum_{i=1}^n \pard{g}{x_i}dx_i + g df_j.\]
This shows that the listed elements do generate the submodule $dJ\cdot \mc{A}$.
\end{proof}

This lemma is the only key needed to show that the differential module of an open subset $U\subseteq \bb{K}^n$ induces a differential module on a local model of an analytic space such that the differential module is not only finitely generated but also finitely presented. The idea is that one simply quotients out the image of the defining ideal under the derivation $d$.

\begin{prop}
Let $\iota\colon \left( N,\mc{A}_N \right) \to \left( M,\mc{A}_M \right)$ be a closed subspace of $\bb{K}$-ringed spaces with defining ideal $J$. Suppose that $\left( \Omega,d \right)$ is a differential module for $\mc{A}_M$ relative to $\phi\colon \left( M,\mc{A}_M \right)\to \left( \left\{ p \right\}, \bb{K} \right)$. Then $\Omega':= \iota^{-1}\left(\quo{\Omega}{\mc{A}_M dJ}\right)$ together with a unique derivation $d'\colon \mc{A}_N \to \Omega'$ such that $\iota_*d'\circ \tilde{\iota} = \mu\circ d$ is the differential module of $\mc{A}_N$, where $\mu\colon \Omega \to \iota_*\Omega'$ is the quotient morphism. In other words, the derivation $d'$ fits into the following commutative diagram
\[
\begin{tikzcd}
\mc{A}_M \arrow[r,"d"] \arrow[d,"\tilde{\iota}"]& \Omega \arrow[d,"\mu"] \\
\iota_*\mc{A}_N \arrow[r,"\iota_*d'"]& \iota_*\Omega'
\end{tikzcd}.
\]
If $\Omega$ is finitely presented and $\mc{A}_M dJ$ is finitely generated, then $\Omega'$ is finitely presented.
\label{localmodeldifferential}
\end{prop}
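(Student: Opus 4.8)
The plan is to check that $(\Omega',d')$ satisfies the defining universal property of a differential module for $\mc{A}_N$, by transporting the universal property already available for $(\Omega,d)$ along the closed embedding $\iota$. Before anything else I must make sure the statement typechecks, i.e. that $\Omega'=\iota^{-1}\!\left(\quo{\Omega}{\mc{A}_M dJ}\right)$ really is an $\mc{A}_N=\iota^{-1}\!\left(\quo{\mc{A}_M}{J}\right)$-module; this amounts to $J\Omega\subseteq \mc{A}_M dJ$. For $a\in\mc{A}_M$ and $f\in J$ the Leibniz identity $f\,da=d(fa)-a\,df$ already places $f\,da\in \mc{A}_M dJ$ (since $fa\in J$ gives $d(fa)\in dJ$ while $a\,df\in\mc{A}_M dJ$), so it is enough to know that $\Omega$ is generated over $\mc{A}_M$ by $d(\mc{A}_M)$. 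This last point I would extract from the universal property itself: if $\Omega_0\subseteq\Omega$ is the subsheaf generated by $d(\mc{A}_M)$, then the quotient $q\colon\Omega\to\Omega/\Omega_0$ is a morphism of finitely generated modules with $q\circ d=0$, so by the injectivity half of the universal property $q=0$ and $\Omega_0=\Omega$.

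With the module structure in place I would construct $d'$. Since $df\in dJ\subseteq \mc{A}_M dJ=\ker{\mu}$ for $f\in J$, the derivation $\mu\circ d$ vanishes on $J=\ker{\tilde{\iota}}$ and therefore factors uniquely as $\mu\circ d=\iota_*d'\circ\tilde{\iota}$; as $\iota$ is a closed embedding this $\iota_*d'$ corresponds to a unique $d'\colon\mc{A}_N\to\Omega'$, whose uniqueness follows from $\tilde{\iota}$ being an epimorphism. That $d'$ is a derivation is forced by the Leibniz rule for $d$, the $\mc{A}_M$-linearity of $\mu$, and $\tilde{\iota}$ being a ring morphism.

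For the universal property, fix an open $V\subseteq N$ (with a corresponding open $U\subseteq M$, $U\cap N=V$) and a finitely generated $\rst{\mc{A}_N}{V}$-module $\mc{F}$; then $\iota_*\mc{F}$ is finitely generated over $\mc{A}_M$ via $\tilde{\iota}$. I would assemble a chain of natural isomorphisms: first, $\der{\mc{A}_N,\mc{F}}$ is identified with the subset of $\delta\in\der{\mc{A}_M,\iota_*\mc{F}}$ killing $J$ (a $J$-killing derivation descends along $\tilde{\iota}$ because its target is annihilated by $J$, and conversely any $\delta_N$ pulls back to $\delta_N\circ\tilde{\iota}$); second, the universal property of $(\Omega,d)$ identifies $\der{\mc{A}_M,\iota_*\mc{F}}$ with $\homom_{\mc{A}_M}\!\left(\Omega,\iota_*\mc{F}\right)$, under which the $J$-killing condition on $h\circ d$ becomes, by $\mc{A}_M$-linearity of $h$, the condition $h(\mc{A}_M dJ)=0$, i.e. $h$ factoring through $\quo{\Omega}{\mc{A}_M dJ}=\iota_*\Omega'$; third, full faithfulness of $\iota_*$ on sheaves supported on $N$ gives $\homom_{\mc{A}_M}\!\left(\iota_*\Omega',\iota_*\mc{F}\right)\cong\homo{\Omega',\mc{F}}$. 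Composing, $\der{\mc{A}_N,\mc{F}}\cong\homo{\Omega',\mc{F}}$, and a short diagram chase using precisely the defining relation $\iota_*d'\circ\tilde{\iota}=\mu\circ d$ shows the composite is $h\mapsto h\circ\rst{d'}{V}$, which is the desired isomorphism.

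Finally, for finite presentation: $\quo{\Omega}{\mc{A}_M dJ}=\coker{\mc{A}_M dJ\hookrightarrow\Omega}$ is the quotient of a finitely presented module by a finitely generated submodule, hence finitely presented over $\mc{A}_M$; being annihilated by $J$ it is finitely presented over $\quo{\mc{A}_M}{J}$, and applying $\iota^{-1}$ to a finite presentation yields one for $\Omega'$ over $\mc{A}_N$. I expect the genuine obstacle to be not this last step but the coherent bookkeeping of the $\iota_*$/$\iota^{-1}$ translation throughout the universal-property chain, together with the one non-formal input isolated above, namely that $d(\mc{A}_M)$ generates $\Omega$ — without it $\quo{\Omega}{\mc{A}_M dJ}$ need not be an $\mc{A}_N$-module at all and the statement would fail to make sense.
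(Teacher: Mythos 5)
Your proof is correct and follows essentially the same route as the paper's: construct $d'$ by factoring $\mu\circ d$ through $\tilde{\iota}$, then identify derivations $\mc{A}_N\to\mc{F}$ with $J$-killing derivations $\mc{A}_M\to\iota_*\mc{F}$ and invoke the universal property of $\left( \Omega,d \right)$ to produce the factoring morphism $h'$. The one point you make explicit that the paper leaves implicit is the verification that $J\cdot\Omega\subseteq\mc{A}_M\,dJ$, so that $\Omega'$ is genuinely an $\mc{A}_N$-module; your derivation of this from the Leibniz rule together with the fact that $d\left( \mc{A}_M \right)$ generates $\Omega$ is a worthwhile addition.
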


\begin{proof}
One obtains a $\bb{K}$-linear morphism $\tilde{d}\colon \iota_*\mc{A}_N \to \iota_* \Omega'$ and then a $\bb{K}$-linear morphism $d':=\iota^{-1}\tilde{d}\colon \mc{A}_N \to \Omega'$ by applying the inverse image. The equation $\iota_*d'\circ \tilde{\iota}=\mu \circ d$ is satisfied by definition. This implies
\[d'\circ \iota^{\#}= \iota^{-1}\mu \circ \iota^{-1}d,\]
by applying $\iota^{-1}$, here $\iota^{\#}=\iota^{-1}\tilde{\iota}\colon \iota^{-1}\mc{A}_M \to \mc{A}_N$ denotes the adjoint of $\tilde{\iota}$. The morphism satisfies the Leibniz-rule, as the following calculation on germs shows
\begin{align*}
d'_p\left( \iota^{\#}_p\left( f \right)\cdot \iota^{\#}_p\left( g \right) \right) &= \mu_p\left( d_p\left( f\cdot g \right) \right) = \mu_p\left( f\cdot d_pg + d_pf \cdot g \right)\\
&= \iota^{\#}_p\left( f \right)\cdot d'_p\left( \iota^{\#}_p\left( g \right) \right) + d'_p\left( \iota^{\#}_p\left( f \right) \right)\cdot \iota^{\#}_p\left( g \right).
\end{align*}
It is immediate that $\Omega'$ is generated by $d'\mc{A}_N$ and thus it suffices to show, that for a given finitely generated $\mc{A}_{U}$-module $\mc{F}$ and a derivation $\delta \colon \mc{A}_{U} \to \mc{F}$, there exists a morphism $h'\colon \rst{\Omega'}{U} \to \mc{F}$ such that $\delta = h'\circ \rst{d'}{U}$.\\
Let $\iota'\colon U \to V$ be the embedding of $U\subseteq N$ into a suitable open subset $V\subseteq M$. Now note that $\iota'_*\delta\circ \tilde{\iota}'$ is a derivation from $\mc{A}_{V}$ to $\iota_*'\mc{F}$ and thus there exists $h\colon \rst{\Omega}{V} \to \iota'_*\mc{F}$ such that $h\circ d = \iota'_*\delta \circ \tilde{\iota}'$. For the preceding, note that $\iota'_*\mc{F}$ is finitely generated as an $\mc{A}_V$-module. However because of 
\[h\left( dJ \right)=\iota'_*\delta\left( \tilde{\iota}'\left( J \right) \right)=0,\]
it follows that $\mc{A}_{M}\cdot dJ$ is in the kernel of $h$ and thus induces a morphism $h'\colon \rst{\Omega'}{U} \to \mc{F}$ with $h = \iota'_*h' \circ \mu$. The following holds:
\[\iota'_*\delta \circ \tilde{\iota}' = h\circ d = \iota'_*h' \circ \mu \circ d = \iota'_*h' \circ \iota'_*d' \circ \tilde{\iota}'\]
and this implies $\iota'_*\delta = \iota'_*h' \circ \iota'_*d'$, as $\tilde{\iota}'$ is surjective. Therefore, $\delta=h'\circ d'$. Hence, $\left( \Omega',d' \right)$ is a differential module on $N$.
\\
It is a standard fact that the quotient of a finitely presented module by a finitely generated submodule is still finitely presented.\end{proof}

\begin{rem}
\label{localrtocdifferential}
The proposition above shows that whenever $\left( M,\mc{A}_M \right)$ is a local model of a $\bb{K}$-analytic space, then the differential module
\[\left(\Omega^1\left( \mc{A}_M \right), d_{\mc{A}_M}\right)\]
exists and is finitely presented, i.e. coherent.\\
Let $\left( M,\rean{M} \right)$ be a $\bb{R}$-analytic space in a local model and $\left( M,\corean{M} \right)$ the associated $\bb{C}$-ringed space. Then the differential modules $\Omega^1\left( \rean{M} \right)$ and $\Omega^1\left( \corean{M} \right)$ exist by the preceding proposition. Moreover, note that the pair
\[\left(\Omega^1\left( \rean{M} \right)\otimes_{\bb{R}} \bb{C},d:=d_{\rean{M}}\otimes_{\bb{R}} \id_{\bb{C}}\right)\]
is such that the image of $d$ generates the $\corean{M}$-module. Therefore, the morphism
\[\homo{\Omega^1\left( \rean{M} \right)\otimes_{\bb{R}} \bb{C}, \mc{F}} \to \der{\corean{M},\mc{F}}\]
is injective for any finitely generated $\corean{M}$-module $\mc{F}$. Let $\delta\colon \corean{M} \to \mc{F}$ be a derivation. By $\bb{C}$-linearity it follows that $\delta$ is completely determined by its action on $\rean{M}$ as a derivation $\delta'\colon \rean{M} \to \mc{F}$ of $\rean{M}$-modules. Thus, there exists a unique morphism $\alpha'\colon \Omega^1\left( \rean{M} \right) \to \mc{F}$ of $\rean{M}$-modules such that $\delta'=\alpha' \circ d_{\rean{M}}$. Now, this morphism $\alpha'$ defines a morphism $\alpha\colon \Omega^1\left( \rean{M} \right)\otimes_{\bb{R}} \bb{C} \to \mc{F}$ of $\corean{M}$-modules by complex linear extension. Then one has
\begin{align*}
\alpha\left( d\left( f_1+if_2 \right) \right)&=\alpha\left( d_{\rean{M}}f_1+id_{\rean{M}}f_2 \right)=\alpha'\left( d_{\rean{M}}f_1 \right)+i \alpha'\left( d_{\rean{M}}f_2 \right)\\
&=\delta'\left( f_1 \right)+i\delta'\left( f_2 \right)=\delta\left( f_1+if_2 \right).
\end{align*}
Thus the pair $\left( \Omega^1\left( \rean{M} \right)\otimes_{\bb{R}} \bb{C},d \right)$ is a differential module for $\corean{M}$ and therefore $\Omega^1\left( \corean{M} \right)\cong \Omega^1\left( \rean{M} \right)\otimes_{\bb{R}} \bb{C}$ via a unique morphism respecting the exterior derivatives.
\end{rem}

Having shown that differential modules are unique up to isomorphisms respecting the exterior derivative pays off now, as it immediately follows that the differential modules defined in local models need to glue to a global differential module.

\begin{lem}
Let $\left( M,\mc{A}_M \right)$ be a $\bb{K}$-ringed space such that for every $p\in M$ there exists an open neighbourhood $U_p$ such that the differential module $\left( \Omega^1\left( \mc{A}_{U_p}\right), d_{\mc{A}_{U_p}}  \right)$ exists. Then the differential module $\left( \Omega^1\left( \mc{A}_M \right), d_{\mc{A}_M} \right)$ exists.
\end{lem}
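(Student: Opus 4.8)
The plan is to glue the locally defined differential modules into a single global one, exploiting the uniqueness established in Proposition~\ref{DiffIso}, and then to observe that both defining properties of a differential module are local in nature, so that they can be verified on the given cover. First I would fix, for each $p\in M$, the local differential module $\left(\Omega^1\left(\mc{A}_{U_p}\right), d_{\mc{A}_{U_p}}\right)$ and abbreviate it by $\left(\Omega_p, d_p\right)$. Since the universal property in the definition quantifies over \emph{all} open subsets, the restriction of $\left(\Omega_p,d_p\right)$ to any overlap $U_p\cap U_q$ is again a differential module of $\rst{\mc{A}}{U_p\cap U_q}$: finite generation is inherited by restriction, and condition~(ii) for a sub-open $U\subseteq U_p\cap U_q$ is a special case of condition~(ii) for $\left(\Omega_p,d_p\right)$.

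By Proposition~\ref{DiffIso} there is then on every overlap a \emph{unique} isomorphism $\theta_{qp}\colon \rst{\Omega_p}{U_p\cap U_q}\to \rst{\Omega_q}{U_p\cap U_q}$ of $\mc{A}$-modules with $\theta_{qp}\circ \rst{d_p}{U_p\cap U_q}=\rst{d_q}{U_p\cap U_q}$. On a triple overlap both $\theta_{rq}\circ \theta_{qp}$ and $\theta_{rp}$ are isomorphisms relating $d_p$ to $d_r$, so the same uniqueness forces $\theta_{rq}\circ\theta_{qp}=\theta_{rp}$, and likewise $\theta_{pp}=\id$. Thus the $\theta_{qp}$ constitute a gluing cocycle, and the standard gluing construction for sheaves produces an $\mc{A}_M$-module $\Omega$ with isomorphisms $\rst{\Omega}{U_p}\cong \Omega_p$ compatible with the $\theta_{qp}$. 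Because $\theta_{qp}\circ d_p=d_q$ on overlaps, the local derivations $d_p$ agree under the gluing isomorphisms and hence assemble to a single morphism $d\colon \mc{A}_M\to\Omega$; it is a $\phi$-derivation since the Leibniz rule and $\phi^{-1}\mc{B}$-linearity are stalk-wise, hence local, conditions satisfied by each $d_p$.

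It remains to confirm that $\left(\Omega,d\right)$ satisfies the two conditions of a differential module. Finite generation is a local property of a sheaf of modules and holds on each $U_p$ by hypothesis, so $\Omega$ is finitely generated. For the universal property, fix an arbitrary open $U\subseteq M$ and a finitely generated $\rst{\mc{A}}{U}$-module $\mc{F}$. The assignment $h\mapsto h\circ \rst{d}{U}$ is compatible with passage to sub-opens and hence underlies a morphism of sheaves on $U$ from $\homo{\rst{\Omega}{U},\mc{F}}$ to $\der{\rst{\mc{A}}{U},\mc{F}}$; whether it is an isomorphism may therefore be tested on the open cover $\left\{U\cap U_p\right\}_p$ of $U$. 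On each member $U\cap U_p$ this map is precisely the comparison morphism for $\left(\Omega_p,d_p\right)$ evaluated on the open subset $U\cap U_p\subseteq U_p$, which is an isomorphism by condition~(ii) for $\left(\Omega_p,d_p\right)$. Locality of sheaf isomorphisms then yields the global statement, so $\left(\Omega,d\right)=\left(\Omega^1\left(\mc{A}_M\right),d_{\mc{A}_M}\right)$ is a differential module.

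The main obstacle is conceptual rather than computational: one must account for the open sets $U\subseteq M$ that are \emph{not} contained in any single chart $U_p$, since it is exactly there that the universal property might a priori fail to glue. This is resolved by recognizing both $\der{\cdot,\cdot}$ and the relevant $\homo{\cdot,\cdot}$ as sheaves and descending to the cover $\left\{U\cap U_p\right\}$, so no new estimate is required. The only genuine inputs are the uniqueness furnished by Proposition~\ref{DiffIso} (which gives the cocycle condition and lets the derivations glue) together with the locality of finite generation and of sheaf isomorphisms.
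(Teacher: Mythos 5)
Your proof is correct and follows essentially the same route as the paper: both glue the local differential modules using the unique comparison isomorphisms from Proposition~\ref{DiffIso}. You merely spell out two steps the paper leaves implicit, namely the cocycle condition on triple overlaps and the verification of the universal property on opens $U$ not contained in a single chart via the sheaf-locality of $\shomom$ and $\fr{Der}$.
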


\begin{proof}
Since between two differential modules there exists a unique isomorphism that respects the exterior derivatives, it follows that the collection
\[\left\{ \left(\Omega^1\left( \mc{A}_{U_p} \right), d_{\mc{A}_{U_p}}\right) \right\}_{p\in M}\]
leads to a gluing system and one obtains a finitely presented $\mc{A}_M$-module $\Omega$ and a $\bb{K}$-linear sheaf morphism $d\colon \mc{A}_M \to \Omega$. Thus, $\left( \Omega,d \right)$ is the differential module for $\mc{A}_M$.
\end{proof}

\begin{rem}
The lemma above shows that whenever $\left( M,\mc{A}_M \right)$ is a $\bb{K}$-analytic space the differential module $\left( \Omega^1\left( \mc{A}_M \right), d_{\mc{A}_M} \right)$ relative to $\phi \colon \left( M,\mc{A}_M \right) \to \left( \left\{ p \right\}, \bb{K} \right)$ exists.\\
Let $\left( M,\rean{M} \right)$ be a real analytic space and $\left( M,\corean{M} \right)$ the associated $\bb{C}$-ringed space then both $\Omega^1\left( \rean{M} \right)$ and $\Omega^1\left( \corean{M} \right)$ exist and
\[\Omega^1\left( \rean{M} \right)\otimes_{\bb{R}} \bb{C} \cong \Omega^1\left( \corean{M} \right),\]
just as outlined in Remark~\ref{localrtocdifferential}.
\end{rem}

This remark settles the question of existence for differential forms on analytic spaces. The question remains how differential forms of two analytic spaces may be related via a morphism of analytic spaces. This is the role of the \emph{pull-back morphism} constructed below.

\begin{defn}
Let $\psi\colon M \to N$ be a morphism of $\bb{K}$-ringed spaces, that both admit a differential module. Then a morphism
\[D\psi\colon \psi^*\Omega^1\left( \mc{A}_N \right)\to \Omega^1\left( \mc{A}_M \right)\]
is called the \emph{pull-back morphism of $\psi$} if
\[D\psi\left( \psi^*\left( d_{\mc{A}_N} f \right) \right) = d_{\mc{A}_{M}}\left( \psi^*\left( f \right) \right)\]
for every $f\in \mc{A}_N$.
\end{defn}

\begin{rem}
If the pull-back morphism of a morphism exists, it is of course uniquely determined by the given relation.\\
Notice that if $\psi\colon U\subseteq \bb{K}^n\to V\subseteq\bb{K}^m$ is an analytic morphism of open subsets, then it admits a pull-back morphism as the differential modules involved are free sheaves of modules and the given relation defines a morphism by linear extension.
\end{rem}

\begin{prop}
Let $\psi\colon \left( M,\mc{A}_M \right) \to \left( N,\mc{A}_N \right)$ be a morphism of $\bb{K}$-ringed spaces that both admit a differential module. Suppose moreover, that for every $p\in M$ there exist an open neighbourhood $U\subseteq M$ of $M$ and an open neighbourhood $V\subseteq N$ of $\psi\left( p \right)$ together with a diagram
\[
\begin{tikzcd}
W_1 \arrow[r,"\Psi"] & W_2\\
U \arrow[u,"\iota_1"] \arrow[r,"\rst{\psi}{U}"] & V \arrow[u,"\iota_2"]
\end{tikzcd}
\]
where $\iota_1$ and $\iota_2$ are embeddings, $W_1$ and $W_2$ admit a differential module and $\Psi$ admits a pull-back morphism. Assume that the defining ideals of $U$ resp. $V$ in $W_1$ resp. $W_2$ are finitely generated and the modules generated by the image of the ideal under $d_{W_1}$ and $d_{W_2}$ are also finitely generated.\\
Then $\psi$ admits a pull-back morphism.
\label{pullbackanalytic}
\end{prop}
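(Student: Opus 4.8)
The plan is to build the pull-back morphism locally around each point and then glue, the gluing being forced by uniqueness. Since $d_{\mc{A}_N}$ generates $\Omega^1(\mc{A}_N)$, the sections $\psi^*(d_{\mc{A}_N}f)$ generate $\psi^*\Omega^1(\mc{A}_N)$ as an $\mc{A}_M$-module; hence any $\mc{A}_M$-linear morphism satisfying the defining relation $D\psi(\psi^*(d_{\mc{A}_N}f))=d_{\mc{A}_M}(\psi^*f)$ is completely determined by it (this is the observation following the free-module case: when the image of a derivation generates, the induced map is pinned down). Consequently two local pull-back morphisms must agree on overlaps and patch to a global one. It therefore suffices to produce $D(\rst{\psi}{U})$ on each $U$ from the given diagram.

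The one algebraic fact that drives the construction is that the sheaf component $\tilde{\Psi}$ carries the defining ideal $J_2$ of $V$ in $W_2$ into the defining ideal $J_1$ of $U$ in $W_1$. Indeed, from $\iota_2\circ\rst{\psi}{U}=\Psi\circ\iota_1$ one gets, on germs, $(\rst{\psi}{U})^{\#}(\iota_2^{\#}f)=\iota_1^{\#}(\Psi^{\#}f)$; for $f$ a germ of $J_2=\ker\iota_2^{\#}$ the left-hand side vanishes, so $\Psi^{\#}f\in\ker\iota_1^{\#}=J_1$. Now start from the ambient pull-back $D\Psi\colon\Psi^*\Omega^1(\mc{A}_{W_2})\to\Omega^1(\mc{A}_{W_1})$ and post-compose with the quotient $\mu_1\colon\Omega^1(\mc{A}_{W_1})\to\iota_{1*}\Omega^1(\mc{A}_U)$ of Proposition~\ref{localmodeldifferential}. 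For a germ $f\in J_2$ the defining relation of $D\Psi$ gives $D\Psi(\Psi^*(d_{W_2}f))=d_{W_1}(\Psi^{\#}f)$, which lies in $dJ_1\subseteq\mc{A}_{W_1}dJ_1=\ker\mu_1$ by the previous step; $\mc{A}_{W_1}$-linearity of $D\Psi$ and of $\mu_1$ extends this to the whole submodule generated by $\Psi^*(\mc{A}_{W_2}dJ_2)$. Since pull-back is right exact it commutes with the quotient, so $\mu_1\circ D\Psi$ descends to a morphism $\Psi^*\bigl(\Omega^1(\mc{A}_{W_2})/\mc{A}_{W_2}dJ_2\bigr)\to\iota_{1*}\Omega^1(\mc{A}_U)$. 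Restricting along $\iota_1$, using $\iota_1^*\Psi^*=(\rst{\psi}{U})^*\iota_2^*$ together with the identification $\iota_2^*\bigl(\Omega^1(\mc{A}_{W_2})/\mc{A}_{W_2}dJ_2\bigr)\cong\Omega^1(\mc{A}_V)$ (which holds because $\mc{A}_{W_2}dJ_2\supseteq J_2\,\Omega^1(\mc{A}_{W_2})$), yields the desired $D(\rst{\psi}{U})\colon(\rst{\psi}{U})^*\Omega^1(\mc{A}_V)\to\Omega^1(\mc{A}_U)$.

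It then remains to verify the defining relation for $D(\rst{\psi}{U})$, which follows by chasing a germ $d_{\mc{A}_V}g$ through the construction: lift $g$ to a germ of $\mc{A}_{W_2}$, apply the relation for $D\Psi$, and push the result down through the two commuting Leibniz squares of Proposition~\ref{localmodeldifferential} (for $U$ via $\mu_1$ and for $V$ via $\mu_2$); compatibility of $\mu_1,\mu_2$ with the exterior derivatives gives $D(\rst{\psi}{U})\bigl((\rst{\psi}{U})^*(d_{\mc{A}_V}g)\bigr)=d_{\mc{A}_U}\bigl((\rst{\psi}{U})^{\#}g\bigr)$. I expect the main obstacle to be precisely this descent step: one must check that the right-exact functor $\Psi^*$ interacts correctly with the two quotient presentations by $\mc{A}_{W_i}dJ_i$ — not merely by $J_i\,\Omega^1(\mc{A}_{W_i})$ — and that the canonical identification $\iota_1^*\Psi^*=(\rst{\psi}{U})^*\iota_2^*$ is applied consistently. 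The finite-generation hypotheses on the $J_i$ and on the modules $\mc{A}_{W_i}dJ_i$ are exactly what guarantee that these quotients are coherent and that the induced arrow is a genuine morphism of finitely presented modules, so that the local pieces are honest pull-back morphisms and glue as in the first paragraph.
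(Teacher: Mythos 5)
Your proposal is correct and follows essentially the same route as the paper: both construct the local pull-back by composing the ambient $D\Psi$ with the quotient maps $\mu_i$ of Proposition~\ref{localmodeldifferential}, check that the result annihilates the submodule generated by the differentials of the defining ideal so that it descends to $\psi^*\Omega^1\left( \mc{A}_V \right)$, verify the defining relation on germs, and glue the local pieces using the uniqueness forced by $\Omega^1\left( \mc{A}_N \right)$ being generated by $d_{\mc{A}_N}\mc{A}_N$. The only (cosmetic) difference is that you descend at the level of $W_1$ before restricting along $\iota_1$, whereas the paper restricts first and then descends along $\psi^*D\iota_2$.
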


\begin{proof}
First, assume that one is in the situation of local models. Then, by assumption, there exists a commutative diagram of local models
\[
\begin{tikzcd}
W_1\arrow[r,"\Psi"] & W_2\\
U \arrow[u,"\iota_1"] \arrow[r,"\psi"] & V \arrow[u,"\iota_2"]
\end{tikzcd}.
\]
Now, for the embeddings $\iota_1$ and $\iota_2$ there is a natural choice for the maps $D\iota_1$ and $D\iota_2$. One simply takes $D\iota_1:= \iota_1^*\mu_1$ where $\mu_1\colon \Omega^1\left( \mc{A}_{W_1} \right) \to \iota_{1*}\Omega^1\left( \mc{A}_{U} \right)$ is the quotient map and $D\iota_2$ is defined similarly (see Proposition~\ref{localmodeldifferential}). The relation on the exterior derivatives then holds by definition and $D\iota_1$ and $D\iota_2$ are surjective.\\
At this point one is in the following situation:
\[
\begin{tikzcd}
\iota_1^*\Psi^*\Omega^1\left( \mc{A}_{W_2} \right) \arrow[d,"\psi^*D\iota_2"]\arrow[r,"\iota_1^*D\Psi"] \arrow[dr,"\alpha"]&\iota_1^*\Omega^1\left( \mc{A}_{W_1} \right) \arrow[d,"D\iota_1"]\\
\psi^*\Omega^1\left( \mc{A}_V \right) & \Omega^1\left( \mc{A}_U \right)
\end{tikzcd}
\]
and wants to show that $\alpha$ descends to $\psi^*\Omega^1\left( \mc{A}_{V} \right)$. Observe that
\[\alpha\left( \iota_1^*\Psi^*\left(d_{\mc{A}_{W_2}}s \right)\right)=d_{\mc{A}_U}\left( \iota_1^*\Psi^*s \right)=d_{\mc{A}_U}\left( \psi^*\iota_2^*s \right)\]
and thus $\alpha\left( \iota_1^*\Psi^*\left( d_{A_{W_2}}J_V \right) \right)=0$, where $J_V$ denotes the defining ideal of $V$ in $W_2$. Thus $\alpha$ annihilates the kernel of $\psi^*D\iota_2$ and thus defines a morphism
\[D\psi \colon \psi^*\Omega^1\left( \mc{A}_V \right) \to \Omega^1\left( \mc{A}_U \right).\]
The following calculation on germs shows that the desired relation holds:
\begin{align*}
D\psi\left( \psi^*d_{\mc{A}_V}s \right) &= D\psi\left( \psi^*\left( d_{\mc{A}_V} \iota_2^*s' \right) \right) = D\psi\left( \psi^*\left( D\iota_2\left( \iota_2^*d_{\mc{A}_{W_2}}s' \right) \right) \right)\\
&= \alpha\left( \psi^*\iota_2^*d_{\mc{A}_{W_2}}s' \right) = \alpha\left( \iota_1^*\Psi^* d_{\mc{A}_{W_2}}s' \right)\\
&= d_{\mc{A}_U}\left( \psi^*\iota_2^*s' \right)= d_{\mc{A}_U}\left( \psi^*s \right).
\end{align*}
The morphism obtained this way is unique as $\Omega^1\left( \mc{A}_N \right)$ is generated by $d_{\mc{A}_N}\mc{A}_N$.\\
Returning now to the general case. It was shown above that for every $p\in M$ there exists an open neighbourhood $U_p\subseteq M$ and a unique morphism $\rst{\psi^*\Omega^1\left( \mc{A}_N \right)}{U_p} \to \rst{\Omega^1\left( \mc{A}_M \right)}{U_p}$ satisfying the desired relation. Since these morphisms are unique it follows that any pair of them coincides on the intersection of their domains of definition. Thus these morphisms glue to the desired global morphism $D\psi$. 
\end{proof}

The preceding proposition can now be applied to all the cases, where morphisms have suitable local models. The relevant situations are gathered in the corollary below.

\begin{cor}
\begin{enumerate}[(i)]
\item Let $\phi\colon M\to N$ be a morphism of $\bb{K}$-analytic spaces. Then $\phi$ admits a pull-back morphism.
\item Let $\phi\colon \left( M,\rean{M} \right)\to \left( N,\rean{N} \right)$ be a morphism of real analytic spaces and
\[\phi^{\bb{C}}\colon \left( M,\corean{M} \right)\to \left( N,\corean{N} \right)\]
the associated morphism of $\bb{C}$-ringed space. Then $\phi^{\bb{C}}$ admits a pull-back morphism $D\phi^{\bb{C}}$. One has $D\phi^{\bb{C}}= D\phi\otimes_{\bb{R}}\id_{\bb{C}}$.
\item Let $M$ be a complex analytic space. Consider the canonical morphism
\[\phi\colon\left( M,\corean{M} \right) \to \left( M,\coan{M} \right).\]
Then $\phi$ admits an injective pull-back morphism
\[D\phi\colon \phi^*\Omega^1\left( \coan{M} \right) \to \Omega^1\left( \corean{M} \right).\]
\end{enumerate}
\label{pullbackcmorph}
\end{cor}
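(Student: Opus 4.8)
The plan is to treat all three parts as applications of Proposition~\ref{pullbackanalytic}, with part (i) serving as the workhorse and parts (ii), (iii) building on it; the only genuinely non-formal step is the injectivity in (iii), which I expect to be the main obstacle.

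\emph{Part (i).} Given $\phi\colon M\to N$ and $p\in M$, I would choose local models $U\subseteq M$ closed in an open $W_1\subseteq \bb{K}^n$ and $V\subseteq N$ closed in an open $W_2\subseteq \bb{K}^m$ with $\phi(U)\subseteq V$. By the standard fact that a morphism of local models lifts to the ambient charts (pull the coordinate functions of $W_2$ back along $\phi$ and lift them to $W_1$, shrinking if necessary), one obtains an analytic $\Psi\colon W_1\to W_2$ fitting into the square required by Proposition~\ref{pullbackanalytic}. Its hypotheses are then immediate: $W_1,W_2$ are open in $\bb{K}^n,\bb{K}^m$, so they carry free differential modules and $\Psi$ admits a pull-back morphism by linear extension on the bases $\{dx_i\}$; the defining ideals of $U$ and $V$ are finitely generated by definition of a local model; and the lemma on $dJ\cdot\mc{A}$ shows the modules generated by the images of these ideals under the exterior derivative are finitely generated. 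Hence $D\phi$ exists.

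\emph{Part (ii).} Here I would simply complexify the real pull-back morphism. Applying (i) to the real analytic $\phi$ gives $D\phi\colon \phi^*\Omega^1\left(\rean{N}\right)\to \Omega^1\left(\rean{M}\right)$. Using $\corean{\cdot}=\rean{\cdot}\otimes_{\bb{R}}\bb{C}$ (Construction~\ref{associatedmorphreal}) and the identification $\Omega^1\left(\corean{\cdot}\right)\cong \Omega^1\left(\rean{\cdot}\right)\otimes_{\bb{R}}\bb{C}$ (Remark~\ref{localrtocdifferential}), together with the fact that inverse image and module pull-back commute with $-\otimes_{\bb{R}}\bb{C}$, one gets a canonical identification $\left(\phi^{\bb{C}}\right)^*\Omega^1\left(\corean{N}\right)\cong \phi^*\Omega^1\left(\rean{N}\right)\otimes_{\bb{R}}\bb{C}$. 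I would then \emph{define} $D\phi^{\bb{C}}:=D\phi\otimes_{\bb{R}}\id_{\bb{C}}$ and check the defining relation. Since everything is $\bb{C}$-linear, it suffices to verify it on a real section $s\in\rean{N}$, where $d_{\corean{N}}s=d_{\rean{N}}s$ and $\left(\phi^{\bb{C}}\right)^{\#}s=\phi^{\#}s$, so the relation reduces to the one already known for $D\phi$. Uniqueness of pull-back morphisms then yields $D\phi^{\bb{C}}=D\phi\otimes_{\bb{R}}\id_{\bb{C}}$.

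\emph{Part (iii).} For existence I would again invoke Proposition~\ref{pullbackanalytic}: by Construction~\ref{conjugatecomplex}, in a local model $M\subseteq W\subseteq\bb{C}^n$ the space $\left(M,\corean{M}\right)$ is the closed subspace of $W\times\bar W$ cut out by $J+\bar J$, and under this identification the canonical morphism $\phi$ is the restriction of the first projection $\pi\colon W\times\bar W\to W$. Taking $W_1=W\times\bar W$, $W_2=W$, $\Psi=\pi$, the hypotheses hold exactly as in (i), so $D\phi$ exists. The real content is \emph{injectivity}, which is the step I expect to be the main obstacle; I would establish it locally via the splitting of differentials into a holomorphic and an antiholomorphic part. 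The free module $\Omega^1\left(\coan{W\times\bar W}\right)$ decomposes as $\Omega_z\oplus\Omega_{\bar z}$ on the bases $\{dz_i\}$ and $\{d\bar z_i\}$. Because $J$ is generated by holomorphic $f_j(z)$ and $\bar J$ by the conjugates $\bar f_j(\bar z)$, the lemma on $dJ\cdot\mc{A}$ lists generators for the relation submodule $R$ each of which lies entirely in $\Omega_z$ (namely $df_j$, $f_jdz_i$, $\bar f_jdz_i$) or entirely in $\Omega_{\bar z}$ (namely $d\bar f_j$, $f_jd\bar z_i$, $\bar f_jd\bar z_i$). Hence $R=R_z\oplus R_{\bar z}$ and
\[\Omega^1\left(\corean{M}\right)\cong \left(\Omega_z/R_z\right)\oplus\left(\Omega_{\bar z}/R_{\bar z}\right),\]
which is precisely the $(1,0)$--$(0,1)$ decomposition. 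A direct computation identifies $\phi^*\Omega^1\left(\coan{M}\right)\cong\Omega_z/R_z$ (both are $\bigoplus_i\corean{M}\,dz_i$ modulo the relations $df_j$), and the defining relation forces $D\phi$ to be the inclusion of this direct summand. Being a split monomorphism locally, $D\phi$ is injective, and since injectivity is stalk-local the global $D\phi$ is injective.
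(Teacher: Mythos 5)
Your proposal is correct and follows essentially the same route as the paper: the corollary is stated there as an immediate application of Proposition~\ref{pullbackanalytic} to the standard local models, and your verification of its hypotheses in (i), the complexification argument in (ii) via Remark~\ref{localrtocdifferential} plus uniqueness, and the injectivity in (iii) via the homogeneous splitting of the relation module into its $dz$- and $d\bar z$-parts all reproduce the paper's reasoning (the splitting is exactly Construction~\ref{10construction}). The only cosmetic imprecision is in (iii), where the ambient local model for $\left( M,\corean{M} \right)$ is $\left( W,\corean{W} \right)$ with $\Psi$ the canonical morphism $\left( W,\corean{W} \right)\to\left( W,\coan{W} \right)$ rather than literally the projection $W\times\bar W\to W$; these agree at the level of germs by Construction~\ref{conjugatecomplex}, so the computation is unaffected.
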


The pull-back operation allows to identify exactly what sheaf the relative differential forms can be represented by.

\begin{prop}
Let $\phi\colon \left( M,\mc{A}_M \right) \to \left( N,\mc{A}_{N} \right)$ be a morphism of $\bb{K}$-ringed spaces, that both admit a differential module and further suppose that the map $\phi$ admits a pull-back morphism $D\phi$. Then the quotient module
\[p\colon \Omega^1\left( \mc{A}_{M} \right)\to\Omega:= \quo{\Omega^1\left( \mc{A}_M \right)}{D\phi\left( \phi^*\Omega^1\left( \mc{A}_N \right) \right)}\]
together with the projected exterior derivative $d_{\phi}:=p\circ d_{\mc{A}_M}$ defines the differential module of $\mc{A}_M$ relative to $\phi$.\\
That is, for any morphism $\phi\colon \left( M,\mc{A}_M \right)\to \left( N,\mc{A}_N \right)$, the differential module relative $\phi$
\[ \left( \Omega^1_{\phi}\left( \mc{A}_M \right), d_{\mc{A}_M,\phi} \right)\]
exists if $\phi$ admits a pull-back morphism.\\
If $\Omega^1\left( \mc{A}_M \right)$ is finitely presented, then so is $\Omega^1_{\phi}\left( \mc{A}_M \right)$.
\end{prop}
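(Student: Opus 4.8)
The plan is to verify the two conditions in the definition of a differential module relative to $\phi$ for the pair $\left(\Omega,d_{\phi}\right)$, and then the finiteness claim. Condition (i) is immediate: $\Omega$ is a quotient of $\Omega^1\left(\mc{A}_M\right)$, which is finitely generated because it is a differential module, so $\Omega$ is finitely generated as well. The substance is condition (ii), and the key observation is purely algebraic: a $\phi$-derivation is exactly an absolute derivation that annihilates the image of the adjoint ring map $\phi^{\#}\colon \phi^{-1}\mc{A}_N\to\mc{A}_M$. Indeed, for a derivation $\delta$, the $\phi^{-1}\mc{A}_N$-linearity condition $\delta\left(\phi^{\#}(a)\cdot f\right)=\phi^{\#}(a)\cdot\delta(f)$ becomes, after expanding the left-hand side by the Leibniz rule, equivalent to $\delta\left(\phi^{\#}(a)\right)=0$ for all local sections $a$. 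Thus $\mathrm{Der}_{\phi}\left(\mc{A}_M,\mc{F}\right)$ is precisely the subsheaf of $\der{\mc{A}_M,\mc{F}}$ of derivations vanishing on $\im{\phi^{\#}}$.

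Next I would identify the submodule that is divided out. Since $\Omega^1\left(\mc{A}_N\right)$ is generated over $\mc{A}_N$ by the image of $d_{\mc{A}_N}$ (a consequence of its universal property, applied with $\mc{F}=\Omega^1\left(\mc{A}_N\right)$, which is legitimate as that module is finitely generated), the pull-back $\phi^*\Omega^1\left(\mc{A}_N\right)$ is generated over $\mc{A}_M$ by the sections $\phi^*\left(d_{\mc{A}_N}f\right)$. Applying the defining relation of $D\phi$, the submodule $D\phi\left(\phi^*\Omega^1\left(\mc{A}_N\right)\right)$ is therefore exactly the $\mc{A}_M$-submodule of $\Omega^1\left(\mc{A}_M\right)$ generated by the elements $d_{\mc{A}_M}\left(\phi^{\#}f\right)$, $f\in\mc{A}_N$.

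Now I would combine these with the universal property of $\Omega^1\left(\mc{A}_M\right)$. Fix an open $U$ and a finitely generated $\rst{\mc{A}_M}{U}$-module $\mc{F}$. The universal property of the quotient $p$ (equivalently, left-exactness of $\homom$ applied to the defining sequence of $\Omega$) identifies $\homo{\rst{\Omega}{U},\mc{F}}$ with the submodule of those $h\in\homo{\rst{\Omega^1\left(\mc{A}_M\right)}{U},\mc{F}}$ that vanish on $D\phi\left(\phi^*\Omega^1\left(\mc{A}_N\right)\right)$. By the previous paragraph together with $\mc{A}_M$-linearity, $h$ vanishes on this submodule if and only if $h\left(d_{\mc{A}_M}\left(\phi^{\#}f\right)\right)=0$ for all $f$, i.e. if and only if the corresponding absolute derivation $\delta=h\circ d_{\mc{A}_M}$ annihilates $\im{\phi^{\#}}$, i.e. is a $\phi$-derivation. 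Moreover the induced map $\bar{h}\colon\rst{\Omega}{U}\to\mc{F}$ satisfies $\bar{h}\circ d_{\phi}=\bar{h}\circ p\circ d_{\mc{A}_M}=h\circ d_{\mc{A}_M}=\delta$, so the assignment $h\mapsto h\circ d_{\phi}$ is precisely this identification restricted to the matching subsheaves. This yields the isomorphism $\homo{\rst{\Omega}{U},\mc{F}}\cong\mathrm{Der}_{\rst{\phi}{U}}\left(\rst{\mc{A}_M}{U},\mc{F}\right)$ and establishes (ii).

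For the finiteness statement, $\Omega^1\left(\mc{A}_N\right)$ is finitely generated, so $\phi^*\Omega^1\left(\mc{A}_N\right)$ and hence its image $D\phi\left(\phi^*\Omega^1\left(\mc{A}_N\right)\right)$ are finitely generated $\mc{A}_M$-modules; when $\Omega^1\left(\mc{A}_M\right)$ is finitely presented, the standard fact that a quotient of a finitely presented module by a finitely generated submodule is again finitely presented (already invoked in Proposition~\ref{localmodeldifferential}) shows that $\Omega^1_{\phi}\left(\mc{A}_M\right)$ is finitely presented. I expect the step requiring the most care to be the sheaf-theoretic bookkeeping in the penultimate paragraph: ensuring that the bijection of $\homom$-sheaves is compatible with localization and with the $\mc{A}_M$-module structure, and that ``vanishing on a submodule'' and ``factoring through the quotient'' genuinely agree at the level of sheaves rather than merely on global sections. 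The underlying algebra is otherwise routine, the whole argument reducing to the characterization of $\phi$-derivations and the explicit generators of the divided-out submodule.
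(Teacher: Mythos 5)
Your proposal is correct and follows essentially the same route as the paper: reduce to the universal property of $\Omega^1\left( \mc{A}_M \right)$, observe that a $\phi$-derivation is an absolute derivation annihilating the image of $\phi^{\#}$ (the paper uses this implicitly via $0=\delta\left( \phi^*s \right)=\alpha'\left( D\phi\left( \phi^*d_{\mc{A}_N}s \right) \right)$), and descend the resulting morphism through the quotient by $D\phi\left( \phi^*\Omega^1\left( \mc{A}_N \right) \right)$, with the same finite-presentation argument at the end. Your treatment is merely slightly more explicit about injectivity, which the paper obtains from the earlier remark that the image of $d_{\phi}$ generates $\Omega$.
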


\begin{proof}
First, $\Omega$ is a finitely generated $\mc{A}_M$-module and $\mathrm{Der}_{\phi}\left( \mc{A}_M,\mc{F} \right)\subseteq \der{\mc{A}_{M}, \mc{F}}$ for any $\mc{A}_M$-module $\mc{F}$. As $\Omega^1\left( \mc{A}_M \right)$ is generated by $d_{\mc{A}_M} \mc{A}_{M}$, it follows that $\Omega$ is generated by $d_{\phi} \mc{A}_M$. Once again it suffices to show that for every finitely generated $\mc{A}_{M}$-module $\mc{F}$ and every derivation $\delta\in \mathrm{Der}_{\phi}\left( \mc{A}_M, \mc{F} \right)$ there exists a morphism $\alpha\colon \Omega \to \mc{F}$ such that $\delta = \alpha \circ d_{\phi}$.\\
However, $\delta$ is also an element of $\der{\mc{A}_M, \mc{F}}$ and thus there exists a morphism
\[\alpha'\colon \Omega^1\left( \mc{A}_M \right) \to \mc{F} \text{ such that } \delta=\alpha'\circ d_{\mc{A}_M}.\]
Note that

\[0=\delta\left( \phi^*s \right)= \alpha'\left( d_{\mc{A}_M}\left( \phi^*s \right) \right)=\alpha'\left( D\phi\left( \phi^*d_{\mc{A}_N}s \right) \right).\]

This implies that $\alpha'$ descends to a morphism $\alpha\colon \Omega \to \mc{F}$ such that $\alpha' = \alpha\circ p$ and thus $\delta=\alpha\circ d_{\psi}$.\\
If $\Omega^1\left( \mc{A}_M \right)$ is finitely presented, then $\Omega$ is the quotient by a finitely generated module and thus finitely presented.
\end{proof}

Due to the uniqueness the pull-back morphism of a concatenation is the concatenation of pull-back morphisms. This is shown in the next lemma.

\begin{lem}
Let $\psi_1\colon \left( M,\mc{A}_M \right) \to \left( N,\mc{A}_N \right)$ and $\psi_2\colon \left( N,\mc{A}_N \right) \to \left( X, \mc{A}_X \right)$ be morphisms of $\bb{K}$-ringed spaces that admit differential modules and pullbacks $D\psi_1$ and $D\psi_2$. Then the following holds:
\[D\psi_1\circ \psi_1^*D\psi_2 = D\left( \psi_2\circ \psi_1 \right).\]
\end{lem}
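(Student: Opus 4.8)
The plan is to exploit the uniqueness of pull-back morphisms recorded in the remark following the definition: since $\Omega^1\left( \mc{A}_X \right)$ is generated as an $\mc{A}_X$-module by the image $d_{\mc{A}_X}\mc{A}_X$, its pullback $\left( \psi_2\circ \psi_1 \right)^*\Omega^1\left( \mc{A}_X \right)$ is generated over $\mc{A}_M$ by the sections $\left( \psi_2\circ \psi_1 \right)^*\left( d_{\mc{A}_X}f \right)$, so any $\mc{A}_M$-linear morphism on it is determined by its values there. Hence it suffices to show that the composite $D\psi_1\circ \psi_1^*D\psi_2$ satisfies the defining relation of a pull-back morphism for $\psi_2\circ \psi_1$; the existence of $D\left( \psi_2\circ \psi_1 \right)$ then follows for free, the composite being an explicit witness, and the claimed equality is forced by uniqueness.

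First I would check that the two sides live in the same $\homom$-space. By functoriality of the pullback functor one has $\psi_1^*\psi_2^* = \left( \psi_2\circ \psi_1 \right)^*$, so $\psi_1^*D\psi_2$ is a morphism $\left( \psi_2\circ \psi_1 \right)^*\Omega^1\left( \mc{A}_X \right)\to \psi_1^*\Omega^1\left( \mc{A}_N \right)$, and post-composing with $D\psi_1\colon \psi_1^*\Omega^1\left( \mc{A}_N \right)\to \Omega^1\left( \mc{A}_M \right)$ gives a morphism with the same source and target as $D\left( \psi_2\circ \psi_1 \right)$.

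The heart of the argument is a short computation on the generators. Fix $f\in \mc{A}_X$ and write the generator as $\left( \psi_2\circ \psi_1 \right)^*\left( d_{\mc{A}_X}f \right) = \psi_1^*\left( \psi_2^*\left( d_{\mc{A}_X}f \right) \right)$. Applying $\psi_1^*D\psi_2$ and using the naturality of $\psi_1^*$ together with the defining relation $D\psi_2\left( \psi_2^*\left( d_{\mc{A}_X}f \right) \right) = d_{\mc{A}_N}\left( \psi_2^*f \right)$ yields $\psi_1^*\left( d_{\mc{A}_N}\left( \psi_2^*f \right) \right)$. Applying $D\psi_1$ and its defining relation then gives $d_{\mc{A}_M}\left( \psi_1^*\psi_2^*f \right)$, which by contravariant functoriality of the structure-sheaf pullback on functions equals $d_{\mc{A}_M}\left( \left( \psi_2\circ \psi_1 \right)^*f \right)$. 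This is precisely the defining relation for $D\left( \psi_2\circ \psi_1 \right)$, so uniqueness finishes the proof.

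The only real delicacy, and the step I would write out most carefully, is the bookkeeping in the middle computation: $\psi_2^*\left( d_{\mc{A}_X}f \right)$ is a section of the sheaf $\psi_2^*\Omega^1\left( \mc{A}_X \right)$ on $N$, and I must track what happens when it is pulled back once more along $\psi_1$ and fed into $\psi_1^*D\psi_2$. Concretely this rests on the naturality identity $\left( \psi_1^*\alpha \right)\left( \psi_1^*s \right) = \psi_1^*\left( \alpha\left( s \right) \right)$ for a morphism $\alpha$ on $N$ and a section $s$, i.e.\ on the compatibility of the adjunction unit $s\mapsto s\otimes 1$ with $\alpha$. Once this is stated cleanly, the two defining relations chain together mechanically and there is no further obstacle.
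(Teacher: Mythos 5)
Your proposal is correct and follows essentially the same route as the paper's proof: both invoke the uniqueness of the pull-back morphism (since the differential module is generated by the image of the exterior derivative) and then verify the defining relation on generators by chaining the two relations for $D\psi_1$ and $D\psi_2$ through the naturality of $\psi_1^*$. The extra care you take with the identification $\psi_1^*\psi_2^*=\left(\psi_2\circ\psi_1\right)^*$ and the adjunction bookkeeping is implicit in the paper's computation.
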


\begin{proof}
By uniqueness it suffices to show that
\[\left( D\psi_1\circ \psi_1^*D\psi_2 \right)\left( \psi_1^*\psi_2^*d_{\mc{A}_X}s \right)=d_{\mc{A}_M}\left( \psi_1^*\psi_2^*s \right).\]
This holds because
\begin{align*}
 \psi_1^*D\psi_2 \left( \psi_1^*\psi_2^*d_{\mc{A}_X}s \right)&= \psi_1^*\left(D\psi_2 \left( \psi_2^*d_{\mc{A}_X}s \right)  \right)= \psi_1^*\left( d_{\mc{A}_N}\psi_2^*s \right)
\end{align*}
and thus
\begin{align*}
\left( D\psi_1\circ \psi_1^*D\psi_2 \right)\left( \psi_1^*\psi_2^*d_{\mc{A}_X}s \right)= D\psi_1 \left( \psi_1^*\left( d_{\mc{A}_N}\psi_2^*s \right)
 \right)= d_{\mc{A}_M}\left( \psi_1^*\psi_2^*s \right).
\end{align*}
\end{proof}

The pull-back morphism of differential forms descends to a pull-back morphism of relative differential forms as demonstrated below.

\begin{prop}
Suppose that
\[
\begin{tikzcd}
\left( M,\mc{A}_M \right) \arrow[r,"\psi_1"] \arrow[d,"\phi_1"] & \left( N,\mc{A}_N \right) \arrow[d,"\phi_2"] \\
\left( S,\mc{A}_S \right) \arrow[r,"\psi_2"] & \left( S',\mc{A}_{S'} \right)
\end{tikzcd}
\]
is a commutative square of morphisms of $\bb{K}$-ringed spaces that admit differential modules and pull-backs. Then there exists a unique morphism
\[D^{\phi_1,\phi_2}_{\psi_2}\psi_1 \colon \psi_1^* \Omega^1_{\phi_2}\left( \mc{A}_N \right) \to \Omega^1_{\phi_1}\left( \mc{A}_M \right)\]
such that
\[D^{\phi_1,\phi_2}_{\psi_2}\psi_1\left( \psi_1^*d_{\mc{A}_N,\phi_2}s \right)  = d_{\mc{A}_M,\phi_1}\psi_1^*s.\]
\end{prop}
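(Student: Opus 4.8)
The plan is to exploit the explicit quotient description of the relative differential modules established just above. Write $p_1\colon \Omega^1\left( \mc{A}_M \right)\to \Omega^1_{\phi_1}\left( \mc{A}_M \right)$ and $p_2\colon \Omega^1\left( \mc{A}_N \right)\to \Omega^1_{\phi_2}\left( \mc{A}_N \right)$ for the two defining quotient maps, so that $d_{\mc{A}_M,\phi_1}=p_1\circ d_{\mc{A}_M}$ and $d_{\mc{A}_N,\phi_2}=p_2\circ d_{\mc{A}_N}$, and recall that $\kernel\left( p_2 \right)=D\phi_2\left( \phi_2^*\Omega^1\left( \mc{A}_{S'} \right) \right)$ is generated as an $\mc{A}_N$-module by the sections $d_{\mc{A}_N}\left( \phi_2^* t \right)=D\phi_2\left( \phi_2^* d_{\mc{A}_{S'}} t \right)$ for $t\in\mc{A}_{S'}$ (since $\Omega^1\left( \mc{A}_{S'} \right)$ is generated by its exact differentials). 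The pull-back morphism $D\psi_1\colon \psi_1^*\Omega^1\left( \mc{A}_N \right)\to\Omega^1\left( \mc{A}_M \right)$ is available by hypothesis, and the natural candidate is to descend the composite $p_1\circ D\psi_1$ along the pulled-back projection $\psi_1^* p_2\colon \psi_1^*\Omega^1\left( \mc{A}_N \right)\to \psi_1^*\Omega^1_{\phi_2}\left( \mc{A}_N \right)$. This mirrors exactly the descent argument in Proposition~\ref{pullbackanalytic}.

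Since $\psi_1^*$ is right exact, $\psi_1^* p_2$ is surjective and its kernel is the image of $\psi_1^*\kernel\left( p_2 \right)$ in $\psi_1^*\Omega^1\left( \mc{A}_N \right)$; by the generation statement above this kernel is generated by the sections $\psi_1^*\left( d_{\mc{A}_N}\left( \phi_2^* t \right) \right)$. Hence it suffices to check that $p_1\circ D\psi_1$ annihilates each such generator. Using the defining relation for $D\psi_1$, the commutativity $\phi_2\circ\psi_1=\psi_2\circ\phi_1$ (so that $\psi_1^*\phi_2^* t=\phi_1^*\psi_2^* t$), and the defining relation for $D\phi_1$, I would compute
\[ D\psi_1\bigl(\psi_1^*\left( d_{\mc{A}_N}\left( \phi_2^* t \right) \right)\bigr)=d_{\mc{A}_M}\bigl(\psi_1^*\phi_2^* t\bigr)=d_{\mc{A}_M}\bigl(\phi_1^*\psi_2^* t\bigr)=D\phi_1\bigl(\phi_1^* d_{\mc{A}_S}\left( \psi_2^* t \right)\bigr). \]
The right-hand side lies in $D\phi_1\left( \phi_1^*\Omega^1\left( \mc{A}_S \right) \right)=\kernel\left( p_1 \right)$, so $p_1\circ D\psi_1$ kills every generator of $\kernel\left( \psi_1^* p_2 \right)$ and therefore factors through a unique morphism $D^{\phi_1,\phi_2}_{\psi_2}\psi_1\colon \psi_1^*\Omega^1_{\phi_2}\left( \mc{A}_N \right)\to\Omega^1_{\phi_1}\left( \mc{A}_M \right)$ characterised by $D^{\phi_1,\phi_2}_{\psi_2}\psi_1\circ\psi_1^* p_2=p_1\circ D\psi_1$.

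It then remains to verify the stated relation on differentials: for $s\in\mc{A}_N$ one has $\psi_1^* d_{\mc{A}_N,\phi_2}s=\psi_1^* p_2\left( \psi_1^* d_{\mc{A}_N}s \right)$, whence
\[ D^{\phi_1,\phi_2}_{\psi_2}\psi_1\bigl(\psi_1^* d_{\mc{A}_N,\phi_2}s\bigr)=p_1\bigl(D\psi_1\left( \psi_1^* d_{\mc{A}_N}s \right)\bigr)=p_1\bigl(d_{\mc{A}_M}\psi_1^* s\bigr)=d_{\mc{A}_M,\phi_1}\psi_1^* s, \]
as required. Uniqueness is immediate, since $\Omega^1_{\phi_2}\left( \mc{A}_N \right)$ is generated by $d_{\mc{A}_N,\phi_2}\mc{A}_N$, so that $\psi_1^*\Omega^1_{\phi_2}\left( \mc{A}_N \right)$ is generated over $\mc{A}_M$ by the elements $\psi_1^* d_{\mc{A}_N,\phi_2}s$, on which any morphism obeying the relation is pinned down. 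The only genuinely delicate point — the hard part — is the identification in the second paragraph of the kernel of the \emph{pulled-back} projection $\psi_1^* p_2$ with the submodule generated by the pull-backs of the generators of $\kernel\left( p_2 \right)$; this is precisely where right-exactness of $\psi_1^*$ and the generation of $\Omega^1\left( \mc{A}_{S'} \right)$ by exact differentials are indispensable, while everything else is a formal consequence of the defining properties already established.
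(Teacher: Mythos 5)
Your proof is correct and takes essentially the same route as the paper: both arguments descend $p_1\circ D\psi_1$ through the quotient presentation of $\Omega^1_{\phi_2}\left( \mc{A}_N \right)$ by showing that $D\psi_1$ carries the image of $\psi_1^*D\phi_2$ into the image of $D\phi_1$. The only cosmetic difference is that the paper obtains this containment in one line from the composition lemma $D\psi_1\circ\psi_1^*D\psi_2=D\left( \psi_2\circ\psi_1 \right)$ applied to the two factorisations of the square, whereas you verify it on the exact generators $\psi_1^*\left( d_{\mc{A}_N}\left( \phi_2^*t \right) \right)$, which amounts to re-deriving that lemma in the instance needed.
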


\begin{proof}
Once again it is clear, that if such a morphism exists it is uniquely determined by the given relation. Notice that
\[D\psi_1\left( \psi_1^*D\phi_2\left( \phi_2^*\Omega^1\left( \mc{A}_{S'} \right) \right) \right)=D\phi_1\left( \phi_1^*D\psi_2\left( \psi_2^*\Omega^1\left( \mc{A}_{S'} \right) \right) \right)\subseteq D\phi_1\left( \phi_1^*\Omega^1\left( \mc{A}_S \right) \right).\]
This shows that $D\psi_1$ maps the image of $\psi_1^*D\phi_2$ into the image of $D\phi_1$ and thus there exists a morphism
\[D^{\phi_1,\phi_2}_{\psi_2}\psi_1\colon \psi_1^*\Omega^1_{\phi_2}\left( \mc{A}_N \right) \to \Omega^1_{\phi_1}\left( \mc{A}_M \right).\]
The relation on exterior derivatives is therefore also immediately satisfied.
\end{proof}

Another aspect of differential forms is of technical importance later on. Namely, how does moving to stalks affect differential modules? In particular, the question is: Is the following morphism an isomorphism
\[\mathrm{Hom}\left( \Omega_p,\mc{F}_p \right) \to \mathrm{Der}\left( \mc{A}_{p},\mc{F}_p \right),\; h\mapsto h\circ d_p,\]
where $\left( \Omega,d \right)$ is a differential module on some ringed space with structure sheaf $\mc{A}$? It is clear that this is injective as the image of $d_p$ generates $\Omega_p$. However, surjectivity is less clear. Note that the preceding morphism always fits into the following commutative diagram
\[
\begin{tikzcd}
\mathrm{Der}\left( \mc{A},\mc{F} \right)_p \arrow[r,hook,] & \mathrm{Der}\left( \mc{A}_{p},\mc{F}_p \right)\\
\homo{\Omega,\mc{F}}_p \arrow[u,hook,two heads] \arrow[r,hook] & \homo{\Omega_p,\mc{F}_p} \arrow[u,hook]
\end{tikzcd}.
\]
The bottom arrow is an injective morphism because $\Omega$ is finitely generated and it is an isomorphism if $\Omega$ is finitely presented.

\begin{prop}
Let $\iota\colon M\to U$ be a closed subspace of a $\bb{K}$-ringed space $U$. Suppose the morphism
\[\mathrm{Der}\left( \mc{A}_U,\mc{G} \right)_p \to \mathrm{Der}\left( \mc{A}_{U,p},\mc{G}_p \right)\]
is an isomorphism for every $p\in M$ and any finitely generated $\mc{A}_U$-module $\mc{G}$. Suppose that the defining ideal $J$ of $M$ in $U$ is finitely generated. Then the morphism
\[\mathrm{Der}\left( \mc{A}_M,\mc{F} \right)_p \to \mathrm{Der}\left( \mc{A}_{M,p},\mc{F}_p \right)\]
is an isomorphism for every $p\in M$ and every finitely generated $\mc{A}_M$-module $\mc{F}$.
\label{dervgermsgermsderv}
\end{prop}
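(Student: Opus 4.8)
The plan is to transfer the known isomorphism on the ambient space $U$ down to the closed subspace $M$ by exhibiting both derivation modules as the kernel of one and the same \emph{evaluation-on-generators} map, and then invoking the hypothesis for a suitably chosen module $\mc{G}$ on $U$. First I would regard $\mc{F}$ as a module over $\mc{A}_U$. Since $\iota$ is a closed embedding, $\iota_*$ is exact and commutes with finite direct sums, and $\iota_*\mc{A}_M=\mc{A}_U/J$; hence a local epimorphism $\mc{A}_M^{\oplus n}\twoheadrightarrow\mc{F}$ pushes forward to $\mc{A}_U^{\oplus n}\twoheadrightarrow\left(\mc{A}_U/J\right)^{\oplus n}\twoheadrightarrow\iota_*\mc{F}$, so $\iota_*\mc{F}$ is a finitely generated $\mc{A}_U$-module which is annihilated by $J$. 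This is the module to which the hypothesis will be applied.

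Next, fix generators $f_1,\dots,f_k$ of $J$. For any $\mc{A}_U$-module $N$ with $JN=0$ and any derivation $D\colon\mc{A}_U\to N$, the Leibniz rule gives $D\left(gf_i\right)=gD\left(f_i\right)$, so $D$ vanishes on all of $J$ if and only if $D\left(f_i\right)=0$ for each $i$; and such derivations are exactly those factoring through $\mc{A}_U/J=\iota_*\mc{A}_M$, i.e.\ the $\phi$-derivations of $\mc{A}_M$ (the same mechanism underlying Proposition~\ref{localmodeldifferential}). The evaluation morphism $D\mapsto\left(D\left(f_1\right),\dots,D\left(f_k\right)\right)$ therefore produces the left-exact sequence of sheaves on $U$
\[
0 \to \iota_*\mathrm{Der}\left( \mc{A}_M,\mc{F} \right) \to \mathrm{Der}\left( \mc{A}_U,\iota_*\mc{F} \right) \to \left( \iota_*\mc{F} \right)^{k},
\]
whose purely algebraic germ-level analogue reads
\[
0 \to \mathrm{Der}\left( \mc{A}_{M,p},\mc{F}_p \right) \to \mathrm{Der}\left( \mc{A}_{U,p},\mc{F}_p \right) \to \left( \mc{F}_p \right)^{k}.
\]

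Taking stalks of the first sequence at $p\in M$, where $\left(\iota_*\mc{F}\right)_p=\mc{F}_p$, I obtain two left-exact sequences with the \emph{same} right-hand term $\left(\mc{F}_p\right)^k$, which fit into the commuting diagram
\[
\begin{tikzcd}[column sep=small]
0 \arrow[r] & \mathrm{Der}\left( \mc{A}_M,\mc{F} \right)_p \arrow[r] \arrow[d,"a"] & \mathrm{Der}\left( \mc{A}_U,\iota_*\mc{F} \right)_p \arrow[r] \arrow[d,"b"] & \left( \mc{F}_p \right)^{k} \arrow[d,"\cong"] \\
0 \arrow[r] & \mathrm{Der}\left( \mc{A}_{M,p},\mc{F}_p \right) \arrow[r] & \mathrm{Der}\left( \mc{A}_{U,p},\mc{F}_p \right) \arrow[r] & \left( \mc{F}_p \right)^{k}
\end{tikzcd}.
\]
The middle vertical map $b$ is precisely the hypothesis map for the finitely generated $\mc{A}_U$-module $\mc{G}=\iota_*\mc{F}$, hence an isomorphism, and the right-hand map is the identity. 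Since forming the stalk of the evaluation map corresponds to evaluating the induced germ derivation, $b$ carries the kernel of the top map isomorphically onto the kernel of the bottom map, and this restriction is exactly the map $a$ under consideration. Therefore $a$ is an isomorphism, which is the claim.

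I expect the main obstacle to lie in step two: matching the sheaf-level kernel description $\iota_*\mathrm{Der}\left(\mc{A}_M,\mc{F}\right)$ with the germ-level one so that the diagram genuinely commutes — that is, checking that the stalk of the evaluation map agrees with evaluation of the induced germ derivation, and that the finite generation of $J$ together with $J\cdot\mc{F}=0$ really reduces the infinite condition ``$D|_J=0$'' to the finite condition landing in $\left(\mc{F}_p\right)^k$. It is this reduction that makes the two right-hand terms literally equal and lets the ambient isomorphism descend by the five lemma; everything else is formal.
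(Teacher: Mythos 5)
Your proposal is correct and follows essentially the same route as the paper: both push $\mc{F}$ forward to $U$, apply the hypothesis to the finitely generated $\mc{A}_U$-module $\iota_*\mc{F}$, and use the finite generation of $J$ together with $J\cdot\iota_*\mc{F}=0$ to identify derivations of $\mc{A}_M$ (at both the sheaf and germ level) with derivations of $\mc{A}_U$ annihilating the generators $f_1,\dots,f_k$. The only difference is packaging — you realize both derivation modules as kernels of an evaluation map and conclude by a five-lemma-type argument, while the paper runs the same diagram chase directly by lifting a germ derivation through the ambient isomorphism and descending it to the quotient.
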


\begin{proof}
One obtains a commutative diagram
\[
\begin{tikzcd}
\mathrm{Der}\left( \mc{A}_U,\iota_*\mc{F} \right)_p \arrow[r,hook,two heads,"d"]& \mathrm{Der}\left( \mc{A}_{U,p},\left(\iota_*\mc{F}\right)_p \right)\\
\mathrm{Der}\left( \mc{A}_M,\mc{F} \right)_p \arrow[r,"c"] \arrow[u,hook,"a"] & \mathrm{Der}\left( \mc{A}_{M,p},\mc{F}_p \right) \arrow[u,hook,"b"]
\end{tikzcd}
\]
where $a\left( \delta_p \right):=\left(\iota_*\delta\circ\tilde{\iota}\right)_p$ and $b\left( \delta \right):=\delta\circ \tilde{\iota}_p$. It immediately follows that $c$ is injective. Moreover, for every $\delta\in \mathrm{Der}\left( \mc{A}_{M,p},\mc{F}_p \right)$, one gets $\delta'_p:=d^{-1}\left( b\left( \delta \right) \right)$ and $\delta'_p\left( J_p \right)=0$. As $J$ is finitely generated by $f_i$ around $p$, one obtains a small enough open neighbourhood of $p$ such that $\delta'\left( f_i \right)=0$ and thus
\[\delta'\left( \sum_{i} g_if_i \right)=\sum_i f_i \delta'\left( g_i \right)\in \iota_*\mc{F}.\]
However this has to vanish as $J\cdot \iota_*\mc{F}=0$. Therefore, $\delta'$ descends to
\[\delta''\colon \iota_*\mc{A}_M\to \iota_*\mc{F}.\]
The derivation $\iota^{-1}\delta''$ is then such that
\[b\left( c\left( \iota^{-1}\delta'' \right) \right)=d\left( a\left( \iota^{-1}\delta'' \right) \right)=b\left( \delta \right),\]
which implies $\delta =c\left( \iota^{-1}\delta'' \right)$. Hence, $c$ is surjective.
\end{proof}

The proposition shows that in the cases of analytic spaces and associated $\bb{C}$-ringed spaces of analytic spaces the morphism of the germs of derivations is an isomorphism.\\
In order to talk about flatness and curvature, one needs higher order differential forms and one needs to extend the exterior derivative via the following definition.

\begin{defn}
Suppose $\phi\colon M\to N$ is a morphism of $\bb{K}$-ringed space such that $M$ and $N$ admit a differential module and $\phi$ admits a pull-back morphism. Then one defines
\[\Omega^p_{\phi}\left( \mc{A}_M \right):=\bigwedge^{p}_{i=1} \Omega^1_{\phi}\left( \mc{A}_M \right)\]
and
\[d_{\phi}\colon \Omega^p_{\phi}\left( \mc{A}_M \right)  \to \Omega^{p+1}_{\phi}\left( \mc{A}_M \right),\]
by requiring
\begin{enumerate}[(i)]
\item $d_{\phi}\circ d_{\phi}=0$ and
\item $d_{\phi}\left( \alpha\wedge \beta \right)=d_{\phi}\alpha\wedge \beta + \left( -1 \right)^p\alpha \wedge d_{\phi} \beta$, where $\alpha\in \Omega^p_{\phi}\left( \mc{A}_M \right)$.
\end{enumerate}
\end{defn}

\section{Real analytic differential forms}
\label{pq}

On a complex manifold the sheaf of complex-valued real analytic forms splits into the $\left( 1,0 \right)$-part, generated by the holomorphic forms, and the $\left( 0,1 \right)$-part, generated by the anti-holomorphic forms. The following section is devoted to showing the same for singular spaces, to realising the $\left( 0,1 \right)$-forms as special relative differential forms and to rephrasing them in terms of yet a different type of relative differential forms on the complexification.

\begin{con}
Let $\left( M,\mc{O}_M \right)\to \left( U,\mc{O}_U \right)$ be a local model of a $\bb{C}$-analytic space. Then one obtains, by Proposition~\ref{associatedmorphcomplex}, a local model of the $\bb{C}$-ringed space
\[\iota\colon\left( M,\corean{M} \right)\to \left( U,\corean{U} \right).\]
Moreover, by Proposition~\ref{pullbackcmorph}, there exists a surjective pull-back morphism
\[D\iota\colon \iota^*\Omega^1\left( \corean{U} \right) \to \Omega^1\left( \corean{M} \right).\]
Now, the module $\Omega^1\left( \corean{U} \right)$ splits into $\Omega^{1,0}U$ and $\Omega^{0,1}U$, where the former is generated by $dz_i$ and the latter by $d \bar{z}_i$. The images $\Omega^{1,0}M$ and $\Omega^{0,1}M$ of these two submodules via the pull-back morphism induce a splitting
\[\Omega^1\left( \corean{M} \right)=\Omega^{1,0}M\oplus \Omega^{0,1}M.\]
In order to see that this splitting holds, independently of the local model, note that the defining ideal $\left( M,\corean{M} \right)$ is given by $J':=\left( J,\bar{J} \right)$, where $J$ is the defining ideal of $\left( M,\mc{O}_M \right)$. The short exact sequence
\[
\begin{tikzcd}
0\arrow[r] & \corean{U} dJ' \arrow[r] & \Omega^1\left( \corean{U} \right) \arrow[r] & \iota_* \Omega^1\left( \corean{M} \right) \arrow[r] & 0
\end{tikzcd}
\]
naturally decomposes to
\[
\begin{tikzcd}[cramped]
0\arrow[r] & J^{1,0} \oplus J^{0,1} \arrow[r] & \Omega^{1,0}U\oplus \Omega^{0,1}U \arrow[r] & \iota_* \Omega^{1,0}M \oplus \iota_* \Omega^{0,1}M \arrow[r] & 0
\end{tikzcd},
\]
where $J^{1,0}:=\corean{U} \partial J+ J'\Omega^{1,0}U$ and $J^{0,1}:=\corean{U} \bar{\partial} \bar{J}+ J'\Omega^{0,1}U$. Suppose now that $\left( M,\coan{M} \right)\to \left( V, \coan{V} \right)$ is a different local model for $M$. Then there exists a commutative square
\[
\begin{tikzcd}
\left( U,\coan{U} \right) \arrow[r] & \left( V,\coan{V} \right)\\
\left( M,\coan{M} \right) \arrow[u] \arrow[r,"\id"] & \left( M,\coan{M} \right) \arrow[u]
\end{tikzcd},
\]
after potentially shrinking $U$, $V$ and $M$. Moreover, by Proposition~\ref{associatedmorphcomplex}, there exists a commutative square
\[
\begin{tikzcd}
\left( U,\corean{U} \right) \arrow[r,"\phi"] & \left( V,\corean{V} \right)\\
\left( M,\corean{M} \right) \arrow[u,"\iota"] \arrow[r,"\id"] & \left( M,\corean{M} \right) \arrow[u,"\iota'"]
\end{tikzcd}.
\]
Clearly, one may also consider the identity map in the other direction, so that one obtains a diagram
\[
\begin{tikzcd}
\left( U,\corean{U} \right) \arrow[rr,"\phi", bend left =5]  & & \arrow[ll,"\psi", bend left =5] \left( V,\corean{V} \right)\\
& \left( M,\corean{M} \right) \arrow[ur,"\iota'",swap] \arrow[ul,"\iota"] &
\end{tikzcd}
\]
with $\phi\circ\iota = \iota'$ and $\psi\circ \iota'=\iota$, after potentially shrinking $U$, $V$ and $M$. The aim is to show that
\[D\iota\left( \iota^*\Omega^{0,1}U \right) = D\iota'\left( \iota'^*\Omega^{0,1}V \right).\]
As $\phi$ is induced from a holomorphic morphism of manifolds, it follows that
\[D\phi\left( \phi^*\Omega^{0,1}V \right)\subseteq \Omega^{0,1}U\]
and thus one obtains
\[D\iota'\left( \iota'^* \Omega^{0,1}V \right)=D\iota\left( \iota^{*}D\phi\left( \phi^*\Omega^{0,1}V \right) \right)\subseteq D\iota \left( \iota^* \Omega^{0,1}U \right).\]
Doing the same for the map $\psi$ leads to
\[D\iota\left( \iota^* \Omega^{0,1}U \right)=D\iota'\left( \iota'^{*}D\psi\left( \psi^*\Omega^{0,1}V \right) \right)\subseteq D\iota' \left( \iota'^* \Omega^{0,1}V \right).\]
Thus one has shown that the definition of $\Omega^{0,1}M$ is independent of the local model. The same argument works for $\Omega^{1,0}M$.\\
Note that via this construction the sheaf $\Omega^1\left( \corean{M} \right)$ naturally comes with an endomorphism
\[\mathcal{I}_M\colon \Omega^{1,0}M\oplus \Omega^{0,1}M \to  \Omega^{1,0}M\oplus \Omega^{0,1}M,\; a\oplus b \mapsto ia\oplus -ib.\]
As this construction is independent of the local model it follows that the subsheaves $\Omega^{0,1}M$ and $\Omega^{1,0}M$ as well as the endomorphism $\mc{I}_M$ are defined globally on $\left( M,\corean{M} \right)$.
\label{10construction}
\end{con}

\begin{defn}
Let $\left( M,\coan{M} \right)$ be a $\bb{C}$-analytic space. The subsheaf $\Omega^{0,1}M\subseteq\Omega^1\left( \corean{M} \right)$ constructed in~\ref{10construction} is called the \emph{sheaf of $\left( 0,1 \right)$-differential forms on $M$} and the subsheaf $\Omega^{1,0}M$ is called the \emph{sheaf of $\left( 1,0 \right)$-differential forms on $M$}.\\
The endomorphism $\mc{I}_M\colon \Omega^1\left( \corean{M} \right)\to\Omega^1\left( \corean{M}\right)$ is called the \emph{associated almost complex structure of $M$} and note that $\mc{I}_M^2=-\id$.
\end{defn}

\begin{rem}
The $\corean{M}$-modules $\Omega^{1,0}M$ and $\Omega^{0,1}M$ are $\corean{M}$-coherent.
\end{rem}

\begin{lem}
Let $\left( M,\coan{M} \right)$ be a $\bb{C}$-analytic space. Then there exists a split exact sequence
\[
\begin{tikzcd}
0\arrow[r] & \Omega^{1,0}M \arrow[r] &\Omega^1\left( \corean{M} \right) \arrow[r] & \Omega^{0,1}M \arrow[r] & 0
\end{tikzcd}.
\]
That is
\[\Omega^1\left( \corean{M} \right)=\Omega^1\left( \rean{M} \right)\otimes_{\bb{R}} \bb{C} \cong \Omega^{1,0}M\oplus \Omega^{0,1}M\]
and denote the thus induced projections by $p_{1,0}$ and $p_{0,1}$.
\end{lem}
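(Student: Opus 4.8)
The plan is to obtain the claimed split exact sequence as a formal consequence of the direct sum decomposition already established in Construction~\ref{10construction}. There it was shown that, globally on $\left( M,\corean{M} \right)$ and independently of the chosen local model, the coherent $\corean{M}$-module $\Omega^1\left( \corean{M} \right)$ decomposes as an internal direct sum $\Omega^{1,0}M\oplus \Omega^{0,1}M$ of $\corean{M}$-submodules. Once such a decomposition is available, no further analytic input is needed, since any internal direct sum $A\oplus B$ of modules canonically produces a split short exact sequence $0\to A\to A\oplus B\to B\to 0$.

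Concretely, I would take the inclusion $\Omega^{1,0}M\hookrightarrow \Omega^1\left( \corean{M} \right)$ of the first summand together with the projection $\Omega^1\left( \corean{M} \right)\to \Omega^{0,1}M$ onto the second summand. By the direct sum property the image of the inclusion is exactly the kernel of the projection, so the sequence
\[
\begin{tikzcd}
0\arrow[r] & \Omega^{1,0}M \arrow[r] &\Omega^1\left( \corean{M} \right) \arrow[r] & \Omega^{0,1}M \arrow[r] & 0
\end{tikzcd}
\]
is exact, and it splits because the inclusion $\Omega^{0,1}M\hookrightarrow \Omega^1\left( \corean{M} \right)$ of the second summand is a section of the projection. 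I would name $p_{1,0}$ and $p_{0,1}$ the two projections onto the summands; in terms of the associated almost complex structure $\mc{I}_M$ of Construction~\ref{10construction} they can be written as $p_{1,0}=\tfrac{1}{2}\left( \id - i\mc{I}_M \right)$ and $p_{0,1}=\tfrac{1}{2}\left( \id + i\mc{I}_M \right)$, since $\mc{I}_M$ acts as multiplication by $i$ on $\Omega^{1,0}M$ and by $-i$ on $\Omega^{0,1}M$; this presentation also makes the $\corean{M}$-linearity of the projections manifest.

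The remaining identity $\Omega^1\left( \corean{M} \right)\cong \Omega^1\left( \rean{M} \right)\otimes_{\bb{R}} \bb{C}$ is not reproved here but quoted directly from Remark~\ref{localrtocdifferential} and the global remark following it, where the canonical isomorphism respecting the exterior derivatives was constructed. Since all the substantive work — the existence of the decomposition and its independence of the local model — was already carried out in Construction~\ref{10construction}, I do not expect a genuine obstacle; the only point requiring care is to confirm that the maps in the sequence are morphisms of $\corean{M}$-modules rather than merely of $\rean{M}$-modules or of sheaves of $\bb{C}$-vector spaces, which is immediate because Construction~\ref{10construction} produces a decomposition of $\corean{M}$-modules.
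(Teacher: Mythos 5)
Your proposal is correct and takes essentially the same route as the paper: the paper's proof likewise rests entirely on the decomposition $\Omega^1\left( \corean{M} \right)=\Omega^{1,0}M\oplus \Omega^{0,1}M$ from Construction~\ref{10construction} and simply exhibits the projections as $p_{1,0}=\tfrac{1}{2}\left( \id - i\mc{I}_M \right)$ and $p_{0,1}=\tfrac{1}{2}\left( \id + i\mc{I}_M \right)$, exactly the formulas you give. Your additional remarks on splitting and $\corean{M}$-linearity are correct but add nothing beyond what the paper leaves implicit.
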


\begin{proof}
The projection $p_{1,0}$ is given by
\[\alpha\mapsto \frac{1}{2}\left(\alpha -i \mc{I}_M\left( \alpha \right)\right)\]
and the projection $p_{0,1}$ is given by
\[\alpha\mapsto \frac{1}{2}\left(\alpha +i \mc{I}_M\left( \alpha \right)\right).\]
\end{proof}

\begin{defn}
Let $\left( M,\coan{M} \right)$ be a $\bb{C}$-analytic space. Then one defines the operators
\[\partial_M:=p_{1,0}\circ d_{\corean{M}}\colon \corean{M} \to \Omega^{1,0}M\]
and
\[\bar{\partial}_M:=p_{0,1}\circ d_{\corean{M}} \colon \corean{M} \to \Omega^{0,1}M.\]
From the local description of $\Omega^{0,1}M$ it is clear that $\coan{M}\subseteq \ker{\bar{\partial}_M}$.
\end{defn}

Now it is shown that the $\left( 1,0 \right)$-forms are generated by the holomorphic forms in a canonical way and that the $\left( 0,1 \right)$-forms are relative differential forms.

\begin{prop}
Let $\left( M,\coan{M} \right)$ be a complex analytic space. Then there exists a canonical isomorphism
\[\Omega^{1,0}M \cong \Omega^1\left( \coan{M} \right)\otimes_{\coan{M}} \corean{M}.\]
Moreover, the morphism $\phi\colon \left( M, \corean{M} \right) \to \left( M,\coan{M} \right)$, given by inclusion in the sheaf component, admits a pull-back morphism and this yields the isomorphism above.\\
Another way of saying this is
\[\Omega^{0,1}M \cong \Omega^1_{\phi}\left( \corean{M} \right),\]
that is, the $\left( 0,1 \right)$-forms are the relative differential forms of $\corean{M}$ relative to $\phi$.
\end{prop}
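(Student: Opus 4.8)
The plan is to realise the pull-back morphism $D\phi$ of the inclusion-induced map $\phi\colon \left( M,\corean{M} \right)\to \left( M,\coan{M} \right)$ as an isomorphism onto $\Omega^{1,0}M$, and then to read off the statement about relative forms from the quotient description of relative differential modules. First I would record the two inputs that are essentially free. The topological component of $\phi$ is the identity, so $\phi^{-1}\coan{M}=\coan{M}$ and hence
\[\phi^*\Omega^1\left( \coan{M} \right)=\corean{M}\otimes_{\coan{M}}\Omega^1\left( \coan{M} \right)=\Omega^1\left( \coan{M} \right)\otimes_{\coan{M}}\corean{M}.\]
By Corollary~\ref{pullbackcmorph}~(iii) the pull-back morphism $D\phi\colon \phi^*\Omega^1\left( \coan{M} \right)\to \Omega^1\left( \corean{M} \right)$ exists and is injective. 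Thus the entire content of the first isomorphism is the claim that $\im{D\phi}=\Omega^{1,0}M$; once this is shown, $D\phi$ is an isomorphism onto $\Omega^{1,0}M$ and the displayed isomorphism follows.

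To compute the image I would work in a local model $M\subseteq U\subseteq \bb{C}^n$ and compare $D\phi$ with the data used to define $\Omega^{1,0}M$ in Construction~\ref{10construction}. Writing $\phi_U\colon \left( U,\corean{U} \right)\to \left( U,\coan{U} \right)$ for the ambient inclusion morphism and $\iota,\iota_0$ for the closed embeddings of $\left( M,\corean{M} \right)$ and $\left( M,\coan{M} \right)$ into the respective models, the square with horizontal maps $\phi,\phi_U$ and vertical maps $\iota,\iota_0$ commutes, since on sheaves both routes realise the inclusion of $\coan{U}$ into $\iota_*\corean{M}$. On the ambient model one has $D\phi_U\left( \phi_U^* dz_i \right)=d_{\corean{U}}z_i = dz_i$, so $\im{D\phi_U}=\Omega^{1,0}U$, the free submodule generated by the $dz_i$. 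Applying the composition rule for pull-backs, $D\psi_1\circ\psi_1^*D\psi_2=D\left( \psi_2\circ\psi_1 \right)$, to the two factorisations of the common morphism $\phi_U\circ\iota=\iota_0\circ\phi$ yields
\[D\iota\circ \iota^*D\phi_U = D\phi\circ \phi^*D\iota_0.\]

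Since $D\iota_0$ is the surjective pull-back morphism of a closed embedding and $\phi^*$ is right exact, $\phi^*D\iota_0$ is surjective, so the right-hand side above has image $\im{D\phi}$. The left-hand side has image $D\iota\left( \iota^*\Omega^{1,0}U \right)$, which is exactly $\Omega^{1,0}M$ by the definition in Construction~\ref{10construction} (here again right exactness of $\iota^*$ is used to identify $\iota^*\im{D\phi_U}$ with $\im{\iota^*D\phi_U}$). Hence $\im{D\phi}=\Omega^{1,0}M$, which establishes the first isomorphism. For the relative statement I would then invoke the quotient description of relative differential modules proven earlier, namely $\Omega^1_{\phi}\left( \corean{M} \right)=\Omega^1\left( \corean{M} \right)/D\phi\left( \phi^*\Omega^1\left( \coan{M} \right) \right)$. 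By the image computation this quotient is $\Omega^1\left( \corean{M} \right)/\Omega^{1,0}M$, which the split exact sequence identifies with $\Omega^{0,1}M$; therefore $\Omega^{0,1}M\cong \Omega^1_{\phi}\left( \corean{M} \right)$.

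The main obstacle is the image computation, and specifically keeping the interaction between pull-back and submodule images under control: since $\phi^*$ and $\iota^*$ are only right exact, the identifications $\im{\iota^*D\phi_U}=\iota^*\Omega^{1,0}U$ and the surjectivity of $\phi^*D\iota_0$ must be phrased via right exactness rather than by naively pulling back inclusions. One must also check that these locally defined identifications are compatible across charts, which is guaranteed since $\Omega^{1,0}M$ was already shown in Construction~\ref{10construction} to be independent of the local model and since $D\phi$ is a globally defined morphism; consequently the local computations of its image glue to the asserted global equality.
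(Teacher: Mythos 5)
Your proposal is correct and follows essentially the same route as the paper's proof: both reduce to a local model, apply the composition rule for pull-backs to the commuting square relating $\phi$ to the ambient morphism on $U\subseteq\bb{C}^n$, use surjectivity of the embedding's pull-back morphism together with injectivity of $D\phi$ to identify $\im{D\phi}$ with $\Omega^{1,0}M$, and then read off the relative statement from the quotient description and the splitting $\Omega^1\left( \corean{M} \right)=\Omega^{1,0}M\oplus\Omega^{0,1}M$. Your added remarks on right exactness and on gluing across charts are sound refinements of the same argument.
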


\begin{proof}
The morphism $\phi$ admits a pull-back by Proposition~\ref{pullbackcmorph} and the pull-back is injective. Recall that locally the pull-back morphism is induced from a local model, i.e.
\[
\begin{tikzcd}
\left( V,\corean{V} \right) \arrow[r,"\Psi"] & \left( V,\coan{V} \right) \\
\left( U,\corean{U} \right) \arrow[u,"\iota^{\bb{C}}"] \arrow[r,"\rst{\phi}{U}"] & \left( U,\coan{U} \right) \arrow[u,"\iota"]
\end{tikzcd},
\]
meaning that
\[D\phi \circ \phi^*D\iota = D\iota^{\bb{C}}\circ  {\iota^{\bb{C}}}^*D\Psi.\]
As $\Psi$ is a morphism of manifolds, it follows that $\im{D\Psi}=\Omega^{1,0}V$ and thus,
\[\im{  D\iota^{\bb{C}}\circ  {\iota^{\bb{C}}}^*D\Psi }=\Omega^{1,0}U.\]
The morphism $\phi^*D\iota$ is surjective, therefore it follows that $\im{D\phi}= \Omega^{1,0}U$.\\
Because $\Omega^1\left( \corean{M} \right)=\Omega^{1,0}M\oplus \Omega^{0,1}M$ and the morphism $D\phi$ is injective with image $\Omega^{1,0}M$, it follows that $\Omega^{0,1}M$ is the relative differential module with respect to $\phi$.
\end{proof}

In order to rephrase the $\left( 0,1 \right)$-forms in terms of the complexification, one needs a technical lemma to extend derivations to the complexification.

\begin{lem}
Let $M^{\bb{C}}$ be the complexification of the complex analytic space $M$ and denote by $\iota\colon M\to M^{\bb{C}}$ the inclusion. Suppose that $\mc{F}$ is a finitely generated $\coan{M^{\bb{C}}}$-module and $\delta\colon \corean{M}\to \iota^{-1}\mc{F}$ is a derivation. Then there exists an open neighbourhood $U\subseteq M^{\bb{C}}$ of $M$ and a derivation $D\colon \coan{U}\to \rst{\mc{F}}{U}$ such that $\iota^{-1}D=\delta$.
\label{extenddervcomplex}
\end{lem}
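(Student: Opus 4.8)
The plan is to transport $\delta$ across the complexification isomorphism, extend it by hand inside local models of $M^{\bb{C}}$, and glue the local extensions by the method already used in Theorem~\ref{canonicalfibration}. First I would reinterpret $\delta$ intrinsically on $M^{\bb{C}}$. Since $A\colon \iota^{-1}\coan{M^{\bb{C}}}\to \corean{M}$ is an isomorphism of sheaves of rings, the inverse image $\iota^{-1}\mc{F}$ carries a $\corean{M}$-module structure through $A$ (this is what makes $\delta$ a derivation of $\corean{M}$ in the first place), and a straightforward check of the Leibniz rule shows that
\[\tilde{\delta}:=\delta\circ A\colon \iota^{-1}\coan{M^{\bb{C}}}\to \iota^{-1}\mc{F}\]
is a $\bb{C}$-linear derivation of $\iota^{-1}\coan{M^{\bb{C}}}$. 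Under the identification $A$ the desired conclusion $\iota^{-1}D=\delta$ reads $\iota^{-1}D=\tilde{\delta}$, so it suffices to extend $\tilde{\delta}$ to a derivation $D\colon\coan{U}\to\rst{\mc{F}}{U}$ on a neighbourhood $U$ of $M$.

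For the local extension, fix $p\in M$ and a local model of $M^{\bb{C}}$ around $p$, i.e.\ a closed embedding into an open $G\subseteq\bb{C}^N$ with finitely generated defining ideal $J=\left( f_1,\dots,f_r \right)$ and ambient coordinates $\zeta_1,\dots,\zeta_N$ (by Construction~\ref{conjugatecomplex} one may even take $G$ of product form $W\times\bar{W}$, although this is not needed). By the very definition of the inverse image sheaf, the section $\tilde{\delta}\left( \rst{\zeta_i}{M} \right)$ of $\iota^{-1}\mc{F}$ is represented near $p$ by a genuine section $m_i$ of $\mc{F}$ on a neighbourhood of $p$ in $M^{\bb{C}}$. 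I would then form the ambient derivation
\[\hat{D}:=\sum_{i=1}^N m_i\,\tb{\zeta_i}\colon \coan{G}\to \mc{F},\]
which by construction has germ $\tilde{\delta}_p$ at $p$ (derivations on $G$ are combinations of partial derivatives). Since $f_j$ maps to $0$ in $\coan{M^{\bb{C}}}$, each $\hat{D}\left( f_j \right)$ has vanishing germ at $p$; as there are finitely many $f_j$ and $\mc{F}$ is a sheaf, they all vanish on a common neighbourhood, and because $J\cdot\mc{F}=0$ the Leibniz rule gives $\hat{D}\left( J \right)=0$ there. Hence $\hat{D}$ descends (compare Proposition~\ref{localmodeldifferential}) to a derivation $D^p\colon \coan{V_p}\to\rst{\mc{F}}{V_p}$ with $D^p\left( \rst{\zeta_i}{V_p} \right)=m_i$. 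As derivations are determined by their values on the coordinate generators and $\rst{m_i}{M}$ represents $\tilde{\delta}\left(\rst{\zeta_i}{M}\right)$, after shrinking $V_p$ one has $\iota^{-1}D^p=\tilde{\delta}$ on $M\cap V_p$.

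It remains to glue. Over an overlap $V_p\cap V_q$ both $D^p$ and $D^q$ restrict to $\tilde{\delta}$ along $M$, so the difference $D^p-D^q$ is a derivation whose germ vanishes at every point of $M\cap V_p\cap V_q$. By the universal property of the differential module together with Proposition~\ref{dervgermsgermsderv}, it corresponds to an $\coan{V_p\cap V_q}$-linear morphism $\Omega^1\left( \coan{V_p\cap V_q} \right)\to \rst{\mc{F}}{V_p\cap V_q}$; since $\Omega^1\left( \coan{M^{\bb{C}}} \right)$ is coherent, the images of its finitely many local generators are sections of $\mc{F}$ with vanishing germ along $M\cap V_p\cap V_q$, hence vanish on an open neighbourhood, so $D^p=D^q$ there. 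Passing to a locally finite refinement $\left\{ W_i \right\}_{i\in I}$ (possible after shrinking $M^{\bb{C}}$ around $M$, as $M^{\bb{C}}$ is paracompact) and setting
\[X:=\left\{ q\in M^{\bb{C}}\mid q\in W_i\cap W_j\implies \left( D^i \right)_q=\left( D^j \right)_q \right\},\]
the same reasoning as in the proof of Theorem~\ref{canonicalfibration} (cf.\ \cite[II.9.5]{bredon}) shows that $X$ is an open neighbourhood of $M$ on which the $D^i$ agree on overlaps and therefore glue to a single derivation $D\colon\coan{X}\to\rst{\mc{F}}{X}$. Taking $U:=X$ yields $\iota^{-1}D=\tilde{\delta}$, i.e.\ $\iota^{-1}D=\delta$ under $A$.

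The routine part is the local construction, which produces the derivation only germ-wise along $M$; the \emph{main obstacle} is the passage to a single honest neighbourhood $U$ of $M$. This is precisely where coherence of $\Omega^1\left( \coan{M^{\bb{C}}} \right)$ (to upgrade germ-wise agreement to agreement on neighbourhoods of the overlaps) and the paracompactness gluing of Theorem~\ref{canonicalfibration} are essential. Conceptually this amounts to the statement that, for the closed paracompact subspace $M\subseteq M^{\bb{C}}$, every global section of $\iota^{-1}\homos{\Omega^1\left( \coan{M^{\bb{C}}} \right),\mc{F}}$ is represented by a section of that coherent derivation sheaf over some neighbourhood of $M$.
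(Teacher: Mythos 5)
Your proof is correct and follows essentially the same route as the paper: both produce local extensions of $\delta$ near each $p\in M$ using the identification $\corean{M,p}\cong\coan{M^{\bb{C}},p}$ together with the fact that derivations are determined by, and recoverable from, their germs (Proposition~\ref{dervgermsgermsderv}), and then glue by the locally finite covering argument of Theorem~\ref{canonicalfibration}. Your explicit coordinate construction of $\hat{D}=\sum_i m_i\,\tb{\zeta_i}$ merely unpacks the surjectivity statement of Proposition~\ref{dervgermsgermsderv} that the paper invokes directly, so the two arguments are the same in substance.
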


\begin{proof}
By Proposition~\ref{dervgermsgermsderv} it follows that around $p\in M$ the derivation $\delta$ is determined by $\delta_p\colon \corean{M,p}\to \mc{F}_p$. This implies that there exists an open neighbourhood $U_p\subseteq M^{\bb{C}}$ around $p$ and a derivation $D^p\colon \coan{U_p}\to \rst{\mc{F}}{U_p}$ such that $D^p_p=\delta_p$. Again, by Proposition~\ref{dervgermsgermsderv} the derivation $D^p$ is determined by $D^p_p$ locally around $p$. Therefore $\iota^{-1}D^p =\rst{\delta}{V_p}$ after shrinking $U_p$, where $V_p=M\cap U_p$.\\
These maps $D^{p}$ glue to a derivation on an open neighbourhood $U_p'\subseteq M^{\bb{C}}$ of $M$ by the same gluing argument as in the proof of Theorem~\ref{canonicalfibration}.
\end{proof}

All these methods allow one to see the $\left( 0,1 \right)$-forms as relative differential forms of the canonical fibration of the complexification.

\begin{prop}
Let $M$ be a complex analytic space and $\Phi\colon M^{\bb{C}}\to M$ be the canonical fibration (see Theorem \ref{canonicalfibration}) that extends the canonical morphism
\[\phi\colon \left( M,\corean{M} \right)\to \left( M,\coan{M} \right).\]
Then $\Omega^1_{\Phi}\left( \coan{M^{\bb{C}}} \right)$ is such that
\[\iota^{-1}\Omega^1_{\Phi}\left( \coan{M^{\bb{C}}} \right) \cong \Omega^{0,1}M,\]
where $\iota$ is the inclusion of $M$ in $M^{\bb{C}}$ and the isomorphism is such that $\iota^{-1}d_{\Phi}$ gets mapped to $\bar{\partial}_M$.\\
More precisely, the pair $\left( \iota^{-1}\Omega^1_{\Phi}\left( \coan{M^{\bb{C}}} \right),\iota^{-1}d_{\Phi} \right)$ is the differential module of $\corean{M}$ with respect to $\phi$.
\label{Rel10Dif}
\end{prop}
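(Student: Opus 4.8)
The plan is to reduce the relative statement to the already-established identification $\Omega^{0,1}M\cong\Omega^1_{\phi}\left( \corean{M} \right)$ from the preceding proposition, by recognizing that along $M$ the canonical fibration $\Phi$ literally restricts to $\phi$. The backbone is the observation that $\phi=\Phi\circ\iota$ as morphisms of $\bb{C}$-ringed spaces: topologically $\Phi\circ\iota=\id_M$, since by construction (Theorem~\ref{canonicalfibration}) the fibration restricts to the identity along $M$; and on sheaf components the composite $\coan{M}\xrightarrow{\tilde{\Phi}}\coan{M^{\bb{C}}}\to\corean{M}$ equals $A\circ B$, which by the definition of $B$ in Theorem~\ref{canonicalfibration} is exactly the inclusion $\tilde{\phi}$.

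The crux is to show that $\iota$ admits a pull-back morphism and that it is an \emph{isomorphism}
\[D\iota\colon \iota^*\Omega^1\left( \coan{M^{\bb{C}}} \right)\xrightarrow{\ \sim\ }\Omega^1\left( \corean{M} \right).\]
Since $A\colon\iota^{-1}\coan{M^{\bb{C}}}\to\corean{M}$ is an isomorphism of sheaves of rings, base change along it is trivial, so $\iota^*\Omega^1\left( \coan{M^{\bb{C}}} \right)=\iota^{-1}\Omega^1\left( \coan{M^{\bb{C}}} \right)$. I would prove that $\left( \iota^{-1}\Omega^1\left( \coan{M^{\bb{C}}} \right),\iota^{-1}d_{\coan{M^{\bb{C}}}} \right)$ satisfies the universal property of the differential module of $\corean{M}$ and then invoke uniqueness (Proposition~\ref{DiffIso}); the resulting canonical morphism respecting exterior derivatives is then precisely $D\iota$. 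Injectivity of the comparison morphism $h\mapsto h\circ\iota^{-1}d_{\coan{M^{\bb{C}}}}$ into $\der{\corean{M},\mc{F}}$ is automatic because the image of $\iota^{-1}d_{\coan{M^{\bb{C}}}}$ generates. For surjectivity, given a finitely generated $\corean{M}$-module $\mc{F}$ and a derivation $\delta$, I would extend $\mc{F}$ to a coherent sheaf $\mc{G}$ on a neighbourhood of $M$ in $M^{\bb{C}}$ (possible since $\corean{M}\cong\iota^{-1}\coan{M^{\bb{C}}}$ is coherent and coherent sheaves extend to neighbourhoods), apply Lemma~\ref{extenddervcomplex} to lift $\delta$ to a derivation $D\colon\coan{U}\to\rst{\mc{G}}{U}$ with $\iota^{-1}D=\delta$, represent $D$ by some $H$ out of $\Omega^1\left( \coan{U} \right)$ via the universal property upstairs, and set $h:=\iota^{-1}H$. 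This is the main obstacle: it is where the complexification isomorphism $A$, the extension Lemma~\ref{extenddervcomplex}, and the germwise control of derivations from Proposition~\ref{dervgermsgermsderv} must be married to differential modules.

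Granting $D\iota$, the relative statement follows formally. Using $\phi=\Phi\circ\iota$ together with the functoriality of pull-backs (the composition lemma $D\psi_1\circ \psi_1^*D\psi_2 = D\left( \psi_2\circ \psi_1 \right)$), I obtain $D\phi=D\iota\circ\iota^*D\Phi$, hence $\im{D\phi}=D\iota\left( \iota^{-1}\im{D\Phi} \right)$. The preceding proposition gives $\im{D\phi}=\Omega^{1,0}M$, so, $D\iota$ being an isomorphism, the submodule $\iota^{-1}\im{D\Phi}$ is carried isomorphically onto $\Omega^{1,0}M$. Finally, since $\iota^{-1}$ is exact, applying it to the defining quotient $\Omega^1_{\Phi}\left( \coan{M^{\bb{C}}} \right)=\quo{\Omega^1\left( \coan{M^{\bb{C}}} \right)}{\im{D\Phi}}$ yields
\[\iota^{-1}\Omega^1_{\Phi}\left( \coan{M^{\bb{C}}} \right)\cong\quo{\Omega^1\left( \corean{M} \right)}{\Omega^{1,0}M}=\Omega^{0,1}M=\Omega^1_{\phi}\left( \corean{M} \right),\]
with $\iota^{-1}d_{\Phi}$ sent to $\bar{\partial}_M=d_{\corean{M},\phi}$ under the identification $\iota^{-1}\coan{M^{\bb{C}}}\cong\corean{M}$. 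By Proposition~\ref{DiffIso} this exhibits $\left( \iota^{-1}\Omega^1_{\Phi}\left( \coan{M^{\bb{C}}} \right),\iota^{-1}d_{\Phi} \right)$ as the differential module of $\corean{M}$ relative to $\phi$, which is the refined assertion.
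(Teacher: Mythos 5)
Your proposal is correct, and it reorganizes the paper's argument rather than replacing it. The paper proves the refined assertion in one pass: it takes a $\phi$-derivation $\delta$ into a coherent $\corean{M}$-module, extends the module and (via Lemma~\ref{extenddervcomplex}) the derivation to a neighbourhood of $M$ in $M^{\bb{C}}$, represents the extension by a morphism $\alpha$ out of $\Omega^1\left( \coan{M^{\bb{C}}} \right)$, observes that $\alpha$ kills $\im{D\Phi}$ precisely because $\delta$ annihilates $\phi^{-1}\coan{M}$, and restricts back along $\iota$ — i.e.\ it verifies the $\phi$-relative universal property of $\left( \iota^{-1}\Omega^1_{\Phi}\left( \coan{M^{\bb{C}}} \right),\iota^{-1}d_{\Phi} \right)$ directly. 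You instead isolate the absolute statement first, establishing that $D\iota\colon \iota^{-1}\Omega^1\left( \coan{M^{\bb{C}}} \right)\to \Omega^1\left( \corean{M} \right)$ is an isomorphism (using exactly the same ingredients: coherent extension, Lemma~\ref{extenddervcomplex}, the universal property upstairs), and then obtain the relative claim formally from $\phi=\Phi\circ\iota$, the composition law for pull-backs, $\im{D\phi}=\Omega^{1,0}M$, and exactness of $\iota^{-1}$ applied to the defining quotient. What your route buys is a cleaner separation of the analytic content (the single isomorphism $D\iota$) from the purely formal relative bookkeeping; what the paper's route buys is that it never needs to name $D\iota$ or invoke the composition lemma. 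Two small points you should make explicit: first, the universal property is required against all finitely generated modules while the extension theorem and Lemma~\ref{extenddervcomplex} apply to coherent ones, so you need the paper's one-line reduction (both candidate modules are coherent, so it suffices to test against coherent modules and then compare with the known differential module $\Omega^1\left( \corean{M} \right)$ via Proposition~\ref{DiffIso}); second, in the last step you are implicitly using that $\im{\iota^{*}D\Phi}=\iota^{-1}\im{D\Phi}$, which holds because the base change along $A$ is trivial and $\iota^{-1}$ is exact — worth a sentence so the identification of the two submodules being quotiented out is airtight.
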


\begin{proof}
As discussed earlier, $\Omega^{0,1}M$ is the relative differential module of $\corean{M}$ with respect to $\phi$. Note that
\[\iota^{-1}d_{\Phi}\colon \iota^{-1}\coan{M^{\bb{C}}}\to \iota^{-1}\Omega^1_{\Phi}\left( \coan{M^{\bb{C}}} \right)\]
is a differential on $\corean{M}$ with values in $\iota^{-1}\Omega^1_{\Phi}\left( \coan{M^{\bb{C}}} \right)$ and the module is generated by the image of that differential.\\
Notice that, since $\Omega^{0,1}M$ and $\iota^{-1}\Omega^1_{\Phi}\left( \coan{M^{\bb{C}}} \right)$ are both coherent, it suffices to check that derivations into coherent modules are induced by morphisms on $\iota^{-1}\Omega^1_{\Phi}\left( \coan{M^{\bb{C}}} \right)$.\\
For any coherent $\corean{U}$-module $\mc{F}$ there exists a coherent $\coan{U^{\bb{C}}}$-module $\mc{F}'$ that extends $\mc{F}$ to an open neighbourhood of the complexfication (see~\cite[I.2.8]{realan}) and every $\phi$-derivation $\delta\colon\corean{U}\to \mc{F}$ also extends to a derivation $\delta'$ on $\coan{U^{\bb{C}}}$ and $\mc{F}'$ (see Lemma~\ref{extenddervcomplex}). For such modules and derivations there exists a morphism $\alpha\colon \rst{\Omega^{1}\left( \coan{M^{\bb{C}}} \right)}{U^{\bb{C}}}\to \mc{F}'$ such that $\alpha\circ \rst{d}{U^{\bb{C}}}= \delta'$, and thus
\[\iota^{-1}\alpha \circ \iota^{-1}\rst{d}{U^{\bb{C}}}=\delta.\]
However, $\delta_p=\alpha_p\circ d_{p}$ annihilates $\left(\Phi^{-1}\coan{M}\right)_p$, showing that $\alpha_p$ annihilates the image of $\left( \Phi^*\Omega^1\left( \coan{M} \right) \right)_p$ for every $p\in M$. Thus, after shrinking $U^{\bb{C}}$ one may assume that $\alpha$ annihilates the image of $\rst{\Phi^{*}\Omega^1\left( \coan{M} \right)}{U^{\bb{C}}}$. Therefore, this morphism descends to $\rst{\Omega^1_{\Phi}\left( \coan{M^{\bb{C}}} \right)}{U^{\bb{C}}}$ such that $\alpha \circ \rst{d_{\Phi}}{U^{\bb{C}}}=\delta$ and $\iota^{-1}\alpha \circ \iota^{-1}d_{\Phi}=\iota^{-1}\delta'=\delta$.\\
This shows that the pair $\left( \iota^{-1}\Omega^{1}_{\Phi}\left( \coan{M^{\bb{C}}} \right),\iota^{-1}d_{\Phi} \right)$ is the differential module of $\corean{M}$ with respect to $\phi$.
\end{proof}

\section{Relative Riemann-Hilbert Theorem for submersions}
\label{RelSub}

Before going into the details of the relative Riemann-Hilbert Theorem with singular fibers, it is instructive and necessary to recall the smooth version as proven by P.~Deligne~\cite{deligne}. This section starts with the basic definitions of connections, local triviality and relative local systems. Afterwards, implications and consequences of the smooth relative Riemann-Hilbert Theorem are gathered.

\begin{defn}
Let $f\colon M \to N$ be a morphism between $\bb{K}$-analytic spaces. The map $f$ is called \emph{locally trivial at $p\in M$} if there exist an open neighbourhood $U\subseteq M$ of $p$, an open neighbourhood $V\subseteq N$ of $f\left( p \right)$, with $f\left( U \right)\subseteq V$, a $\bb{K}$-analytic space $Z$ and an analytic isomorphism $\psi\colon U \to V \times Z$ such that the diagram
\[
\begin{tikzcd}
 U \arrow[rr,"\psi"] \arrow[dr,"f",swap] & & V\times Z \arrow[dl,"\pi_1"] \\
 & V &
\end{tikzcd}
\]
commutes. The map $f$ is called \emph{locally trivial} if it is  locally trivial at every element of $M$. In the following, we call the map a \emph{reduced} locally trivial morphism if $N$ and all fibers of $f$ are reduced. The map is called a \emph{submersion (at $p$)} if in the definition $Z$ can be chosen to be an open subset of $\bb{K}^k$.
\label{LocTriv}
\end{defn}

\begin{defn}
Let $f\colon M\to N$ be an analytic map between $\bb{K}$-analytic spaces and $\mc{F}$ a sheaf of $\mc{A}_M$-modules. Then a morphism of $f^{-1}\mc{A}_N$-modules $\nabla_f\colon \mc{F}\to \Omega^1_f\left( \mc{A}_M \right)\otimes_{\mc{A}_M} \mc{F}$ satisfying
\[\nabla_f(g\cdot s) =(d_f g)\otimes s + g\cdot(\nabla_f s),\]
for $g\in \mc{A}_M$ and $s\in \mc{F}$, is called \emph{relative connection on $\mc{F}$}. A relative connection defines morphisms of $f^{-1}\mc{A}_N$-modules
\[\nabla_f^{(1)}\colon \Omega^1_f\left( \mc{A}_M \right)\otimes_{\mc{A}_M} \mc{F} \to \Omega^{2}_f\left( \mc{A}_M \right)\otimes_{\mc{A}_M} \mc{F}\]
such that
\[\nabla_f^{\left( 1 \right)}\left( \alpha \otimes s \right) = \left( d_f\alpha \right)\otimes s -\alpha \wedge \left( \nabla_f s \right).\]
The relative connection is called \emph{flat} if $F^{\nabla}:=\nabla_f^{(1)}\circ \nabla_f =0$.\\
If the map $f$ is $f\colon M \to \left( \left\{ p \right\}, \bb{K} \right)$, then $\nabla_f$ is simply called \emph{connection}.
\end{defn}

\begin{defn}
Let $f\colon M \to N$ be an analytic map of $\bb{K}$-analytic spaces and let $\mc{F}$ and $\mc{G}$ be sheaves of $\mc{A}_{M}$-modules with relative connection $\nabla$ and $\nabla'$, respectively. Suppose $A\colon \mc{F} \to \mc{G}$ is a morphism of $\mc{A}_{M}$-modules such that
\[\id\otimes A \circ \nabla = \nabla' \circ A\colon \mc{F} \to \Omega^1_{f}\left( \mc{A}_{M} \right)\otimes_{\mc{A}_{M}} \mc{G}.\]
Then, $ A$ is called a \emph{morphism of connections} or \emph{preserving the connections $\nabla$ and $\nabla'$}. One may denote $A$ being a morphism of connections by writing
\[A\colon \left( \mc{F},\nabla \right) \to \left( \mc{G},\nabla' \right).\]
\end{defn}

\begin{lem}
Let $f\colon M \to N$ be an analytic map of $\bb{K}$-analytic spaces and let $A\colon \left( \mc{F},\nabla \right)\to \left( \mc{G},\nabla' \right)$ be a morphism of connections. Suppose that $\mc{F}$ is finitely generated. Then the following statements are true:
\begin{enumerate}[(i)]
\item $F^{\nabla'}\circ A = \id\otimes A \circ F^{\nabla}\colon \mc{F} \to \Omega^2_f\left( \coan{M} \right)\otimes_{\coan{M}} \mc{G}$.
\item The cokernel sheaf of $A$ comes with a connection $\nabla''$ such that the quotient morphism preserves the connections.
\end{enumerate}
In particular, if $\nabla'$ is flat, then so is $\nabla''$.
\label{connecpreserve}
\end{lem}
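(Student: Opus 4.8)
The plan is to derive both parts as formal consequences of the single defining relation $\left( \id\otimes A \right)\circ\nabla = \nabla'\circ A$ together with the Leibniz rule, so that essentially no geometry enters.

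For part (i) I would first record the intertwining identity $\left( \id\otimes A \right)\circ\nabla^{(1)} = \left( \nabla' \right)^{(1)}\circ\left( \id\otimes A \right)$ as morphisms $\Omega^1_f\left( \mc{A}_M \right)\otimes_{\mc{A}_M}\mc{F}\to\Omega^2_f\left( \mc{A}_M \right)\otimes_{\mc{A}_M}\mc{G}$. It suffices to check this on local generators $\alpha\otimes s$, where both sides are expanded using $\nabla_f^{(1)}\left( \alpha\otimes s \right)=\left( d_f\alpha \right)\otimes s-\alpha\wedge\nabla_f s$. The only nontrivial term is $\alpha\wedge\nabla_f s$; since $\id\otimes A$ commutes with the operation $\alpha\wedge\left( - \right)$ and $\left( \id\otimes A \right)\left( \nabla s \right)=\nabla'\left( As \right)$ by hypothesis, the two sides coincide term by term. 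Applying this identity to $\nabla s$ and using the hypothesis once more gives $F^{\nabla'}\left( As \right)=\left( \nabla' \right)^{(1)}\left( \nabla'\left( As \right) \right)=\left( \id\otimes A \right)\left( \nabla^{(1)}\left( \nabla s \right) \right)=\left( \id\otimes A \right)\left( F^{\nabla}s \right)$, which is precisely (i).

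For part (ii), write $q\colon\mc{G}\to\coker{A}$ for the quotient. The crucial input is right exactness of $\Omega^1_f\left( \mc{A}_M \right)\otimes_{\mc{A}_M}\left( - \right)$, which identifies $\ker{\id\otimes q}$ with $\im{\id\otimes A}$. Because $q\circ A=0$ we have $\left( \id\otimes q \right)\circ\left( \id\otimes A \right)=0$, and combining this with $\nabla'\circ A=\left( \id\otimes A \right)\circ\nabla$ shows that $\left( \id\otimes q \right)\circ\nabla'$ annihilates every local section of $\im{A}$ (which locally has the form $A\left( t \right)$). Hence $\left( \id\otimes q \right)\circ\nabla'$ descends to a map $\nabla''\colon\coker{A}\to\Omega^1_f\left( \mc{A}_M \right)\otimes_{\mc{A}_M}\coker{A}$ with $\nabla''\circ q=\left( \id\otimes q \right)\circ\nabla'$. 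That $\nabla''$ is again a relative connection follows by lifting an arbitrary local section of $\coker{A}$ to a section of $\mc{G}$ via surjectivity of $q$ and transporting the Leibniz rule of $\nabla'$ through the $\mc{A}_M$-linear map $q$.

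Finally, for the flatness claim I would prove the analogue of the intertwining identity for $q$, namely $\left( \nabla'' \right)^{(1)}\circ\left( \id\otimes q \right)=\left( \id\otimes q \right)\circ\left( \nabla' \right)^{(1)}$ (the same computation as in (i), with $q$ replacing $A$ and $\nabla''\circ q=\left( \id\otimes q \right)\circ\nabla'$ in place of the hypothesis), yielding $F^{\nabla''}\circ q=\left( \id\otimes q \right)\circ F^{\nabla'}$. If $\nabla'$ is flat the right-hand side vanishes, and since $q$ is an epimorphism of sheaves it follows that $F^{\nabla''}=0$. The main obstacle is the descent step in (ii): one must check that $\left( \id\otimes q \right)\circ\nabla'$ kills the whole image subsheaf rather than merely its generators, and it is exactly the right-exactness identification $\ker{\id\otimes q}=\im{\id\otimes A}$ that makes this work, since a connection fails to be $\mc{A}_M$-linear. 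I expect the finite generation of $\mc{F}$ to play no role in these formal identities; it presumably serves only to keep $\coker{A}$ inside the category of coherent modules in the intended applications.
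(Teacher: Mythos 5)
Your proposal is correct and takes essentially the same route as the paper: part (i) is the same Leibniz-rule computation (the paper expands $\nabla s$ over a finite generating system where you verify the intertwining identity on elementary tensors, which is why you are right that the finite generation of $\mc{F}$ is dispensable for these formal identities), and part (ii) is the identical descent of $\left( \id\otimes q \right)\circ \nabla'$ to the cokernel presheaf followed by sheafification, with the flatness claim obtained by applying the intertwining identity to $q$. The one imprecision is your attribution of the descent step to right exactness: what is actually needed is only the inclusion $\im{\id\otimes A}\subseteq \ker{\id\otimes q}$, which is immediate from $q\circ A=0$ and functoriality of $\Omega^1_f\left( \mc{A}_M \right)\otimes_{\mc{A}_M}\left( - \right)$, whereas the right-exactness identification $\ker{\id\otimes q}=\im{\id\otimes A}$ that you single out as ``the crucial input'' supplies the reverse inclusion, which is never used.
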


\begin{proof}
\begin{enumerate}[(i)]
\item Let $s\in \mc{F}$ and write $\nabla s = \sum_i \alpha_i \otimes s_i$, where the $s_i$ are generators of $\mc{F}$, and write $\nabla s_k = \sum_i \alpha_{ki}\otimes s_i$. One can assume this as the question is local. Note that $\nabla'\left( A\left( s \right) \right)=  \sum_i \alpha_i \otimes A\left( s_i \right)$ and similarly, $\nabla'\left( A\left( s_k \right) \right)=  \sum_i \alpha_{ki} \otimes A\left( s_i \right)$. Then one has
\begin{align*}
\id\otimes A\left( \nabla^{\left( 1 \right)}\left( \nabla\left( s \right) \right) \right) &=\id\otimes A \left( \sum_i d_f\alpha_i \otimes s_i - \sum_i \alpha_i\wedge \nabla s_i \right)\\
&= \sum_i d_f \alpha_i \otimes A\left( s_i \right) - \sum_{i,j} \alpha_i \wedge \alpha_{ij} \otimes A\left(s_j\right)\\
&= \sum_i d_f \alpha_i \otimes A\left( s_i \right) - \sum_{i} \alpha_i \wedge \nabla'\left(A\left(s_i\right)\right)\\
&= \nabla'^{\left( 1 \right)} \left(  \sum_i \alpha_i\otimes A\left( s_i \right) \right)\\
&= \nabla'^{\left( 1 \right)} \left(  \nabla' \left( A\left( s \right) \right) \right)\\
&= F^{\nabla'} \left( A\left( s \right) \right).
\end{align*}
\item Denote the image of $A$ by $K$ and the cokernel sheaf by $Q$. Moreover, denote by $q\colon \mc{G} \to Q$ the quotient morphism. Note that the definition
\[\nabla''\left( q\left( t \right) \right):= \id\otimes q \left( \nabla' \left(t\right) \right)\in \Omega^1_f\left( \mc{A}_{M} \right)\otimes_{\mc{A}_{M}} Q,\]
for $t\in \mc{G}$ is well-defined because if $q\left( t' \right)=q\left( t \right)$, then $t-t'\in K$ and hence
\[\nabla'\left( t-t' \right)\in \im{\id\otimes A}=\ker{\id\otimes q}\subseteq \Omega^1_f\left( \mc{A}_{M} \right)\otimes_{\mc{A}_{M}} \mc{G},\]
which means
\[\id\otimes q \left( \nabla' \left( t-t' \right) \right)=0.\]
This defines a relative connection on the cokernel presheaf of $A$ and by sheafification a relative connection on $Q$.
\end{enumerate}
\end{proof}

\begin{defn}
Let $f\colon M \to N$ be a locally trivial morphism of $\bb{K}$-analytic spaces. A \emph{relative local system on $M$} is a $f^{-1}\mc{A}_N$-module $V$ such that for every element $p\in M$ there exists an open neighbourhood $U\subset M$ of $p$ and a coherent sheaf of $\mc{A}_{f\left( U \right)}$-modules $\mc{F}$ such that $\rst{V}{U}\cong \left( \rst{f}{U} \right)^{-1} \mc{F}$. A relative local system is called \emph{torsion-free} if the modules $\mc{F}$ in the definition can be assumed to be torsion-free.\\
If $N=\left( \left\{ \mathrm{pt.} \right\},\bb{K} \right)$, then $V$ is simply called \emph{local system}.
\end{defn}

\begin{defn}
Let $f\colon M\to N$ be a locally trivial morphism of $\bb{K}$-analytic spaces and let $V$ be a relative local system. Then $\nabla^V:=\id_V\otimes d_f$ defines a canonical flat connection on the coherent module $V\otimes_{f^{-1}\mc{A}_N} \mc{A}_M$. Note that $V\to V\otimes_{f^{-1}\mc{A}_N} \mc{A}_M$ is injective as the canonical morphisms $\tilde{f}_p$ of stalks of the structure sheaves are faithfully flat because they are flat between local rings.
\label{canconnec}
\end{defn}

Now, recall the well-known relative Riemann-Hilbert correspondence in the case of a holomorphic submersion between complex analytic spaces.

\begin{thm}
Let $f\colon M\to N$ be a holomorphic submersion of complex spaces, i.e. the fibers are assumed to be complex manifolds.
\begin{enumerate}[(i)]
\item Let $\mc{F}$ be a coherent sheaf on $M$ with a flat relative connection $\nabla_f$. Then the sheaf $\ker{\nabla_f}$ is a relative local system. Moreover,
\[\mc{F}\cong \ker{\nabla_f}\otimes_{f^{-1}\mc{O}_N} \mc{O}_M.\]
\item Let $V$ be a relative local system. Then $\mc{F}:= V\otimes_{f^{-1}\mc{O}_N} \mc{O}_M$ is a coherent sheaf and $\nabla^V:=\id_V\otimes d_f$ defines a flat relative connection on $\mc{F}$. Moreover,
\[\ker{\nabla^V} = V.\]
\end{enumerate}
\label{relriemanhilbert}
\end{thm}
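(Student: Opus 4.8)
The plan is to prove the statement locally on $M$ and $N$ and then glue, exploiting the uniqueness that characterises all the objects involved. Since both hypotheses and conclusions are local and $f$ is a submersion, I would first pass to a local trivialisation $M=V\times Z$ with $V\subseteq N$ open, $Z\subseteq\bb{C}^k$ a polydisc centred at $0$, and $f$ the projection. In this model $\Omega^1_f\left( \coan{M} \right)$ is free on $dz_1,\dots,dz_k$, so a relative connection $\nabla_f$ is the same datum as a family of $f^{-1}\coan{N}$-linear operators $\nabla_1,\dots,\nabla_k$ (the contractions against $\partial_{z_i}$) satisfying $\nabla_i(gs)=(\partial_{z_i}g)s+g\nabla_i s$, and flatness $F^{\nabla}=0$ is exactly the integrability $[\nabla_i,\nabla_j]=0$. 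Throughout I use that $\ker{d_f}=f^{-1}\coan{N}$: a function annihilated by every $\partial_{z_i}$ is constant along the connected fibre $Z$, hence pulled back from $V$.

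For part (ii), the easy direction, I would form $\mc{F}:=V\otimes_{f^{-1}\coan{N}}\coan{M}$, coherent because $\coan{M}$ is coherent and $V$ is locally $f^{-1}\mc{H}$ for a coherent $\coan{N}$-module $\mc{H}$. That $\nabla^V=\id_V\otimes d_f$ is a relative connection is the Leibniz rule for $d_f$, and flatness follows from $d_f\circ d_f=0$ together with the fact that the generators $v\in V$ are covariant constant, so the curvature formula of Lemma~\ref{connecpreserve} gives $F^{\nabla^V}=0$. The inclusion $V\subseteq\ker{\nabla^V}$ is clear since $\nabla^V(v\otimes 1)=v\otimes d_f 1=0$, and $V\hookrightarrow\mc{F}$ by the faithful flatness noted in Definition~\ref{canconnec}. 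For the reverse inclusion I would Taylor-expand a local section in the fibre variables, $s=\sum_\alpha h_\alpha z^\alpha$ with $h_\alpha\in\mc{H}$; then $\nabla^V s=\sum_{\alpha,i}\alpha_i h_\alpha z^{\alpha-e_i}\otimes dz_i$ vanishes iff $\alpha_i h_\alpha=0$ for all $i,\alpha$, and as each nonzero $\alpha_i$ is a positive integer this forces $h_\alpha=0$ for $\alpha\neq0$, whence $s=h_0\in V$ and $\ker{\nabla^V}=V$.

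For part (i), the substantial direction, set $\mc{G}:=\ker{\nabla_f}$ and let $\mc{F}_0$ be the restriction of $\mc{F}$ to the slice $V\times\{0\}$, a coherent $\coan{V}$-module. The central input is a holomorphic existence-and-uniqueness theorem for the integrable system $\nabla_i s=0$ $(i=1,\dots,k)$ with prescribed datum $s|_{V\times\{0\}}=s_0\in\mc{F}_0$: flatness is precisely the integrability that makes this overdetermined system solvable, and the solution is unique with holomorphic dependence on the base parameter in $V$. I would establish this by induction on $k$, reducing to a single ODE $\nabla_\partial s=0$ and using $[\nabla_i,\nabla_j]=0$ to see that solutions built in one variable stay flat in the variables already treated. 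Restriction to the slice then gives an isomorphism $\mc{G}\xrightarrow{\sim}f^{-1}\mc{F}_0$, exhibiting $\ker{\nabla_f}$ as a (torsion-free, if $\mc{F}_0$ is) relative local system. Finally the natural evaluation map $\mc{G}\otimes_{f^{-1}\coan{N}}\coan{M}\to\mc{F}$ is an isomorphism: surjectivity because any section is reconstructed from its vertical jets along the flat extensions of a generating set of $\mc{F}_0$, and injectivity by the same uniqueness. The local isomorphisms patch because they are the unique ones compatible with $\nabla_f$ and with slice restriction.

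The hard part will be this holomorphic Frobenius/Cauchy–Kovalevskaya step in the required generality, where $\mc{F}$ is merely \emph{coherent} rather than a vector bundle; indeed $\mc{F}$ need not be locally free over $M$, since $\mc{F}_0$ lives on the possibly singular base $V$. I would manage this by observing that on each individual fibre $\{v\}\times Z$ the restricted sheaf carries an integrable connection over the manifold $Z$ and is therefore locally free there, so that fibrewise one solves an honest linear integrable system; the real work is to keep the solution holomorphic in $v$ across the singular locus and to see that the resulting flat sections assemble into the coherent relative local system $f^{-1}\mc{F}_0$. Concretely, after the reduction to one fibre variable the formal solution $s=\sum_n z^n s_n$ is determined recursively by $\nabla_\partial$, and convergence on shrinking polydiscs follows from a standard majorant estimate, with coherence of $\mc{F}$ and relative smoothness of $f$ ensuring holomorphy in the base parameter. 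This is exactly the content of Deligne's theorem~\cite{deligne}, to which the remaining bookkeeping may be referred.
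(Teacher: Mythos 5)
The paper offers no proof of this theorem beyond the citation ``See~\cite{deligne}'', and your proposal is an accurate outline of exactly the argument in that reference (local trivialisation of the submersion, reduction to an integrable system $\nabla_i s=0$ in the fibre variables, power-series/Frobenius solution with initial datum on the slice $V\times\{0\}$, fibrewise local freeness to handle mere coherence), itself deferring the convergence and bookkeeping back to Deligne. So you are taking essentially the same route as the paper, just with the intermediate structure of the cited proof made explicit.
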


\begin{proof}
See~\cite{deligne}.
\end{proof}

An important tool is going to be the pull-back of a connection since in the singular version one will pull-back the relative connection to a submersion and then work there.

\begin{defn}
Let 
\[
\begin{tikzcd}
M' \arrow[r,"g"] \arrow[d,"f'",swap] & M \arrow[d,"f"]\\
N' \arrow[r,"g'",swap] & N
\end{tikzcd}
\]
be a commutative square of holomorphic maps. Let $\left( \mc{F},\nabla_f \right)$ be a coherent sheaf on $M$ with $f$-relative connection. Locally the coherent sheaf $g^*\mc{F}$ is generated by pull-back sections $g^*s_i$ of $\mc{F}$. Define
\[\left(g^*\nabla_f\right)\left( \sum_i a_i g^*s_i \right) := \sum_i d_{f'}\left( a_i \right) g^*s_i + \sum_i a_i \left(D_{g'}^{f',f}g\otimes{\id_{g^*\mc{F}}}\right)\left(g^*\left( \nabla_f s_i \right)\right),\]
where $D_{g'}^{f',f}g\colon g^*\Omega^1_f\left( \coan{M} \right) \to \Omega^1_{f'}\left( \coan{M'} \right)$ is the canonical relative differential of $g$. This is well-defined because $\nabla_f$ is a connection and satisfies the Leibniz-rule. The map $g^*\nabla_f \colon g^*\mc{F} \to \Omega^1_{f'}\left( \coan{M'} \right)\otimes_{\coan{M'}} g^*\mc{F}$ is a $f'$-relative connection. It is called the \emph{pull-back connection}.\\
Note that the pull-back of a flat connection is also flat. Moreover, if $N'$ is a simple point and $M'$ is the fiber over this point, then the pull-back relative connection is simply a connection.
\label{pullbackconnec}
\end{defn}

\begin{prop}
\label{relativepullback}
Let 
\[
\begin{tikzcd}
M' \arrow[r,"g"] \arrow[d,"f'",swap] & M \arrow[d,"f"]\\
N' \arrow[r,"g'",swap] & N
\end{tikzcd}
\]
be a commutative square of holomorphic maps where $f$ and $f'$ are submersions. Let $\left( \mc{F},\nabla_f \right)$ be a coherent sheaf on $M$ with flat $f$-relative connection. Then, 
\[\ker{g^{*}\nabla_f} = g^{-1} \ker{\nabla_f} \otimes_{f'^{-1}g'^{-1}\mc{O}_N} f'^{-1}\mc{O}_{N'} .\]
The tensor product above makes sense since $g^{-1}\ker{\nabla_f}$ is a $g^{-1}f^{-1}\mc{O}_N=f'^{-1}g'^{-1}\mc{O}_N$-module.
\label{relpullbackconnec}
\end{prop}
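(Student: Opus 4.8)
The plan is to bring $\left(\mc{F},\nabla_f\right)$ into canonical form, compute the pull-back explicitly, and then invoke the submersion Riemann--Hilbert Theorem~\ref{relriemanhilbert} on the primed side. Since $f$ is a submersion and $\nabla_f$ is flat, part~(i) of Theorem~\ref{relriemanhilbert} gives that $V:=\ker{\nabla_f}$ is a relative local system with $\mc{F}\cong V\otimes_{f^{-1}\coan{N}}\coan{M}$. The comparison isomorphism is induced by the inclusion $V\hookrightarrow\mc{F}$, on which $\nabla_f$ vanishes; combined with the Leibniz rule this shows it carries the canonical connection $\id_V\otimes d_f$ of Definition~\ref{canconnec} to $\nabla_f$. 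Hence I may assume from the outset that $\mc{F}=V\otimes_{f^{-1}\coan{N}}\coan{M}$ and $\nabla_f=\id_V\otimes d_f$. Recall also that $g^{*}\nabla_f$ is flat (Definition~\ref{pullbackconnec}) and that $f'$ is a submersion, so Theorem~\ref{relriemanhilbert} will apply to the primed situation once the connection is pinned down.

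First I would compute the underlying module. Using that the inverse image $g^{-1}$ is monoidal and that the commutativity $f\circ g=g'\circ f'$ yields $g^{-1}f^{-1}\coan{N}=f'^{-1}g'^{-1}\coan{N}$, one gets
\[
g^{*}\mc{F}=g^{-1}\!\left(V\otimes_{f^{-1}\coan{N}}\coan{M}\right)\otimes_{g^{-1}\coan{M}}\coan{M'}
=g^{-1}V\otimes_{f'^{-1}g'^{-1}\coan{N}}\coan{M'}.
\]
Factoring the structure morphism $f'^{-1}g'^{-1}\coan{N}\to\coan{M'}$ through $f'^{-1}\coan{N'}$ rewrites this as $g^{*}\mc{F}\cong W\otimes_{f'^{-1}\coan{N'}}\coan{M'}$, where $W:=g^{-1}V\otimes_{f'^{-1}g'^{-1}\coan{N}}f'^{-1}\coan{N'}$ is exactly the right-hand side of the claim. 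I would also check that $W$ is a relative local system: locally $V\cong f^{-1}\mc{F}_0$ for a coherent $\coan{N}$-module $\mc{F}_0$, whence $W\cong f'^{-1}\!\left(g'^{*}\mc{F}_0\right)$ with $g'^{*}\mc{F}_0$ coherent on $N'$, so Theorem~\ref{relriemanhilbert}(ii) is applicable to $W$.

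It remains to identify the two connections under this isomorphism. By construction $g^{*}\mc{F}$ is generated over $\coan{M'}$ by the pull-back sections $g^{*}v$ with $v\in V$, and these correspond precisely to $W\otimes 1$. On such a generator the pull-back connection satisfies $g^{*}\nabla_f\left(g^{*}v\right)=\left(D_{g'}^{f',f}g\otimes\id\right)\!\left(g^{*}\left(\nabla_f v\right)\right)=0$, since $\nabla_f v=0$; likewise the canonical connection $\nabla^{W}:=\id_W\otimes d_{f'}$ annihilates $W\otimes 1$. As both are $f'$-relative connections obeying the Leibniz rule and agreeing on a generating set, they coincide, i.e. $g^{*}\nabla_f=\nabla^{W}$ under the identification above. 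Applying Theorem~\ref{relriemanhilbert}(ii) to the relative local system $W$ then gives $\ker{g^{*}\nabla_f}=\ker{\nabla^{W}}=W$, which is the assertion. I expect the main obstacle to be the careful bookkeeping of the tensor-product identifications of the second step, together with confirming that the canonical relative differential $D_{g'}^{f',f}g$ transports $d_f$ to $d_{f'}$ as its defining property demands; once $\left(\mc{F},\nabla_f\right)$ has been reduced to canonical form, however, the connection comparison collapses to the single observation that both connections kill the generating local system.
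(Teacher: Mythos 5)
Your proposal is correct and follows essentially the same route as the paper's proof: use Theorem~\ref{relriemanhilbert}(i) to put $\left(\mc{F},\nabla_f\right)$ into the canonical form $\ker{\nabla_f}\otimes_{f^{-1}\coan{N}}\coan{M}$, rewrite $g^{*}\mc{F}$ by the same chain of tensor-product identifications, recognise $g^{*}\nabla_f$ as the canonical connection of the relative local system $g^{-1}\ker{\nabla_f}\otimes_{f'^{-1}g'^{-1}\coan{N}}f'^{-1}\coan{N'}$, and conclude with Theorem~\ref{relriemanhilbert}(ii). You merely spell out two steps the paper leaves implicit (that the two connections agree because both kill the generating sections $g^{*}v$, and that $W$ is locally $f'^{-1}\left(g'^{*}\mc{F}_0\right)$, hence a relative local system), which is fine.
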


\begin{proof}
There are the following identities:
\begin{align*}
g^*\mc{F}&= g^{-1}\mc{F} \otimes_{g^{-1}\mc{O}_M} \mc{O}_{M'}\\
&= g^{-1}\left( \ker{\nabla_f} \otimes_{f^{-1}\mc{O}_N} \mc{O}_M \right) \otimes_{g^{-1}\mc{O}_M} \mc{O}_{M'}\\
&= g^{-1}\ker{\nabla_f} \otimes_{g^{-1}f^{-1}\mc{O}_N} \mc{O}_{M'}\\
&= \left(g^{-1}\ker{\nabla_f}\otimes_{f'^{-1}g'^{-1}\mc{O}_N} f'^{-1}\mc{O}_{N'}\right) \otimes_{g^{-1}f^{-1}\mc{O}_N} \mc{O}_{M'}
\end{align*}
Under this identification the connection $g^*\nabla_f$ is simply the canonical connection associated to the relative local system
\[g^{-1}\ker{\nabla_f}\otimes_{f'^{-1}g'^{-1}\mc{O}_N} f'^{-1}\mc{O}_{N'}.\]
By Theorem~\ref{relriemanhilbert} it follows that the kernel is equal to this relative local system.
\end{proof}

\begin{cor}
Let
\[
\begin{tikzcd}
M' \arrow[r,"\iota"] \arrow[d,"f'",swap] & M \arrow[d,"f"]\\
\left( \left\{ p \right\},\bb{C} \right) \arrow[r,"\iota'",swap] & N
\end{tikzcd}
\]

be a commutative square where $f$ is a submersion and $M'$ is the fiber of $f$ over $p\in N$. Suppose $\left( \mc{F},\nabla_f \right)$ is an $\coan{M}$-coherent sheaf with a flat $f$-relative connection. Then
\[\ker{\iota^*\nabla_f}_q=\ker{\nabla_f}_q \otimes_{\coan{N,p}} \bb{C}, \]
where $q\in M'$.
That is to say, the stalk of the kernel of the connection along the fibers is equal to the fiber of the kernel of the relative connection.
\end{cor}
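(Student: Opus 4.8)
The plan is to deduce this directly from Proposition~\ref{relativepullback} by specialising $N'$ to the reduced point $\left( \left\{ p \right\},\bb{C} \right)$ and then passing to stalks. The corollary is precisely the pointwise shadow of the sheaf-level identity already established there.

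First I would verify that the hypotheses of Proposition~\ref{relativepullback} are met for the square at hand, taking $g=\iota$, $g'=\iota'$, and $f'\colon M'\to \left( \left\{ p \right\},\bb{C} \right)$ the structure morphism of the fiber. The only thing to check is that $f'$ is a submersion: since $f$ is a holomorphic submersion, its fiber $M'$ is a complex manifold, and the morphism from a manifold to a single point is locally the projection of an open subset of $\bb{C}^k$ to the point. Hence $f'$ is a submersion and Proposition~\ref{relativepullback} applies, giving
\[\ker{\iota^*\nabla_f}=\iota^{-1}\ker{\nabla_f}\otimes_{f'^{-1}\iota'^{-1}\coan{N}} f'^{-1}\coan{N'}.\]

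Next I would identify the sheaves on the right. Because $N'=\left( \left\{ p \right\},\bb{C} \right)$ is a reduced point, $\coan{N'}=\bb{C}$, so $f'^{-1}\coan{N'}$ is the constant sheaf $\bb{C}$ on $M'$; likewise $\iota'^{-1}\coan{N}$ is the skyscraper with stalk $\coan{N,p}$, and therefore $f'^{-1}\iota'^{-1}\coan{N}$ is the constant sheaf $\coan{N,p}$ on $M'$. Finally I would take the stalk at $q\in M'$, using that the inverse image functor commutes with the formation of stalks, i.e. $\left( \iota^{-1}\mc{G} \right)_q=\mc{G}_{\iota\left( q \right)}$, and that the stalk of a tensor product of sheaves of modules is the tensor product of the stalks. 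Since $\left( f'^{-1}\coan{N'} \right)_q=\bb{C}$ and $\left( f'^{-1}\iota'^{-1}\coan{N} \right)_q=\coan{N,p}$, this yields
\[\ker{\iota^*\nabla_f}_q=\ker{\nabla_f}_{\iota\left( q \right)}\otimes_{\coan{N,p}} \bb{C}=\ker{\nabla_f}_q\otimes_{\coan{N,p}} \bb{C},\]
where in the last step $q\in M'$ is identified with its image under the embedding $\iota$.

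The argument is essentially bookkeeping, so I do not expect a genuine obstacle; the place where I would be most careful is the identification of the inverse-image structure sheaves over the point (recognising $f'^{-1}\coan{N'}$ as the constant sheaf $\bb{C}$ and $f'^{-1}\iota'^{-1}\coan{N}$ as the constant sheaf $\coan{N,p}$) and the verification that $f'$ qualifies as a submersion so that Proposition~\ref{relativepullback} is indeed applicable.
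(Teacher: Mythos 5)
Your proposal is correct and matches the paper's intent exactly: the corollary is stated without a separate proof precisely because it is the specialisation of Proposition~\ref{relativepullback} to $N'=\left( \left\{ p \right\},\bb{C} \right)$, followed by the stalk-level identifications you carry out. Your verification that $f'$ is a submersion and your identification of $f'^{-1}\coan{N'}$ and $f'^{-1}\iota'^{-1}\coan{N}$ as the constant sheaves $\bb{C}$ and $\coan{N,p}$ supply exactly the bookkeeping the paper leaves implicit.
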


This observation about the pull-back of relative connections to the fibers allows one to recognize when a given flat relative connection is trivial. Namely, exactly when the connections along the fibers all have global parallel frames.

\begin{thm}
Let $f\colon M\times N \to M$ be a holomorphic submersion of reduced complex spaces with connected fibers and let $\left( \mc{F},\nabla_f \right)$ be a locally free sheaf with flat $f$-relative connection. Then there exists a coherent sheaf $\mc{G}$ on $M$ such that $f^*\mc{G}\cong \mc{F}$ and $\nabla_f=\nabla^{f^{-1}\mc{G}}$ if and only if $\rst{\left( \mc{F},\nabla_f \right)}{\preim{f}{x}}\cong \left( \mc{O}_{\preim{f}{x}}^{\oplus r}, d^{\oplus r} \right)$ for every $x\in M$.
\label{nonrelativetorelative}
\end{thm}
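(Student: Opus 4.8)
The plan is to reduce to the relative Riemann--Hilbert Theorem for submersions (Theorem~\ref{relriemanhilbert}) and then exploit reducedness to show that the resulting relative local system is constant along the connected fibers. I would treat the two implications separately.

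For the direction ($\Rightarrow$), assume $\mc{F}\cong f^*\mc{G}$ with $\nabla_f=\nabla^{f^{-1}\mc{G}}$. Since $f$ is a submersion, the stalk maps $\tilde{f}_p$ are faithfully flat (Definition~\ref{canconnec}); as $f^*\mc{G}\cong\mc{F}$ is locally free of rank $r$, faithfully flat descent forces $\mc{G}$ to be locally free of rank $r$. Writing $\iota\colon\preim{f}{x}\to M\times N$ for a fiber inclusion, the composite $f\circ\iota$ is the constant map to $x$, so $\iota^*\mc{F}\cong(f\circ\iota)^*\mc{G}\cong\coan{\preim{f}{x}}^{\oplus r}$ and the pulled-back canonical connection is the trivial connection $d^{\oplus r}$. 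This is exactly the asserted triviality on each fiber.

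For the converse ($\Leftarrow$), the first step is to apply Theorem~\ref{relriemanhilbert} to the flat relative connection $\left(\mc{F},\nabla_f\right)$, producing the relative local system $V:=\ker{\nabla_f}$ together with $\mc{F}\cong V\otimes_{f^{-1}\coan{M}}\coan{M\times N}$ and $\nabla_f=\nabla^V$. It therefore suffices to produce a global coherent $\mc{G}$ on $M$ with $f^{-1}\mc{G}\cong V$, for then $f^*\mc{G}\cong\mc{F}$ and $\nabla^{f^{-1}\mc{G}}=\nabla^V=\nabla_f$. Locally on $M\times N$ one has $V\cong f^{-1}\mc{H}$ with $\mc{H}$ coherent on an open $U\subseteq M$; comparing stalks in $\mc{F}\cong f^*\mc{H}$ and using faithful flatness of $\tilde{f}_p$ again shows each such $\mc{H}$ is locally free of rank $r$, so $V$ is a locally free relative local system of rank $r$. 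The heart of the matter is then to show that $V$ is constant along the fibers. Fix $U\subseteq M$ over which $V\cong f^{-1}\mc{H}$ near a base point $n_0\in N$; along each fiber $V$ is locally constant, and relative parallel transport around a loop $\gamma\in\pi_1(N,n_0)$ defines an automorphism $\rho(\gamma)\in GL_r\bigl(\coan{M}(U)\bigr)\cong\mathrm{Aut}(\mc{H})$. By the corollary following Proposition~\ref{relpullbackconnec}, the restriction of $V$ to the fiber $\preim{f}{x}$, after the base change $\otimes_{\coan{M,x}}\bb{C}$, computes $\ker{\iota^*\nabla_f}$; by hypothesis this is the trivial $\bb{C}$-local system $\bb{C}^{\oplus r}$ on the connected fiber, whose monodromy is precisely the value $\rho(\gamma)(x)\in GL_r(\bb{C})$, and so equals the identity for every $x\in U$. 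A section of $GL_r(\coan{M})$ over the \emph{reduced} space $U$ that is the identity at every point is the identity; hence $\rho(\gamma)=\id$ for all $\gamma$, i.e.\ the relative monodromy is trivial.

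It remains to globalize and conclude. Using the global section $s\colon M\to M\times N$, $x\mapsto(x,n_0)$, which satisfies $f\circ s=\id_M$, set $\mc{G}:=s^{-1}V$; this is coherent since locally $s^{-1}f^{-1}\mc{H}\cong\mc{H}$. Because the fibers are connected and the relative monodromy is trivial, parallel transport furnishes canonical isomorphisms $V_{(x,n_0)}\cong V_{(x,n)}$, independent of the chosen path, and these assemble into a morphism $f^{-1}\mc{G}=f^{-1}s^{-1}V\to V$ which is locally the tautological identification $f^{-1}\mc{H}\cong f^{-1}\mc{H}$, hence an isomorphism. Consequently $\mc{F}\cong f^*\mc{G}$ and $\nabla_f=\nabla^{f^{-1}\mc{G}}$, finishing the converse. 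I expect the main obstacle to be precisely the passage from trivial monodromy of the reduced (mod $\fr{m}_x$) fiber systems to triviality of the relative monodromy over $\coan{M}$: this is where reducedness of $M$ is indispensable, since it guarantees that the holomorphic $GL_r$-valued transport functions are determined by their pointwise values. Without reducedness the transport could take values in $\id+\fr{m}_x$-matrices and fail to be trivial, and the theorem would break down.
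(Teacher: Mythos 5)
Your proposal is correct in substance, but it follows a genuinely different route from the paper's proof, so a comparison is worth making. The paper does not invoke $\pi_1(N)$ or a monodromy representation at all: it works directly with $V=\ker{\nabla_f}$, defines the evaluation map $\phi\colon f_*\left(\rst{V}{U\times N}\right)\to s^{-1}V$ along a section $s$ of $f$, and proves $\phi$ is bijective by an explicit countable-cover argument (compactness of $\bar U$ to get product neighbourhoods $U_{y,i}\times W_y$, reordering so consecutive $W_{y_j}$ overlap, and successively extending parallel sections, using fiber-wise constancy to check consistency on overlaps). Your monodromy formulation packages exactly that extension argument into the statement ``$\rho(\gamma)$ is pointwise the identity, hence the identity by reducedness,'' and you correctly identify both the role of the corollary to Proposition~\ref{relativepullback} (the fiber of $\ker{\nabla_f}$ computes the fiber-wise local system) and the precise point where reducedness is indispensable — which is also where the paper uses it, in the guise of ``a section of a locally free sheaf with vanishing values is zero.'' What your version buys is conceptual clarity and brevity; what the paper's version buys is that it never has to justify that relative local systems (sheaves locally of the form $f^{-1}\mc{H}$, with $\mc{H}$ varying and the trivializing neighbourhoods shrinking in the $M$-direction) are classified by a transport representation valued in $\mathrm{Aut}(\mc{H})$ over a \emph{fixed} $U$. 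That well-definedness is the one under-justified step in your write-up: to get $\rho(\gamma)\in GL_r\left(\coan{M}(U)\right)$ rather than merely a germ-level automorphism over ever-smaller neighbourhoods of each $x$, you need precisely the compactness-and-refinement construction that occupies the middle of the paper's proof. The gap is fillable (e.g.\ define the transport stalk-wise and observe it is holomorphic in $x$, or reproduce the cover argument), so this is a presentational omission rather than an error; your forward implication matches the paper's, which simply declares it clear.
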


\begin{proof}
$\Rightarrow$: It is clear, that the induced holomorphic connection on the fibers for $\nabla^{f^{-1}\mc{G}}$ is trivial.\\
$\Leftarrow$: Throughout this proof, denote by $M_{f,x}$ the fiber of $f$ in $M\times N$ over $x$ and by $V:= \ker{\nabla_f}$ the relative local system of parallel sections.\\
Let $s\colon U \to W$ be a section of $f$ such that $\rst{V}{W}\cong \left( \rst{f}{W}\right)^{-1}\mc{G}$ for some coherent sheaf $\mc{G}$, where $W\subseteq M\times N$ is an open subset and $U:=f\left( W \right)$. Then the restriction maps of $V$ induce a sheaf morphism
\[\phi\colon f_*\left( \rst{V}{U\times N} \right) \to s^{-1}V\cong \mc{G}.\]
The morphism sends a section over $\preim{f}{U'}$ to the germs in the image of $s$. Suppose this morphism was not injective, this would mean that two such sections $t,t'$ have the same germs along the image of $s$. In particular, the section $t-t'$ vanishes on an open set. This implies that for every $x\in U'$ the restriction $\rst{\left( t-t' \right)}{M_{f,x}}\in \ker{\rst{\nabla_f}{M_{f,x}}}$ vanishes on an open subset. However $N$ is connected and sections of  $\ker{\rst{\nabla_f}{M_{f,x}}}$ are locally constant and thus $\rst{\left( t-t' \right)}{M_{f,x}}$ is zero on all of $M_{f,x}$. As the value of a section in the fibers is invariant under pullback, one has
\[\forall \left( x,y \right)\in U'\times N\colon \left[ t-t' \right]_{\left( x,y \right)}=0 \in \mc{F}\otimes_{\coan{U'\times N}} \bb{C}.\]
Thus, it follows that $t=t'$ as $\mc{F}$ is locally free and thus $\phi$ is injective.\\
To prove surjectivity, one needs to show that, given $t\in V_p$, there exists $U\subseteq M$ and a section of $t'$ of $V$ over $U\times N$ such that $t'_p=t$. To do this, one needs to construct a well controlled open cover and then successively extend the locally defined sections.\\
Let $t\in V_p$. First, let $U\subseteq M$ and $W_0$ be such that there exists a representative $s$ on $U\times W_0$. Since $M$ is locally compact, $U$ may be chosen such that $\bar{U}$ is compact. Now, for every $p=(x,y)\in \bar{U}\times N$, choose open sets $U_p\times W_p$ such that
\[\rst{V}{U_p\times W_p} \cong  \rst{\left(f^{-1}\mc{G}\right)}{U_p\times W_p}.\]
In order to cover $\bar{U}\times\left\{y\right\}$ one only needs finitely many of the open sets $U_{\left( x,y \right)}$. Denote these by $\left\{U'_{y,i}\right\}_{i=1}^{a_y}$ and then set $U_{y,i}:= U'_{y,i}\cap  U$. Denote the corresponding open subset $W_p$ by $W_{y,i}$ and set $W_y:=\bigcap_{i=1}^{a_y}W_{y,i}$. It may be assumed that every $U_{y,i}\times W_y$ is connected. Then $\left\{ U_{y,i}\times W_y \right\}_{y\in N,i=1}^{a_y}$ is an open cover of $U\times N$. In particular, $W_y$ is an open cover of $N$. As $N$ is a manifold it follows that there exist countable many $y_j\in N$ such that $W_{y_j}$ is a countable open cover. Furthermore, it may be assumed that $W_{y_j}\cap \left(\bigcup_{i=1}^{j-1} W_{y_i}\right)$ is never empty and $W_{y_1}\cap W_0\neq \emptyset$, by the connectedness of $N$ and reordering the $y_j$. Now, $\left\{ U_{y_j,i}\times W_{y_j} \right\}_{j,i=1}^{\infty,a_{y_j}}$ is a countable cover of $U\times N$ by connected open sets. This cover is nice enough for our purposes.\\
Each open set $U_{y_1,i}\times W_{y_1}$ has a non-empty intersection with $U\times W_0$ by assumption and one has $\rst{V}{U_{y_1,i}\times W_{y_1}}\cong \rst{\left( f^{-1}\mc{G} \right)}{U_{y_1,i}\times W_{y_1}}$. Pick a connected component $A$ of $W_{y_1}\cap W_0$ and extend $s$ from $U_{y_1,j}\times A$ to $U_{y_1,j}\times W_{y_1}$ as $s_{1,j}$.\\
As the intersection $W_{y_1}\cap W_0$ may have multiple connected components, one first needs to verify that $s_{1,j}$ agrees with $s$ on the entire intersection. This follows, just as before, from restricting to fibers, where everything is locally constant with respect to the parallel frames that exist globally along the fibers by assumption. Note that $W_0$ and $W_{y_1,j}$ are connected and thus parallel sections on them are constant.\\
By the exact same argument it follows that
\[\rst{s_{1,j}}{\left( U_{y_1,j}\cap U_{y_1,k}\right) \times W_{y_1}} = \rst{s_{1,k}}{\left( U_{y_1,j}\cap U_{y_1,k} \right)\times W_{y_1}}\]
as these sections agree on $\left( U_{y_1,j}\cap U_{y_1,k} \right)\times \left(W_{y_1}\cap W_0\right)$.\\
By induction one ends up with a section $s_i$ over $U\times \bigcup_{j=1}^i W_{y_j}$ for every $i\in \left\{ 1,2,\dots \right\}$. As $V$ forms a sheaf it follows that one ends up with a section $t'$ on $U\times N$ such that $t'_p=t$.\\
Therefore, the morphism $\phi$ is an isomorphism and $f_*V$ is actually $\coan{M}$-coherent.\\
Recall that the morphism
\[f^{-1}f_*V \to V\]
is given by the restriction of sections. However the preceding argument showed that this is an isomorphism and hence $V\cong f^{-1}f_*V$. The sheaf $f_*V$ is coherent on $M$ and $f^*f_*V \cong \mc{F}$. That $\nabla_f \cong \nabla^{f^{-1}f_*V}$ is clear because their relative local systems of parallel sections are isomorphic.
\end{proof}

\section{Reducedness, torsion and tame connections}
\label{interlude}

For the proof of the Relative Riemann-Hilbert Theorem in this paper, the assumptions of reducedness and torsion-freeness are important. In particular, the notion of \emph{torsion-free} is needed in the case where the stalks of the structure sheaf are not integral domains. This means, the concept of torsion is needed over possibly reducible spaces. Often references restrict themselves to the theory of torsion over locally irreducible spaces as the theory is very well-behaved there.\\
This section contains two important technical results, namely, Proposition~\ref{torsionfreeembed2} and Proposition~\ref{PullConTor}. The first statement shows that even over locally reducible spaces, one can locally embed torsion-free sheaves into a free sheaf. This statement utilizes the normalisation of a complex analytic space and the fact that this statement is true on the normalisation. The second statement explains that pulling back a torsion-free sheaf on a locally trivial morphism with potentially singular fibers to a submersion remains torsion-free if it is equipped with a flat relative connection.\\
The following section starts with a recap of reduced spaces, then moves on to a discussion of torsion-free sheaves and concludes with the torsion-freeness of pull-backs of torsion-free sheaves with flat connections. Moreover, the useful notion of \emph{tame} connections is established and discussed in some settings.

\begin{defn}
Let $\left( M,\mc{A} \right)$ be a $\bb{K}$-ringed space. The space $\left( M,\mc{A} \right)$ is called \emph{reduced} if the point-wise evaluation morphism
\[\mc{A} \to \mathrm{Map}\left( M,\bb{K} \right),\; s \mapsto \left(p\mapsto s\left( p \right)\right)\]
to the residue field is injective.
\end{defn}

\begin{rem}
Let $\left( M,\coan{M} \right)$ be a reduced analytic space. Then it is a standard fact that the set of singular points $\mathrm{sing}\left( M \right)$ is nowhere dense and hence, the set of smooth (or regular) points $M_{\mathrm{reg}}$ is dense. Moreover, it is simple to see that the stalks of the structure sheaf of a reduced space are reduced rings, i.e. contain no nilpotent elements.
\end{rem}

Now the discussion moves on to the definition of torsion in the case of possibly reducible spaces. In a lot of cases, the study of torsion is restricted to the case where the rings are assumed to be integral domains. As the discussion in this paper does not depend on this restriction, some elementary aspects of torsion in this more general situation will be discussed.

\begin{defn}
Let $\left( M,\mc{A} \right)$ be a reduced real or complex analytic space and $\mc{F}$ an $\mc{A}$-module. A section $s\in \mc{F}\left( U \right)$ is called \emph{torsion} if for every $p\in U$ there exists a regular element $r_p\in \coan{M,p}$ such that $r_p\cdot s_p=0$. Recall that a regular element of a ring is an element that is not a zero-divisor. The subsheaf of torsion sections is denoted by $\mc{T}_{\mc{F}}$.\\
If $\mc{T}_{\mc{F}}=0$, the module $\mc{F}$ is called \emph{torsion-free}.
\label{TorFree}
\end{defn}

Observe that dividing out the torsion submodule results in a torsion-free module, as expected.

\begin{lem}
Let $\left( M,\mc{A} \right)$ be a reduced real/complex analytic space and $\mc{F}$ an $\mc{A}$-module. As $\mc{A}$ is a sheaf of commutative rings, it follows that $\mc{T}_{\mc{F}}$ is a submodule of $\mc{F}$.\\
Moreover, the quotient $\mc{F}_{\mathrm{tf}}:=\quo{\mc{F}}{\mc{T}_{\mc{F}}}$ is torsion-free.
\end{lem}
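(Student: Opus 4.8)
The plan is to reduce the entire statement to a stalk-wise computation together with a single geometric input. The submodule assertion is elementary. The torsion-freeness of the quotient hinges on identifying the stalk $\left( \mc{T}_{\mc{F}} \right)_p$ with the \emph{algebraic} torsion submodule of $\mc{F}_p$, that is, with the set $T_p := \left\{ m \in \mc{F}_p \mid rm = 0 \text{ for some non-zerodivisor } r \in \mc{A}_p \right\}$. Once this identification is available, both claims fall out of a routine argument.

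First I would settle that $\mc{T}_{\mc{F}}$ is an $\mc{A}$-submodule. It is a subpresheaf, since the torsion condition is preserved under restriction, and in fact a subsheaf, because being torsion is checked stalk-wise, so a section glued from torsion sections is again torsion. For the module structure, if $s$ is torsion then so is $a\cdot s$ for $a \in \mc{A}(U)$, as $r_p\left( a_p s_p \right) = a_p\left( r_p s_p \right) = 0$; and if $s,t$ are torsion with $r_p s_p = 0$ and $r'_p t_p = 0$ for non-zerodivisors $r_p, r'_p$, then $r_p r'_p$ is again a non-zerodivisor (a product of non-zerodivisors in a commutative ring is a non-zerodivisor) and kills $s_p + t_p$. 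Hence $\mc{T}_{\mc{F}}$ is closed under the module operations.

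For the quotient I would argue as follows. The inclusion $\left( \mc{T}_{\mc{F}} \right)_p \subseteq T_p$ is immediate, since a germ of a torsion section is killed by a non-zerodivisor at $p$. The reverse inclusion $T_p \subseteq \left( \mc{T}_{\mc{F}} \right)_p$ is the crux and rests on the claim that \emph{if $\rho \in \mc{A}(V)$ has $\rho_p$ a non-zerodivisor in $\mc{A}_p$, then, after shrinking $V$, $\rho_q$ is a non-zerodivisor in $\mc{A}_q$ for every $q \in V$}. Granting this, take $m \in T_p$ with $\rho_p m = 0$ for a non-zerodivisor $\rho_p$, choose representatives with $\rho s = 0$ on $V$, and shrink $V$ as in the claim; then $\rho_q s_q = 0$ with $\rho_q$ a non-zerodivisor for every $q$, so $s$ is a torsion section and $m = s_p \in \left( \mc{T}_{\mc{F}} \right)_p$. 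Thus $\left( \mc{T}_{\mc{F}} \right)_p = T_p$, and therefore $\left( \mc{F}_{\mathrm{tf}} \right)_p = \mc{F}_p / T_p$. The purely algebraic fact that $\mc{F}_p / T_p$ is torsion-free over $\mc{A}_p$ (if $r\bar{m} = 0$ then $rm \in T_p$, so $r'rm = 0$ for some non-zerodivisor $r'$, and since $r'r$ is a non-zerodivisor we get $m \in T_p$) shows that every stalk of $\mc{F}_{\mathrm{tf}}$ is torsion-free. Finally, a torsion section of $\mc{F}_{\mathrm{tf}}$ has, by definition, each germ killed by a non-zerodivisor; as the stalks are torsion-free these germs vanish, so the section is zero, and $\mc{F}_{\mathrm{tf}}$ is torsion-free.

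The main obstacle is the propagation claim. In the complex-analytic case I would prove it geometrically: shrink $V$ so that its irreducible components $X_1, \dots, X_k$ all pass through $p$; regularity of $\rho_p$ means $\rho$ vanishes on no local branch at $p$, so $\rho|_{X_i} \not\equiv 0$ and $W_i := \{ \rho = 0 \} \cap X_i$ is a proper analytic subset of the irreducible, hence pure-dimensional, set $X_i$, giving $\dim_q W_i \le \dim W_i < \dim X_i = \dim_q X_i$ for all $q \in X_i$. Since the minimal primes of $\mc{A}_q$ correspond to the local branches at $q$, each of dimension $\dim_q X_i$ for the appropriate $i$, no branch can lie in $W_i$; thus $\rho_q$ lies in no minimal prime and is a non-zerodivisor. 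For the real-analytic case the cleanest route is to pass to the complexification: the isomorphism $\corean{M,p} \cong \coan{M^{\bb{C}},p}$ identifies the zero-divisor structure, and since $\corean{M} = \rean{M}^{\oplus 2}$ is free, hence faithfully flat, over $\rean{M}$, an element of $\rean{M,p}$ is a non-zerodivisor exactly when it is one in $\corean{M,p}$, reducing the claim to the complex case just treated.
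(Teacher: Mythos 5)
Your argument is correct, but for the second assertion it takes a noticeably heavier route than the paper. The submodule part coincides with the paper's. For torsion-freeness of $\quo{\mc{F}}{\mc{T}_{\mc{F}}}$, the paper argues directly: if $t=\left[ s \right]$ is a torsion section of the quotient, then at each $p$ some regular $r_p$ forces $r_ps_p\in \mc{T}_{\mc{F},p}$, i.e.\ $r_ps_p$ is the germ of a torsion section and is therefore itself killed by a further regular element $u_p$; since $u_pr_p$ is regular, $s$ is torsion and $t=0$. Because the definition of a torsion section is already pointwise, only the easy inclusion $\left( \mc{T}_{\mc{F}} \right)_p\subseteq T_p$ is ever used, and no geometry enters. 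You instead establish the full stalk identification $\left( \mc{T}_{\mc{F}} \right)_p=T_p$, whose hard direction rests on your propagation claim that regularity of a germ spreads to a neighbourhood. That claim is true, and your proof of it is essentially sound (local irreducible decomposition, pure dimensionality of irreducible analytic sets, dimension drop for proper analytic subsets, zero-divisors being the union of the minimal primes in a reduced Noetherian local ring, and faithful flatness of $\corean{M}$ over $\rean{M}$ plus reducedness of $\coan{M^{\bb{C}}}$ near $M$ in the real case), but each of these is a nontrivial input that the lemma does not actually require, and several of them (notably the reducedness of the complexification and the minimal-prime description of zero-divisors) should be cited rather than taken for granted. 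What your approach buys is a genuinely stronger statement of independent interest --- that the torsion subsheaf is computed stalk-wise as the algebraic torsion submodule --- at the cost of considerable machinery; the paper's two-line argument is the more economical proof of the lemma as stated.
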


\begin{proof}
Let $s,t\in \mc{F}\left( U \right)$ be two torsion sections and let $p\in U$ with $r_p,r'_p\in\mc{A}_p$ be regular elements such that $r_ps_p=0=r'_pt_p$. Then also $r_p\cdot r'_p$ is a regular element and
\[r_p\cdot r'_p\left( s+t \right)_p=r'_pr_ps_p + r_pr'_pt_p=0,\]
by commutativity. Also by commutativity, $r_p\cdot\left( f\cdot s \right)_p=f_p\cdot r_p\cdot s=0$ for any $f\in \mc{A}\left( U \right)$. Hence, $\mc{T}_{\mc{F}}$ is a submodule.\\
Suppose now that $t\in \quo{\mc{F}}{\mc{T}_{\mc{F}}}\left( U \right)$ is a torsion section. As this question is local, one may assume that $t=\left[ s \right]$ for some section $s\in \mc{F}\left( U \right)$. Then, $t$ being a torsion section implies that for every $p\in U$ there exists a regular element $r_p\in \mc{A}_p$ such that
\[r_pt_p=\left[ r_ps_p \right] =0.\]
This implies that $r_p s_p\in \mc{T}_{\mc{F},p}$. Therefore, for every $p\in U$ there exists a regular element $u_p$ such that $u_pr_ps_p=0$. However $u_pr_p$ is a regular element and hence, $s$ is torsion and thus $t=0$.
\end{proof}

The torsion subsheaf of a coherent sheaf on a reduced complex analytic space is actually itself coherent.

\begin{lem}
Let $\mc{F}$ be a coherent sheaf over a reduced complex analytic space. Then the torsion subsheaf $\mc{T}_{\mc{F}}$ is coherent.
\end{lem}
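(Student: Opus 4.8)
The plan is to reduce the statement to a local finiteness property and then to locate, and resolve, the one genuinely delicate point by passing to the normalisation. Coherence is a local property and $\coan{M}$ is a coherent sheaf of rings (Oka), so a subsheaf of the coherent sheaf $\mc{F}$ is coherent as soon as it is locally of finite type; it therefore suffices to show that $\mc{T}_{\mc{F}}$ is locally finitely generated. As a first attempt I would fix $p\in M$, write $R:=\coan{M,p}$ (a reduced Noetherian local ring), and note that $\mc{T}_{\mc{F},p}$, being a submodule of the finitely generated $R$-module $\mc{F}_p$, is finitely generated. Choosing generators $t_1,\dots,t_m$, each killed by a regular element $r_j\in R$, and setting $r:=r_1\cdots r_m$ (again regular), one gets $\mc{T}_{\mc{F},p}=\ker{r\colon \mc{F}_p\to\mc{F}_p}$. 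Since on a reduced space an element is regular at $p$ precisely when it vanishes identically on no irreducible component through $p$, the identity theorem shows $r$ stays regular on a whole neighbourhood $U$; there $\mc{K}:=\ker{r\colon \mc{F}\to\mc{F}}$ is the kernel of a morphism of coherent sheaves, hence coherent, and satisfies $\mc{K}\subseteq \mc{T}_{\mc{F}}$ with $\mc{K}_p=\mc{T}_{\mc{F},p}$.

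The main obstacle is propagating the equality $\mc{K}_p=\mc{T}_{\mc{F},p}$ to a neighbourhood, i.e. showing that the single element $r$ annihilates \emph{all} torsion near $p$, equivalently that the coherent quotient $\mc{G}:=\mc{F}/\mc{K}$ is torsion-free near $p$. By the preceding lemma $\mc{G}_p=\mc{F}_p/\mc{T}_{\mc{F},p}$ is torsion-free, but torsion-freeness of a coherent module is a priori not an open condition: new torsion, invisible to $r$, could in principle appear at points $q\to p$ lying on associated analytic sets of $\mc{F}$ that are not components of $M$. This is exactly where reducedness and analyticity must be used in an essential way, and I would not try to verify openness by hand.

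Instead I would bypass the openness issue entirely by reducing to the normal case. Let $\nu\colon \tilde M\to M$ be the (local) normalisation: $\nu$ is finite and surjective, $\tilde M$ is normal, hence locally irreducible with integral-domain stalks, and $\nu$ is biholomorphic over the dense open normal locus of $M$, which meets each component of $M$ in its generic points. On $\tilde M$ the reflexive-hull map identifies the torsion as $\mc{T}_{\nu^*\mc{F}}=\ker{\nu^*\mc{F}\to(\nu^*\mc{F})^{**}}$, a kernel of coherent sheaves, so $\mc{T}_{\nu^*\mc{F}}$ and therefore $(\nu^*\mc{F})_{\mathrm{tf}}=\nu^*\mc{F}/\mc{T}_{\nu^*\mc{F}}$ are coherent. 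Since $\nu$ is finite, $\nu_*(\nu^*\mc{F})_{\mathrm{tf}}$ is again coherent, and the claim is that
\[\mc{T}_{\mc{F}}=\ker{\mc{F}\to \nu_*(\nu^*\mc{F})_{\mathrm{tf}}},\]
which exhibits $\mc{T}_{\mc{F}}$ as the kernel of a morphism of coherent sheaves and finishes the proof.

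The displayed equality I would check on germs. A germ $s_p$ is torsion in $\mc{F}$ if and only if it dies at the generic point of every component of $M$ through $p$; because $\nu$ is an isomorphism over those generic points, this holds if and only if $\nu^* s$ is torsion on $\tilde M$, i.e. if and only if $\nu^*s$ maps to $0$ in $(\nu^*\mc{F})_{\mathrm{tf}}$ at every point of the finite fibre $\nu^{-1}(p)$, which is exactly the condition that $s_p$ lie in the kernel above. Thus the whole difficulty is concentrated in the normal case, where it dissolves: local irreducibility makes the torsion the kernel of the canonical map to the reflexive hull, which is manifestly coherent, and the generic-isomorphism property of the normalisation transports this back to $M$.
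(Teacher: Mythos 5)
Your proof is correct, but it takes a genuinely different route from the paper's. The paper settles the lemma in two lines: it identifies $\mc{T}_{\mc{F}}$ with the gap sheaf $0_{\mc{F}}\left[\mathrm{sing}\left(\mc{F}\right)\right]$ of sections supported on the non-locally-free locus (torsion sections vanish where $\mc{F}$ is locally free, and conversely sections with nowhere dense support are torsion on a reduced space), and then quotes Siu's theorem that gap sheaves of coherent sheaves are coherent. You instead pass to the normalisation $\nu\colon \tilde{M}\to M$, where local irreducibility turns the torsion into $\ker{\nu^*\mc{F}\to\left(\nu^*\mc{F}\right)^{**}}$, and transport coherence back through the finite map via $\mc{T}_{\mc{F}}=\ker{\mc{F}\to\nu_*\left(\nu^*\mc{F}\right)_{\mathrm{tf}}}$. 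Both arguments outsource the real analytic input --- the paper to Siu's gap-sheaf theorem, you to the normalisation and the coherence of finite direct images --- but your version meshes well with the paper's own toolkit (the normalisation and the torsion-free pull-back already appear in Proposition~\ref{torsionfreeembed2} and Definition~\ref{tensorprodtor}), and you correctly identify, and avoid, the trap in the naive local argument: torsion-freeness of the stalk at $p$ is not obviously an open condition, so the single regular element killing $\mc{T}_{\mc{F},p}$ need not kill the torsion at nearby points. Two steps deserve a line each if this were written out: the inclusion $\ker{\nu^*\mc{F}\to\left(\nu^*\mc{F}\right)^{**}}\subseteq\mc{T}_{\nu^*\mc{F}}$ needs the local embedding of a finitely generated torsion-free module over the integral-domain stalks into a free module (the paper's Lemma~\ref{torsionfreeembed}) to produce a functional separating a non-torsion germ from zero; and the equivalence ``$s_p$ is torsion iff its annihilator is contained in no minimal prime'' is the prime-avoidance argument that converts the component-wise vanishing you pull back from $\tilde{M}$ into a single regular annihilating element on $M$.
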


\begin{proof}
Denote by $0_{\mc{F}}\left[ X \right]$ the gap sheaf of $0$ in $\mc{F}$ with respect to an analytic subset $X$. Note that $\mc{T}_{\mc{F}}= 0_{\mc{F}}\left[ \mathrm{sing}\left( \mc{F} \right) \right]$ because $\mc{F}$ is locally free outside of the nowhere dense subset $\mathrm{sing}\left( \mc{F} \right)$. Gap sheaves on complex analytic spaces of coherent sheaves are coherent by~\cite[Proposition 3.4.]{siu}.
\end{proof}

For the remainder of the section, assume that $\left( M,\coan{M} \right)$ is a reduced complex analytic space, unless stated otherwise. Another technical necessary lemma is that torsion-free sheaves can be locally embedded into free sheaves. First, one considers the case of locally irreducible varieties and then the more general case. The locally irreducible case is a well-known fundamental result:

\begin{lem}
Let $\mc{F}$ be a coherent torsion-free $\coan{M}$-module on a reduced locally irreducible complex analytic space $M$, i.e. for every $p\in M$ the stalk $\coan{M,p}$ is an integral domain. Then for every $p\in M$ there exists an open neighbourhood $U$ of $p$ and a monomorphism $\rst{\mc{F}}{U}\hookrightarrow \coan{M}^{\oplus k}$.
\label{torsionfreeembed}
\end{lem}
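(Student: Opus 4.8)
The plan is to reduce the statement to a purely algebraic fact about the stalk $R:=\coan{M,p}$ and then to globalise it by coherence. Since $M$ is reduced and locally irreducible, $R$ is a Noetherian local integral domain; write $K$ for its field of fractions. First I would record that $\mc{F}_p$ is a finitely generated torsion-free $R$-module. Finite generation is coherence; torsion-freeness of the stalk is where the hypotheses enter. Indeed, if $rt=0$ for some $0\neq r\in R$ and $t\in\mc{F}_p$, then on a small irreducible neighbourhood a representing section of $t$ is annihilated by a representing function of $r$, which vanishes only on a nowhere-dense analytic set; since $\mc{F}$ is locally free, hence torsion-free, on the complement of $\sing(\mc{F})$, the section must be supported on $\sing(\mc{F})$, i.e. it is a torsion section and therefore zero by torsion-freeness of $\mc{F}$. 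Thus $\mc{F}_p$ is torsion-free.

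The algebraic core is the classical fact that a finitely generated torsion-free module over a Noetherian integral domain embeds into a free module. Concretely, set $V:=\mc{F}_p\otimes_R K$; because $\mc{F}_p$ is torsion-free the natural map $\mc{F}_p\to V$ is injective, and $V$ is a $K$-vector space of some finite dimension $k$. Taking generators $x_1,\dots,x_n$ of $\mc{F}_p$, their images span $V$, so one can select a $K$-basis among them, say $\bar{x}_1,\dots,\bar{x}_k$; these are in particular $R$-linearly independent, so $L:=R\bar{x}_1+\dots+R\bar{x}_k\subseteq V$ is free of rank $k$. Writing each $\bar{x}_i=\sum_{j=1}^k c_{ij}\bar{x}_j$ with $c_{ij}\in K$ and choosing $0\neq d\in R$ clearing all the denominators, one gets $d\,\bar{x}_i\in L$ for every $i$, whence $d\cdot\mc{F}_p\subseteq L\cong R^{\oplus k}$ inside $V$. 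Since multiplication by the regular element $d$ is injective on the torsion-free module $\mc{F}_p$, the composite $\mc{F}_p\xrightarrow{\,\cdot d\,}L\cong R^{\oplus k}$ is a monomorphism $\varphi_p$.

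Finally I would sheafify. As $\mc{F}$ and $\coan{M}^{\oplus k}$ are coherent, the sheaf $\mathcal{H}om_{\coan{M}}(\mc{F},\coan{M}^{\oplus k})$ is coherent with stalk $\homo{\mc{F}_p,\coan{M,p}^{\oplus k}}$ at $p$; hence the germ $\varphi_p$ is induced by an actual homomorphism $\varphi\colon\rst{\mc{F}}{U}\to\rst{\coan{M}^{\oplus k}}{U}$ on some open neighbourhood $U$ of $p$. The kernel $\ker{\varphi}$ is then a coherent subsheaf of $\rst{\mc{F}}{U}$ whose germ at $p$ is $\ker{\varphi_p}=0$; a coherent, in particular finite-type, sheaf has closed support, so after shrinking $U$ one has $\ker{\varphi}=0$ and $\varphi$ is the desired monomorphism $\rst{\mc{F}}{U}\hookrightarrow\coan{M}^{\oplus k}$. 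The main obstacle is not the algebra, which is standard, but this globalisation step: one must know both that the germ-level embedding extends to a neighbourhood (coherence of the $\mathcal{H}om$-sheaf) and that injectivity, a priori only a stalk condition at $p$, propagates to a full neighbourhood (coherence of the kernel together with the closedness of the support).
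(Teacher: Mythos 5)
Your proposal is correct and follows essentially the same route as the paper: tensor the stalk with the fraction field of $\coan{M,p}$, use torsion-freeness for injectivity, clear denominators of the generators to land in a free module, and then extend the stalk-level monomorphism to a neighbourhood by coherence. The only differences are expository — you spell out why the stalk is torsion-free and why injectivity propagates from the stalk to a neighbourhood, both of which the paper leaves implicit.
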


\begin{proof}
Denote by $\mc{M}_{M,p}$ the quotient field of $\coan{M,p}$ and one gets a canonical mapping
\[ \phi\colon\mc{F}_p\to \mc{F}_p\otimes_{\coan{M,p}}\mc{M}_{M,p}\cong \mc{M}_{M,p}^{\oplus k}.\]
It follows that this mapping is a monomorphism by torsion-freeness. Let $v_1,\dots v_k$ be a basis of $\mc{M}_{M,p}^{\oplus k}$ and denote by $s_1,\dots,s_l$ a system of generators of $\mc{F}_p$. Then $s_j = \sum_{i=1}^k \frac{a_{ji}}{b_{ji}}v_i$. Set $b:=\prod_{i,j} b_{ji}$. One obtains
\[b\cdot \phi\colon \mc{F}_p \hookrightarrow \coan{M,p}^{\oplus k}.\]
This monomorphism can be extended to an open neighbourhood of $p$ as $\mc{F}$ is coherent.
\end{proof}

Observe that torsion-free sheaves always have restriction morphisms to dense subsets that are injective. This fact will also help with extending Lemma~\ref{torsionfreeembed} to reducible spaces.

\begin{lem}
Let $\mc{F}$ be a torsion-free coherent $\coan{M}$-module. Moreover, suppose that $U\subseteq M$ is an open subset and $V\subseteq U$ is an open dense subset. Then the restriction morphism
\[\mc{F}\left( U \right)\to \mc{F}\left( V \right)\]
is injective.
\label{monorestriction}
\end{lem}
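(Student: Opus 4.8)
The plan is to show that a section $s\in\mc{F}(U)$ with $s|_V=0$ must vanish, by checking $s_p=0$ at every $p\in U$; the guiding idea is that the vanishing hypothesis forces the support of $s$ to be thin, after which torsion-freeness forces $s$ itself to vanish. First I would introduce the annihilator ideal sheaf $\mathcal{I}:=\mathrm{Ann}(s)$, i.e.\ the kernel of the morphism $\coan{U}\to\mc{F}$, $a\mapsto a\cdot s$. Since $\mc{F}$ is coherent, $\mathcal{I}$ is a coherent ideal sheaf, and its zero set $V(\mathcal{I})=\mathrm{supp}(s)=\{q\in U:s_q\neq 0\}$ is a closed analytic subset of $U$. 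Because $s|_V=0$ we have $\mathrm{supp}(s)\subseteq U\sm V$, and as $V$ is open and dense, $U\sm V$ is closed with empty interior; hence $\mathrm{supp}(s)$ is a nowhere-dense analytic subset of $U$.

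Next I would argue that at each $p$ the germ of $\mathrm{supp}(s)$ cannot contain any irreducible component of $(M,p)$. Indeed, if it contained a component $M_j$, then (being closed and analytic) it would contain $M_j$ on an entire neighbourhood of $p$; choosing a generic $q\in M_j$ that is a regular point of $M$ lying on no other component — such points are dense in $M_j$ — one sees that $M$ coincides with $M_j$ near $q$, so $\mathrm{supp}(s)$ would contain an open subset of $M$, contradicting nowhere-density. Translating through the analytic Nullstellensatz: for each minimal prime $\mathfrak{p}_j$ of the reduced Noetherian local ring $\coan{M,p}$ (corresponding to a component of the germ), $V(\mathcal{I}_p)\not\supseteq V(\mathfrak{p}_j)$, and therefore $\mathcal{I}_p=\mathrm{Ann}(s_p)\not\subseteq\mathfrak{p}_j$.

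Finally I would invoke prime avoidance. Since $\coan{M,p}$ is reduced, its set of zero-divisors is exactly the union $\bigcup_j\mathfrak{p}_j$ of the minimal primes. As $\mathrm{Ann}(s_p)$ is contained in none of the finitely many $\mathfrak{p}_j$, prime avoidance produces a germ $r_p\in\mathrm{Ann}(s_p)$ lying in none of them, i.e.\ a regular element with $r_p\cdot s_p=0$. Torsion-freeness of $\mc{F}$ (Definition~\ref{TorFree}) then forces $s_p=0$, and since $p$ was arbitrary we conclude $s=0$, so the restriction map is injective.

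The main obstacle is the reducibility of $M$: over a local \emph{domain} one could simply localize at the generic point of the component, but here one must carefully pass between the topological statement (the support is nowhere dense) and the algebraic one (the annihilator avoids every minimal prime). This is precisely where the reduced structure — zero-divisors equal the union of the minimal primes — together with prime avoidance carries the argument. As an alternative, on the dense locus where $M$ is locally irreducible one could conclude directly by embedding $\mc{F}$ into a free sheaf via Lemma~\ref{torsionfreeembed}, reducing to the vanishing of holomorphic functions on a dense set, and then dispose of the remaining thin set of reducible or singular points by the torsion argument above.
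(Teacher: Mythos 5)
Your proof is correct. It runs on the same engine as the paper's: reduce to producing, at each stalk, a regular element annihilating $s_p$, and then let torsion-freeness kill $s_p$. The difference is in how that regular element is produced. The paper first asserts $\mathrm{supp}(\coan{U}/J)\subseteq \mathrm{sing}(U)$, applies the analytic Nullstellensatz to get $I^k\subseteq J$ for $I$ the ideal of $\mathrm{sing}(U)$, and then cites a lemma of Grauert--Remmert to the effect that the stalks of the ideal of a nowhere dense analytic subset contain regular elements. You instead work directly with the annihilator of $s$: from the nowhere-density of $\mathrm{supp}(s)$ you deduce that its germ contains no local irreducible component, hence $\mathrm{Ann}(s_p)$ lies in no minimal prime of the reduced Noetherian stalk, and prime avoidance together with the identification of zero-divisors with $\bigcup_j\mathfrak{p}_j$ hands you the regular element. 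This buys two things: it is self-contained (you essentially reprove the cited lemma), and it sidesteps the paper's intermediate claim that the support lies in the singular locus --- a claim that is true but itself requires a small torsion-freeness argument at smooth points, which the paper leaves implicit. The one point to keep visible is that you are using reducedness of $M$ twice: once so that the stalks are reduced Noetherian rings whose zero-divisors are exactly the union of the minimal primes, and once so that the generic points of a local component you pick (smooth, lying on no other component) are dense in that component; both uses are legitimate since the section's standing hypothesis is that $M$ is reduced.
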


\begin{proof}
Let $0\neq s\in \mc{F}\left( U \right)$ be a section with $\rst{s}{V}=0$ and consider the associated morphism $A\colon \coan{U}\to \rst{\mc{F}}{U}$. Denote by $J$ the kernel of $A$ - note that it is an ideal sheaf. By assumption there exists $p\in U$ such that for all regular elements $r_p\in \coan{U,p}$ one has $r_p\cdot s_p \neq 0$. Moreover, one has
\[\mathrm{supp}\left( \quo{\coan{U}}{J} \right)\subseteq \mathrm{sing}\left( U \right).\]
Hence, by~\cite[Proposition 0.15.]{fischer} it follows that $I^k\subseteq J$, where $I$ denotes the defining ideal of $\mathrm{sing}\left( U \right)$.\footnotemark{}\footnotetext{This fact is due to the complex analytic Nullstellensatz.} By Lemma~\cite[page 98]{grauert} it follows that $J_p$ contains at least one regular element because the analytic subspace defined by $I$ is nowhere dense and thus each stalk of $I$ contains a regular element. This is a contradiction and therefore $s=0$.
\end{proof}

This attribute of torsion-free sheaves has the interesting effect, that one can test whether a morphism of sheaves with torsion-free domain is a monomophism by testing it on a dense subset.

\begin{cor}
Let $\mc{F}$ and $\mc{G}$ be $\coan{M}$-modules. Suppose that $\mc{F}$ is torsion-free and $\psi\colon \mc{F}\to \mc{G}$ is a morphism of $\coan{M}$-modules that is injective on a dense open subset $W$ of $M$. Then, $\psi$ is injective everywhere.
\label{monoondense}
\end{cor}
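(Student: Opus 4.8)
The plan is to deduce this directly from Lemma~\ref{monorestriction}, of which it is essentially a packaging: the kernel of $\psi$ is a torsion-free sheaf whose sections vanish on a dense open set, and for torsion-free sheaves such sections must vanish identically. First I would reduce the claim to the injectivity of $\psi$ on sections over each open subset. Recall that a morphism of sheaves is injective on every stalk precisely when it is injective on sections over each open set: one direction is the separatedness axiom, and the other follows because stalks are filtered colimits of sections and a filtered colimit of injections is injective. Hence it suffices to show that for every open $U\subseteq M$ the map $\psi_U\colon \mc{F}(U)\to\mc{G}(U)$ has trivial kernel.

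So let $U\subseteq M$ be open and let $s\in\mc{F}(U)$ satisfy $\psi_U(s)=0$. I would first observe that $s$ vanishes on the dense open set $U\cap W$. Indeed, $\psi$ being injective on $W$ means $\psi_p$ is injective for every $p\in W$, so for each $p\in U\cap W$ the relation $\psi_p(s_p)=0$ forces $s_p=0$; as all stalks of $s$ over $U\cap W$ vanish, $\rst{s}{U\cap W}=0$. Next I would check that $U\cap W$ is dense in $U$: since $W$ is dense in $M$, every nonempty open subset of $U$ is open in $M$ and therefore meets $W$, so $U\cap W$ is a dense open subset of $U$. Now Lemma~\ref{monorestriction}, applied to the torsion-free sheaf $\mc{F}$ with the dense open subset $V:=U\cap W$, asserts that the restriction $\mc{F}(U)\to\mc{F}(U\cap W)$ is injective; since $\rst{s}{U\cap W}=0$, we conclude $s=0$. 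This gives $\ker{\psi_U}=0$ for every $U$, and by the reduction above $\psi$ is injective everywhere.

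There is no substantive obstacle here beyond Lemma~\ref{monorestriction}, which carries all the analytic weight through the Nullstellensatz and the existence of regular elements in the stalks of the defining ideal of $\mathrm{sing}(U)$; the only points requiring care are the elementary topological fact that $U\cap W$ stays dense in $U$ and the translation of ``injective on $W$'' into the vanishing of the stalks $s_p$ for $p\in U\cap W$. One should keep in view that Lemma~\ref{monorestriction} is established for \emph{coherent} torsion-free sheaves, so $\mc{F}$ is to be taken coherent here, in keeping with the standing assumptions of this section; equivalently, one may run the same argument on the torsion-free subsheaf $\ker{\psi}\subseteq\mc{F}$, every section of which restricts to zero on $U\cap W$.
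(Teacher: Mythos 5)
Your proof is correct and follows essentially the same route as the paper: both arguments reduce the claim to injectivity on sections over an arbitrary open $U$, use that $\psi$ is injective on the dense open subset $V:=U\cap W$, and then invoke Lemma~\ref{monorestriction} for the injectivity of the restriction $\mc{F}(U)\to\mc{F}(V)$ (the paper packages this as a commutative square rather than an element chase). Your closing remark that $\mc{F}$ should be taken coherent, since Lemma~\ref{monorestriction} is proved for coherent torsion-free sheaves, is a fair and accurate caveat about the hypotheses.
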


\begin{proof}
The claim follows from the preceding proposition by considering the following commutative diagram of morphisms and their restrictions
\[
\begin{tikzcd}
\mc{F}\left( U \right) \arrow[r] \arrow[d,hook]& \mc{G}\left( U \right) \arrow[d]\\
\mc{F}\left( V \right) \arrow[r,hook] & \mc{G}\left( V \right)
\end{tikzcd},
\]
where $V:=W\cap U$ and $U$ is an abritrary open subset of $M$.
\end{proof}

These considerations suffice to obtain the desired result. The idea is that every complex analytic space admits a finite morphism from a locally irreducible space, i.e. the normalisation. The pull-back of a torsion-free coherent sheaf to the normalisation embeds into free sheaves locally and, as the morphism is finite, one expects to embed the torsion-free sheaf into the direct image of the pull-back. Before giving the precise argument a useful notion in this context is introduced, namely the \emph{torsion-free pullback}.

\begin{defn}
Also, suppose that $\psi\colon N\to M $ is a complex analytic morphism. Then define
\[\psi^T\mc{F}:=\quo{\psi^*\mc{F}}{\mathrm{torsion}}\]
as the \emph{torsion-free pull-back}.
\label{tensorprodtor}
\end{defn}

\begin{rem}
Properties of the torsion-free pull-back and its interplay with direct images along proper modifications were studied extensively in~\cite{ruppenthal}.
\end{rem}

\begin{prop}
Let $\mc{F}$ be a torsion-free coherent sheaf. Then for every $p\in M$ there exists an open neighbourhood $U$ and a monomorphism
\[\rst{\mc{F}}{U}\hookrightarrow \mc{O}_U^{\oplus k}.\]
\label{torsionfreeembed2}
\end{prop}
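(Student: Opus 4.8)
The plan is to pass to the normalisation $\nu\colon\widetilde{M}\to M$, which exists locally and is a finite morphism with $\widetilde{M}$ normal, hence locally irreducible, so that Lemma~\ref{torsionfreeembed} becomes available upstairs. Working in a neighbourhood of $p$, I would consider the torsion-free pull-back $\nu^T\mc{F}$ (Definition~\ref{tensorprodtor}), which is a coherent torsion-free $\coan{\widetilde{M}}$-module. By Lemma~\ref{torsionfreeembed}, near each of the finitely many points of the fibre $\nu^{-1}(p)$ there is a monomorphism into a free sheaf; since $\nu$ is finite I can shrink $M$ around $p$ so that $\nu^{-1}(U)$ is a disjoint union of such coordinate patches, and after padding to a common rank $k$ this yields a monomorphism $\nu^T\mc{F}\hookrightarrow\coan{\widetilde{M}}^{\oplus k}$ over $\nu^{-1}(U)$.

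Next I would push forward. As $\nu$ is finite, $\nu_*$ is exact and preserves coherence, so applying it gives a monomorphism of coherent $\coan{M}$-modules
\[ \nu_*\nu^T\mc{F}\hookrightarrow\left(\nu_*\coan{\widetilde{M}}\right)^{\oplus k}. \]
To relate this to $\mc{F}$, the unit of the $(\nu^*,\nu_*)$-adjunction composed with the quotient $\nu^*\mc{F}\to\nu^T\mc{F}$ provides a morphism $\mc{F}\to\nu_*\nu^T\mc{F}$. Over the normal locus of $M$ the map $\nu$ is an isomorphism and no torsion is introduced, so this morphism is an isomorphism there; as the normal locus is dense and open and $\mc{F}$ is torsion-free, Corollary~\ref{monoondense} upgrades this to a monomorphism on all of $U$.

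The step I expect to be the main obstacle is embedding the sheaf of weakly holomorphic functions $\nu_*\coan{\widetilde{M}}$ into a free $\coan{M}$-module. Here I would invoke the conductor ideal $\mathfrak{c}:=\{\,r\in\coan{M}\mid r\cdot\nu_*\coan{\widetilde{M}}\subseteq\coan{M}\,\}$. Since $\nu_*\coan{\widetilde{M}}/\coan{M}$ is supported on the non-normal locus, which is a nowhere-dense analytic subset, each stalk $\mathfrak{c}_p$ contains a regular element $c_p$ by the same Nullstellensatz-and-regular-element argument used in Lemma~\ref{monorestriction}. Multiplication by $c_p$ then maps $\nu_*\coan{\widetilde{M}}$ into $\coan{M}$ and is injective, because $\nu_*\coan{\widetilde{M}}$ is torsion-free and $c_p$ is regular; extending by coherence and shrinking $U$ yields a monomorphism $\nu_*\coan{\widetilde{M}}\hookrightarrow\coan{U}$. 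Composing the three monomorphisms produces $\rst{\mc{F}}{U}\hookrightarrow\coan{U}^{\oplus k}$, as required.

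As a sanity check, one can also bypass the normalisation and argue directly, imitating Lemma~\ref{torsionfreeembed} but replacing the quotient field by the total ring of fractions $Q\left(\coan{M,p}\right)=\prod_i K_i$, which is a finite product of fields because $\coan{M,p}$ is reduced. Torsion-freeness embeds the stalk $\mc{F}_p$ into $Q\left(\coan{M,p}\right)^{\oplus d}$, and clearing a common regular denominator lands it in $\coan{M,p}^{\oplus d}$; coherence then spreads the embedding to a neighbourhood. This confirms the statement and isolates the role of the conductor as the only genuinely new ingredient beyond the locally irreducible case.
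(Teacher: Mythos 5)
Your main argument is essentially the paper's proof: normalise, apply the locally irreducible case (Lemma~\ref{torsionfreeembed}) to the torsion-free pull-back $\nu^T\mc{F}$, push forward along the finite map, embed $\mc{F}$ into $\nu_*\nu^T\mc{F}$ via density and torsion-freeness, and clear denominators to land in $\coan{U}^{\oplus k}$ --- your conductor-ideal construction is exactly the ``universal denominator'' the paper invokes from Narasimhan, and Corollary~\ref{monoondense} is the right tool for the middle step. Your alternative via the total ring of fractions of the reduced stalk is also correct (and bypasses the normalisation entirely), but the route you actually take matches the paper's.
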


\begin{proof}
Denote by $\nu \colon M^{\nu} \to M$ the normalisation of $M$ and consider the torsion-free pull-back $\nu^{T}\mc{F}$ on $M^{\nu}$. Recall that $\nu$ is a finite morphism and denote by $p_i\in M^{\nu}$ the elements in the fiber of $\nu$ over $p$. Moreover, recall that $M^{\nu}$ is locally irreducible because it is normal. One may assume that one has disjoint open subsets $U_i$ around each $p_i$ and $U$ around $p$ such that $\nu^{-1}\left( U \right)=\bigcup_i U_i$ (see e.g.~\cite[p.48]{grauert}). Furthermore, it may be assumed that for each $i$ one has a monomorphism
\[\rst{\nu^{T}\mc{F}}{U_i}\hookrightarrow \coan{U_i}^{\oplus k_i}\]
and since there are only finitely many $p_i$, one can set $k_i=k_j$ for all $i,j$ by simply taking the maximum.
Hence, one has
\[\rst{\nu^{T}\mc{F}}{\nu^{-1}\left( U \right)} \hookrightarrow \coan{\nu^{-1}\left( U \right)}^{\oplus k}\]
and can take the direct image of this morphism, leading to
\[\nu_*\rst{\nu^{T}\mc{F}}{\nu^{-1}\left( U \right)} \hookrightarrow \nu_*\coan{\nu^{-1}\left( U \right)}^{\oplus k}\cong \tilde{\mc{O}}_{U}^{\oplus k}.\]
Here $\tilde{\mc{O}}_U$ denotes the sheaf of weakly holomorphic functions on $U$, i.e. those holomorphic functions on the regular part of $U$ which are locally bounded at the singularities of $U$. Observe that $\rst{\mc{F}}{U}$ naturally embeds into the left-hand side because of Corollary~\ref{monorestriction} and that the right-hand side embeds into $\mc{O}_{U}^{\oplus k}$ by multiplying with a universal denominator that does not vanish on any irreducible component of $U$ (see e.g.~\cite[p.110]{nara}). Hence one obtains
\[\rst{\mc{F}}{U}\hookrightarrow \coan{U}^{\oplus k}.\]
\end{proof}

In the following discussion, it is going to be of the utmost importance that pulling back a torsion-free sheaf with a flat relative connection to a submersion does not produce torsion. This behaviour is elegantly proven utilizing the Relative Riemann-Hilbert Theorem for submersions (Proposition~\ref{PullConTor}). Moreover, with the style of argument used to proving this result one also touches on the definition of \emph{tameness}. It seems that in the singular situation this notion is indispensable for the considerations at hand. The definition of tameness is given first and then the argument showing that flat relative connections on submersion are tame is given.

\begin{defn}
Suppose $f\colon M\to N$ is a locally trivial morphism of complex spaces. Then a $f$-relative connection $\nabla_f$ on $\mc{F}$ is called \emph{tame} if for every section $s\in \mc{F}\left( U \right)$ and for every nowhere dense analytic subset $X\subseteq U$ such that for every $q\in N$ the intersection
\[X\cap f^{-1}\left( \left\{ q \right\} \right)\]
is nowhere dense in $f^{-1}\left( \left\{ q \right\} \right)$ one has that the following implication holds:
\[\rst{\nabla_fs}{U\setminus X}=0 \implies \nabla_f s=0.\]
\label{tamedefinition}
\end{defn}

This definition might seem artificial at first, however the following Proposition shows how flat relative connections on submersions naturally behave like this.

\begin{prop}
Let $f\colon M\to N$ be a submersion of complex analytic spaces. Suppose that $\mc{F}$ is a coherent $\coan{M}$-module with a flat $f$-relative connection $\nabla$. Suppose that $s\in \mc{F}\left( U \right)$ is a section such that $\mathrm{supp}\left( s \right)\cap f^{-1}\left( \left\{ q \right\} \right)$ is nowhere dense in $U\cap f^{-1}\left( \left\{ q \right\} \right)$ for all $q\in f\left( U \right)\subseteq N$. Then $s$ is zero.\\
In particular, the connection $\nabla$ is tame and moreover, if $\mc{F}$ is torsion-free on a dense open subset $W\subseteq M$ such that $W\cap f^{-1}\left( \left\{ q \right\} \right)$ is dense in $f^{-1}\left( \left\{ q \right\} \right)$ for every $q\in N$, then $\mc{F}$ is torsion-free.\\
\label{PullConTor}
\end{prop}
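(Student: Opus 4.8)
The plan is to combine the local structure provided by the Relative Riemann--Hilbert Theorem for submersions (Theorem~\ref{relriemanhilbert}) with an ``identity theorem along the fibres''. Since vanishing of $s$ is a local question and $\mathrm{supp}(s)$ is analytic (as $\mc{F}$ is coherent), I would fix $p\in U$, choose a local trivialisation $U\cong V'\times Z$ with $V'\subseteq N$, $Z\subseteq \bb{C}^k$ open and $f=\pi_1$, and write $M_{x_0}$ for the fibre $\{x_0\}\times Z$. By Theorem~\ref{relriemanhilbert} one has $\mc{F}\cong \ker{\nabla}\otimes_{f^{-1}\coan{N}}\coan{M}$ with $\ker{\nabla}$ a relative local system, so after shrinking $\ker{\nabla}\cong \pi_1^{-1}\mc{G}$ for a coherent $\coan{V'}$-module $\mc{G}$ and hence $\mc{F}\cong \pi_1^*\mc{G}$. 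At a point $(x_0,z_0)$ the stalk is $\mc{F}_{(x_0,z_0)}\cong \mc{G}_{x_0}\otimes_{\coan{N,x_0}}\coan{M,(x_0,z_0)}$, which I would identify with the convergent power series $\sum_\alpha c_\alpha (z-z_0)^\alpha$, $c_\alpha\in \mc{G}_{x_0}$, and accordingly write $s=\sum_\alpha c_\alpha(z-z_0)^\alpha$.

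The decisive step is to show that if the germ $s_{(x_0,z_0)}$ is nonzero, then $\mathrm{supp}(s)$ meets $M_{x_0}$ in a set that is \emph{not} nowhere dense. Contrapositively, suppose $\mathrm{supp}(s)\cap M_{x_0}$ is nowhere dense; then $s$ has vanishing germ on a dense open $W\subseteq M_{x_0}$, so in particular the value $s(z)=\sum_\alpha c_\alpha(z-z_0)^\alpha\in \mc{G}_{x_0}$ vanishes for every $z\in W$. The main obstacle is exactly here: the coefficients live in the module $\mc{G}_{x_0}$, so the scalar identity theorem does not apply directly, and this reflects genuine torsion/non-reduced behaviour in the fibre direction (the naive reduced restriction $\iota_{x_0}^*s$ loses this, as the example $\mc{G}=\coan{N}/\mathfrak{m}^2$ shows, where $s=\pi_1^*(x)$ restricts to $0$ on the reduced fibre yet is supported on the whole fibre). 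To circumvent this I would reduce to the scalar case: choose finitely many generators of $\mc{G}_{x_0}$, and for each $\bb{C}$-linear functional $\lambda$ on the $\bb{C}$-vector space $\mc{G}_{x_0}$ observe that $\lambda(s(z))$ is an ordinary holomorphic function vanishing on the dense set $W$, hence identically zero; as functionals separate points, every $c_\alpha=0$ and thus $s_{(x_0,z_0)}=0$. Applying this at every point, the hypothesis that $\mathrm{supp}(s)\cap f^{-1}\left(\left\{q\right\}\right)$ is nowhere dense for all $q$ forces $\mathrm{supp}(s)=\emptyset$, i.e. $s=0$.

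Tameness then follows formally. If $\nabla s$ vanishes off a nowhere dense $X\subseteq U$ with $X\cap f^{-1}\left(\left\{q\right\}\right)$ fibrewise nowhere dense, then $\nabla s$ is a section of $\Omega^1_f\left(\coan{M}\right)\otimes_{\coan{M}}\mc{F}$ whose support is again fibrewise nowhere dense. Since $f$ is a submersion, $\Omega^1_f\left(\coan{M}\right)$ is locally free, and locally $\Omega^1_f\left(\coan{M}\right)\otimes_{\coan{M}}\mc{F}\cong \pi_1^*\left(\mc{G}^{\oplus k}\right)$ is once more a projection pull-back (equivalently, it carries the induced flat relative connection). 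Hence the core statement applies verbatim to it and yields $\nabla s=0$, which is precisely the implication in Definition~\ref{tamedefinition}.

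Finally, for the torsion-freeness assertion let $s$ be a torsion section of $\mc{F}$. On the dense open $W$ where $\mc{F}$ is torsion-free one has $s|_W=0$, so $\mathrm{supp}(s)\subseteq M\setminus W$. Because $W\cap f^{-1}\left(\left\{q\right\}\right)$ is open and dense in the fibre, its complement is nowhere dense there, whence $\mathrm{supp}(s)\cap f^{-1}\left(\left\{q\right\}\right)$ is nowhere dense for every $q$. The core statement proven in the first two paragraphs then gives $s=0$, so $\mc{T}_{\mc{F}}=0$ and $\mc{F}$ is torsion-free.
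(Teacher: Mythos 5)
Your overall strategy is sound and genuinely different from the paper's: you invoke Theorem~\ref{relriemanhilbert} to write $\mc{F}\cong \pi_1^*\mc{G}$ locally and then try to prove an identity theorem for $\mc{G}_{x_0}$-valued holomorphic functions along the fibre. The paper instead works with the gap sheaf $0_{\mc{F}}\left[ \mathrm{supp}\left( s \right) \right]$ of sections supported on $\mathrm{supp}\left( s \right)$: this subsheaf is coherent, is preserved by $\nabla$ because $\Omega^1_f\left( \coan{M} \right)$ is locally free, hence is itself locally a pull-back sheaf from $N$ by Theorem~\ref{relriemanhilbert}; a pull-back sheaf has support equal to a union of fibres, so vanishing on a dense subset of every fibre forces it to vanish, and $s$ with it. That route avoids all stalk-level analysis. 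Your reductions of the tameness and torsion-freeness assertions to the core claim are correct and agree with what the paper does.

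However, the decisive analytic step in your argument has a genuine gap. You take an \emph{arbitrary} $\bb{C}$-linear functional $\lambda$ on $\mc{G}_{x_0}$ and assert that $\lambda\left( s\left( z \right) \right)$ is an ordinary holomorphic function with Taylor coefficients $\lambda\left( c_{\alpha} \right)$. But $\mc{G}_{x_0}$ is an infinite-dimensional $\bb{C}$-vector space, and a discontinuous functional neither commutes with the limit defining $s\left( z \right)$ nor yields a convergent series $\sum_{\alpha}\lambda\left( c_{\alpha} \right)\left( z-z_0 \right)^{\alpha}$; so neither the holomorphy of $\lambda\left( s\left( z \right) \right)$ nor the identification of its coefficients is justified, and the conclusion $c_{\alpha}=0$ does not follow as written. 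The step can be repaired: pass to the finite-dimensional quotients $\mc{G}_{x_0}/\mathfrak{m}_{x_0}^{k}\mc{G}_{x_0}$, where $z\mapsto s\left( z \right)\bmod \mathfrak{m}_{x_0}^{k}$ genuinely is a holomorphic map into a finite-dimensional space (Cauchy estimates on a lift to $\coan{N,x_0}^{\oplus m}$ give convergence); it vanishes on the dense set $W$, hence identically, and Krull intersection then gives $c_{\alpha}\in\bigcap_{k}\mathfrak{m}_{x_0}^{k}\mc{G}_{x_0}=0$. You should also justify that $c_{\alpha}=0$ for all $\alpha$ implies $s_{\left( x_0,z_0 \right)}=0$, i.e. that the map from $\mc{G}_{x_0}\otimes_{\coan{N,x_0}}\coan{M,\left( x_0,z_0 \right)}$ to formal power series with coefficients in $\mc{G}_{x_0}$ is injective; this follows from flatness of the $\left( z-z_0 \right)$-adic completion but is not automatic. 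With these repairs your route works, but the gap-sheaf argument sidesteps the issue entirely.
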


\begin{proof}
Consider the analytic subset $X:=\mathrm{supp}\left( s \right)\subseteq M$ and the gap sheaf $0_{\mc{F}}\left[ X \right]$ of $0$ in $\mc{F}$ with respect to $X$. Then $0_{\mc{F}}\left[ X \right]$ is coherent by~\cite[Proposition 3.4.]{siu}. Moreover, note that the relative $1$-forms $\Omega^1_{f}\left( \coan{M} \right)$ are locally free and hence for $t\in 0_{\mc{F}}\left[ X \right]$ it is clear that
\[\nabla t \in 0_{\mc{F}\otimes \Omega^1_{f}\left( \coan{M} \right)}\left[ X \right]=0_{\mc{F}}\left[ X \right]\otimes \Omega^1_{f}\left( \coan{M} \right).\]
Hence, the coherent subsheaf $0_{\mc{F}}\left[ X \right]\subseteq \mc{F}$ is preserved by the flat connection and inherits an induced flat connection simply by restriction. Then by Theorem~\ref{relriemanhilbert} it follows that $0_{\mc{F}}\left[ X \right]$ is locally a pull-back sheaf from $N$, but it is zero on a dense subset of the fibers and hence zero everywhere. Hence, $s\in 0_{\mc{F}}\left[ X \right]=0$ is zero as well.\\
The statement about tameness follows immediately from the first claim by noting that $\Omega^1_f\left( \coan{M} \right)$ is locally free and the statement about the torsion-freeness follows by observing that any torsion section would have support that intersected with the fibers is nowhere dense.
\end{proof}

Later on and for the tameness of the canonical connection associated to a torsion-free relative local system one will need the following technical statement about the interplay of sections of the pull-back of a torsion-free sheaves and sections before pull-back.

\begin{prop}
Let $f\colon M\to N$ be a flat and surjective morphism of reduced complex analytic spaces and suppose $\iota\colon \mc{F}\to \coan{N}^{\oplus r}$ is an injective morphism of coherent sheaves. If $s\in \coan{N}^{\oplus r}$ and $u\in f^*\mc{F}$ are such that $f^*s= f^*\iota\left( u \right)$ then there exists $t\in \mc{F}$ such that $f^*t= u$.
\label{pullbackSec}
\end{prop}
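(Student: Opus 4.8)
The plan is to reduce everything to the injectivity of the section-level pullback on the cokernel of $\iota$, which is precisely where flatness and surjectivity of $f$ enter. Set $\mc{Q}:=\coker{\iota}$ and let $\pi\colon \coan{N}^{\oplus r}\to \mc{Q}$ be the projection, so that $0\to \mc{F}\xrightarrow{\iota}\coan{N}^{\oplus r}\xrightarrow{\pi}\mc{Q}\to 0$ is exact. Since $f$ is flat, the functor $f^*$ is exact, and therefore
\[
0\to f^*\mc{F}\xrightarrow{f^*\iota}\coan{M}^{\oplus r}\xrightarrow{f^*\pi}f^*\mc{Q}\to 0
\]
is again exact; in particular $f^*\iota$ is injective. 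This last fact is what I would save for the very end.

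First I would push the hypothesis through $f^*\pi$. Because $f^*\pi\circ f^*\iota=f^*(\pi\circ\iota)=0$, applying $f^*\pi$ to the equation $f^*s=f^*\iota(u)$ kills the right-hand side; by naturality of the section-level pullback $s\mapsto f^*s$ the left-hand side becomes $f^*\bar s$, where $\bar s:=\pi(s)$ denotes the image of $s$ in $\mc{Q}$. Thus $f^*\bar s=0$, and the whole problem collapses to showing $\bar s=0$: once this is known, $s\in\ker{\pi}=\im{\iota}$, and since $\iota$ is injective there is a unique $t\in\mc{F}$ with $\iota(t)=s$.

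The hard part will be this descent $f^*\bar s=0\implies\bar s=0$, i.e.\ the injectivity on sections of the canonical map $\mc{Q}\to \mc{Q}\otimes_{f^{-1}\coan{N}}\coan{M}$. I would verify it stalkwise. Fix $p\in N$; by surjectivity of $f$ choose a point $q$ with $f(q)=p$. The germ $\bar s_p$ maps to $\bar s_p\otimes 1$ in $\mc{Q}_p\otimes_{\coan{N,p}}\coan{M,q}=(f^*\mc{Q})_q$, and this germ vanishes because $f^*\bar s=0$. Now the stalk morphism $\tilde{f}_q\colon \coan{N,p}\to \coan{M,q}$ is a flat local homomorphism, hence faithfully flat---exactly the reasoning already invoked in Definition~\ref{canconnec}---so $\bar s_p\mapsto \bar s_p\otimes 1$ is injective and $\bar s_p=0$. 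As $p$ was arbitrary, $\bar s=0$.

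Finally, with $t\in\mc{F}$ satisfying $\iota(t)=s$ in hand, I would close the argument by comparing pullbacks: $f^*\iota(f^*t)=f^*(\iota(t))=f^*s=f^*\iota(u)$. Since $f^*\iota$ is injective by the exactness of flat pullback noted in the first step, this forces $f^*t=u$, which is the claim. The only nonformal ingredient is the faithful-flatness descent of the third paragraph; every remaining step is a formal consequence of the exactness of $f^*$ together with the universal properties of cokernel and image.
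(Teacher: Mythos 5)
Your proof is correct, but it diverges from the paper's at the key descent step. The paper does not form the cokernel of $\iota$: it takes the subsheaf $\mc{L}\subseteq\coan{N}^{\oplus r}$ generated by $s$, defines $Q:=\mc{F}\cap\mc{L}$ through the exact sequence $0\to Q\to\mc{L}\to\quo{\coan{N}^{\oplus r}}{\mc{F}}$, pulls back (using flatness exactly as you do), notes that the hypothesis $f^*s=f^*\iota(u)$ forces $f^*Q\to f^*\mc{L}$ to be an epimorphism, and then descends surjectivity along $f$ via fiber ranks: $\coker{f^*a}\cong f^*\coker{a}=0$, fiber rank is invariant under pullback, $f$ is surjective, and a coherent sheaf with everywhere-vanishing fiber rank on the \emph{reduced} space $N$ is zero. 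You instead descend the vanishing of $\bar{s}=\pi(s)$ stalkwise, using that the flat local homomorphisms $\coan{N,p}\to\coan{M,q}$ are faithfully flat, so that $\mc{Q}_p\to\mc{Q}_p\otimes_{\coan{N,p}}\coan{M,q}$ is injective --- the same observation the paper records in Definition~\ref{canconnec}. The endpoints coincide (exactness of $f^*$ at the start, injectivity of $f^*\iota$ at the finish), but the key lemmas differ: your faithfully-flat descent is purely algebraic and does not invoke reducedness of $N$, whereas the paper's fiber-rank argument does. Both arguments are complete.
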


\begin{proof}
Consider the module $\mc{L}$ generated by the section $s$ in $\coan{M}^{\oplus r}$ and denote by $Q:= \mc{F}\cap \mc{L}$ the intersection of $\mc{F}$ and $\mc{L}$ in $\coan{M}^{\oplus r}$. Recall that the intersection of sheaves of modules may be defined by the following exact sequence
\[
\begin{tikzcd}
0 \arrow[r] & Q \arrow[r,"a"] &\mc{L} \arrow[r] & \quo{\coan{M}^{\oplus r}}{\mc{F}}
\end{tikzcd}
\]
and the pull-back of this sequence reads
\[
\begin{tikzcd}
 0\arrow[r] & f^*Q \arrow[r,"f^*a"] &f^*\mc{L} \arrow[r] & f^*\left(\quo{\coan{M}^{\oplus r}}{\mc{F}}\right)=\quo{f^{*}\coan{M}^{\oplus r}}{f^{*}\mc{F}} 
\end{tikzcd}.
\]
Note that the pull-back sequence remains exact because $f$ is assumed to be flat. However, because $f^*s = \left( f^*\iota \right)\left( u \right)\in f^*\mc{F}$ it follows that $f^*a$ is an epimorphism and since
\[0=\coker{f^*a}\cong f^*\coker{a}\]
holds, one obtains that $a$ is an epimorphism as well. Note that in the last deduction it was used that $f$ is surjective, that fiber rank of a sheaf is invariant under pull-back and that $N$ is reduced, so that a coherent sheaf with vanishing fiber rank everywhere is zero. Hence, $s\in \im{\iota}$ and there exists $t\in \mc{F}$ such that $\iota\left( t \right)=s$. However,
\[f^*\iota \left( f^*t \right)=f^*s=f^*\iota\left( u \right)\]
hence, $u=f^*t$ because $f^*\iota$ remains a monomorphism by flatnes by flatness by flatness by flatness by flatness by flatness by flatness by flatness.
\end{proof}

Tameness of flat relative connections can now even be shown in the more general situation of canonical connections on torsion-free sheaves over reduced locally trivial morphisms, as the following lemma shows.

\begin{thm}
Let $f\colon M \to N$ be a reduced locally trivial complex analytic morphism and $V$ a torsion-free relative local system. Then $\ker{\nabla^V}=V$. Moreover, $\nabla^{V}$ is tame.
\label{kernelofcanonical}
\end{thm}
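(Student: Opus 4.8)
The plan is to reduce both assertions to the behaviour of the \emph{trivial} relative connection $d_f$ on a free sheaf and then to apply Proposition~\ref{pullbackSec}. Since both claims are local on $M$, I may assume $V=f^{-1}\mc{F}$ for a torsion-free coherent $\coan{N}$-module $\mc{F}$, and that $f$ is the projection $N\times Z\to N$ of a local model; after shrinking $N$ to the image, $f$ is surjective, and it is flat because its stalks have the form $\coan{N,n}\widehat{\otimes}\coan{Z,z}$, which is flat over $\coan{N,n}$. By Proposition~\ref{torsionfreeembed2} I may shrink further and fix an embedding $\iota\colon\mc{F}\hookrightarrow\coan{N}^{\oplus r}$. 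Pulling back along the flat map $f$ keeps $\iota$ injective, and since the canonical connection is natural with respect to morphisms of relative local systems, $f^{*}\iota$ becomes an injective morphism of connections
\[
f^{*}\iota\colon\left(V\otimes_{f^{-1}\coan{N}}\coan{M},\nabla^{V}\right)\to\left(\coan{M}^{\oplus r},d_f^{\oplus r}\right).
\]

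The first, and \emph{main}, step is to control $d_f$ on $\coan{M}$: I claim $\ker{d_f}=f^{-1}\coan{N}$ and that $d_f$ (hence $d_f^{\oplus r}$) is tame. Let $R:=N\times Z_{\mathrm{reg}}$ be the relative regular locus; it is open and dense in $M$ since $Z$ is reduced, and $\rst{f}{R}$ is a genuine submersion onto $N$. On $R$, Theorem~\ref{relriemanhilbert} applied to the trivial relative local system gives $\ker{\rst{d_f}{R}}=f^{-1}\coan{N}$, and Proposition~\ref{PullConTor} shows $\rst{d_f}{R}$ is tame. To pass from $R$ to $M$ I use that $\coan{M}$ is torsion-free on the reduced space $M$, so by Lemma~\ref{monorestriction} a section vanishes as soon as it vanishes on the dense set $R$: if $d_f h=0$ then $\rst{h}{R}$ is relative-constant, hence locally $\rst{h}{R}=\rst{f^{*}\sigma}{R}$, and $h-f^{*}\sigma$ dies on $R$, forcing $h=f^{*}\sigma\in f^{-1}\coan{N}$. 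The same density argument upgrades tameness of $\rst{d_f}{R}$ to tameness of $d_f$ (a permitted $X$ meets the fibres of $\rst{f}{R}$ nowhere densely, so tameness on $R$ kills $d_f h$ there, and torsion-freeness of $\coan{M}$ finishes it), and taking $r$-fold sums yields $\ker{d_f^{\oplus r}}=f^{-1}\coan{N}^{\oplus r}$ with $d_f^{\oplus r}$ tame. I expect this step to be the real obstacle, precisely because the relative forms $\Omega^1_f\left(\coan{M}\right)$ may themselves carry torsion, so one cannot argue in the target of $d_f$ directly; the submersion locus together with the torsion-freeness of $\coan{M}$ is what circumvents this.

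With this in hand, $\ker{\nabla^{V}}=V$ follows. The inclusion $V\subseteq\ker{\nabla^{V}}$ is immediate from $d_f\circ f^{*}=0$ (Definition~\ref{canconnec}). Conversely, if $u\in\ker{\nabla^{V}}$, then since $f^{*}\iota$ is a morphism of connections $d_f^{\oplus r}\left(f^{*}\iota(u)\right)=\id\otimes f^{*}\iota\left(\nabla^{V}u\right)=0$, so $f^{*}\iota(u)\in\ker{d_f^{\oplus r}}=f^{-1}\coan{N}^{\oplus r}$, i.e. locally $f^{*}\iota(u)=f^{*}\sigma$ with $\sigma\in\coan{N}^{\oplus r}$. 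Proposition~\ref{pullbackSec} (valid as $f$ is flat and surjective and $N$ is reduced) then produces $t\in\mc{F}$ with $f^{*}t=u$, whence $u\in V$.

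For tameness, let $s$ be a section of $V\otimes_{f^{-1}\coan{N}}\coan{M}$ and $X$ a nowhere-dense analytic set with nowhere-dense fibre-intersections such that $\rst{\nabla^{V}s}{U\setminus X}=0$. Applying the morphism of connections and restricting, $\rst{d_f^{\oplus r}\left(f^{*}\iota(s)\right)}{U\setminus X}=\rst{\id\otimes f^{*}\iota\left(\nabla^{V}s\right)}{U\setminus X}=0$. Tameness of $d_f^{\oplus r}$ now gives $d_f^{\oplus r}\left(f^{*}\iota(s)\right)=0$, so $f^{*}\iota(s)\in f^{-1}\coan{N}^{\oplus r}$, and Proposition~\ref{pullbackSec} again yields $s\in V=\ker{\nabla^{V}}$, so in particular $\nabla^{V}s=0$. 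Note that this route never requires $\id\otimes f^{*}\iota$ to be injective---which would fail, as $\Omega^1_f\left(\coan{M}\right)$ need not be flat---because the torsion difficulty has been absorbed once and for all into the tameness of $d_f^{\oplus r}$.
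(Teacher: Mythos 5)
Your proof is correct and follows essentially the same route as the paper's: localize, use torsion-freeness and flatness to obtain an injective morphism of connections $f^{*}\iota\colon\left(f^{*}\mc{F},\nabla^{V}\right)\to\left(\coan{M}^{\oplus r},d_f\right)$, reduce both claims to the tameness of $d_f$ together with the identification $\ker{d_f}=f^{-1}\coan{N}$, and descend back to $V$ via Proposition~\ref{pullbackSec}. The only difference is that you justify the tameness of $d_f$ and the kernel identification in detail (via the submersion over the fibre's regular locus, Proposition~\ref{PullConTor}, and the injectivity of restriction to dense opens for the torsion-free sheaf $\coan{M}$), a step the paper asserts with only a brief appeal to reducedness and connectedness of the fibre.
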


\begin{proof}
Note that $V\subseteq \ker{\nabla^V}$ and away from the singularities of the fibers of $f$ this inclusion is an equality. So suppose $p\in M$ and $U\subseteq M$ around $p$ open and small enough such that $U\cong W\times X$, with $W$ and $X$ connected. Additionaly, suppose that $\rst{V}{U}\cong f^{-1}\left( \mc{G} \right)$, for some torsion-free $\coan{N}$-coherent sheaf $\iota\colon \mc{G}\hookrightarrow \coan{N}^{\oplus r}$. Note that because $f$ is flat one still has $f^*\iota\colon f^{*}\mc{G}\hookrightarrow f^*\coan{N}^{\oplus r}$ after pull-back along $f$. Moreover, note that $\nabla^{f^{-1}\coan{N}^{\oplus r}}=d_f$ is of course tame, as the kernel of $d_f$ is $f^{-1}\coan{N}$.\\
Suppose that $u\in \left(f^{*}\mc{G}\right)\left( U \right)$ is a section and $Y\subseteq U$ is a nowhere dense analytic subset such that $Y$ intersected with the fibers of $f$ is nowhere dense in the fibers of $f$. Moreover, assume that $\rst{\nabla^{V}u}{U\setminus Y}=0$. Then it follows that $f^{*}\iota\left( u \right)$ is in the kernel of $d_f$, i.e. in $\left(f^{-1}\coan{N}^{\oplus r}\right)\left( W\times X \right)=\coan{N}^{\oplus r}\left( W \right)$, because $X$ is connected. Hence, there exists a section $s\in \left(\coan{N}^{\oplus r}\right)\left( W \right)$ such that $f^*\iota\left( u \right)=f^*s$. Then by Proposition~\ref{pullbackSec} it follows that there exists $t\in \mc{G}\left( U \right)$ such that $f^*t =s$. Thus, $s\in \left( f^{-1}\mc{G} \right)\left( U \right)=V\left( U \right)\subseteq \ker{\nabla^V}$ and thus, $\nabla^V$ is tame. This argument also shows that $\ker{\nabla}^V = V$, as one could have started the argument with just any parallel $u$ and concluded that $u\in V$.
\end{proof}

Under suitable conditions cokernels and kernel of morphisms preserving connections admit themselves a compatible connection in the following way.

\begin{prop}
Let $f\colon M\to N$ be a reduced locally trivial morphism of complex analytic spaces.
\begin{enumerate}[(i)]
\item The kernel of $d_f$ on $\coan{M}^{\oplus r}$ is the relative local system $f^{-1}\coan{N}^{\oplus r}$.
\item Let $A\colon \left( \coan{M}^{\oplus r_1 },d_f \right)\to \left( \coan{M}^{\oplus r_2},d_f \right)$ be a morphism of connections. Suppose the cokernel of $A$ is torsion-free. Then the cokernel connection $\nabla$ on the cokernel of $A$ is such that $\coker{A}\cong \ker{\nabla}\otimes_{f^{-1}\coan{N}} \coan{M}$ and $\ker{\nabla}$ is a $f$-relative local system.
\item Suppose $M$ is a Stein complex analytic space and let $B\colon \left( \mc{F},\nabla' \right)\to \left( \mc{G},\nabla'' \right)$ be an surjective morphism of connections on coherent sheaves. Then the kernel $\iota\colon \ker{B}\to \mc{F}$ can be equipped with a relative connection such that $\iota$ becomes a morphism of connections.
\end{enumerate}
\label{MorphOfConnec}
\end{prop}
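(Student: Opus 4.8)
I would prove the three parts in order, using each for the next. For (i), since $d_f$ acts componentwise it suffices to treat $r=1$ and to show $\ker{d_f}=f^{-1}\coan{N}$ inside $\coan{M}$. The inclusion $f^{-1}\coan{N}\subseteq\ker{d_f}$ holds because $\Omega^1_f\left( \coan{M} \right)$ is by definition the quotient of $\Omega^1\left( \coan{M} \right)$ by the image of $f^*\Omega^1\left( \coan{N} \right)$, so $d_f$ annihilates everything coming from the base. For the reverse inclusion I would use local triviality to write $M\cong V\times Z$ with $f$ the projection, under which the relative exterior derivative restricts on each fiber $\{v\}\times Z$ to the absolute exterior derivative of the reduced space $Z$. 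A germ $g$ with $d_f g=0$ then has vanishing differential along each fiber; since the regular points of a reduced analytic space are dense and sections of a reduced structure sheaf are determined by their values, $g$ is locally constant along the fibers and hence pulled back from $V$, i.e. a section of $f^{-1}\coan{N}$. That $f^{-1}\coan{N}^{\oplus r}$ is a relative local system is then immediate, as $\coan{N}^{\oplus r}$ is coherent.

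For (ii), the decisive point is that $A$ being a morphism of connections forces its matrix entries to be parallel: writing $A=(a_{ij})$ and comparing $d_f\circ A$ with $(\id\otimes A)\circ d_f$ collapses to $d_f a_{ij}=0$, so part (i) gives $a_{ij}\in f^{-1}\coan{N}$. Thus $A$ is the base change along $\coan{M}$ of a morphism $A_0'\colon\coan{N}^{\oplus r_1}\to\coan{N}^{\oplus r_2}$ over $N$. Because $\coan{M}$ is flat over $f^{-1}\coan{N}$ (Definition~\ref{canconnec}), tensoring is exact and $\coker{A}\cong V\otimes_{f^{-1}\coan{N}}\coan{M}$ with $V:=f^{-1}\coker{A_0'}$; since $\coker{A_0'}$ is coherent on $N$, $V$ is a relative local system, and the induced cokernel connection of Lemma~\ref{connecpreserve} is visibly $\nabla^V=\id_V\otimes d_f$. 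Faithful flatness (again Definition~\ref{canconnec}) shows that torsion-freeness of $\coker{A}=f^*\coker{A_0'}$ descends to $\coker{A_0'}$ (a regular element pulls back to a regular element and nonzero sections stay nonzero), so $V$ is a torsion-free relative local system and Theorem~\ref{kernelofcanonical} yields $\ker{\nabla}=V$, whence also $\coker{A}\cong\ker{\nabla}\otimes_{f^{-1}\coan{N}}\coan{M}$.

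For (iii) the natural candidate is the restriction $\nabla:=\nabla'|_{\ker{B}}$. For a local section $s$ of $\ker{B}$ one computes $(\id\otimes B)(\nabla' s)=\nabla''(B s)=0$, since $B$ preserves the connections, so $\nabla' s$ lies in $\ker(\id\otimes B)\subseteq\Omega^1_f\left( \coan{M} \right)\otimes_{\coan{M}}\mc{F}$. Tensoring the short exact sequence $0\to\ker{B}\to\mc{F}\xrightarrow{B}\mc{G}\to 0$ with $\Omega^1_f\left( \coan{M} \right)$ is right exact and identifies $\ker(\id\otimes B)$ with the image of $\Omega^1_f\left( \coan{M} \right)\otimes_{\coan{M}}\ker{B}$. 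I expect the main obstacle — and the reason the Stein hypothesis is imposed — to be precisely that this map $\id\otimes\iota$ need not be injective: $\Omega^1_f\left( \coan{M} \right)$ fails to be locally free along the singular fibers, so a $\mathrm{Tor}_1$ contribution, supported on the nowhere-dense fiber-singular locus, can obstruct a canonical lift of $\nabla' s$ into $\Omega^1_f\left( \coan{M} \right)\otimes_{\coan{M}}\ker{B}$. My plan is to define $\nabla$ first over the fiber-regular locus, where $\Omega^1_f\left( \coan{M} \right)$ is locally free and the lift is unique, and then to invoke that $M$ is Stein: Cartan's Theorem~B makes the epimorphism $\Omega^1_f\left( \coan{M} \right)\otimes_{\coan{M}}\ker{B}\to\ker(\id\otimes B)$ surjective on sections over Stein opens, producing global lifts, while the density of the fiber-regular locus together with the restriction arguments of Corollary~\ref{monoondense} pin the lift down. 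Once $\nabla$ is obtained as a sheaf morphism, the Leibniz rule and $f^{-1}\coan{N}$-linearity are inherited verbatim from $\nabla'$, and $(\id\otimes\iota)\circ\nabla=\nabla'\circ\iota$ holds by construction, so $\iota$ becomes a morphism of connections. The entire difficulty is thus confined to the well-definedness of the induced connection on the subsheaf, in sharp contrast with the effortless cokernel case of Lemma~\ref{connecpreserve}, where right-exactness of the tensor product makes the construction automatic.
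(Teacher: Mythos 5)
Your parts (i) and (ii) are correct and essentially elaborate the paper's much terser argument: in (ii), your observation that the matrix entries of $A$ are $d_f$-parallel and hence, by (i), sections of $f^{-1}\coan{N}$ is exactly how one produces the morphism $A'$ over $f^{-1}\coan{N}$ that the paper merely asserts, and you are right that torsion-freeness of $\coker{A}$ must be descended to the relative local system before Theorem~\ref{kernelofcanonical} can be applied.

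Part (iii) has a genuine gap. You correctly locate the difficulty, namely that $\id\otimes\iota\colon\Omega^1_f\left(\coan{M}\right)\otimes_{\coan{M}}\ker{B}\to\Omega^1_f\left(\coan{M}\right)\otimes_{\coan{M}}\mc{F}$ need not be injective, so $\nabla's$ has no canonical preimage; but the proposed repair does not work. Corollary~\ref{monoondense} requires the source of the restriction to be torsion-free, whereas $\Omega^1_f\left(\coan{M}\right)\otimes_{\coan{M}}\ker{B}$ will in general carry torsion supported on the fiber-singular locus (Section~\ref{NonTame} of the paper exhibits nonzero torsion $1$-forms on a singular curve). Hence a lift of $\nabla's$ is \emph{not} pinned down by its restriction to the fiber-regular locus: two lifts can differ by a torsion section invisible on that dense open set. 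Even if one settles for a non-canonical choice (the statement only asks for \emph{some} connection, and the Remark following the proposition stresses that the kernel connection is a priori not unique), choosing a lift section by section yields no additivity, no Leibniz rule, and no compatibility with restriction; one must choose lifts on a generating set and extend by Leibniz, and the well-definedness with respect to the relations among those generators is then precisely the unresolved point.

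The paper avoids all of this by never restricting $\nabla'$ to $\ker{B}$ directly. Using the Stein hypothesis it takes a free presentation $\coan{M}^{\oplus r_2}\xrightarrow{C}\coan{M}^{\oplus r_1}\xrightarrow{D}\ker{B}\to 0$, lifts $\nabla'\left(\iota\left(D\left(e_i\right)\right)\right)$ through the surjection $F\otimes\id$ onto $\im{\iota\otimes\id}=\ker{B\otimes\id}$ to define a connection $\nabla_1$ on the \emph{free} sheaf $\coan{M}^{\oplus r_1}$, where prescribing values on a frame and imposing the Leibniz rule is automatically consistent; it does the same for $C$, and then invokes the always well-defined cokernel construction of Lemma~\ref{connecpreserve} to push $\nabla_1$ down to $\ker{B}=\coker{C}$, checking on generators that $\iota$ preserves the connections. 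If you wish to keep your restriction picture, you essentially have to reproduce this free-resolution detour; the density argument alone cannot close the gap.
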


\begin{proof}
\begin{enumerate}[(i)]
\item This is clear from the reducedness and the fact that functions in the kernel of $d_f$ are constant along the fibers of $f$.
\item One obtains a morphism of sheaves
\[A'\colon f^{-1}\coan{N}^{\oplus r_1}\to f^{-1}\coan{N}^{\oplus r_2}\]
such that
\[A=A'\otimes \id \colon f^{-1}\coan{N}^{\oplus r_1}\otimes_{f^{-1}\coan{N}} \coan{M}\to f^{-1}\coan{N}^{\oplus r_2}\otimes_{f^{-1}\coan{N}}\coan{M}.\]
However, clearly $\coker{A'}$ is a relative local system such that
\[\coker{A'}\otimes_{f^{-1}\coan{N}} \coan{M}\cong \coker{A}.\]
As such the $\coker{A}$ can be equipped with the canonical connection associated to a relative local system. Moreover, the epimorphism to the cokernel will be a morphism of connections by construction. By Theorem~\ref{kernelofcanonical} it follows that $\ker{\nabla}= \coker{A'}$.
\item Consider the following diagram
\[
\begin{tikzcd}
 & \coan{M}^{\oplus r_2} \arrow[d,"C"] & & & \\
 & \coan{M}^{\oplus r_1} \arrow[d,"D",two heads] \arrow[dr,"F"] & & & \\
0 \arrow[r] & \ker{B} \arrow[r,"\iota"] & \left( \mc{F},\nabla' \right) \arrow[r,"B"] & \left( \mc{G},\nabla'' \right) \arrow[r] & 0
\end{tikzcd},
\]
where $F=\iota\circ D$. One can obtain a connection of $\ker{B}$ by equipping it with a cokernel connection from the morphism $C$. To that end, let $e_i$ be a frame of $\coan{M}^{\oplus r_1}$ and note that
\[B\otimes\id\left(\nabla'\left( F\left( e_i \right) \right)\right)= \nabla'' \left( B\left( F\left( e_i \right) \right) \right)=0.\]
Hence,
\[\nabla'\left( F\left( e_i \right) \right)\in \ker{B\otimes \id}= \im{\iota\otimes \id}\subseteq \mc{F}\otimes_{\coan{M}}\Omega^1_f\left( \coan{M} \right)\]
and observe that $F\otimes \id$ surjects onto $\im{\iota\otimes \id}$ and since $M$ is Stein there exists $\alpha_i\in \coan{M}^{\oplus r_1}\otimes \Omega^1_f\left( \coan{M} \right)$ such that
\[F\otimes \id \left( \alpha_i \right)= \nabla'\left( F\left( e_i \right) \right).\]
As $\coan{M}^{\oplus r_1}$ is free, one can define a connection on it by $\nabla_1\left( e_i \right):=\alpha_i$ and then imposing the $d_f$-Leibniz rule. Moreover, by definition this connection turns $F$ into a morphism of connections.\\
The same construction allows one now to define a connection $\nabla_2$ on $\coan{M}^{\oplus r_2}$ such that $C$ turns into a morphism of connections. By Lemma~\ref{connecpreserve} there then exists the cokernel connection $\nabla_B$ on $\ker{B}=\coker{C}$.\\
It remains to argue that $\iota$ becomes a morphism of connections with respect to $\nabla_B$ and $\nabla'$. To this end consider the following
\begin{align*}
\nabla'\left( \iota\left( D\left( e_i \right) \right) \right)&= F\otimes\id \left( \nabla_1\left( e_i \right) \right)\\
&=\iota\otimes \id \left( D\otimes \id \left( \nabla_1\left( e_i \right) \right) \right)\\
&=\iota\otimes\id\left( \nabla_B\left( D\left( e_i \right) \right) \right).
\end{align*}
But $D\left( e_i \right)$ are generators of $\ker{B}$ and hence $\iota$ is a morphism of connections.
\end{enumerate}
\end{proof}

\begin{rem}
Notice that while a cokernel connection is always uniquely defined, this is a priori not the case for the kernel connection as tensoring with the sheaf of relative $1$-forms is not necessarily exact and hence does not preserve injectivity.
\end{rem}

With these facts one can now make a useful observation. Namely, if a connection on a submersion admits parallel sections on a closed \emph{singular} complex analytic subset then it admits parallel extension of such sections to an entire open neighbourhood.

\begin{cor}
Let $f\colon N\times M \to N$ be a submersion of complex analytic spaces, i.e. suppose that $M$ is smooth. Moreover, let $\iota'\colon Y\hookrightarrow M$ be a reduced complex analytic subspace and let $\left( \mc{F},\nabla_f \right)$ be a torsion-free $\coan{N\times M}$-coherent sheaf with flat $f$-relative connection. Denote by $\iota\colon N\times Y\to N\times M$ the inclusion. Then one has that
\[\ker{\iota^*\nabla_f}=\iota^{-1}\ker{\nabla_f}\]
and as complex analytic spaces are paracompact one has
\[\Gamma\left( N\times Y, \iota^{-1}\ker{\nabla_f} \right) = \varinjlim_{U\supseteq N\times Y} \Gamma\left( U,\ker{\nabla_f} \right).\]
In particular, parallel section of $\iota^*\nabla_f$ can be extended to an open neighbourhood of $N\times Y$ in $N\times M$ and the connection $\iota^*\nabla_f$ is tame.
\label{ExtPara}
\end{cor}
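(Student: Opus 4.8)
The plan is to reduce everything to Theorem~\ref{kernelofcanonical} applied to the projection $f'\colon N\times Y\to N$, after identifying the pull-back connection with the canonical connection of a torsion-free relative local system. Observe first that $f'$ is a \emph{reduced locally trivial} morphism: it is a product projection (hence locally trivial), its fibers are copies of the reduced space $Y$, and its base $N$ is reduced (as is needed for torsion-freeness of $\mc{F}$ to be meaningful, and as follows from reducedness of $N\times M$ since $M$ is smooth). The crucial point is that $f'$ need \emph{not} be a submersion, because $Y$ may be singular; thus Proposition~\ref{relpullbackconnec} does not apply, and one must route the argument through Theorem~\ref{kernelofcanonical} instead.

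Set $V:=\ker{\nabla_f}$. By the Relative Riemann--Hilbert Theorem for submersions (Theorem~\ref{relriemanhilbert}), $V$ is a relative local system for $f$ and $\mc{F}\cong V\otimes_{f^{-1}\coan{N}}\coan{N\times M}$. I would first check that $V$ is \emph{torsion-free}: locally $V\cong f^{-1}\mc{G}$ with $\mc{G}$ coherent on $N$ and $\mc{F}\cong f^{*}\mc{G}$, and if $\mc{G}$ carried a torsion section $t$ with $r\cdot t=0$ for a non-zerodivisor $r$, then by faithful flatness of $f$ (as in Definition~\ref{canconnec}) $f^{*}t\neq 0$ while $f^{*}r\cdot f^{*}t=0$; since the zero locus of $r$ is nowhere dense in the reduced space $N$, the zero locus of $f^{*}r$ is $Z(r)\times M$, again nowhere dense in the reduced space $N\times M$, so $f^{*}r$ is a non-zerodivisor and $f^{*}t$ would be a torsion section of $\mc{F}$ -- a contradiction. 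Hence each $\mc{G}$ is torsion-free. Using $f\circ\iota=f'$, base change then gives
\[\iota^{*}\mc{F}\cong \iota^{-1}V\otimes_{f'^{-1}\coan{N}}\coan{N\times Y},\]
and $\iota^{-1}V$ is locally $f'^{-1}\mc{G}$ with the same torsion-free $\mc{G}$, so $\iota^{-1}V$ is a torsion-free relative local system for $f'$.

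Next I would identify $\iota^{*}\nabla_f$ with the canonical connection $\nabla^{\iota^{-1}V}$. Both are $f'$-relative connections on $\iota^{*}\mc{F}$, so by the Leibniz rule it suffices to check that they agree on a generating set. The sections $v\otimes 1$ with $v\in\iota^{-1}V$ generate $\iota^{*}\mc{F}$ over $\coan{N\times Y}$; on them $\nabla^{\iota^{-1}V}$ vanishes by definition, while for a local parallel section $s$ of $\mc{F}$ (so that $v\otimes 1=\iota^{*}s$) the pull-back formula of Definition~\ref{pullbackconnec} gives $\iota^{*}\nabla_f(\iota^{*}s)=(D\iota\otimes\id)(\iota^{*}\nabla_f s)=0$ since $\nabla_f s=0$. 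Hence $\iota^{*}\nabla_f=\nabla^{\iota^{-1}V}$, and Theorem~\ref{kernelofcanonical} yields both
\[\ker{\iota^{*}\nabla_f}=\ker{\nabla^{\iota^{-1}V}}=\iota^{-1}V=\iota^{-1}\ker{\nabla_f}\]
and the tameness of $\iota^{*}\nabla_f$. The colimit identity is then the standard tautness of the closed subset $N\times Y$ (closed because $Y$ is closed in $M$) of the paracompact space $N\times M$: for a sheaf $W$ and a closed subset $A$ of a paracompact Hausdorff space one has $\Gamma(A,\iota^{-1}W)\cong\varinjlim_{U\supseteq A}\Gamma(U,W)$. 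Applying this with $W=\ker{\nabla_f}$ gives the displayed equality, and since sections of $\ker{\iota^{*}\nabla_f}=\iota^{-1}\ker{\nabla_f}$ are exactly the parallel sections of $\iota^{*}\nabla_f$, each extends to a parallel section of $\nabla_f$ on a neighbourhood of $N\times Y$.

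The genuinely non-formal step -- and hence the main obstacle -- is the identification $\iota^{*}\nabla_f=\nabla^{\iota^{-1}V}$ together with the verification that $\iota^{-1}V$ is a torsion-free relative local system over the \emph{non-submersive} morphism $f'$; this is precisely what allows one to substitute the inapplicable Proposition~\ref{relpullbackconnec} by Theorem~\ref{kernelofcanonical}. Everything else (the base-change description of $\iota^{*}\mc{F}$, the torsion-freeness of the representing sheaves $\mc{G}$, and the tautness colimit) is routine.
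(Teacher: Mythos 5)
Your proposal is correct and follows essentially the same route as the paper: rewrite $\left(\mc{F},\nabla_f\right)$ as the canonical connection of the torsion-free relative local system $\ker{\nabla_f}$ via Theorem~\ref{relriemanhilbert}, identify $\iota^*\nabla_f$ with $\nabla^{\iota^{-1}\ker{\nabla_f}}$, apply Theorem~\ref{kernelofcanonical}, and invoke the tautness of closed subsets of paracompact spaces for the colimit of sections. The only difference is that you spell out two steps the paper leaves implicit (the torsion-freeness of the representing sheaves $\mc{G}$ and the check on generators that the pull-back connection equals the canonical one), both of which are carried out correctly.
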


\begin{proof}
The statement regarding the kernels is a simple application of Theorem~\ref{kernelofcanonical}. As $\left( \mc{F},\nabla_f \right)\cong \left( \ker{\nabla_f}\otimes_{f^{-1}\coan{N}}\coan{N\times M}, \id\otimes d_f \right)$ and hence
\[\left( \iota^*\mc{F},\iota^*\nabla_f \right)\cong \left( \iota^{-1}\ker{\nabla_f}\otimes_{\iota^{-1}f^{-1}\coan{N}}\coan{N\times Y}, \id\otimes d_f \right).\]
However, $\iota^{-1}\ker{\nabla_f}$ is a torsion-free relative local system, hence
\[\ker{\iota^*\nabla_f}= \iota^{-1}\ker{\nabla_f}\]
and the tameness of $\iota^*\nabla_f$ follow from Theorem~\ref{kernelofcanonical}.\\
For the statement about the sections of the inverse image sheaf in paracompact spaces see e.g.~\cite[page 66, Theorem 9.5]{bredon}.
\end{proof}

This section is now concluded with a simple observation. Namely, that coherent sheaves on reduced spaces with an absolute connection are necessarily locally free.

\begin{lem}
Let $M$ be a reduced complex analytic space and let $\mc{F}$ be a coherent sheaf with connection $\nabla$. Then $\mc{F}$ is locally free.
\label{ConnecLocFree}
\end{lem}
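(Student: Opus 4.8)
The plan is to show that $\mc{F}$ is free at each point $p\in M$ by proving that a minimal system of generators at $p$ admits no nontrivial relations. Working in the germ, write $R:=\coan{M,p}$ with maximal ideal $\mathfrak{m}$, and choose germs $s_1,\dots,s_k$ generating $\mc{F}_p$ whose residues form a basis of the fiber $\mc{F}(p):=\mc{F}_p\otimes_R(R/\mathfrak{m})$, so that $k=\dim_{\bb{C}}\mc{F}(p)$. Let $\mc{R}\subseteq R^{\oplus k}$ be the relation module, i.e. the kernel of $R^{\oplus k}\to\mc{F}_p,\ e_i\mapsto s_i$. Minimality of the generators gives $\mc{R}\subseteq\mathfrak{m}\,R^{\oplus k}$, and the goal is to upgrade this to $\mc{R}\subseteq\mathfrak{m}^n R^{\oplus k}$ for every $n$; then $\mc{R}\subseteq\bigcap_n\mathfrak{m}^n R^{\oplus k}=0$ by Krull's intersection theorem, whence $\mc{F}_p\cong R^{\oplus k}$ is free.

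The connection enters as follows. Writing $\nabla s_i=\sum_j\omega_{ij}\otimes s_j$ with $\omega_{ij}\in\Omega^1\left(\coan{M}\right)_p$, I define a connection $\check\nabla$ on the free module $R^{\oplus k}$ by $\check\nabla e_i:=\sum_j\omega_{ij}\otimes e_j$; by construction the surjection $R^{\oplus k}\to\mc{F}_p$ becomes a morphism of connections. Consequently $\mc{R}$ is stable under $\check\nabla$: for $r\in\mc{R}$ the element $\check\nabla r$ is sent to $\nabla(0)=0$ in $\Omega^1\left(\coan{M}\right)_p\otimes_R\mc{F}_p$, hence lies in the image of $\Omega^1\left(\coan{M}\right)_p\otimes_R\mc{R}$ inside $\left(\Omega^1\left(\coan{M}\right)_p\right)^{\oplus k}$.

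Now I induct on $n$, the case $n=1$ being minimality. Assume $\mc{R}\subseteq\mathfrak{m}^n R^{\oplus k}$ and take $r=(a_1,\dots,a_k)\in\mc{R}$, so each $a_i\in\mathfrak{m}^n$. By stability together with the inductive hypothesis, $\check\nabla r$ lies in the image of $\Omega^1\left(\coan{M}\right)_p\otimes_R\mathfrak{m}^n R^{\oplus k}$, which is contained in $\mathfrak{m}^n\left(\Omega^1\left(\coan{M}\right)_p\right)^{\oplus k}$. On the other hand $\check\nabla r=\sum_i d a_i\otimes e_i+\sum_i a_i\,\check\nabla e_i$, and the second sum already lies in $\mathfrak{m}^n\left(\Omega^1\left(\coan{M}\right)_p\right)^{\oplus k}$ because $a_i\in\mathfrak{m}^n$; comparing components in the direct sum forces $d a_i\in\mathfrak{m}^n\Omega^1\left(\coan{M}\right)_p$ for every $i$. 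It then remains to conclude $a_i\in\mathfrak{m}^{n+1}$, i.e. to establish the purely local principle that on a reduced analytic algebra the differential detects the $\mathfrak{m}$-adic filtration:
\[
a\in\mathfrak{m}^n\ \text{and}\ d a\in\mathfrak{m}^n\Omega^1\left(\coan{M}\right)_p\ \Longrightarrow\ a\in\mathfrak{m}^{n+1}.
\]
For $n=1$ this is exactly the canonical isomorphism $\mathfrak{m}/\mathfrak{m}^2\cong\Omega^1\left(\coan{M}\right)_p\otimes_R(R/\mathfrak{m})$, so the first step is immediate and reproves that relations lie in $\mathfrak{m}^2$.

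The main obstacle is this order-raising principle at singular points for $n\ge 2$. It cannot be reduced to contracting $\nabla$ against vector fields: on a singular space the derivation module $\der{\coan{M},\coan{M}}=\homos{\Omega^1\left(\coan{M}\right),\coan{M}}$ is far too small — for a cusp every derivation maps the maximal ideal into itself, so no derivation lowers $\mathfrak{m}$-adic order — which is precisely why the argument must be run with $d$ into $\Omega^1\left(\coan{M}\right)$ itself rather than with covariant derivatives. To prove the principle I would pass to a minimal local embedding $R=S/I$ with $S=\bb{C}\{z_1,\dots,z_N\}$ and $I\subseteq\mathfrak{m}_S^2$, lift $a$ to some $\tilde a\in\mathfrak{m}_S^n$, and analyse the lowest-degree homogeneous parts of the identity expressing $d\tilde a\in\mathfrak{m}_S^n\Omega^1_S+R\langle dg_j\rangle+I\,\Omega^1_S$; Euler's identity annihilates the leading form of $\tilde a$ in the smooth model, while reducedness of $R$ is what controls the contribution of the defining ideal $I$ to the lower-degree terms. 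This filtered analysis is the technical heart; once it is in place the induction closes and local freeness follows at every point.
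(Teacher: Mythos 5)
Your architecture is genuinely different from the paper's and, as far as it goes, correct: a minimal generating system at $p$, the lifted connection $\check\nabla$ on $R^{\oplus k}$, stability of the relation module $\mc{R}$ under $\check\nabla$ (by right-exactness of the tensor product), the componentwise conclusion $da_i\in\fr{m}^n\Omega^1\left( \coan{M} \right)_p$, and Krull intersection. For contrast, the paper avoids all local algebra: it pulls $\left( \mc{F},\nabla \right)$ back to a desingularisation, where the connection forces local freeness (via local systems in the flat case, and by restricting to smooth curves through pairs of nearby points in the non-flat case), and then descends using surjectivity of $\psi$, invariance of fiber rank under pull-back, and reducedness of $M$. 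Your route would yield a desingularisation-free, pointwise proof --- \emph{if} the order-raising principle held.

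That principle is precisely where the proposal stops being a proof, and the sketch you give for it has a concrete gap. Lift $a$ to $\tilde a\in\fr{m}_S^n$ in a minimal embedding $R=S/I$, $S=\bb{C}\{z_1,\dots,z_N\}$, and rewrite the hypothesis as $d\tilde a\in\fr{m}_S^n\Omega^1_S+S\,dI+I\,\Omega^1_S$. Contracting the degree-$(n-1)$ homogeneous part of this identity with the Euler field does kill the contributions of $\fr{m}_S^n\Omega^1_S$ and of $I\,\Omega^1_S$, but for the term $\sum_j h_j\,dg_j$ it only shows that the leading form $a_n$ of $\tilde a$ lies in the ideal generated by \emph{all homogeneous components} of the chosen generators $g_j$ of $I$ --- not in the initial ideal $\mathrm{in}(I)$. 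The desired conclusion $a\in\fr{m}_R^{n+1}$ is equivalent to $a_n\in\mathrm{in}(I)$, and these two ideals differ as soon as $I$ is inhomogeneous: for the cusp $g=y^2-x^3$ the former is $\left( y^2,x^3 \right)$ while $\mathrm{in}(I)=\left( y^2 \right)$. So the filtered analysis does not close as described; moreover, reducedness --- which you invoke to ``control the contribution of the defining ideal'' --- plays no role in the part of the argument that does work, and I do not see where it would repair the missing step. I could not find a counterexample to the order-raising principle on reduced germs, but I also could not prove it, and it is a genuinely nontrivial statement about how $d$ interacts with the $\fr{m}$-adic filtration at a singular point; as written, the proof is incomplete at its self-declared technical heart.
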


\begin{proof}
First assume that $\nabla$ is flat. Let $\psi \colon \tilde{M}\to M$ be a desingularization and note that $\psi^*\mc{F}$ is locally free as it is locally determined by a local system of vector spaces. However, fiber rank of a sheaf of modules is invariant under pull-back and $\psi$ is surjective. Moreover, $M$ is reduced and thus $\mc{F}$ is locally free.\\
If $\nabla$ is not flat then locally on $\tilde{M}$ any to points $p,q$ in a small open set can be connected by a smooth complex curve $\gamma_{p,q}$. The pull-back $\gamma_{p,q}^*\psi^*\mc{F}$ will be locally free as any connection on a smooth curve is flat. Hence, the fiber rank of $\psi^*\mc{F}$ is locally constant and therefore so is the fiber rank of $\mc{F}$. Hence, $\mc{F}$ is locally free.
\end{proof}

\section{Example of a flat non-tame connection}
\label{NonTame}

In this section, it is demonstrated -- by example -- that a flat connection on a singular space is not necessarily tame. Showing that the tameness condition can in general not be dropped on an arbitrary singular space when considering the Riemann-Hilbert correspondence. One can obtain such an example by constructing a closed non-zero torsion $1$-form $\alpha$ on a complex variety and the connection $\nabla = d + \alpha$ will be flat but not tame. The precise argument and example are laid out in the following.\\
Throughout this section denote by $f\in \coan{\bb{C}^2}$ the polynomial $f=x^4+y^4x+y^5$ on $\bb{C}^2$ and by $C\subseteq \bb{C}^2$ the curve defined by $f=0$. Moreover, denote by $J$ the ideal generated by $f$ in $\coan{\bb{C}^2}$. Then the following fact can be proven:

\begin{thm}
Let $\alpha= (x^4y+\frac{y^5x}{5} + \frac{y^6}{6})dx$ be a $1$-form on $C$. Then there exists a holomorphic function $G$ on $C$ such that 
\[\alpha -dG\]
is a non-zero closed torsion $1$-form on $C$, i.e.
\[\rst{\alpha}{C_{\mathrm{reg}}}=d \rst{G}{C_{\mathrm{reg}}} \text{ but } dG\neq \alpha.\]
\label{NonExact}
\end{thm}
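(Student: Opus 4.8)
The plan is to localise at the unique singular point of $C$, the origin, and carry out all computations in $\coan{C,0}=\bb{C}\{x,y\}/(f)$, where by Proposition~\ref{localmodeldifferential} one has $\Omega^1\left( \coan{C} \right)_0=\left( \coan{C,0}\,dx\oplus \coan{C,0}\,dy \right)/\coan{C,0}\,df$. The single identity driving everything is $g_y=f$, where $g=x^4y+\frac{xy^5}{5}+\frac{y^6}{6}$ is the coefficient of $\alpha=g\,dx$; indeed $g_y=x^4+xy^4+y^5=f$. From this the \emph{closedness} of $\alpha$ is immediate: $d\alpha=dg\wedge dx=g_y\,dy\wedge dx$, and $g_y$ is the class of $f$, which is zero in $\coan{C}$, so $d\alpha=0$ in $\Omega^2\left( \coan{C} \right)$.

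First I would construct the primitive $G$ and, crucially, show that it is holomorphic across the singularity. Let $\nu\colon C^{\nu}\to C$ be the normalisation; near the origin $C$ is irreducible with a single Puiseux branch, parametrised (after a choice of uniformiser $t$) by $y=t^4$, $x=w_0t^5+O(t^6)$ with $w_0^4=-1$, and value semigroup $\langle 4,5\rangle$, whose conductor is $c=12$. Pulling back, $\nu^*\alpha=g\!\left( x(t),y(t) \right)x'(t)\,dt$ is a holomorphic $1$-form of order $28$ in $t$, so its primitive $\widetilde{G}(t):=\int_0^t\nu^*\alpha$ has order $29$. Since the conductor ideal $t^{12}\bb{C}\{t\}$ is contained in $\coan{C,0}$ and $29\ge 12$, the series $\widetilde{G}$ lies in $\coan{C,0}$, i.e.\ $\widetilde{G}=\nu^*G$ for a unique $G\in\coan{C,0}$. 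By construction $\nu^*\left( \alpha-dG \right)=\nu^*\alpha-d\widetilde{G}=0$, and as $\nu$ is an isomorphism over $C_{\mathrm{reg}}$ while the torsion of $\Omega^1\left( \coan{C} \right)$ is exactly the kernel of $\Omega^1\left( \coan{C} \right)\to\nu_*\Omega^1\left( \coan{C^{\nu}} \right)$, it follows that $\rst{\left( \alpha-dG \right)}{C_{\mathrm{reg}}}=0$. Hence $\alpha-dG$ is a torsion section and $\rst{\alpha}{C_{\mathrm{reg}}}=d\rst{G}{C_{\mathrm{reg}}}$.

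The main obstacle is the nonvanishing $dG\neq\alpha$, i.e.\ $\alpha\notin d\coan{C,0}$; the difficulty is that this torsion class is invisible to $\nu^*$ (which kills it), so it must be detected inside $\Omega^1\left( \coan{C} \right)$ itself. I would argue by contradiction: if $\alpha=dH$ for some $H\in\coan{C,0}$, then lifting to $\bb{C}\{x,y\}$ gives power series $\lambda,\rho,\sigma$ with $H_x=g-\lambda f_x-\rho f$ and $H_y=-\lambda f_y-\sigma f$. Imposing integrability $\partial_yH_x=\partial_xH_y$ and again using $g_y=f$, the auxiliary series $\lambda$ cancels, and with $P:=\lambda_y-\sigma$, $Q:=\rho-\lambda_x$ the condition collapses to the purely ambient identity
\[ f=(Pf)_x+(Qf)_y\qquad\text{for some }P,Q\in\bb{C}\{x,y\}. \]
Comparing homogeneous components then yields an over-determined linear system: the degree-$3$ part forces the constant term of $P$ to vanish; the degree-$4$ part forces (writing $P_1=ax+by$, $Q_1=cx+dy$ for the linear parts) $b=0$, the constant term of $Q$ to vanish, and $5a+d=1$; and the degree-$5$ part forces $c=0$ together with $2a+5d=1$ (from the monomial $xy^4$) and $a+6d=1$ (from $y^5$). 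The three relations $5a+d=2a+5d=a+6d=1$ are inconsistent, so no such $P,Q$ exist and $\alpha$ is not exact. I expect this coefficient comparison to be the crux of the proof: the contradiction is produced precisely by the non-quasi-homogeneous monomial $xy^4$ of $f$ (equivalently, by $f\notin(f_x,f_y)$, cf.\ Saito's criterion), which is exactly why this singularity was selected.

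Assembling the pieces, $\alpha-dG$ is closed and a torsion section, and it is nonzero, since $\alpha-dG=0$ would give $\alpha=dG\in d\coan{C,0}$, contradicting the nonvanishing step; its support is the singular point. This produces the required non-zero closed torsion $1$-form and shows $\rst{\alpha}{C_{\mathrm{reg}}}=d\rst{G}{C_{\mathrm{reg}}}$ while $dG\neq\alpha$.
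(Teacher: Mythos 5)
Your proposal is correct and follows essentially the same route as the paper: non-exactness is reduced to the unsolvability of $f=(Pf)_x+(Qf)_y$ and refuted by the identical inconsistent linear system $5a+d=2a+5d=a+6d=1$, while $G$ is obtained by integrating the pullback of $\alpha$ to the normalisation and checking that the order of the primitive ($29$) exceeds the conductor exponent $12$. The only differences are in execution: you derive $P(0)=Q(0)=0$ by low-degree comparison instead of citing that vector fields tangent to $C$ must vanish at the singular point, and you invoke the semigroup conductor of the branch $\langle 4,5\rangle$ where the paper writes down the explicit parametrisation $t\mapsto(-t^5/(1+t),-t^4/(1+t))$ and verifies by hand that $(x+y)^3$ is a universal denominator.
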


We establish this example of a non-zero closed torsion $1$-form in several steps. The argument relies on the fact that in~\cite{reiffen} it was proven that $C$ has a non-exact de Rham sequence in $0\in C$ in degree $1$. The argument for non-exactness given in~\cite{reiffen} boils down to the following claim:

\begin{claim}
The equation

\[f= \pard{\left(f\cdot A\right)}{x} + \pard{\left(f\cdot B\right)}{y}\]

does not admit any solutions $A,B\in \coan{\bb{C}^2}$ in any neighbourhood of $0\in \bb{C}^2$.
\end{claim}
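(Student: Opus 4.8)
The plan is to argue by contradiction. Suppose holomorphic germs $A,B$ at $0$ satisfy $f=\partial_x(fA)+\partial_y(fB)$; expanding the right-hand side by the Leibniz rule gives $f=f_xA+f_yB+f(A_x+B_y)$, and since a convergent solution is in particular a formal one I would pass to $R=\mathbb{C}[[x,y]]$ and work throughout with the weighted grading $\deg x=5$, $\deg y=4$. For this grading $f=f_0+xy^4$ splits into the quasi-homogeneous part $f_0=x^4+y^5$ of weight $20$ and the single perturbation $xy^4$ of weight $21$, and the whole difficulty will be seen to reside in this perturbation.

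The first reduction is a \emph{perturbed Euler relation}. A direct computation from $f_x=4x^3+y^4$ and $f_y=4xy^3+5y^4$ gives $5xf_x+4yf_y=20f+xy^4$, hence $\partial_x(5xf)+\partial_y(4yf)=29f+xy^4$. Writing $I:=\partial_x(fR)+\partial_y(fR)$ for the subspace in question (note that $I$ is \emph{not} an ideal), this identity shows $29f+xy^4\in I$, so that $f\in I\iff xy^4\in I$. Thus the claim is equivalent to $xy^4\notin I$, with the obstruction pinned to the non-quasi-homogeneous monomial.

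I would then attack $xy^4\notin I$ through the dual problem: exhibit a $\mathbb{C}$-linear functional $\lambda$ on $R$, described in the monomial pairing by $c_{pq}=\lambda(x^py^q)$, that solves the recurrences $\lambda(\partial_x(fx^iy^j))=0$ and $\lambda(\partial_y(fx^iy^j))=0$ for all $i,j\ge0$ while $\lambda(xy^4)=c_{14}\neq0$ (equivalently $\lambda(f)=c_{40}+c_{14}+c_{05}\neq0$); the existence of such a $\lambda$ is precisely the failure of $f\in I$. Using $f_x,f_y$ these recurrences read $(4+i)c_{i+3,j}+(1+i)c_{i,j+4}+i\,c_{i-1,j+5}=0$ and $j\,c_{i+4,j-1}+(4+j)c_{i+1,j+3}+(5+j)c_{i,j+4}=0$, a system that is triangular in weight and can be solved degree by degree.

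The step I expect to be the genuine obstacle --- and the reason the example is interesting --- is that the obstruction is invisible to the associated graded. In the quasi-homogeneous model $I_0:=\partial_x(f_0R)+\partial_y(f_0R)$ one checks directly that $f_0=\partial_x(\tfrac{5}{29}xf_0)+\partial_y(\tfrac{4}{29}yf_0)$ and $xy^4=\partial_y(\tfrac{1}{5}xf_0)$ both lie in $I_0$, so leading-weight analysis detects nothing; correspondingly, truncating the dual recurrences at weight $\le21$ over-determines them and forces $c_{40}=c_{14}=c_{05}=0$ (for instance $\lambda(\partial_y(fx))=0$ reads $4c_{23}+5c_{14}=0$, killing $c_{14}$ once the weight-$22$ coefficient $c_{23}$ is set to zero). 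The nonvanishing therefore genuinely couples $f_0$ to the degree-$21$ perturbation across infinitely many higher weights and cannot be read off from any finite quasi-homogeneous truncation. This final nonvanishing is exactly the content established by Reiffen~\cite{reiffen}, which I would invoke; conceptually it reflects that the reduced curve $C$, being singular with $f$ \emph{not} quasi-homogeneous, carries nontrivial torsion in its sheaf of differentials, so that the closed form $f\,dx\wedge dy$ cannot be written as $d$ of a form divisible by $f$.
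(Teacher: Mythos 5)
Your reduction via the perturbed Euler relation is correct and genuinely illuminating: with $f_x=4x^3+y^4$ and $f_y=4xy^3+5y^4$ one indeed has $5xf_x+4yf_y=20f+xy^4$, hence $\partial_x(5xf)+\partial_y(4yf)=29f+xy^4$, so $f$ lies in $I:=\partial_x(f\coan{\bb{C}^2})+\partial_y(f\coan{\bb{C}^2})$ if and only if $xy^4$ does. This explains structurally why the quasi-homogeneous part of $f$ contributes no obstruction and where the constant $29$ comes from. The paper's own proof is more pedestrian and entirely finite: it first forces $A(0)=B(0)=0$ (the vector field $A\partial_x+B\partial_y$ preserves $(f)$ and must vanish at the singular point), then expands $A,B$ in standard homogeneous degree and compares the coefficients of $x^4$, $x^3y$, $y^5$, $xy^4$, $x^2y^3$ in degrees $4$ and $5$ of the equation; these involve only the linear parts of $A,B$ and produce a $3\times 2$ linear system whose unique candidate solution is exactly your quasi-homogeneous pair $A_{11}=5/29$, $B_{12}=4/29$, with the third equation failing by $1/29$ --- the trace of the perturbation $xy^4$.

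The genuine gap is that you never establish the final nonvanishing: you reduce to $xy^4\notin I$, set up the dual recurrences correctly, and then invoke Reiffen for the conclusion. Since the claim under discussion \emph{is} the computational core of Reiffen's non-exactness argument (the paper says so explicitly just before stating it), citing Reiffen here is circular in context; a proof has to carry out the computation. Moreover, the reason you give for stopping is factually wrong: the obstruction is \emph{not} invisible to finite truncations. A functional $\lambda$ supported on the five monomials $x^3y,\,x^4,\,y^5,\,xy^4,\,x^2y^3$ (weighted degrees $19$ through $22$) already does the job. On this support the only nontrivial instances of your recurrences are $5c_{40}+2c_{14}+c_{05}=0$, $4c_{31}+c_{05}=0$, $4c_{23}+5c_{14}=0$ and $c_{40}+5c_{14}+6c_{05}=0$ (every other $\lambda(\partial_x(fx^iy^j))$ and $\lambda(\partial_y(fx^iy^j))$, including the contributions of the constant terms of $A$ and $B$, vanishes identically on this support), and solving gives $\lambda(f)=c_{40}+c_{14}+c_{05}=-c_{14}/29\neq 0$. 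Your weight-$\le 21$ truncation failed only because it excluded $c_{23}$ and $c_{31}$; going one weight higher closes the argument and is precisely dual to the paper's coefficient comparison. Note also that if the obstruction genuinely required an infinitely supported $\lambda$, your duality argument would itself be in jeopardy, since such a functional is not well defined on all of $\bb{C}[[x,y]]$.
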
 

\begin{proof}
Note that any such solutions would yield a vector field $X:=A \pard{}{x} + B \pard{}{y}$ that preserves the ideal $J$. Therefore $X$ restricts to a vector field on an open neighbourhood of $0$ in $C$. As such $X$ needs to vanish at $0\in C$, because $0$ is a singular point of $C$ (see e.g.~\cite[Corollary 3.3]{rossi}). Hence, $A\left( 0 \right)=B\left( 0 \right)=0$. Let now
\[A=\sum_{k\geq 1} A_k \text{ and } B=\sum_{k\geq 1} B_k\]
be the homogeneous decomposition of $A$ and $B$ in a sufficiently small open neighbourhood of $0$. Further introduce
\[A_1=A_{11}\cdot x + A_{12} \cdot y \text{ and } B_1=B_{11}\cdot x + B_{12} \cdot y.\]
Comparing different order terms in the equation then yields
\begin{align*}
x^4 =& \pard{(x^4 \cdot A_1)}{x} + \pard{\left( x^4\cdot B_1 \right)}{y}\\
 =& 4\cdot x^3 (A_{11}\cdot x + A_{12}\cdot y) + x^4\cdot A_{11} + x^4\cdot B_{12}\\
 =& 5 \cdot x^4 \cdot A_{11} + 4 \cdot x^3 \cdot y \cdot A_{12} + x^4\cdot B_{12}\\
y^4\cdot x + y^5 =& \pard{\left(A_1\left( y^5+y^4\cdot x \right)\right)}{x} + \pard{\left(B_1\left( y^5+y^4\cdot x \right)\right)}{y} + \pard{\left(x^4\cdot A_2\right)}{x} + \pard{\left(x^4\cdot B_2\right)}{y}\\
=& A_{11}\cdot y^5 + 2 \cdot A_{11}\cdot y^4\cdot x + A_{12} \cdot y^5 + B_{12} \left( y^5 + y^4\cdot x \right)\\
& + \left( B_{11} \cdot x + B_{12}\cdot y \right) \left( 5\cdot y^4 + 4\cdot y^3\cdot x \right)\\
& + 4\cdot x^3\cdot  A_2 + x^4 \cdot \pard{A_2}{x} + x^4 \cdot \pard{B_2}{y}
\end{align*}

Note that the first equation implies that $A_{12}=0$ and the second implies $B_{11}=0$. Collecting similar terms then yields the following system of linear equations:

\begin{align*}
5\cdot A_{11}  +  B_{12} &= 1\\
A_{11}  +  6\cdot B_{12} &= 1\\
2 \cdot A_{11} +  5\cdot B_{12} &= 1
\end{align*}

The associated determinant

\[
\mathrm{det} \begin{pmatrix}
5 & 1 & 1\\
1 & 6 & 1\\
2 & 5 & 1
\end{pmatrix} = 5\cdot \left( 6-5 \right) - 1\cdot \left( 1-2 \right) + 1 \cdot \left( 5-12 \right) = 5 + 1 -7 = -1\neq 0 
\]

is not equal to zero and hence there does not exist a solution to this system of linear equations and therefore also no solutions $A$ and $B$ of the partial differential equation in any neighbourhood of $0$.
\end{proof}

Immediately from the non-existence of such solutions, one can infer that the $1$-form $\alpha$ is closed but not locally exact around $0$ on $C$.

\begin{claim}
The $1$-form $\alpha$ is closed on $C$ and not exact in any open neighbourhood of $0\in C$. In particular, $\alpha\neq 0$ around $0\in C$.
\end{claim}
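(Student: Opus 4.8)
The plan is to extract both assertions from one elementary fact, namely that $\pard{g}{y}=f$ for the coefficient $g=x^4y+\tfrac{y^5x}{5}+\tfrac{y^6}{6}$ of $\alpha=g\,dx$, combined with the preceding Claim. For closedness I would lift $\alpha$ to $\bb{C}^2$ and compute
\[
d\alpha = dg\wedge dx = \pard{g}{y}\,dy\wedge dx = -f\,dx\wedge dy.
\]
Since $f\in J$, the class of $f\,dx\wedge dy$ vanishes in $\Omega^2\left(\coan{C}\right)$, so $\alpha$ is closed on $C$.

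For non-exactness I argue by contradiction. Suppose $\alpha=dG$ as germs at $0$ in $\Omega^1\left(\coan{C}\right)$ and lift the germ $G$ to $\tilde G\in\coan{\bb{C}^2,0}$. Recalling that $\Omega^1\left(\coan{C}\right)$ is the quotient of $\Omega^1\left(\coan{\bb{C}^2}\right)$ by the submodule $\coan{\bb{C}^2}dJ$, which is generated by $df$, $f\,dx$ and $f\,dy$, the relation $\alpha-d\tilde G\in\coan{\bb{C}^2}dJ$ produces germs $h,a,b$ with $g\,dx-d\tilde G=h\,df+af\,dx+bf\,dy$. Comparing the coefficients of $dx$ and $dy$ yields
\[
g-\pard{\tilde G}{x}=h\pard{f}{x}+af,\qquad -\pard{\tilde G}{y}=h\pard{f}{y}+bf.
\]
The key step is then to differentiate the first equation in $y$ and the second in $x$, equate the mixed partials $\tilde G_{xy}=\tilde G_{yx}$, and substitute $\pard{g}{y}=f$; after the two $fh_{xy}$ contributions cancel this gives
\[
f = \left(\pard{h}{y}\pard{f}{x}-\pard{h}{x}\pard{f}{y}\right) + \pard{(af)}{y}-\pard{(bf)}{x}.
\]

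Now I observe that \emph{both} summands lie in the image of the operator $(A,B)\mapsto\pard{(fA)}{x}+\pard{(fB)}{y}$ occurring in the preceding Claim. The second summand is its value at $(-b,a)$ directly, while the decisive point is that the Jacobian remainder is also of this form, since
\[
\pard{\left(f\pard{h}{y}\right)}{x}-\pard{\left(f\pard{h}{x}\right)}{y}=\pard{f}{x}\pard{h}{y}-\pard{f}{y}\pard{h}{x}.
\]
Adding the two identifications gives $f=\pard{(fA)}{x}+\pard{(fB)}{y}$ with $A=\pard{h}{y}-b$ and $B=a-\pard{h}{x}$, i.e. a holomorphic solution of precisely the equation the preceding Claim forbids near $0$ — a contradiction. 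Hence $\alpha$ is not exact in any neighbourhood of $0$, and since the zero form is trivially exact, it follows that $\alpha\neq0$ around $0\in C$.

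The main obstacle is conceptual rather than computational: one must recognize that the apparently obstructive Jacobian term $h_yf_x-h_xf_y$, which is forced upon us by the $h\,df$ contribution inherent in working modulo $dJ$, is itself a divergence of exactly the shape appearing in the Claim. Once this is noticed, the assumed exactness of $\alpha$ collapses onto the forbidden partial differential equation and the earlier Claim immediately closes the argument; everything else is the routine bookkeeping of comparing coefficients and cross-differentiating.
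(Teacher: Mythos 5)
Your proposal is correct and follows essentially the same route as the paper: both verify closedness via $d\alpha = f\,dy\wedge dx \equiv 0$ on $C$, and both reduce the assumed exactness, written as $\alpha - d\tilde G \in \coan{\bb{C}^2}dJ$, to a holomorphic solution of the forbidden equation $f=\pard{(fA)}{x}+\pard{(fB)}{y}$ from the preceding Claim. The only cosmetic difference is that the paper first absorbs the $h\,df$ term into the primitive via $h\,df=d(hf)-f\,dh$ and then applies $d$, whereas you cross-differentiate directly and observe that the resulting Jacobian $h_yf_x-h_xf_y$ is itself a divergence of the required shape — the same cancellation in different bookkeeping.
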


\begin{proof}
The closedness of $\alpha$ on $C$ is clear because,
\[d\alpha = \left( x^4+y^4x+y^5 \right)dy\wedge dx= f dy\wedge dx.\]
Now the partial differential equation:
\begin{align}
f&= \pard{\left(fA\right)}{x} + \pard{\left(fB\right)}{y}
\end{align}
for $f$ does not have solutions $A$ and $B$. This shows that $\alpha$ is not exact around $0\in C$. To see this, suppose there exists $H\in \coan{\bb{C}^2}$ such that
\[\alpha - dH = h\cdot df+ f\beta.\]
Then one would have that $\alpha - d(H + h\cdot f)= f(\beta - dh)$. That is there would exist a holomorphic function $H'$ and a $1$-form $\beta'$ such that
\[\alpha -dH' = f \beta'.\]
This equality in turn implies
\[-fdx\wedge dy = d\alpha = d(f\beta')\]
or in coordinates
\[-f = \pard{\left(f\beta'_2\right)}{x} - \pard{\left(f\beta'_1\right)}{y}.\]
However, then $-\beta'_2$ and $\beta'_1$ would be a solution of (1), but no such solution exists as previously established. Hence, $\alpha$ is not exact in any open neighbourhood of $0\in C$.
\end{proof}

Now we establish the concrete form of a desingularization of $C$ around $0$. The aim in doing this is to be able to identify a universal denominator around $0$ on $C$ and therefore better understand the strongly holomorphic functions on $C$ as a subalgebra of the desingularization.

\begin{claim}
Let $U\subset \bb{C}$ be a sufficiently small open neighbourhood of $0\in \bb{C}$. Then the morphism
\[\psi\colon U \to C, \; t\mapsto \left( - \frac{t^5}{1+t}, - \frac{t^4}{1+t} \right)\]
is a desingularization of $C$ around $0$.
\end{claim}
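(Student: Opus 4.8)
The plan is to recognize $\psi$ as the normalization of the germ $(C,0)$; since a normal curve is smooth and the normalization is an isomorphism over the regular locus, this is exactly a desingularization around $0$. First I would check that $\psi$ lands in $C$. Writing $x=ty$ and $(1+t)y=-t^{4}$ (both read off directly from the formula for $\psi$), one gets $x^{4}=t^{4}y^{4}=-(1+t)y\cdot y^{4}=-y^{5}-t y^{5}=-y^{5}-xy^{4}$, so $x^{4}+xy^{4}+y^{5}=0$ and $\psi(U)\subseteq C$. As $U$ is an open subset of $\bb{C}$, it is smooth.

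Next I would establish that $\psi$ is injective and an immersion away from $0$. Injectivity is immediate: if $\psi(t_{1})=\psi(t_{2})$ with both nonzero, then $t_{i}=x/y$ forces $t_{1}=t_{2}$, while $y=0$ only at $t=0$. Differentiating the second coordinate gives $\tfrac{d}{dt}\!\left(-\tfrac{t^{4}}{1+t}\right)=-\tfrac{t^{3}(4+3t)}{(1+t)^{2}}$, which is nonzero for small $t\neq0$, so $\psi$ is an immersion there. A direct check of $f=f_{x}=f_{y}=0$ (with $f_{x}=4x^{3}+y^{4}$ and $f_{y}=y^{3}(4x+5y)$) shows that $0$ is the only singular point of $C$ near the origin; since $\psi(t)=0$ only at $t=0$, the points $\psi(t)$ with $t\neq0$ lie in $C_{\mathrm{reg}}$, and there $\psi$ is a local biholomorphism.

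The main step -- and the hard part -- is to prove that $\psi(U)$ is a full neighborhood of $0$ in $C$, equivalently that $(C,0)$ is analytically irreducible and $\psi$ is its normalization. Here I would use that $f$ is $x$-regular of order $4$, since $f(x,0)=x^{4}$. By the Weierstrass preparation theorem the projection $p\colon C\to\bb{C}$, $(x,y)\mapsto y$, is a finite branched cover of degree $4$ near $0$ (as $\coan{C,0}$ is free of rank $4$ over $\bb{C}\{y\}$). On the other hand $p\circ\psi\colon t\mapsto -t^{4}/(1+t)$ vanishes to order $4$ at $t=0$, hence is itself a degree-$4$ branched cover of a small $y$-disk. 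Since $\psi^{-1}(0)=\{0\}$, the map $\psi$ is finite, so $\psi(U)$ is a closed analytic curve germ inside $C$. For generic small $y_{0}\neq 0$ the fiber $p^{-1}(y_{0})\cap C$ and the fiber $(p\circ\psi)^{-1}(y_{0})$ each consist of $4$ distinct points; injectivity of $\psi$ then forces $\psi$ to map the latter bijectively onto the former, so $\psi(U)$ meets every generic fiber of $p$ in all $4$ points. Thus $\psi(U)$ contains a dense subset of $C$ near $0$ and, being closed, equals $C$ there. Consequently $(C,0)$ is irreducible and $\psi$ is a homeomorphism onto a neighborhood of $0$ in $C$ that is biholomorphic over $C_{\mathrm{reg}}$; that is, $\psi$ is a desingularization of $C$ around $0$.

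The genuine obstacle is this last surjectivity/irreducibility claim: verifying that $\psi$ maps into $C$ and is injective is routine, but one must rule out that $C$ has a second branch at $0$ that $\psi$ misses. The degree count against the Weierstrass degree of the projection $p$ is what closes this gap, and I expect it to require the most care (in particular, confirming that the generic fiber of $p$ really has $4$ reduced points and that $\psi$ is proper near $0$).
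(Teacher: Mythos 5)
Your proposal is correct, and it is in fact more complete than the argument the paper gives. The paper's own proof consists of exactly your first two routine steps --- verifying $f\circ\psi=0$ by direct substitution and verifying injectivity via $t=x/y$ --- and then simply asserts ``thus, outside of the singular set of $C$, it follows that $\psi$ is an isomorphism,'' without addressing the point you correctly single out as the genuine obstacle: that $\psi(U)$ is all of $C$ near $0$, i.e.\ that $(C,0)$ has no second branch missed by $\psi$. Your closing argument fills this gap cleanly: since $f(x,0)=x^4$, the polynomial $f=x^4+y^4x+y^5$ is already a Weierstrass polynomial of degree $4$ in $x$, so the projection $p(x,y)=y$ restricted to $C$ is a degree-$4$ branched cover near $0$; meanwhile $p\circ\psi(t)=-t^4/(1+t)$ also has degree $4$ at $t=0$, and the fiber count combined with injectivity and properness of $\psi$ (which follows from $\psi^{-1}(0)=\{0\}$ after shrinking $U$) forces $\psi(U)$ to contain every generic fiber of $p$ and hence, being closed, to equal $C$ near $0$. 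The degree/discriminant checks you flag as needing care do go through: the only singular point of $C$ near $0$ is the origin (so $f$ is squarefree and generic fibers of $p$ are four reduced points), and $t^4+y_0t+y_0$ has four distinct small roots for generic small $y_0\neq0$. In short, you reproduce the paper's two computations and then supply the irreducibility/surjectivity argument that the paper leaves implicit; nothing in your write-up is wrong.
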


\begin{proof}
Notice that $\psi$ actually factors through $C$, i.e.
\begin{align*}
f\circ \psi &= \frac{t^{20}}{(1+t)^4} - \frac{t^{16}}{(1+t)^4} \cdot \frac{t^5}{(1+t)}-\frac{t^{20}}{(1+t)^5}\\
 &= \frac{t^{20}(1+t) - t^{21}-t^{20}}{(1+t)^5}\\
 &=0.
\end{align*}
Moreover, $\psi$ is injective as
\begin{align*}
& & \left( - \frac{t^5}{1+t}, -\frac{t^4}{1+t} \right) &= \left( -\frac{t'^5}{1+t'},-\frac{t'^4}{1+t'} \right)\\
& \iff & t\cdot\frac{t^4}{1+t} = t'\cdot \frac{t'^4}{1+t'} \text{ } &\text{and } \frac{t^4}{1+t} = \frac{t'^4}{1+t'}\\
& \iff & t &= t',
\end{align*}
for $t,t'\neq 0$ and only $t=0$ maps to $0$. Thus, outside of the singular set of $C$, it follows that $\psi$ is an isomorphism and it follows that $\psi$ is a desingularization of $C$ around $0$.
\end{proof}

Notice that of course with this $\psi$ one can now always view the stalk $\coan{C,0}$ as the subalgebra $\bb{C}\left[\frac{t^5}{1+t}, \frac{t^4}{1+t}\right]$ of convergent power series in $x\circ \psi$ and $y\circ \psi$ in $\coan{\bb{C},0}=\bb{C}[t]$.

\begin{claim}
The element $h:=t^4= -(x+y)$ is such that $h^3$ is a universal denominator around $0$. That is, for every holomorphic function $g$ on $U$ there exists $g'$ on $C$ such that $h^3\cdot g=g'\circ \psi$.\\
In particular, it follows that any holomorphic function $H$ on $U$ around $0$ of the form
\[H= H(0)+ \sum_{i\geq 12} H_i\cdot t^i\]
is strongly holomorphic on $C$, where $H_i\in \bb{C}$.
\end{claim}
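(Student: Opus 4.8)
The plan is to transfer everything to the normalization via $\psi$ and prove the single statement that $t^{12}\,\bb{C}\{t\}\subseteq R$, where $R:=\psi^{\ast}\coan{C,0}\subseteq\bb{C}\{t\}=\coan{\bb{C},0}$. Writing $a:=-\psi^{\ast}y=\tfrac{t^4}{1+t}$ and $b:=-\psi^{\ast}x=\tfrac{t^5}{1+t}$, one identifies $\coan{C,0}$ with the analytic subalgebra $R=\bb{C}\{a,b\}$, and the function $h=-(x+y)$ pulls back to $a+b$. Since $\tfrac{t^4+t^5}{1+t}=t^4$, one has the two basic relations $a+b=t^4$ and $b=ta$; in particular $h$ corresponds to $t^4$ and $h^3$ to $t^{12}$, so the universal-denominator claim is \emph{exactly} the inclusion $t^{12}\,\bb{C}\{t\}\subseteq R$. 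Granting this, the ``in particular'' is immediate: a series $H=H(0)+\sum_{i\ge 12}H_it^i$ satisfies $H-H(0)=t^{12}g$ with $g\in\bb{C}\{t\}$, whence $H-H(0)\in R$ and $H\in R$ is strongly holomorphic.

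To prove $t^{12}\bb{C}\{t\}\subseteq R$ I would first cut it down to four monomials. The relation $a+b=t^4$ reads $t^4-at-a=0$, a monic Weierstrass polynomial of degree $4$ in $t$ over $\bb{C}\{a\}\subseteq R$; equivalently, $a$ has order $4$, so $\bb{C}\{t\}$ is a free $\bb{C}\{a\}$-module with basis $1,t,t^2,t^3$. Hence $t^{12}\bb{C}\{t\}\subseteq t^{12}R+t^{13}R+t^{14}R+t^{15}R$, and since $R$ is a ring it suffices to check that the four monomials $t^{12},t^{13},t^{14},t^{15}$ already lie in $R$.

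These I would exhibit explicitly. From $a+b=t^4$ one gets $t^{12}=(a+b)^3\in R$, and multiplication by $t$ obeys the clean rule $t\cdot a^ib^j=a^{i-1}b^{j+1}$ for $i\ge 1$ (immediate from $ta=b$). Pushing $(a+b)^3$ up by powers of $t$ therefore moves each factor $a$ to $b$ inside $R$; the only terms escaping the monomial semigroup are pure powers $b^j$, and for these the identity $(1+t)^{-j}=(1+t)(1+t)^{-(j+1)}$ gives $t\,b^{j}=a^4b^{j-3}+a^3b^{j-2}\in R$ for the relevant $j\ge 3$. Carrying this out yields closed forms such as $t^{13}=a^4+a^3b+a^2b+3ab^2+3b^3$, and analogously for $t^{14}$ and $t^{15}$, placing all four monomials in $R$ and finishing the inclusion.

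Conceptually the exponent $12$ is not arbitrary: the value semigroup $\Gamma=\mathrm{ord}_t(R\setminus\{0\})$ equals the numerical semigroup $\langle 4,5\rangle$, whose gaps are $\{1,2,3,6,7,11\}$ and whose conductor is $12$; because $(C,0)$ is an irreducible plane-curve germ, this semigroup conductor coincides with the analytic conductor $\{f:\mathrm{ord}_t f\ge 12\}=t^{12}\bb{C}\{t\}\subseteq R$, which gives a second, computation-free route. The main obstacle is precisely the bookkeeping that pins the value down at $12$ and no larger: on the explicit side, controlling the pure-$b^j$ terms when multiplying $t^{12}$ upward, and on the conceptual side, verifying $\Gamma=\langle 4,5\rangle$ by ruling out orders $6,7,11$ through a leading-term argument along the $\mathfrak{m}_R$-adic filtration. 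Neither step is deep, but each must be executed carefully so that the denominator is exactly $h^3$.
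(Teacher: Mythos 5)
Your argument is correct and is essentially the paper's proof in different clothing: both exploit that $\bb{C}\{t\}$ is generated over a subring containing an element of $t$-order $4$ (your $a$, the paper's $h=t^4=-(x+y)$) by $1,t,t^2,t^3$, thereby reducing the claim to the four monomials $t^{12},t^{13},t^{14},t^{15}$, which are then placed in $\psi^*\coan{C,0}$ by explicit manipulation of the relations $ta=b$ and $a+b=t^4$ — the paper's table of the twelve products $x^jy^{3-j}\cdot(x/y)^i$, $i=1,2,3$, is exactly this computation written on $C$ rather than on the normalization. Your closing observation that $12$ is the conductor of the value semigroup $\langle 4,5\rangle$ is a genuinely different and more conceptual route that the paper does not take, though, as you acknowledge, it still requires identifying the semigroup and invoking the coincidence of the semigroup conductor with the conductor ideal for plane branches.
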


\begin{proof}
Let us first verify the equation for $h$, i.e.
\[-(x+y)= \frac{t^5}{1+t}+\frac{t^4}{1+t}=t^4\cdot \frac{t+1}{1+t}=t^4.\]
Notice that every holomorphic function $g$ on $U$ can be written as
\[g=g\left( 0 \right)+\sum_{i\geq 1} g_i\cdot \left( \frac{x}{y} \right)^i\]
around $0\in U$, since $\frac{x}{y}=t$. From the equation $-\left( x+y \right)=t^4=\left(\frac{x}{y}\right)^4$ one knows that $t^4$ is holomorphic on $C$ and one may rewrite $g$ such that
\[g= g\left( 0 \right) + \sum_{i=1}^3 g_i \left(\frac{x}{y} \right)^i - (x+y)\sum_{i=0}^3 g_{i+4} \left( \frac{x}{y} \right)^i + (x+y)^2\sum_{i=0}^3 g_{i+8} \left( \frac{x}{y} \right)^i \dots.\]
Therefore, one only has to verify that
\[h^3\cdot \left(\frac{x}{y}\right)^i\]
is holomorphic on $C$ for $i=1,2,3$. One can do this by verifying that
\[x^3\cdot \left(\frac{x}{y}\right)^i, x^2\cdot y \cdot \left(\frac{x}{y}\right)^i, x\cdot y^2 \cdot \left(\frac{x}{y}\right)^i, y^3\cdot \left(\frac{x}{y}\right)^i\]
are stronlgy holomorphic on $C$ around $0$ for $i=1,2,3$. The following calculations verify this:
\begin{align*}
\frac{x}{y}\cdot x^3 &= \frac{x^4}{y^4}\cdot y^3 = -(x+y)\cdot y^3\\
\frac{x}{y} \cdot x^2\cdot y &= x^3\\
\frac{x}{y} \cdot x \cdot y^2 &= x^2\cdot y\\
\frac{x}{y} \cdot y^3 &= x\cdot y^2\\
 & \\
\frac{x^2}{y^2} \cdot x^3 &= \frac{x^5}{y^2} = -(x+y)\cdot x \cdot y^2\\
\frac{x^2}{y^2} \cdot x^2\cdot y &= \frac{x^4}{y} = -(x+y)\cdot y^3\\
\frac{x^2}{y^2} \cdot x \cdot y^2 &=x^3\\
\frac{x^2}{y^2} \cdot y^3 &= x^2\cdot y\\
 & \\
\frac{x^3}{y^3} \cdot x^3 &= \frac{x^6}{y^3} = -(x+y)\cdot x^2\cdot y\\
\frac{x^3}{y^3} \cdot x^2\cdot y&= \frac{x^5}{y^2} = -(x+y)\cdot x \cdot y^2\\
\frac{x^3}{y^3} \cdot x\cdot y^2 &= \frac{x^4}{y} = -(x+y)\cdot y^3 \\
\frac{x^3}{y^3} \cdot y^3 &= x^3.
\end{align*}
The claim now follows from the fact that $h^3=-x^3-3x^2\cdot y - 3 x\cdot y^2 - y^3$.
\end{proof}

\begin{claim}
A holomorphic function $G$ on $U$ around $0$ such that
\[dG = \psi^*\alpha\]
is strongly holomorphic on $C$.
\end{claim}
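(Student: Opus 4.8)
The plan is to reduce the statement directly to the preceding claim by pulling $\alpha$ back along the parametrisation $\psi$ and reading off the order of vanishing of the resulting one-form at $t=0$. Since $U\subseteq\bb{C}$ is one-dimensional, $\psi^*\alpha$ is automatically closed, so a primitive $G$ exists; the only datum that controls strong holomorphicity is the lowest power of $t$ occurring in $G$. Concretely, I would show that $\psi^*\alpha$ vanishes to high order at $0$, so that $G-G(0)$ vanishes to order at least $12$, and then invoke the claim that every holomorphic function of the form $H(0)+\sum_{i\ge 12}H_it^i$ is strongly holomorphic on $C$.

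First I would substitute $x=-t^5/(1+t)$, $y=-t^4/(1+t)$ together with $dx=-\tfrac{t^4(5+4t)}{(1+t)^2}\,dt$ into $\alpha=\bigl(x^4y+\tfrac15 y^5x+\tfrac16 y^6\bigr)\,dx$. Because each denominator $(1+t)^k$ is a unit in $\coan{\bb{C},0}$, the order in $t$ of every term is governed by its numerator. The three monomials $x^4y$, $y^5x$ and $y^6$ vanish at $t=0$ to orders $24$, $25$ and $24$ respectively, so their sum vanishes to order at least $24$, \emph{regardless of any cancellation}. Multiplying by $dx$, which contributes a further factor of order $4$, shows that $\psi^*\alpha$ vanishes to order at least $28$ at the origin; that is,
\[\psi^*\alpha=t^{28}\,g(t)\,dt\]
for some $g$ holomorphic near $0$.

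Integrating term by term, any $G$ with $dG=\psi^*\alpha$ satisfies $G=G(0)+\sum_{i\ge 29}G_it^i$, so in particular $G$ is of the form $H(0)+\sum_{i\ge 12}H_it^i$, and the preceding claim then gives that $G$ is strongly holomorphic on $C$, which is exactly the assertion. The main obstacle here is only the bookkeeping of the rational-function computation of $\psi^*\alpha$, and even that is mild: since I need nothing more than a \emph{lower} bound on the vanishing order, no delicate cancellation analysis is required, and the sole identity to check is $dx=-\tfrac{t^4(5+4t)}{(1+t)^2}\,dt$. The conceptual reason the statement holds is that $\psi$ introduces such high powers of $t$ that the primitive $G$ clears the threshold $12$ coming from the universal denominator $h^3=t^{12}$ of the previous claim with a wide margin.
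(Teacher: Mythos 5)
Your proof is correct and follows essentially the same route as the paper: both bound the order of vanishing of $\psi^*\alpha$ at $t=0$ (you get order $\geq 28$ by counting orders of $x$, $y$, and $dx$; the paper writes out $\partial G/\partial t$ as an explicit rational function and bounds its numerator) and then invoke the preceding claim that any $G=G(0)+\sum_{i\geq 12}G_i t^i$ is strongly holomorphic. Your order bookkeeping, including the identity $dx=-\frac{t^4(5+4t)}{(1+t)^2}\,dt$, checks out.
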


\begin{proof}
It is clear that for a sufficiently small neighbourhood of $0\in U$ such a holomorphic function $G$ exists. Moreover, $G$ must be such that
\begin{align*}
\pard{G}{t} &= \left[ \left( \frac{t^5}{1+t} \right)^4\cdot \frac{t^4}{1+t}- \left( \frac{t^4}{1+t} \right)^5\cdot \frac{t^5}{1+t} \cdot\frac{1}{5} - \left( \frac{t^4}{1+t} \right)^6\cdot \frac{1}{6} \right]\pard{}{t}\left( \frac{t^5}{1+t} \right)\\
 &= \frac{ t^{24} + t^{25} -t^{25} \cdot\frac{1}{5} - t^{24} \cdot\frac{1}{6}}{(1+t)^6}\cdot \left( \frac{5t^4}{1+t} - \frac{t^5}{(1+t)^2} \right)\\
&= \frac{\tilde{G}}{(1+t)^8},
\end{align*}
for some appropriate $\tilde{G}$. Notice that $\tilde{G}$ is at least such that
\[\tilde{G}=\sum_{i\geq 24} \tilde{G}_i t^i,\]
for some $\tilde{G}_i\in \bb{C}$, and the series expansion of $\frac{1}{1+t}$ around $0$ reads:
\[\frac{1}{1+t}= \sum_{i=0}^{\infty} \left( -1 \right)^i t^i.\]
Hence, one has that at the very least $G$ is such that
\[\pard{G}{t}= \sum_{i\geq 25} i G_i t^{i-1}\]
and hence $G$ is of the form
\[G= G\left( 0 \right) + \sum_{i\geq 25} G_i t^i,\]
for some $G_i\in \bb{C}$. Therefore $G$ is strongly holomorphic.
\end{proof}

Now one can conclude Theorem~\ref{NonExact} namely:

\begin{claim}
The $1$-form $\alpha - dG$ on $C$ is non-zero closed and torsion around $0$.
\end{claim}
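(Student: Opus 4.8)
The plan is to verify the three asserted properties of $\omega := \alpha - dG$ in turn: closedness, non-vanishing and torsion. The first two are immediate from what has already been shown, so I would dispose of them first. Closedness is formal: $d(dG)=0$ since $d\circ d=0$, and the computation $d\alpha = f\,dy\wedge dx$ from the earlier claim vanishes on $C$ because $f$ vanishes on $C$; hence $d\omega = 0$. For non-vanishing, recall that the preceding claim exhibits $G$ as a \emph{strongly} holomorphic function, i.e. a genuine element of $\coan{C}$, so $dG$ is an honestly exact $1$-form on $C$ near $0$. Were $\omega$ zero, then $\alpha = dG$ would be exact in a neighbourhood of $0\in C$, directly contradicting the earlier claim that $\alpha$ is not exact in any open neighbourhood of $0$. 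Therefore $\omega\neq 0$.

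The main step is torsion, and here I would exploit the desingularization $\psi\colon U\to C$, which is a biholomorphism over the regular locus $C_{\mathrm{reg}}$ near $0$. Viewing $G$ as the element of $\coan{C}$ furnished by the strong-holomorphy claim, its pullback $\psi^*G$ is precisely the primitive on $U$ satisfying $d(\psi^*G) = \psi^*\alpha$; therefore $\psi^*\omega = \psi^*\alpha - d(\psi^*G) = 0$ on all of $U$. Since $\psi$ restricts to an isomorphism over $C_{\mathrm{reg}}$, this forces $\rst{\omega}{C_{\mathrm{reg}}} = 0$. Now $C_{\mathrm{reg}}$ is dense in the reduced curve $C$, and the torsion quotient $\Omega^1\left( \coan{C} \right)/\mc{T}$ of $\Omega^1\left( \coan{C} \right)$ by its torsion subsheaf $\mc{T}$ is torsion-free, so by Lemma~\ref{monorestriction} its sections restrict injectively to the dense open $C_{\mathrm{reg}}$. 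The class of $\omega$ in this quotient restricts to $0$ over $C_{\mathrm{reg}}$ and is therefore $0$, which means $\omega$ lies in $\mc{T}$. Thus $\omega$ is a torsion $1$-form, and combined with the previous paragraph $\alpha - dG$ is a non-zero, closed, torsion $1$-form around $0\in C$, as claimed.

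I expect no serious obstacle to remain, because the two genuinely hard points have already been settled upstream: the non-exactness of $\alpha$ (the linear-algebra obstruction extracted from the earlier partial differential equation) and the strong holomorphy of the primitive $G$ (via the universal denominator $h^3$). The only point deserving care is that $\psi^*\omega$ vanishes \emph{identically} on $U$, not merely over $\psi^{-1}(C_{\mathrm{reg}})$; this is guaranteed precisely because $G$ was constructed as a primitive of $\psi^*\alpha$ on the whole of $U$, and it is exactly this identical vanishing that lets the dense-restriction argument of Lemma~\ref{monorestriction} conclude that $\omega$ is a torsion element rather than leaving open where it is supported.
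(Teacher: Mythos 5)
Your proposal is correct and follows essentially the same route as the paper: non-vanishing from the non-exactness of $\alpha$, closedness from $\alpha$ and $dG$ both being closed, and torsion from the identity $dG=\psi^*\alpha$ on $U$ together with $\psi$ being an isomorphism away from $0$. Your extra step of passing to the torsion-free quotient and invoking Lemma~\ref{monorestriction} is just a more explicit justification of the paper's one-line torsion argument.
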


\begin{proof}
The form $\alpha - dG$ is non-zero because $\alpha$ is not exact around $0$. It is closed as it is the difference of two closed $1$-forms. Finally it is torsion, as $\psi$ is an isomorphism away from $0\in C$ and there one has $dG=\psi^*\alpha$.
\end{proof}

One is now able to give an example of a flat non-tame connection.

\begin{thm}
The connection $\nabla:= d + \alpha$ on $\coan{C}$ is flat but $\ker{\nabla}$ is not a local system around $0\in C$. Moreover, $\nabla$ is not tame.
\end{thm}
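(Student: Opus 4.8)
The strategy is to transfer the problem to the torsion $1$-form $\omega:=\alpha-dG$ produced in Theorem~\ref{NonExact} by a gauge transformation, and then to exploit the fact that $\omega$ is supported only over the singular point $0$. Flatness I would settle directly: on the trivial module $\coan{C}$ the curvature of $\nabla=d+\alpha$ is multiplication by the $2$-form $d\alpha$, and since $d\alpha=f\,dy\wedge dx$ with $f\equiv 0$ on $C$, the form $\alpha$ is closed and $F^{\nabla}=0$. To handle the remaining two assertions I would first observe that $\nabla$ is isomorphic, as a connection, to $\nabla':=d+\omega$: because $G$ is strongly holomorphic, $e^{-G}$ is a unit of $\coan{C}$, and multiplication by $e^{-G}$ intertwines $d+\alpha$ with $d+(\alpha-dG)=\nabla'$, since the connection form changes by $d\log(e^{-G})=-dG$. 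As flatness, tameness, and the property that the kernel be a local system are all invariant under isomorphisms of connections, it suffices to treat $\nabla'$.

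For non-tameness I would take $s=1$ and $X:=\mathrm{sing}(C)=\{0\}$, a nowhere dense analytic subset; since the base is a point, this $X$ meets the hypotheses of Definition~\ref{tamedefinition}. Then $\nabla'(1)=\omega$ vanishes on $C_{\mathrm{reg}}=U\setminus X$ because $\omega$ is torsion, while $\omega\neq 0$ by Theorem~\ref{NonExact}. This contradicts the defining implication of tameness, so neither $\nabla'$ nor $\nabla$ is tame. (Equivalently, without invoking invariance, the section $s=e^{-G}$ satisfies $\nabla(e^{-G})=e^{-G}\omega$, which is again a nonzero torsion form, witnessing non-tameness of $\nabla$ on the nose.)

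Finally, for $\ker{\nabla'}$ I would compute the stalk at $0$. Over $C_{\mathrm{reg}}$ we have $\omega=0$, so $\nabla'=d$ there and $\ker{\nabla'}$ restricts to the constant sheaf $\bb{C}$. A germ $g\in\coan{C,0}$ with $\nabla'g=dg+g\omega=0$ therefore satisfies $dg=0$ on the regular locus; since $C$ is unibranch at $0$ (its normalisation $\psi$ is injective), $C_{\mathrm{reg}}$ is connected near $0$, so $g$ equals a constant $c$ there, and by density of $C_{\mathrm{reg}}$ together with reducedness of $C$ one concludes $g=c$ in $\coan{C,0}$. But then $\nabla'g=c\,\omega$, which vanishes only for $c=0$ (a nonzero $c$ is a unit and $\omega\neq 0$). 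Hence $\ker{\nabla'}_0=0$, whereas the generic stalk has rank one, so $\ker{\nabla'}$—and thus $\ker{\nabla}$—is not locally constant and cannot be a local system around $0$.

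The crux of the argument is this last kernel computation. The decisive point is that a parallel germ is forced to be \emph{globally} constant (connectedness of the regular locus plus reducedness of $C$), after which torsion-ness of $\omega$ annihilates every nonzero constant and collapses the kernel entirely at the singular point, producing precisely the rank drop that obstructs local constancy. Everything else reduces to the bookkeeping of the gauge transformation and the already-established closedness and torsion-ness of the relevant $1$-forms.
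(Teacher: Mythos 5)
Your proof is correct and follows essentially the same route as the paper: flatness from $d\alpha=f\,dy\wedge dx\equiv 0$ on $C$, and non-tameness from the computation $\nabla\left(e^{-G}\right)=e^{-G}\left(\alpha-dG\right)$, a nonzero torsion $1$-form that vanishes on $C_{\mathrm{reg}}$. The only real divergence is in the local-system claim: the paper argues that a parallel section near $0$ would be non-vanishing and would force $\alpha=-d\log\left(s\right)$ to be exact, contradicting Theorem~\ref{NonExact}, whereas you gauge-transform to $\nabla'=d+\left(\alpha-dG\right)$ and compute $\ker{\nabla'}_0=0$ directly --- a parallel germ must be a constant $c$ (connectedness of $C_{\mathrm{reg}}$ near $0$, since the normalisation is injective there, plus reducedness), and $c\left(\alpha-dG\right)\neq 0$ for $c\neq 0$. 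This is a legitimate variant that makes explicit the non-vanishing step the paper only asserts, and then detects the failure of local constancy via the rank drop of the kernel at $0$; both arguments ultimately rest on the same input, namely that $\alpha-dG$ is a nonzero torsion form.
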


\begin{proof}
Flatness is clear, as $\alpha$ is closed. Suppose that around $0\in C$ there was a parallel section $s$, i.e.
\[\nabla s = ds + s\alpha =0.\]
Then $s$ is necessarily non-vanishing in $0\in C$ and therefore in a sufficiently small neighbourhood of $0\in C$ one has
\[\alpha = - d \log\left( s \right),\]
which contradicts $\alpha $ not being exact. Hence, $\ker{\nabla}$ is not a local system.\\
Moreover, note that
\[\nabla(\exp(-G))=-\exp\left( -G \right)\cdot dG +\exp\left( -G \right)\cdot \alpha= \exp\left( -G \right)\left( \alpha - dG \right)\neq 0\]
is a non-vanishing torsion $1$-form as established earlier -- note here that of course $\exp\left( -G \right)$ is a unit element.
\end{proof}

This example shows that on $C$ the flat connection $\nabla:=d+ \alpha$ on $\coan{C}$ does not have a local system of parallel sections, i.e. the sheaf $\ker{\nabla}$ is not locally constant. The kernel sheaf has rank $1$ away from $0\in C$ but rank $0$ at $0\in C$. A completely new phenomenon on singular complex analytic spaces, as such an example can never exists on a complex manifold. However, the difference between $\nabla$ and the connection $d+dG$ is only a closed torsion $1$-form.

\section{Riemann-Hilbert correspondence with singular fibers}
\label{SingFib}

The following section applies the collected material to the Riemann-Hilbert correspondence with singular fibers. As the upcoming discussion is going to be somewhat technical it is presented in multiple subsections. Subsection~\ref{WeakHoloMaxPropMod} introduces and discusses the concept of weak holomorphicity of a function defined on the regular part of a reduced complex analytic space. A firm grasp of this concept is necessary because one is able to prove that weakly holomorphic solutions for the Riemann-Hilbert correspondence always exist. In order to make the following discussion accessible, one finds an outline of the overall strategy in subsection~\ref{Strat}. Furthermore, one finds a discussion of the philosophy that led to the development of the current proofs in that section as well. Subsection~\ref{weakSol} then establishes the existence of weakly holomorphic local solutions in the general situation of torsion-free sheaves over the total space of a locally trivial morphism. Then in subsection~\ref{RedAbs} one can show that the obtained solutions are precisely holomorphic if and only if they are holomorphic when restricted to each fiber of the locally trivial morphism (Proposition~\ref{RelStrong} and Theorem~\ref{RelToAbs}). This procedure is called the \emph{reduction to the absolute case}. From this reduction it follows that the Riemann-Hilbert correspondence for torsion-free sheaves still holds on locally trivial morphisms with \emph{maximal} fibers (Definition~\ref{Normal} and Theorem~\ref{MaxRH}). The Riemann-Hilbert correspondence in the singular case can be reduced even further to the case of complex analytic curves. This reduction is discussed in subsection~\ref{RedToCurve} and the precise statement is proven in Proposition~\ref{RedToCurveGen} and Theorem~\ref{RedToCurveGenCor}. This further reduction to the curve case allows one to also prove the Riemann-Hilbert correspondence in the case where the fibers of the morphism are homogeneous complex analytic subspaces of $\bb{C}^n$. The argument relies on first proving that the absolute case holds on homogeneous curves (i.e. the union of finitely many straight lines) in Proposition~\ref{HomCurve}. One then obtains the general homogeneous case by Proposition~\ref{RedCurve} and Theorem~\ref{HomCor}.

\subsection{Weak holomorphicity, maximalization and proper modifications}
\label{WeakHoloMaxPropMod}

A brief overview of well-behaved meromorphic functions on reduced complex analytic spaces is given in this subsection. The main takeaway here is that singularities in the underlying space allow for the existence of locally bounded and even continuous meromorphic functions that are \emph{not} globally holomorphic. Such a phenomenon can of course not be observed on complex manifolds. In relation to this occurrence the notions of \emph{normal} and \emph{maximal} (Definition~\ref{Normal}) spaces are introduced. One can then observe that locally bounded meromorphic functions have a well-behaved graph that completely determines the function (Proposition~\ref{WeakGraph}). The section concludes by showing that holomorphic functions on the domain of definition of a proper modification can always been viewed as a locally bounded weakly holomorphic function on the image of the proper modification (Proposition~\ref{PropMod}).

\begin{defn}
Let $M$ be a reduced complex analytic space. A \emph{weakly holomorphic function (on $M$)} is a holomorphic function $f\colon M_{\mathrm{reg}}\to \bb{C}^r$ such that each component $f_i$ is locally bounded on $M$.\\
A \emph{continuous weakly holomorphic function (on $M$)} is a continuous function $g\colon M\to \bb{C}^r$ such that $\rst{g}{M_{\mathrm{reg}}}$ is holomorphic.\\
It is clear that a continuous weakly holomorphic function can always be viewed as a weakly holomorphic function, even though the domain of definition is technically different in the definition of these terms.
\end{defn}

\begin{defn}
Let $M$ be a reduced complex analytic space. Then $M$ is called \emph{normal} (resp. \emph{maximal}) if every weakly holomorphic function (resp. continuous weakly holomorphic function) $f$ on $U\subseteq M$ is holomorphic on $U$, where $U\subseteq M$ is open. One might say then that $f$ is even \emph{strongly holomorphic} to emphasis the holomorphicity.
\label{Normal}
\end{defn}

A very famous example of a complex analytic variety actually turns out the be a maximal complex analytic space.

\begin{exmp}
The \emph{Whitney Umbrella}
\[\left\{ \left( z_1,z_2,z_3 \right)\in \bb{C}^3 \mid z_1^2 -z_2^2\cdot z_3 = 0 \right\}\]
is a maximal complex analytic space. A proof can be found in e.g.~\cite[page 301]{Umbrealla} - it boils down to the Whitney Umbrella being a normal crossing everywhere except at the origin, but the origin has codimension $2$ and maximality follows in this case for local complete intersections.\\
In particular, normal crossings are maximal complex analytic spaces, see e.g.~\cite[page 297]{Umbrealla}.
\end{exmp}

The topology of a reduced complex analytic space actually supports a maximal complex analytic structure that almost agrees with the initial complex analytic structure. This notion is defined concretely in the following way:

\begin{defn}
Let $M$ be a reduced complex analytic space. Then a morphism $\mu\colon \hat{M}\to M$ is called a \emph{maximalization (of $M$)} if the following hold:
\begin{enumerate}[(i)]
\item $\hat{M}$ is maximal.
\item $\mu$ is a homeomorphism.
\item The restriction $\hat{M}\setminus \mu^{-1}\left( \mathrm{sing}\left( M \right) \right)\to M \setminus \mathrm{sing}\left( M \right)$ is biholomorphic.
\end{enumerate}
\end{defn}

Such a maximalization always exists in the reduced case.

\begin{thm}
Let $M$ be a reduced complex analytic space. Then there exists a maximalization $\mu\colon \hat{M}\to M$.
\label{MaxEx}
\end{thm}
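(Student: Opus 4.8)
The plan is to realize $\hat{M}$ as the \emph{weak normalization} of $M$: the ringed space with the same underlying topological space as $M$ whose structure sheaf is the sheaf of continuous weakly holomorphic functions. I would begin with the normalization $\nu\colon M^{\nu}\to M$ already used in the proof of Proposition~\ref{torsionfreeembed2}; recall that $\nu$ is finite and surjective, that $M^{\nu}$ is normal (hence locally irreducible), that $\nu$ is biholomorphic over $M_{\mathrm{reg}}$, and that $\nu_{*}\coan{M^{\nu}}$ is the coherent sheaf $\tilde{\coan{M}}$ of weakly holomorphic functions. Define $\hat{\coan{M}}\subseteq\tilde{\coan{M}}$ to be the subsheaf of those weakly holomorphic functions that extend continuously to $M$, set $\hat{M}:=\left(M,\hat{\coan{M}}\right)$, and let $\mu$ be the identity on points. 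Property (ii) is then immediate, and over $M_{\mathrm{reg}}$ a continuous weakly holomorphic function is just a holomorphic function while $\nu$ is biholomorphic, so $\hat{\coan{M}}$ agrees with $\coan{M}$ there and property (iii) holds.

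The heart of the matter is to show that $\hat{M}$ is genuinely a complex analytic space, i.e. that $\hat{\coan{M}}$ is coherent and locally a model. Since $\nu$ is proper with finite fibres, a weakly holomorphic function $f$ extends continuously across $p\in M$ exactly when its lift $\tilde{f}\in\coan{M^{\nu}}$ takes one common value on the whole fibre $\nu^{-1}\left(p\right)$; I would first record this equivalence (continuity $\Longleftrightarrow$ constancy on fibres) by a properness argument, lifting a sequence $x_{n}\to p$ to $M^{\nu}$ and using compactness of $\nu^{-1}\left(p\right)$. Thus $\hat{\coan{M}}$ is exactly the sheaf of sections of $\nu_{*}\coan{M^{\nu}}$ that are invariant under the proper finite analytic equivalence relation $R:=M^{\nu}\times_{M}M^{\nu}$ on $M^{\nu}$. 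The quotient of an analytic space by such a relation is again an analytic space whose structure sheaf is the sheaf of invariants (the quotient theorem for proper finite holomorphic equivalence relations of H.~Cartan and Grauert); applying this to $M^{\nu}/R$ yields a complex analytic space whose underlying topological space is $M$ and whose structure sheaf is precisely $\hat{\coan{M}}$, which is what makes $\hat{M}$ analytic and $\hat{\coan{M}}$ coherent.

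It then remains to verify that $\hat{M}$ is maximal. Because $\mu\colon\hat{M}\to M$ is a finite homeomorphism that is biholomorphic over the regular locus, $\hat{M}$ and $M$ have the same normalization $M^{\nu}$; hence the weakly holomorphic functions on an open set of $\hat{M}$ are the same sections of $\nu_{*}\coan{M^{\nu}}$ as those on $M$, and the continuity requirement is a condition on the common topological space. Consequently the continuous weakly holomorphic functions on $\hat{M}$ are again exactly the sections of $\hat{\coan{M}}$, i.e. they are already holomorphic on $\hat{M}$, and this idempotence is precisely maximality.

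I expect the main obstacle to be the analyticity and coherence step: turning the set-theoretic continuity condition into an honest coherent analytic structure. The purely local description of $\hat{\coan{M}}$ at a point in terms of value matching on the finitely many local branches does not sheafify directly, since the fibre structure of $\nu$ degenerates along the non-normal locus; this is exactly why one must invoke the quotient theorem for proper finite equivalence relations (or, equivalently, prove local finite generation of $\hat{\coan{M}}$ over $\coan{M}$ inside the coherent module $\tilde{\coan{M}}$ by means of the conductor ideal of $\nu$). Everything else -- properties (ii) and (iii), the continuity-versus-fibre-constancy equivalence, and the idempotence yielding maximality -- is comparatively routine once this structure is in place.
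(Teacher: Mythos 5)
The paper offers no proof of this theorem beyond the citation of Fischer, and your proposal is exactly the construction carried out in that reference: the maximalization is the weak normalization, i.e.\ the topological space $M$ equipped with the sheaf of continuous weakly holomorphic functions, identified inside $\nu_*\coan{M^{\nu}}$ as the sections constant on the fibres of the normalization $\nu$. Your outline is correct and isolates the genuinely nontrivial step in the right place -- showing that this structure sheaf is coherent and locally a model, which you handle either by the finite-equivalence-relation quotient theorem applied to $M^{\nu}\times_M M^{\nu}$ or, more directly, by realizing $\hat{\coan{M}}$ as the kernel of a morphism of coherent $\coan{M}$-modules -- so it agrees in all essentials with the cited proof.
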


\begin{proof}
See e.g.~\cite[Theorem 2.29, page 123]{fischer}
\end{proof}

Encoding weakly holomorphic functions in terms of their graph is a useful technique in the upcoming proof. The argument for this encoding is fairly straight forward, once one is familiar with the graph of a meromorphic function. An introduction can be found in e.g.~\cite[Chapter 4]{fischer}.

\begin{prop}
Let $M$ be a reduced complex analytic space. Let $f\colon M_{\mathrm{reg}}\to \bb{C}^r$ be a holomorphic function. Then the following statements are equivalent:
\begin{enumerate}[(i)]
\item $f$ is a weakly holomorphic function on $M$.
\item The closure of the graph $\Gamma_f\subseteq M_{\mathrm{reg}}\times \bb{C}^r$ of $f$ inside $M\times \bb{C}^r$ is analytic and the projection $\beta:=\rst{\pi}{\bar{\Gamma}_f}\colon \bar{\Gamma}_f \to M$ is a proper map.
\end{enumerate}
\label{WeakGraph}
\end{prop}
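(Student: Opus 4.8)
The plan is to prove the two implications separately, after first isolating the purely topological equivalence between local boundedness of $f$ and properness of $\beta$. Throughout, $f$ is holomorphic on $M_{\mathrm{reg}}$ by hypothesis, so in each direction only the boundedness, respectively the analyticity of the graph closure, is really at stake.

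First I would record the boundedness--properness dictionary. If $f$ is locally bounded, then $\beta$ is proper: given a compact $K\subseteq M$, cover it by finitely many open sets $U_1,\dots,U_n$ on each of which $|f|\le C_j$ on $U_j\cap M_{\mathrm{reg}}$, and put $C:=\max_j C_j$. If $(q,w)\in\bar{\Gamma}_f$ with $q\in K$, then approximating $(q,w)$ by points $(q_k,f(q_k))\in\Gamma_f$ with $q_k$ eventually inside the $U_j$ containing $q$ forces $|w|\le C$. Hence $\beta^{-1}(K)\subseteq K\times\{|w|\le C\}$, a closed subset of a compact set, so $\beta^{-1}(K)$ is compact. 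Conversely, if $\beta$ is proper but $f$ fails to be bounded near some $p\in M$, I choose a relatively compact neighbourhood $U$ of $p$ and a sequence $q_k\to p$ in $M_{\mathrm{reg}}$ with $|f(q_k)|\to\infty$; then $(q_k,f(q_k))\in\beta^{-1}(\bar{U})$, which is forced to be compact, contradicting $|f(q_k)|\to\infty$. Thus $\beta$ is proper if and only if $f$ is locally bounded. This makes the implication (ii)$\Rightarrow$(i) immediate: properness of $\beta$ yields local boundedness, which together with holomorphy on $M_{\mathrm{reg}}$ is exactly weak holomorphicity of $f$; note that the analyticity half of (ii) is not needed here.

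For (i)$\Rightarrow$(ii) the dictionary already gives properness of $\beta$, so the substantive task is the analyticity of $\bar{\Gamma}_f$. Here I would pass to the normalisation $\nu\colon M^\nu\to M$, which is finite, restricts to a biholomorphism over $M_{\mathrm{reg}}$, and for which $\nu^{-1}(M_{\mathrm{reg}})$ is dense in $M^\nu$. Since $f$ is weakly holomorphic it is a section of $\tilde{\mc{O}}_M=\nu_*\coan{M^\nu}$ (the identification already invoked in the proof of Proposition~\ref{torsionfreeembed2}), so $f\circ\nu$ extends from $\nu^{-1}(M_{\mathrm{reg}})$ to a genuine holomorphic map $\hat{f}\colon M^\nu\to\bb{C}^r$. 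Its graph $\Gamma_{\hat{f}}=\{(q,\hat{f}(q)):q\in M^\nu\}$ is analytic in $M^\nu\times\bb{C}^r$, being cut out by the equations $w_i-\hat{f}_i=0$. The map $\nu\times\id\colon M^\nu\times\bb{C}^r\to M\times\bb{C}^r$ is finite, hence proper, so by Remmert's proper mapping theorem (see e.g.~\cite{fischer}) the image $(\nu\times\id)(\Gamma_{\hat{f}})$ is analytic. It then remains to identify this image with $\bar{\Gamma}_f$: over $M_{\mathrm{reg}}$ the two sets coincide, the image is closed and contains $\Gamma_f$, hence contains $\bar{\Gamma}_f$, and conversely every point $(\nu(q),\hat{f}(q))$ is the limit of points $(\nu(q_k),f(q_k))\in\Gamma_f$ with $q_k\in\nu^{-1}(M_{\mathrm{reg}})$, $q_k\to q$, by density and continuity, hence lies in $\bar{\Gamma}_f$. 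Therefore $\bar{\Gamma}_f=(\nu\times\id)(\Gamma_{\hat{f}})$ is analytic.

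The main obstacle is precisely this analyticity step. Local boundedness alone only controls the size of the graph and gives properness; to see that the graph closure is genuinely an analytic set one needs an extension mechanism producing an honest holomorphic object whose graph is manifestly analytic. The normalisation supplies this object via the identification $\tilde{\mc{O}}_M=\nu_*\coan{M^\nu}$, and Remmert's proper mapping theorem, together with the density of $\nu^{-1}(M_{\mathrm{reg}})$ needed to match the pushed-forward graph with the closure, transports the analytic graph upstairs to the analytic graph closure downstairs.
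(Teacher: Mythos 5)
Your proof is correct, and its skeleton agrees with the paper's: both reduce the equivalence to the observation that properness of $\beta$ is exactly local boundedness of $f$, plus the one substantive analytic fact that $\bar{\Gamma}_f$ is an analytic set when $f$ is weakly holomorphic. Your boundedness--properness dictionary is the same content as the paper's argument (the paper phrases it via local models $K_i\subseteq V_i\subseteq\bb{C}^{n_i}$ and ``closed and bounded in $\bb{C}^{n_i+r}$''; your purely topological version $\beta^{-1}(K)\subseteq K\times\{|w|\le C\}$ is a touch cleaner and avoids choosing embeddings). Where you genuinely diverge is the analyticity step in (i)$\Rightarrow$(ii): the paper disposes of it in one line by citing that weakly holomorphic functions are meromorphic and that the graph of a meromorphic function on a reduced space is analytic (\cite[Proposition 4.6, p.181]{fischer}), whereas you construct $\bar{\Gamma}_f$ explicitly as $(\nu\times\id)(\Gamma_{\hat f})$ using the normalisation, the identification of weakly holomorphic functions with $\nu_*\coan{M^{\nu}}$, and Remmert's proper mapping theorem. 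Your route is more self-contained and meshes with machinery the paper already deploys in Proposition~\ref{torsionfreeembed2}, at the cost of being longer and of implicitly invoking two standard facts you should flag: the extension of $f\circ\nu$ across $\nu^{-1}\left(\sing\left(M\right)\right)$ uses the Riemann extension property characterising normal spaces, and the density of $\nu^{-1}\left(M_{\mathrm{reg}}\right)$ in $M^{\nu}$ uses that $M^{\nu}$ is locally irreducible so that proper analytic subsets are nowhere dense. Both approaches ultimately rest on comparable external input, so this is a matter of packaging rather than of substance.
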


\begin{proof}
$(i) \implies (ii)$ follows by first recalling that the graph of a meromorphic function on an analytic space is well-defined and by noting that weakly holomorphic functions are meromorphic (see e.g.~\cite[Proposition 4.6, page 181]{fischer}). This settles that the closure of the graph is analytic in $M\times \bb{C}^r$. The question of the projection being proper can be answered locally. To this end, consider a compact subset $K\subseteq M$ and choose compact subsets $K_i\subseteq K$ such that
\[K=\bigcup_{i=1}^l K_i\]
and such that there exists an open neighbourhood $U_i\supseteq K_i$ around $K_i$ where $U_i$ is a local model in $V_i\subseteq \bb{C}^{n_i}$. Note that this is always possible as $K$ is compact. It is clear that
\[\beta^{-1}\left( K \right)= \bigcup_{i=1}^l \beta^{-1}\left( K_i \right)\]
holds, hence, it suffices to show that each $K_i$ is compact. However, as around $K_i$ one is in a local model it follows that $\beta^{-1}\left( K_i \right)$ can be viewed as a closed subset of $V_i\times \bb{C}^r\subseteq \bb{C}^{n_i+r}$. By the assumption of $f$ being locally bounded it follows that $\beta^{-1}\left( K_i \right)$ is a bounded subset. Hence, closed and bounded, hence, compact. The properness of $\beta$ follows.\\
$(ii) \implies (i)$ is clear because $\rst{\pi}{\bar{\Gamma}_f}$ is proper. To see this, let $p\in M$ be arbitrary and let $K\subseteq M$ be a compact neighbourhood around $p$ such that there exists an open neighbourhood $V\supseteq K$ around $K$ that is a local model in $\bb{C}^n$. Then $\beta^{-1}\left( K \right)$ can be viewed as a compact subset of $\bb{C}^{n+r}$. Therefore, also as a closed and bounded one and hence $f$ is bounded on $K$.
\end{proof}

Continuous weakly holomorphic functions are then of course even more well-behaved, as they will only have one limiting value towards the singular points.

\begin{prop}
Let $M$ be a reduced complex analytic space. Let $f\colon M_{\mathrm{reg}}\to \bb{C}^r$ be a holomorphic function. Then the following statements are equivalent:
\begin{enumerate}[(i)]
\item $f$ defines a continuous weakly holomorphic function on $M$.
\item The closure of the graph $\Gamma_f\subseteq M_{\mathrm{reg}}\times \bb{C}^r$ of $f$ inside $M\times \bb{C}^r$ is analytic and the projection $\beta:=\rst{\pi}{\bar{\Gamma}_f}\colon \bar{\Gamma}_f \to M$ is a proper bijection.
\end{enumerate}
\label{ContGraph}
\end{prop}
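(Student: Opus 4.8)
The plan is to bootstrap from Proposition~\ref{WeakGraph}, which already establishes the equivalence of weak holomorphicity with the analyticity of $\bar{\Gamma}_f$ together with properness of $\beta$. Since a continuous function on $M$ is automatically locally bounded, a continuous weakly holomorphic function is in particular weakly holomorphic; hence in both directions the analyticity of $\bar{\Gamma}_f$ and the properness of $\beta$ come for free from Proposition~\ref{WeakGraph}, and the entire content to be added is the equivalence between \emph{continuity of $f$} and \emph{bijectivity of $\beta$}.

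For $(i)\implies(ii)$, let $g\colon M\to \bb{C}^r$ denote the continuous extension of $f$, so that $\rst{g}{M_{\mathrm{reg}}}=f$. Its graph $\Gamma_g=\{(p,g(p))\mid p\in M\}$ is closed in $M\times \bb{C}^r$ because $\bb{C}^r$ is Hausdorff and $g$ is continuous, and it visibly contains $\Gamma_f$, so $\bar{\Gamma}_f\subseteq \Gamma_g$. Conversely, the density of $M_{\mathrm{reg}}$ in $M$ together with the continuity of $g$ shows that every point $(p,g(p))$ is a limit of points $(p_n,f(p_n))\in \Gamma_f$, whence $\Gamma_g\subseteq \bar{\Gamma}_f$. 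Thus $\bar{\Gamma}_f=\Gamma_g$ is the graph of a single-valued function defined on all of $M$, and therefore $\beta$ is a bijection.

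For $(ii)\implies(i)$, Proposition~\ref{WeakGraph} already yields that $f$ is weakly holomorphic, so it only remains to produce a continuous extension. Here I would invoke the topological fact that a proper continuous map into a locally compact Hausdorff space is closed; since $M$, and hence the analytic set $\bar{\Gamma}_f$, is locally compact Hausdorff, $\beta$ is a continuous closed bijection and therefore a homeomorphism. Setting $g:=\mathrm{pr}_2\circ \beta^{-1}\colon M\to \bb{C}^r$, where $\mathrm{pr}_2$ is the second projection, yields a continuous map that restricts to $f$ on $M_{\mathrm{reg}}$, since there $\beta^{-1}(p)=(p,f(p))$. Hence $f$ defines a continuous weakly holomorphic function.

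The only genuinely delicate points are the identification $\bar{\Gamma}_f=\Gamma_g$ in the first direction --- which hinges on $\bb{C}^r$ being Hausdorff so that graphs of continuous functions into it are closed --- and, in the second direction, the passage from a proper continuous bijection to a homeomorphism; both reduce to standard point-set topology once the local compactness of analytic spaces is noted. The surjectivity of $\beta$, should one wish to verify it directly rather than read it off from $\bar{\Gamma}_f=\Gamma_g$, follows exactly as in Proposition~\ref{WeakGraph}: the image is closed by properness and contains the dense set $M_{\mathrm{reg}}$.
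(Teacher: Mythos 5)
Your proposal is correct and follows essentially the same route as the paper, which likewise reduces everything to Proposition~\ref{WeakGraph} plus the observation that a closed (because proper) continuous bijection is a homeomorphism. Your write-up merely spells out in more detail the two directions that the paper leaves implicit.
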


\begin{proof}
This is clear from applying Proposition~\ref{WeakGraph} and from a closed continuous bijection being a homeomorphism.
\end{proof}

\begin{defn}
Let $\psi\colon M'\to M$ be a morphism of complex analytic spaces. Then $\psi$ is called a \emph{proper modification} if $\psi$ is proper and there exists a nowhere dense analytic subset $A\subseteq M$ such that $\psi^{-1}\left( A \right)$ is nowhere dense and the restriction
\[X\setminus \psi^{-1}\left( A \right)\to M\setminus A\]
is biholomorphic.
\end{defn}

\begin{prop}
Let $\psi \colon M' \to M$ be a proper modification of reduced complex spaces and let $f\colon M'\to \bb{C}^{{r}}$ be a holomorphic function on $M'$. Then $f$ canonically defines a weakly holomorphic function $g$ on $M$ such that
\[\rst{f}{\psi^{-1}\left( M_{\mathrm{reg}} \right)} = g\circ \psi.\]
In particular, $f$ defines a continuous weakly holomorphic function if and only if $f$ is constant on $\psi^{-1}\left( \left\{ p \right\} \right)$ for every $p\in M$.
\label{PropMod}
\end{prop}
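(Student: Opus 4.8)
The plan is to use the biholomorphism that $\psi$ supplies away from its centre to define $g$, and then to use properness to control $g$ across that centre. Write $A\subseteq M$ for the nowhere dense analytic set furnished by the definition of proper modification, so that $\psi^{-1}(A)$ is nowhere dense and $\rst{\psi}{M'\setminus\psi^{-1}(A)}$ is a biholomorphism onto $M\setminus A$. First I would set $g:=f\circ\psi^{-1}$ on $M\setminus A$, which is holomorphic there and in particular holomorphic on the manifold locus $M_{\mathrm{reg}}\setminus A$. Local boundedness comes for free from properness: given $p\in M$, choose a compact neighbourhood $K$; then $\psi^{-1}(K)$ is compact, $f$ is bounded on it by some constant $C$, and since $\psi^{-1}(q)\in\psi^{-1}(K)$ for every $q\in K\setminus A$ we get $\lvert g\rvert\le C$ on $K\setminus A$.

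Next, since $A\cap M_{\mathrm{reg}}$ is a nowhere dense analytic subset of the complex manifold $M_{\mathrm{reg}}$ and $g$ is holomorphic and locally bounded on $M_{\mathrm{reg}}\setminus A$, Riemann's removable singularity theorem extends $g$ to a holomorphic function on all of $M_{\mathrm{reg}}$; the bound $C$ persists under this extension by continuity on the dense set $K\cap M_{\mathrm{reg}}\setminus A$, so $g$ is a weakly holomorphic function on $M$. For the compatibility relation, note that $g\circ\psi=f$ holds on $\psi^{-1}(M_{\mathrm{reg}}\setminus A)$ by construction; as $\psi^{-1}(A)$ is nowhere dense this set is dense in $\psi^{-1}(M_{\mathrm{reg}})$, and both $f$ and $g\circ\psi$ are continuous ($M'$ being reduced), so the two agree on all of $\psi^{-1}(M_{\mathrm{reg}})$. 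This also pins $g$ down uniquely on $M_{\mathrm{reg}}=\psi(\psi^{-1}(M_{\mathrm{reg}}))$, making the assignment canonical; here I use that $\psi$ is surjective, which holds because its image is closed (properness) and contains the dense subset $M\setminus A$.

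For the ``in particular'' I would first strengthen the relation to all of $M'$. The set $\psi^{-1}(M_{\mathrm{reg}})$ is dense in $M'$: its complement $\psi^{-1}(\sing(M))$ is nowhere dense, being the biholomorphic image of the nowhere dense $\sing(M)\setminus A$ over $M\setminus A$ and contained in $\psi^{-1}(A)$ over $A$. Hence if $g$ extends to a continuous $\bar g$ on $M$, then $\bar g\circ\psi$ is continuous and agrees with $f$ on the dense set $\psi^{-1}(M_{\mathrm{reg}})$, so $\bar g\circ\psi=f$ everywhere; evaluating on a fibre gives $f(q)=\bar g(p)$ for all $q\in\psi^{-1}(\{p\})$, which is the forward implication. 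Conversely, if $f$ is constant on every fibre, define $\bar g(p)$ to be the common value of $f$ on $\psi^{-1}(\{p\})$ (non-empty by surjectivity), so that $\bar g\circ\psi=f$. Since $\psi$ is a proper continuous surjection it is closed, hence a quotient map, and therefore $\bar g$ is continuous because $\bar g\circ\psi$ is; as $\bar g$ agrees with $g$ on the dense set $M_{\mathrm{reg}}\setminus A$, it is the desired continuous weakly holomorphic extension. One could instead route this equivalence through the graph criterion of Proposition~\ref{ContGraph}, but the quotient-map argument is more direct.

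The main obstacle I anticipate is the passage across $A\cap M_{\mathrm{reg}}$: one must be sure that properness really delivers local boundedness of $g$ up to $A$ so that Riemann extension applies, and then keep careful track that each of the exceptional loci $A$, $\psi^{-1}(A)$ and $\psi^{-1}(\sing(M))$ is nowhere dense, since every identification of $f$ with $g\circ\psi$ (and of $\bar g$ with $g$) rests on agreement over a dense set together with continuity.
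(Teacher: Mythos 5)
Your proof is correct, but it takes a genuinely different route from the paper's. The paper stays entirely inside the graph formalism it set up in Propositions~\ref{WeakGraph} and~\ref{ContGraph}: it notes that $\psi\times\id$ is proper, so the image $Y=(\psi\times\id)(\Gamma_f)$ is an analytic subset of $M\times\bb{C}^r$ by the proper mapping theorem, identifies $Y$ with the closure of the graph of the function induced on $M\setminus A$, proves properness of the projection $Y\to M$ by a short diagram chase, and then reads off weak holomorphy (and, for the ``in particular'', continuity) directly from those graph criteria. You instead argue at the level of the function itself: define $g=f\circ\psi^{-1}$ off the centre $A$, extract local boundedness from properness of $\psi$ via compact neighbourhoods, extend across the nowhere dense analytic set $A\cap M_{\mathrm{reg}}$ by Riemann's removable singularity theorem on the manifold $M_{\mathrm{reg}}$, and settle the continuity equivalence by observing that a proper continuous surjection is closed, hence a quotient map. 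Your version trades Remmert's proper mapping theorem for the Riemann extension theorem plus some careful density bookkeeping (all of which you do correctly, including the nowhere-density of $\psi^{-1}(\sing(M))$); it is arguably more elementary and self-contained, whereas the paper's version is shorter because it leans on machinery it has already built and reuses later (e.g.\ in Proposition~\ref{RedToCurveGen}). The obstacles you flag --- boundedness up to $A$ and the various dense-agreement arguments --- do go through exactly as you state them.
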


\begin{proof}
It is clear that the morphism $\phi:=\psi\times \id\colon M'\times \bb{C}^r\to M\times \bb{C}^r$ is also proper. This implies that the image $Y:=\phi\left( \Gamma_f \right)\subseteq M\times \bb{C}^r$ of the graph of $f$ on $M'$ via $\phi$ is a closed analytic subset of $M\times \bb{C}^r$. Moreover, it is clear that $Y$ equals the closure of the graph of the holomorphic function induced on $M\setminus A$, where $A$ is the nowhere dense analytic subset of $M$ away from which $\psi$ is biholomorphic. Thereby, it suffices to argue that the projection $\beta\colon Y \to M$ is proper. To conclude the properness, note that one is in the situation of the following commutative diagram
\[
\begin{tikzcd}
\Gamma_f \arrow[d,"\beta'"]  \arrow[r,"\phi'"] & Y \arrow[d,"\beta"]\\
M' \arrow[r,"\psi"] & M
\end{tikzcd}
\]
and each morphism except $\beta$ is proper by construction or assumption. Note that $\phi'$ is surjective. Let $K\subseteq M$ be a compact subset. Then
\[{\phi'}^{-1}\left( \beta^{-1}\left( K \right) \right)= {\beta'}^{-1}\left( {\psi}^{-1}\left( K \right) \right)\]
is compact. By surjectivity of $\phi'$, one has
\[\phi'\left( {\phi'}^{-1}\left( \beta^{-1}\left( K \right) \right) \right)= \beta^{-1}\left( K \right).\]
However, continuous images of compact sets are compact and hence $\beta^{-1}\left( K \right)$ is compact. Thus, $\beta$ is proper and thus by Proposition~\ref{WeakGraph} it follows that $g$ is weakly holomorphic on $M$.\\
The last statement about the continuity of $g$ is clear by Proposition~\ref{ContGraph}.
\end{proof} 

\begin{rem}
The graph of a holomorphic function over a reduced complex analytic space is of course such that the projection is biholomorphic.
\end{rem}

\subsection{Strategy outline}
\label{Strat}

The following proof for some cases of the relative Riemann-Hilbert correspondence on singular spaces is lengthy and technical. Therefore the strategy and the basic ideas behind the arguments are presented informally in this subsection. The ideal correspondence in the singular setting that one would like to obtain is something like the following:

\begin{openThm*}
Let $f\colon X\to N$ be a reduced locally trivial morphism of complex analytic spaces. Then there is a one-to-one correspondence between:
\begin{enumerate}[(i)]
\item pairs $\left( \mc{F},\nabla_f \right)$ consisting of a $\coan{X}$-coherent module and a flat tame $f$-relative connection, and,
\item $f$-relative local systems $V$.
\end{enumerate}
The correspondence sends pairs $\left( \mc{F},\nabla_f \right)$ to $\ker{\nabla_f}$ and $f$-relative local systems $V$ to $\left( V\otimes_{f^{-1}\coan{N}}\coan{X}, \id\otimes d_f \right)$.
\end{openThm*}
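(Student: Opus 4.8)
The plan is to dispatch the direction (ii)$\Rightarrow$(i) at once and concentrate all effort on (i)$\Rightarrow$(ii). Given a torsion-free relative local system $V$, the pair $\left( V\otimes_{f^{-1}\coan{N}}\coan{X}, \id\otimes d_f \right)$ carries a flat connection by Definition~\ref{canconnec}, and Theorem~\ref{kernelofcanonical} shows directly that this connection is tame and that its kernel is again $V$. Coherence and torsion-freeness of the associated module follow since $V$ is locally the pull-back of a torsion-free coherent sheaf and $f$ is flat. Thus this half of the correspondence, together with the verification that the two assignments are mutually inverse, reduces to a single hard statement: for a torsion-free coherent $\coan{X}$-module $\mc{F}$ with a flat tame $f$-relative connection $\nabla_f$, the kernel $\ker{\nabla_f}$ is a relative local system $V$ and $\mc{F}\cong V\otimes_{f^{-1}\coan{N}}\coan{X}$.

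For the hard direction I would argue entirely locally, so by local triviality I may assume $f$ is the projection $V\times Z\to V$ with $Z$ the (possibly singular) fiber. First I would record that $\mc{F}$ is locally free: restricting $\nabla_f$ to a single fiber produces an honest connection on a reduced space, Lemma~\ref{ConnecLocFree} forces local freeness fiberwise, and constancy of the fiber rank together with reducedness of $N$ should propagate this to local freeness of $\mc{F}$. Using Proposition~\ref{torsionfreeembed2} I would then embed $\rst{\mc{F}}{U}\hookrightarrow\coan{U}^{\oplus k}$. The governing idea is to transport the problem onto a genuine submersion: a desingularisation $\psi\colon\tilde{Z}\to Z$ yields the submersion $V\times\tilde{Z}\to V$ and the map $g=\id\times\psi$, along which I would form the pull-back connection of Definition~\ref{pullbackconnec}. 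By Proposition~\ref{PullConTor} the pulled-back sheaf remains torsion-free, and Deligne's Theorem~\ref{relriemanhilbert} then supplies a relative local system of parallel frames upstairs.

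The real content is the descent of these frames back to $X$. Pushing the parallel sections forward along $\psi$ gives, for each generator, a section holomorphic on $Z_{\mathrm{reg}}$ and locally bounded near the singular locus --- a \emph{weakly holomorphic} solution in the sense of Definition~\ref{Normal}, whose weak holomorphicity is precisely the content of Proposition~\ref{PropMod}. Here tameness (Definition~\ref{tamedefinition}) is exactly what guarantees that a solution recognised on the fibrewise-dense regular part is genuinely parallel on all of $U$. This is the mechanism behind the ``reduction to the absolute case'': a weakly holomorphic parallel section is a bona fide section of $\ker{\nabla_f}$ exactly when it is \emph{strongly} holomorphic, and strong holomorphicity can be tested one fiber at a time. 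Hence everything comes down to the absolute question of whether, on each fiber, a weakly holomorphic parallel section is automatically strongly holomorphic.

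I expect this last point to be the decisive obstacle, and it is the reason the statement is only conjectural in full generality. On a general reduced space weakly holomorphic functions need not be holomorphic, so the weak solutions produced above may genuinely fail to lie in $\coan{X}$, and $\ker{\nabla_f}$ can then drop rank over the singular set; a related rank-drop is exhibited by the non-tame example of Section~\ref{NonTame}. The natural way to force the implication is to assume the fibers \emph{maximal}, so that continuous weakly holomorphic functions are automatically holomorphic, or \emph{homogeneous}, where one reduces further to finite unions of lines and argues by hand. Under either hypothesis the weak solutions become strong, the fiberwise criterion is met, and the correspondence closes; a proof of the unrestricted conjecture would require controlling weakly holomorphic parallel sections over an arbitrary reduced fiber, which lies beyond the techniques assembled here.
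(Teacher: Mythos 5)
This statement is the paper's open \emph{Conjecture} in Subsection~\ref{Strat}; the paper does not prove it, and your proposal correctly recognises this. Your outline reproduces the paper's own strategy essentially verbatim: the easy direction via Theorem~\ref{kernelofcanonical}, the reduction of the hard direction to the absolute (fiberwise) case via pull-back to a fiberwise desingularisation, the production of weakly holomorphic parallel frames (Propositions~\ref{RelWeak} and~\ref{PropMod}), and the identification of the decisive obstruction --- whether weakly holomorphic parallel sections on an arbitrary reduced fiber are strongly holomorphic --- which the paper only overcomes for maximal fibers (Theorem~\ref{MaxRH}) and homogeneous fibers (Theorem~\ref{HomCor}).

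One point deserves flagging. The conjecture as stated imposes no torsion-freeness on either side of the correspondence, yet your argument inserts it silently in both directions: the easy direction invokes Theorem~\ref{kernelofcanonical}, which is proved only for \emph{torsion-free} relative local systems (its proof runs through the embedding $\mc{G}\hookrightarrow\coan{N}^{\oplus r}$ and Proposition~\ref{pullbackSec}), and the hard direction begins by applying Proposition~\ref{torsionfreeembed2}, which requires $\mc{F}$ torsion-free. So what you reduce to the fiberwise holomorphicity question is the torsion-free variant of the conjecture, not the statement as written; for general coherent sheaves and general relative local systems even the ``easy'' half is not established by the paper's results. A smaller quibble: your claim that fiberwise local freeness from Lemma~\ref{ConnecLocFree} ``should propagate'' to local freeness of $\mc{F}$ on the total space is not something the paper asserts or needs --- it works throughout with torsion-free sheaves embedded in free ones --- and it is not obviously true, though nothing downstream in your outline depends on it.
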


This type of theorem is considered the ``relative'' version of the Riemann-Hilbert correspondence. The ``absolute'' version of such a correspondence is the situation where the morphism $f$ is simply $X\to \left( \left\{ \mathrm{pt.} \right\}, \bb{C} \right) $. The general relative version is then, intuitively speaking, simply the parametrised version of the absolute correspondence.\\
The overall perspective on the relative Riemann-Hilbert correspondence taken in this work can be summarized by three main ideas:

\begin{enumerate}
\item If the absolute Riemann-Hilbert correspondence can be solved on all fibers of the locally trivial morphism $f$, then the relative Riemann-Hilbert correspondence can be solved for the morphism $f$.
\item If the absolute Riemann-Hilbert correspondence can be solved weakly, then it can be solved strongly, i.e. if there exist weakly holomorphic parallel frames for connections, then these frames are really strongly holomorphic.
\item If the absolute Riemann-Hilbert correspondence can be solved on complex analytic curves, then it can be solved in arbitrary dimension.
\end{enumerate}

The first two ideas are probably the most intuitive and reasonably sounding ideas, while the third idea might be a bit surprising at first. However, it will turn out that the major road block in executing this type of strategy is actually idea (2). Idea (1) and (3) can be accomplished in the case of torsion-free sheaves without further assumptions.\\
Before diving into the more concrete argument one might already guess some of the restrictions that have to be placed on the situation to make these ideas work. Specifically, it already seems prudent to work on reduced complex analytic spaces and torsion-free sheaves only, because in that situation the idea of weakly holomorphic solutions is actually sensibly defined. Without reducedness and torsion-freeness one already sees that the proof of Theorem~\ref{nonrelativetorelative} for a submersion requires new ideas. Therefore, this work considers the relative Riemann-Hilbert correspondence with \emph{reduced} locally trivial morphisms (Definition~\ref{LocTriv}) and \emph{torsion-free} coherent sheaves (Definition~\ref{TorFree}).\\
In ubsection~\ref{weakSol} it will be shown that for torsion-free sheaves on reduced locally trivial morphisms a flat relative connection always admits weakly holomorphic parallel generators. The argument is surprisingly simple. Consider for simplicity the absolute situation with a flat connection $\nabla$ on some reduced complex analytic space $M$. Let $\psi\colon \tilde{M}\to M$ be a desingularization. Then $\psi^*\nabla$ admits local parallel frames as $\tilde{M}$ is a complex manifold. Moreover, $\psi^*\nabla$ is trivial along the fibers of $\psi$. Hence, by Corollary~\ref{ExtPara} there exists a parallel frame in an open neighbourhood of any fiber of $\psi$. The desingularization $\psi$ is of course proper and therefore closed, hence, an open neighbourhood of a fiber contains a preimage of an open neighbourhood. Thus, by Proposition~\ref{PropMod} the parallel frame around a fiber of $\psi$ can be viewed as a weakly holomorphic section on $M$. These weakly holomorphic sections are then parallel away from $\mathrm{sing}\left( M \right)$ as $\psi$ is an isomorphism there.\\
In subsection~\ref{RedAbs} it is first proven (Proposition~\ref{AbsWeak}) that the absolute situation admits very nice \emph{continuous} weakly holomorphic frames $s_i$ that are parallel away from the singularities and furthermore, such that for every $j\in \bb{N}$ the restriction to the iterated singularities  $\rst{s_i}{\mathrm{sing}^j\left( M \right)}$ (Definition~\ref{IterSing}) are again continuous weakly holomorphic frames that are parallel away from the singularities of $\mathrm{sing}^j\left( M \right)$. This is proven rather elegantly by an induction on the dimension and by noticing that in the curve case one only has to make sure that the weakly holomorphic frames one constructs have the same values at a given singular point. The argument then carries this idea over into higher dimensions by considering a desingularization with an exceptional divisor that locally is a normal crossing and therefore maximal (see e.g.~\cite[page 297]{Umbrealla}). The maximality ensures that the continuous weakly holomorphic parallel frames on the singular set are holomorphic on the exceptional divisor and can then be extended to an entire open subset of the desingularization. This extension is the desired continuous weakly holomorphic frame.\\
Using these nice frames in the absolute situation one can show that the weakly holomorphic frames obtained in subsection~\ref{weakSol} are in fact continuous (Proposition~\ref{Czero}), essentially by simply noting that the relative frames restricted to the fibers are just the frames obtained in the absolute situation in Proposition~\ref{AbsWeak}.\\
Leveraging this idea even further yields that the obtained weakly holomorphic sections in the relative situation are precisely strongly holomorphic if and only if the continuous weakly holomorphic sections obtained in the absolute situation are stongly holomorphic (Proposition~\ref{RelStrong}). This argument heavily relies on the torsion-freeness of the sheaves that are considered, as this allows one to almost simply reduce the argument to an application of this Theorem by Grauert and Remmert~\cite[Satz 29]{SepHolo}:

\begin{thm*}
Let $N\times M$ be the product of two reduced complex analytic spaces and let $f\colon N\times M \to \bb{C}$ be a (set-theoretic) map such that for all $p\in N$ the functions $\rst{f}{\left\{ p\right\}\times M }$ and for all $q\in M$ the functions $\rst{f}{ N\times \left\{ q \right\}}$ are holomorphic on $\left\{ p \right\}\times M$ and $N\times \left\{ q \right\}$. Then the function $f$ is holomorphic on $N\times M$.
\end{thm*}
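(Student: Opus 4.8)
The plan is to first settle holomorphicity on the regular part by reducing to the classical (manifold) situation, then to push the result down from a resolution in order to obtain a \emph{continuous} weakly holomorphic function, and finally to upgrade weak holomorphicity to strong holomorphicity using the slicewise hypotheses. Since the assertion is local, I would fix a point $(p_0,q_0)\in N\times M$ and pass to local models $N\subseteq U\subseteq \bb{C}^a$ and $M\subseteq V\subseteq \bb{C}^b$. Choosing desingularizations $\pi_N\colon N'\to N$ and $\pi_M\colon M'\to M$, the product $N'\times M'$ is a complex manifold and $\pi:=\pi_N\times\pi_M\colon N'\times M'\to N\times M$ is a proper modification of reduced spaces. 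Set $g:=f\circ\pi$. For fixed $p'\in N'$ the slice map $\{p'\}\times M'\to\{\pi_N(p')\}\times M$ is just $\pi_M$, so $\rst{g}{\{p'\}\times M'}=\rst{f}{\{\pi_N(p')\}\times M}\circ\pi_M$ is holomorphic; symmetrically $\rst{g}{N'\times\{q'\}}$ is holomorphic. Thus in every product chart $g$ is holomorphic in each coordinate separately.

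On the manifold $N'\times M'$ the classical theorem of Hartogs on separately holomorphic functions then shows that $g$ is jointly holomorphic (note that no continuity of $f$ is required as input, as Hartogs' theorem supplies it on the smooth model). Applying Proposition~\ref{PropMod} to the proper modification $\pi$, the holomorphic function $g$ descends to a weakly holomorphic function on $N\times M$. Because $f$ is an honest set-theoretic function and $\pi$ collapses each fibre $\pi_N^{-1}(p)\times\pi_M^{-1}(q)$ to the single point $(p,q)$, the function $g=f\circ\pi$ is constant on the fibres of $\pi$; hence by the continuity criterion of Proposition~\ref{PropMod} (see also Proposition~\ref{ContGraph}) the descended function is continuous, and its value at $(p,q)$ is forced to equal $g$ on the fibre, that is $f(p,q)$. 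Therefore $f$ is itself a continuous weakly holomorphic function on $N\times M$, holomorphic on the dense regular part $N_{\mathrm{reg}}\times M_{\mathrm{reg}}$ and, by hypothesis, strongly holomorphic on every slice $\{p\}\times M$ and $N\times\{q\}$.

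It remains to upgrade this continuous weakly holomorphic $f$ to a strongly holomorphic one. The engine is the case in which one factor, say $N=U$, is smooth: writing $f(z,q)=\sum_{\alpha}c_{\alpha}(q)z^{\alpha}$ in the smooth variables $z$ and representing the coefficients by the Cauchy integral $c_{\alpha}(q)=(2\pi i)^{-a}\int_{T_r}f(\zeta,q)\,\zeta^{-\alpha-1}\,d\zeta$ over a distinguished torus $T_r$, one sees that each $c_{\alpha}$ is holomorphic on $M$ (a compact continuous superposition of the slice-holomorphic functions $q\mapsto f(\zeta,q)$), while the continuity just established yields Cauchy estimates $|c_{\alpha}(q)|\le C\,r^{-|\alpha|}$ locally uniformly in $q$. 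Hence $\sum_{\alpha}c_{\alpha}(q)z^{\alpha}$ is a locally uniformly convergent series of strongly holomorphic functions and realises $f$ as the restriction of the ambient holomorphic function $\sum_{\alpha}c_{\alpha}^{\mathrm{ext}}(w)z^{\alpha}$, so $f$ is strongly holomorphic whenever one factor is smooth. To reach the general case I would reduce the singular–singular interaction to this situation following the reduction-to-curves philosophy of Section~\ref{SingFib}: normalising one factor converts the slicewise data into separately holomorphic data over a (locally irreducible, generically smooth) factor, and the remaining task is to verify that the candidate extension satisfies the gluing (conductor) conditions that characterise $\coan{N\times M}$ inside the pushforward of the structure sheaf of the normalisation.

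I expect the genuine obstacle to be precisely this last upgrade, namely controlling $f$ at the singular–singular points of $(\sing N)\times(\sing M)$. On the regular part and on the mixed loci the Cauchy-integral engine applies after resolving one factor, but where both factors are singular one must combine the transversal jet information coming from both families of slices simultaneously. This is exactly where reducedness is indispensable: it guarantees that weakly holomorphic functions are determined by their values on the dense regular part (Lemma~\ref{monorestriction}) and that a coherent sheaf of vanishing fibre rank is zero, so that matching the ambient candidate with $f$ on a dense set already forces equality. An induction on $\dim N+\dim M$, whose base case is handled on complex analytic curves through an explicit normalisation and universal-denominator argument in the spirit of Section~\ref{SingFib}, should then close the gap.
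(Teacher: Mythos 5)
The paper does not actually prove this statement: it is imported from Grauert and Remmert, and the ``proof'' given in the paper is the citation to Satz~29 of that work. So there is no internal argument to compare yours against; your proposal has to stand on its own, and at present it does not.

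The first two stages are correct and complete. Pulling $f$ back to $N'\times M'$ along a product of desingularizations, observing that the pullback is holomorphic along every coordinate slice of every product chart, invoking Hartogs' theorem on separate analyticity (which indeed needs no continuity hypothesis), and descending along the proper modification $\pi_N\times\pi_M$ via Proposition~\ref{PropMod} does show that $f$ is a continuous weakly holomorphic function on $N\times M$, holomorphic on $N_{\mathrm{reg}}\times M_{\mathrm{reg}}$. The Cauchy-integral argument for a smooth factor is also sound, granted the standard fact that a locally uniform limit of holomorphic functions on a reduced complex space is again holomorphic; applied to $N_{\mathrm{reg}}\times M$ and $N\times M_{\mathrm{reg}}$ it gives holomorphicity of $f$ off $\mathrm{sing}\left( N \right)\times\mathrm{sing}\left( M \right)$.

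The gap is the last stage, at points of $\mathrm{sing}\left( N \right)\times\mathrm{sing}\left( M \right)$, and it is genuine rather than cosmetic. A continuous weakly holomorphic function on a reduced complex space need not be holomorphic: this is exactly the failure of maximality in Definition~\ref{Normal} (on the cusp $y^2=x^3$ the function $y/x$ is continuous, weakly holomorphic and not strongly holomorphic), and a product of reduced spaces is in general neither maximal nor normal, so no second Riemann extension theorem lets you cross the codimension-two set $\mathrm{sing}\left( N \right)\times\mathrm{sing}\left( M \right)$ on which $f$ is not yet known to be holomorphic. Your proposed completion does not close this. Normalizing one factor does not produce a smooth factor, so the Cauchy-expansion engine is not available after normalization; and ``verify that the candidate satisfies the conductor conditions characterising $\coan{N\times M}$ inside the pushforward from the normalisation'' is a restatement of the remaining problem, not an argument for it. The induction on $\dim N+\dim M$ with curves as base case is likewise only announced -- even the base case of a product of two singular curves is not carried out. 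In short, everything up to ``$f$ is continuous, weakly holomorphic, and holomorphic outside $\mathrm{sing}\left( N \right)\times\mathrm{sing}\left( M \right)$'' is established, but the step that constitutes the actual content of Grauert--Remmert's Satz~29 beyond these elementary reductions is missing, as you in effect acknowledge.
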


With these facts and results in hand one has finally obtained the following statement of Theorem~\ref{MaxRH} as the continuous weakly holomorphic frames are necessarily holomorphic on maximal spaces:

\begin{thm*}
Let $f\colon X \to N$ be a reduced locally trivial morphism of complex analytic spaces and assume that the fibers of $f$ are maximal complex analytic spaces. Then there is a one-to-one correspondence between
\begin{enumerate}[(i)]
\item pairs $\left( \mc{F},\nabla_f \right)$ of torsion-free coherent sheaves $\mc{F}$ with tame $f$-relative flat connections $\nabla_f$, and,
\item torsion-free $f$-relative local systems $V$.
\end{enumerate}
The correspondence sends pairs $\left( \mc{F},\nabla_f \right)$ to the sheaf $\ker{\nabla_f}$ and sends $f$-relative local systems $V$ to $\left( V\otimes_{f^{-1}\coan{N}} \coan{X}, \nabla^V \right)$.
\end{thm*}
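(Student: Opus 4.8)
The plan is to show that the two assignments in the statement are mutually inverse, so it suffices to treat each composite separately. Let $V$ be a torsion-free relative local system. The connection $\nabla^V = \id \otimes d_f$ is flat because $d_f \circ d_f = 0$, and Theorem~\ref{kernelofcanonical} gives both that $\nabla^V$ is tame and that $\ker{\nabla^V} = V$. For coherence and torsion-freeness of $V \otimes_{f^{-1}\coan{N}} \coan{X}$ I would work locally, where $f$ is a projection $N \times Z \to N$ and hence flat, and where $V \cong f^{-1}\mc{G}$ with $\mc{G}$ torsion-free coherent on the reduced base. By Proposition~\ref{torsionfreeembed2} one may embed $\mc{G} \hookrightarrow \coan{N}^{\oplus r}$; applying the flat functor $f^{*}$ yields $V \otimes_{f^{-1}\coan{N}} \coan{X} \hookrightarrow \coan{X}^{\oplus r}$, so the sheaf is coherent and, being a subsheaf of a free sheaf over a reduced space, torsion-free. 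Thus this composite is the identity, and the substance of the theorem lies in the reverse direction.

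So fix a pair $\left(\mc{F}, \nabla_f\right)$ with $\mc{F}$ torsion-free coherent and $\nabla_f$ tame and flat, and set $V := \ker{\nabla_f}$. I must prove that $V$ is a torsion-free relative local system and that the canonical map $V \otimes_{f^{-1}\coan{N}} \coan{X} \to \mc{F}$ is an isomorphism carrying $\nabla^V$ to $\nabla_f$. As the assertion is local, I may assume $f$ is the projection $N \times Z \to N$ with $Z$ maximal and, using Proposition~\ref{torsionfreeembed2}, that $\mc{F}$ embeds into some $\coan{X}^{\oplus k}$. Restricting $\nabla_f$ to a fiber $\{q\} \times Z$ produces a flat \emph{absolute} connection on the maximal space $Z$, and the idea is to solve this fiberwise problem and then spread the solutions across the base.

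The construction then proceeds in four steps. First, $\nabla_f$ admits \emph{weakly holomorphic} parallel frames: after desingularizing the fibers the pulled-back relative connection is trivial along the desingularization fibers, so Corollary~\ref{ExtPara} yields parallel frames near each such fiber, and these descend to weakly holomorphic sections by Proposition~\ref{PropMod} (Section~\ref{weakSol}). Second, these frames are shown to be \emph{continuous}, by matching their fiberwise restrictions against the continuous weakly holomorphic frames of the absolute problem on $Z$ (Section~\ref{RedAbs}). Third, maximality of $Z$ enters: by Definition~\ref{Normal} a continuous weakly holomorphic frame on a maximal space is holomorphic, so the frames are holomorphic along every fiber. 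Fourth, the reduction to the absolute case upgrades ``holomorphic along the fibers'' to ``holomorphic on $N \times Z$'' via the Grauert--Remmert separate-holomorphicity theorem recalled in the outline. Once honest holomorphic parallel frames exist, $\ker{\nabla_f}$ is locally isomorphic to $f^{-1}\coan{N}^{\oplus r}$, hence a relative local system; it is torsion-free since it embeds into $\mc{F}$; and as both $V \otimes_{f^{-1}\coan{N}} \coan{X}$ and $\mc{F}$ are then freely generated by the same frame, the canonical morphism between them is an isomorphism intertwining $\nabla^V$ and $\nabla_f$.

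The main obstacle is the passage from weak to strong solutions — steps two through four above, i.e. Idea~(2) of the outline. Producing weakly holomorphic frames is comparatively soft; the difficulty lies in proving continuity of those frames along the iterated singular loci and then converting them into holomorphic ones, which is exactly where maximality of the fibers and the separate-holomorphicity input become indispensable. The tameness hypothesis on $\nabla_f$ is precisely what rules out the kind of torsion obstruction exhibited in Section~\ref{NonTame}, without which $\ker{\nabla_f}$ would fail to generate $\mc{F}$ and the correspondence would break down.
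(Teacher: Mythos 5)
Your proposal follows the paper's own proof almost step for step: Theorem~\ref{kernelofcanonical} for the direction from torsion-free relative local systems to tame flat connections, and the chain ``weak solutions (Proposition~\ref{RelWeak}) $\to$ continuity (Corollary~\ref{Czero}) $\to$ holomorphicity along the fibers via maximality $\to$ separate holomorphicity (Theorem~\ref{SepHolo}, packaged in Proposition~\ref{RelStrong})'' for the substantive direction. The decomposition into two composites and the key references are all the right ones.

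There is, however, one step in your wrap-up that would fail as stated. You conclude that $\ker{\nabla_f}$ is locally isomorphic to $f^{-1}\coan{N}^{\oplus r}$ and that $V\otimes_{f^{-1}\coan{N}}\coan{X}$ and $\mc{F}$ are ``freely generated by the same frame.'' That is only true when $\mc{F}$ is locally free; a torsion-free coherent $\mc{F}$ need not be (take $\mc{F}=f^{*}\mc{G}$ for a torsion-free, non-locally-free $\mc{G}$ on a singular base $N$ -- the relative connection is along the fibers, so Lemma~\ref{ConnecLocFree} gives local freeness only of the fiber restrictions, not of $\mc{F}$). The correct conclusion, and the one Proposition~\ref{RelStrong} actually delivers, is that $\ker{\nabla_f}$ is locally $f^{-1}\mc{G}$ for a torsion-free \emph{coherent} $\mc{G}$, which is all the definition of a torsion-free relative local system requires; and passing from ``holomorphic generators, parallel on the regular part'' to $\mc{F}\cong f^{*}\mc{G}$ is not a frame count but the double-dual embedding and tf-connection argument of Propositions~\ref{DualConnec} and~\ref{RelStrong}. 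Relatedly, the sections produced by your four steps are a priori parallel only over $U\times W_{\mathrm{reg}}$; tameness is precisely the step that promotes them to elements of $\ker{\nabla_f}$, so it belongs inside the construction (as in the paper's proof of Theorem~\ref{MaxRH}) rather than only in your closing commentary. Neither point changes the route, but both are exactly where the work of the torsion-free, singular case sits.
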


Subsection~\ref{RedToCurve} then demonstrates that one can further reduce the general absolute case to the absolute case over complex analytic curves (Proposition~\ref{RedToCurveGen} and Theorem~\ref{RedToCurveGenCor}). The argument utilizes the description of the weakly holomorphic frames in terms of an analytic graph. One can then always find a complex analytic curve spanning the tangent space of the graph by a Theorem of J. Becker (Theorem~\ref{CurveSpan}) and it suffices to verify holomorphicity on such curves. This idea is then further applied to the case of homogeneous complex analytic subsets of $\bb{C}^n$, as it is possible to verify that the weakly holomorphic frames are always holomorphic on homogeneous complex analytic curves (i.e. on unions of finitely many straight complex lines) - this is the content of Proposition~\ref{HomCurve}. Together with Proposition~\ref{RedCurve} one then obtains the Riemann-Hilbert correspondence with homogeneous fibers and torsion-free sheaves in Theorem~\ref{HomCor}:

\begin{thm*}
Let $f\colon N\times M\to N$ be the projection of reduced complex spaces and assume that $M\subseteq \bb{C}^n$ is a homogeneous complex analytic space. Then there is a one-to-one correspondence between
\begin{enumerate}[(i)]
\item pairs $\left( \mc{F},\nabla_f \right)$ of torsion-free coherent sheaves $\mc{F}$ with tame $f$-relative flat connections $\nabla_f$, and,
\item torsion-free $f$-relative local systems $V$.
\end{enumerate}
The correspondence sends pairs $\left( \mc{F},\nabla_f \right)$ to the sheaf $\ker{\nabla_f}$ and sends $f$-relative local systems $V$ to $\left( V\otimes_{f^{-1}\coan{N}} \coan{X}, \nabla^V \right)$.
\end{thm*}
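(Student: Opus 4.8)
The plan is to show that the two assignments are mutually inverse, with essentially all of the substance lying in the direction $\left(\mc{F},\nabla_f\right)\mapsto\ker{\nabla_f}$; the proof will closely mirror that of Theorem~\ref{MaxRH}, with the reduction to curves replacing the maximality hypothesis on the fibers. First I would dispatch the reverse direction. Given a torsion-free $f$-relative local system $V$, Theorem~\ref{kernelofcanonical} already shows that $\nabla^V$ is tame and flat with $\ker{\nabla^V}=V$, so it only remains to check that $V\otimes_{f^{-1}\coan{N}}\coan{X}$ is torsion-free and coherent: locally $V\cong f^{-1}\mc{G}$ with an embedding $\mc{G}\hookrightarrow\coan{N}^{\oplus r}$, and since the projection $f$ is flat this pulls back to $f^*\mc{G}\hookrightarrow\coan{X}^{\oplus r}$, exhibiting the module as a subsheaf of a free sheaf. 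Thus $V\mapsto\left(V\otimes_{f^{-1}\coan{N}}\coan{X},\nabla^V\right)$ lands in class (i), and composing with the kernel functor recovers $V$.

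For the forward direction I would fix a torsion-free coherent $\mc{F}$ with a tame flat $\nabla_f$ and set $V:=\ker{\nabla_f}$. Because $V\subseteq\mc{F}$ and $\mc{F}$ is torsion-free, $V$ is automatically torsion-free, so the task reduces to showing that $V$ is a relative local system and that the canonical map $V\otimes_{f^{-1}\coan{N}}\coan{X}\to\mc{F}$ is an isomorphism carrying $\nabla^V$ to $\nabla_f$. The strategy is to construct, near each point, a weakly holomorphic parallel frame and then to upgrade it to a genuine holomorphic one. I would invoke the existence of weakly holomorphic parallel generators (Section~\ref{weakSol}) and their continuity (Proposition~\ref{Czero}), reducing the problem to the strong holomorphicity of these continuous weakly holomorphic sections. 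By the reduction to the absolute case (Proposition~\ref{RelStrong} and Theorem~\ref{RelToAbs}), strong holomorphicity holds if and only if it holds after restriction to each fiber of $f$; since every fiber is isomorphic to $M$, the question collapses to the absolute Riemann--Hilbert statement on the homogeneous space $M$.

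It remains to settle the absolute case on $M$, and this is where the homogeneity is used. I would apply the reduction to curves (Theorem~\ref{RedToCurveGenCor}, resting on the curve-spanning result Theorem~\ref{CurveSpan} of J.~Becker): a continuous weakly holomorphic frame is strongly holomorphic as soon as its restriction to every test curve spanning the tangent space of the graph is strongly holomorphic. For homogeneous $M$ one may, by Proposition~\ref{RedCurve}, take these test curves to be themselves homogeneous, i.e. finite unions of straight lines through the origin. On such a union the decisive point (Proposition~\ref{HomCurve}) is that a continuous weakly holomorphic function is automatically strongly holomorphic: each branch is a smooth line on which a locally bounded holomorphic function extends across the origin, and continuity forces the branch values to agree at the crossing, which is exactly the gluing condition cutting out the structure sheaf inside its normalisation. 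Chaining these reductions yields strong holomorphicity of the frame, whence $V$ is a relative local system with $V\otimes_{f^{-1}\coan{N}}\coan{X}\cong\mc{F}$ and $\nabla_f=\nabla^V$. The main obstacle is precisely the homogeneous curve step together with the reduction making the test curves homogeneous: one must ensure that straight lines through the origin suffice to detect holomorphicity of the graph, and that the behaviour at the origin is controlled by continuity alone — it is here that the embedding $M\subseteq\bb{C}^n$ and its homogeneity are indispensable.
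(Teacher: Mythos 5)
Your overall architecture coincides with the paper's: the reverse direction via Theorem~\ref{kernelofcanonical} together with flatness of the projection, and the forward direction via the chain weak solutions $\to$ continuity (Proposition~\ref{Czero}) $\to$ reduction to the absolute case (Proposition~\ref{RelStrong}, Theorem~\ref{RelToAbs}) $\to$ reduction to homogeneous curves (Proposition~\ref{RedCurve}) $\to$ the homogeneous curve case (Proposition~\ref{HomCurve}). The gap is in your justification of the last, decisive step. You assert that on a homogeneous curve a continuous weakly holomorphic function is automatically strongly holomorphic because ``continuity forces the branch values to agree at the crossing, which is exactly the gluing condition cutting out the structure sheaf inside its normalisation.'' This is false as soon as the curve has three or more concurrent branches in $\bb{C}^2$: for $M=\left\{ xy\left( x-y \right)=0 \right\}$ the function that vanishes on the two coordinate axes and equals the linear parameter on the diagonal is continuous weakly holomorphic, but any $f\in\bb{C}\left\{ x,y \right\}$ vanishing on both axes lies in the ideal $\left( xy \right)$ and hence vanishes to second order on the diagonal, so no strongly holomorphic representative exists. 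Agreement of the values at the origin is not the full gluing condition; there are further linear conditions on derivatives depending on the configuration of the lines. If your claim were true, homogeneous curves would be maximal and Theorem~\ref{HomCor} would be a special case of Theorem~\ref{MaxRH}, which it is not.

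What Proposition~\ref{HomCurve} actually proves is weaker and uses the connection in an essential way: it does not show that every continuous weakly holomorphic function on a homogeneous curve is holomorphic, only that the specific parallel frames from Proposition~\ref{AbsWeak} are. The paper's argument trivialises the locally free sheaf on the Stein, contractible space $M$, extends the connection to one on $\coan{\bb{C}^n}^{\oplus r}$, blows up the origin so that the lines of $M$ become fibers of the tautological line bundle $\mathrm{Bl}_0\left( \bb{C}^n \right)\to\bb{CP}^{n-1}$, passes to the induced flat relative connection along the smooth, simply connected fibers, produces a global relative parallel frame via Theorem~\ref{nonrelativetorelative}, and descends it to a genuinely holomorphic frame on $\bb{C}^n$ using that the exceptional set has codimension at least two; restricting to $M$ yields a strongly holomorphic frame differing from the continuous weakly holomorphic one by a constant matrix. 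You need an argument of this kind --- one exploiting flatness and parallelism rather than a purported function-theoretic maximality of homogeneous curves --- to close the gap.
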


Now, the strategy laid out above is executed in the following sections beginning with proving the existence of weakly holomorphic parallel generators for flat connections.

\subsection{Existence of weak solutions}
\label{weakSol}

The following result establishes the existence of local weakly holomorphic parallel generators of flat connections on torsion-free sheaves over the total of a reduced locally trivial morphism. The main ingredient is Corollary~\ref{ExtPara} and realising that a connection pulled back to a fiber-wise desingularization is of course trivial along the fibers of such a proper modification.

\begin{prop}
Let $f\colon N\times M \to N$ be the projection of reduced complex analytic spaces. Denote by $\tilde{M}$ a desingularization of $M$ and by $\psi\colon N\times \tilde{M}\to N\times M$ the fiber-wise desingularization.\\
Moreover, let $\left( \mc{F},\nabla_f \right)$ be a torsion-free sheaf with flat $f$-relative connection on $N\times M$ and suppose that there exists an injective morphism $\theta\colon \mc{F}\to \mathcal{O}_{N\times M}^{\oplus r}$.\\
Then for every $\left( p,q \right)\in N\times M$ there exists an open neighbourhood $U\times W\subseteq N\times M$ of $\left( p,q \right)\in M$ such that $\rst{\psi^*\mc{F}}{\psi^{-1}\left( U\times W \right)}$ has global parallel generators $s_i\in \left(\psi^*\mc{F}\right)\left(\psi^{-1}\left( U\times W \right)\right)$ such that
\[\left(\psi^*\theta\right)\left( s_i \right)\in \left(\mathcal{O}_{N\times \tilde{M}}^{\oplus r}\right)\left(\psi^{-1}\left( U\times W \right)\right)\]
defines tuples of weakly holomorphic functions on $U\times W$, which are holomorphic on $U\times W_{\mathrm{reg}}$.\\
Moreover, the sections $s_i$ can be chosen such that
\[\rst{s_i}{\psi^{-1}\left(U\times\left\{ q \right\}\right)}={\psi'}^*t_i,\]
where $\psi'\colon \psi^{-1}\left( U\times \left\{ q \right\} \right)\to U\times \left\{ q \right\}$ is the restriction of $\psi$ and $t_i$ is a section of $\rst{\mc{F}}{U\times \left\{ q \right\}}$, i.e. the functions $\left( \psi^*\theta \right)\left( s_i \right)$ are constant on the fibers of $\psi$ over $\left( a,q \right)$ for every $a\in U$.
\label{RelWeak}
\end{prop}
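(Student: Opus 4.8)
The plan is to pull the whole situation back along $\psi=\id_N\times\psi_M$, where $\psi_M\colon\tilde M\to M$ is the chosen desingularization, and to exploit that the pulled-back connection trivializes along the exceptional fibers, reducing matters to the smooth theory already available. First I would observe that $f\circ\psi\colon N\times\tilde M\to N$ is simply the projection $\tilde f$, a submersion because $\tilde M$ is smooth, and that $\psi^*\nabla_f$ is a flat $\tilde f$-relative connection on $\psi^*\mc{F}$. Since $\psi$ is an isomorphism over the dense open set $N\times M_{\mathrm{reg}}$, whose trace on each fiber $\{a\}\times\tilde M$ is dense, and $\mc{F}$ is torsion-free there, Proposition~\ref{PullConTor} guarantees that $\psi^*\mc{F}$ is again torsion-free --- precisely the hypothesis needed to invoke Corollary~\ref{ExtPara}.

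Next I would manufacture parallel generators along the exceptional fiber over $q$. Set $Y:=\psi_M^{-1}(q)_{\mathrm{red}}\subseteq\tilde M$ and let $\iota\colon N\times Y\hookrightarrow N\times\tilde M$ be the inclusion and $c:=\rst{\psi}{N\times Y}\colon N\times Y\to N\times\{q\}$ the collapse. The restriction $\rst{f}{N\times\{q\}}$ is an isomorphism onto $N$, so its relative differentials vanish and the relative connection induced on $\rst{\mc{F}}{N\times\{q\}}$ is zero; hence every section there is parallel. Choosing finitely many generators $t_i$ of the coherent sheaf $\rst{\mc{F}}{N\times\{q\}}$ over a small $U\ni p$ in $N$, the observation that a connection pulled back under a fiber-wise desingularization is trivial along the collapsed fibers shows that the sections $c^*t_i$ are parallel for $\iota^*\psi^*\nabla_f$ and generate $\iota^*\psi^*\mc{F}$. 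Corollary~\ref{ExtPara} then extends the $c^*t_i$ to parallel sections $s_i$ of $\psi^*\nabla_f$ on an open neighbourhood of $U\times Y$ in $N\times\tilde M$.

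To finish I would fix the neighbourhood $U\times W$ and descend. On $N\times Y$ the $s_i$ coincide with $c^*t_i$, so they generate $\psi^*\mc{F}$ at every point of $\{p\}\times Y$ (a pull-back of generators being a set of generators), and coherence propagates this to generation on an open neighbourhood $\tilde V$ of $\{p\}\times Y$. Because $\psi$ is proper, hence closed, a tube-lemma argument yields an open $U\times W\ni(p,q)$ with $\psi^{-1}(U\times W)\subseteq\tilde V$, so the $s_i$ are global parallel generators of $\rst{\psi^*\mc{F}}{\psi^{-1}(U\times W)}$. Since $\psi$ is a proper modification --- proper and biholomorphic away from the nowhere-dense $N\times\mathrm{sing}(M)$ --- Proposition~\ref{PropMod} lets each holomorphic tuple $(\psi^*\theta)(s_i)$ descend to a weakly holomorphic function on $U\times W$, which is genuinely holomorphic on $U\times W_{\mathrm{reg}}$ as $\psi$ is an isomorphism there. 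The remaining assertions $\rst{s_i}{\psi^{-1}(U\times\{q\})}={\psi'}^*t_i$ and the fiberwise constancy of $(\psi^*\theta)(s_i)$ hold by construction, the $s_i$ restricting to the collapse-pullbacks $c^*t_i$.

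The difficulty is less any single deep theorem than the interlocking of three facts, each resting on torsion-freeness and flatness: that $\psi^*\mc{F}$ remains torsion-free (Proposition~\ref{PullConTor}), that parallel frames on the exceptional fiber extend to a neighbourhood (Corollary~\ref{ExtPara}), and that properness of $\psi$ lets one replace a neighbourhood of the full fiber $N\times Y$ by a saturated set $\psi^{-1}(U\times W)$. I expect the most delicate bookkeeping to be the verification that the connection genuinely trivializes along $Y$, so that generators of $\rst{\mc{F}}{N\times\{q\}}$ really do pull back to \emph{parallel} generators; everything else is then an application of the two cited results together with the descent of holomorphic functions through a proper modification.
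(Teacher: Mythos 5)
Your proposal follows essentially the same route as the paper's own proof: restrict to the slice $N\times\{q\}$, where the relative connection is trivial because the slice is a section of $f$, pull back generators of $\rst{\mc{F}}{N\times\{q\}}$ to parallel generators over the exceptional set $\psi^{-1}(N\times\{q\})$, extend them via Corollary~\ref{ExtPara}, use properness/closedness of $\psi$ to cut out a saturated neighbourhood $\psi^{-1}(U\times W)$, and conclude weak holomorphicity from Proposition~\ref{PropMod}. The only (harmless) difference is that you make the torsion-freeness of $\psi^*\mc{F}$ explicit via Proposition~\ref{PullConTor} before invoking Corollary~\ref{ExtPara}, a hypothesis the paper leaves implicit.
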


\begin{proof}
Let $q\in M$ and denote $X_q:= \psi^{-1}\left( N\times \left\{ q \right\} \right)$ and because the question is local one may assume that $\mc{F}$ has global generators. Moreover, let $\iota\colon N\times \left\{ q \right\}\to N\times M$ and $\iota'\colon X_q\to N\times \tilde{M}$ be the inclusions of analytic subspaces. Then, one of course has
\[\left(\iota'^*\psi^* \mc{F}, \iota'^*\psi^*\nabla \right) = \left( \psi'^*\iota^*\mc{F},\psi'^*\iota^*\nabla \right),\]
where $\psi'\colon X_q \to N\times \left\{ q \right\}$ is the restriction of $\psi$ to $X_q$. However, $\iota$ is a section of $f$ and hence $\psi'^*\iota^*\nabla $ is the canonical connection on $\psi'^*\iota^*\mc{F}$ associated to $\psi'^{-1}\iota^*\mc{F}$. Because $\mc{F}$ is globally generated, it follows that $\psi'\iota^*\nabla$ has global parallel generators. Additionally, by Corollary~\ref{ExtPara}, one has
\[\ker{\psi'\iota^*\nabla}=\ker{\iota'^*\psi^*\nabla }=\iota'^{-1}\ker{\psi^*\nabla}\]
and that there exists an open neighbourhood $V$ of $X_q$ such that $\ker{\psi^*\nabla}$ is globally generated on $V$.\\
It still needs to be argued that $V$ contains an open subset of the form $\psi^{-1}\left( U\times W \right)$ with $p\in U$. For this, note that $\psi$ is a closed map and hence $C:=\psi\left( \left( N\times \tilde{M} \right)\setminus V \right)\subset M\times N$ is closed. As $N\times \left\{ p \right\}$ is also closed and
\[\left( N\times \left\{ p \right\} \right)\cap C=\emptyset,\]
it follows that there exists an open neighbourhood $V'\subseteq N\times M$ of $N\times \left\{ q \right\}$ such that
\[V'\cap C =\emptyset.\]
As $V'$ is open and contains $N\times \left\{ q \right\}$, it follows that for every $p\in N$ there exist open neighboourhoods $U\subseteq N$ around $p$ and $W\subseteq M$ around $q$ such that $U\times W\subseteq V'$. Subseqeuntly, $\psi^{-1}\left( U\times W \right)\subseteq V$ is an open neighbourhood of $\psi^{-1}\left( U\times \left\{ q \right\} \right)$.\\
Since $\psi\colon N\times \tilde{M}\to N\times M$ is a proper modification it follows that the functions $\left( \psi^*\theta \right)\left( s_i \right)$ are weakly holomorphic on $N\times M$ by Proposition~\ref{PropMod}.
\end{proof}

One would like to argue now that the thus constructed frame $s_i$ defines \textit{continuous} weakly holomorphic functions $\left( \psi^*\theta \right)\left( s_i \right)$ on $N\times M$, however for continuity one needs that these functions are constant on all fibers of $\psi$ (Proposition~\ref{PropMod}) and so far the previous result only guarantees local boundedness. The next section will establish that these functions are continuous weakly holomorphic and will show that the absolute case admits `nice' continuous weakly holomorphic solutions. Moreover, the case of maximal fibers will follow immediately from these results.

\subsection{Reduction to absolute case and maximal fibers}
\label{RedAbs}

In this subsection, it will be shown that it suffices to verify whether the sections obtained in Proposition~\ref{RelWeak} are holomorphic along each fiber. The main inspiration for this argument is the following theorem of Grauert and Remmert regarding separate holomorphicity on product complex analytic spaces:

\begin{thm}
Let $N\times M$ be the product of two reduced complex analytic spaces and let $f\colon N\times M \to \bb{C}$ be a (set-theoretic) map such that for all $p\in N$ the functions $\rst{f}{\left\{ p\right\}\times M }$ and for all $q\in M$ the functions $\rst{f}{ N\times \left\{ q \right\}}$ are holomorphic on $\left\{ p \right\}\times M$ and $N\times \left\{ q \right\}$. Then the function $f$ is holomorphic on $N\times M$.
\label{SepHolo}
\end{thm}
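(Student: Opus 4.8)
The plan is to localize the problem, settle it on the manifold locus with the classical Hartogs separate-analyticity theorem, promote $f$ to a weakly holomorphic function on all of $N\times M$, and finally upgrade weak to strong holomorphicity using both families of slices at once.

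First I would reduce to local models, so that $N$ is a closed analytic subspace of an open $U\subseteq\bb{C}^n$ and $M$ a closed analytic subspace of an open $V\subseteq\bb{C}^m$; then $N\times M$ is a closed analytic subspace of $U\times V$. Since holomorphicity is local and a product of reduced spaces is reduced, one has $(N\times M)_{\mathrm{reg}}=N_{\mathrm{reg}}\times M_{\mathrm{reg}}$, while $S:=\mathrm{sing}(N\times M)=(\mathrm{sing}(N)\times M)\cup(N\times\mathrm{sing}(M))$ is a nowhere dense analytic subset. On the complex manifold $N_{\mathrm{reg}}\times M_{\mathrm{reg}}$ the restrictions of $f$ to the slices $\{p\}\times M_{\mathrm{reg}}$ and $N_{\mathrm{reg}}\times\{q\}$ are holomorphic, so by the classical Hartogs theorem on separate analyticity (read in charts) $f$ is holomorphic on $N_{\mathrm{reg}}\times M_{\mathrm{reg}}$.

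Next I would show $f$ is weakly holomorphic on $N\times M$, i.e. locally bounded near $S$, so that Proposition~\ref{WeakGraph} applies. Fix $(p_0,q_0)$ and compact neighbourhoods $K_1\ni p_0$, $K_2\ni q_0$ in the two models. For fixed $q$ the slice $f(\cdot,q)$ is holomorphic, hence bounded on $K_1$, so the closed sets $E_k=\{q\in K_2:\,|f(z,q)|\le k\text{ for all }z\in K_1\}$ cover $K_2$; by Baire some $E_{k_0}$ has interior, and the standard Hartogs propagation (subharmonicity of $\log|f(z,\cdot)|$ along the holomorphic slices) spreads the resulting bound to a full neighbourhood of $(p_0,q_0)$. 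Together with the holomorphicity on the regular locus from the first step, this exhibits $f$ as weakly holomorphic on $N\times M$, equivalently as a genuine holomorphic function on the normalization $\widetilde{N\times M}=\tilde N\times\tilde M$, using that the normalization of a product of reduced complex spaces is the product of the normalizations.

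The crux is the descent from weak to strong holomorphicity, and it is here that both slice hypotheses must be used simultaneously. I would consider the exact sequence of coherent sheaves
\[0\to\coan{N\times M}\to\tilde{\mc{O}}_{N\times M}\to\mc{Q}\to 0\]
with $\mc{Q}$ supported on $S$, and aim to show the class of $f$ in $\mc{Q}$ vanishes. Using $\widetilde{N\times M}=\tilde N\times\tilde M$ one identifies $\tilde{\mc{O}}_{N\times M}$ with the analytic tensor product $\tilde{\mc{O}}_N\,\hat\otimes\,\tilde{\mc{O}}_M$ and $\coan{N\times M}$ with $\coan{N}\,\hat\otimes\,\coan{M}$, so that $\mc{Q}$ is governed, through a Künneth/flatness bookkeeping, by the finite-length torsion cokernels $\mc{Q}_N=\tilde{\mc{O}}_N/\coan{N}$ and $\mc{Q}_M=\tilde{\mc{O}}_M/\coan{M}$ on the respective singular loci. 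The hypothesis that every slice $f(\cdot,q)$ is strongly holomorphic should force the image of $f$ in $\mc{Q}_N\,\hat\otimes\,\tilde{\mc{O}}_M$ to vanish (the $\tilde{\mc{O}}_M$-valued coefficients in the $\mc{Q}_N$-direction being detected by their values at the various $q$), and symmetrically the $M$-slice hypothesis kills the image in $\tilde{\mc{O}}_N\,\hat\otimes\,\mc{Q}_M$; flatness of $\coan{}\hookrightarrow\tilde{\mc{O}}$ in each factor then pins $f$ into $\coan{N}\,\hat\otimes\,\coan{M}=\coan{N\times M}$. The hard part will be exactly this descent: making the identification $\tilde{\mc{O}}_{N\times M}\cong\tilde{\mc{O}}_N\,\hat\otimes\,\tilde{\mc{O}}_M$ and the Künneth decomposition of $\mc{Q}$ precise at the level of germs, and justifying that the two families of slices detect vanishing in the two cokernel directions. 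If this bookkeeping proves too delicate, the fallback is an induction on $\dim M$ that reduces to the case where $M$ is a curve (where $\tilde{\mc{O}}_M/\coan{M}$ is especially transparent) and then symmetrically in $N$.
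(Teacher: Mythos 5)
First, note that the paper does not prove Theorem~\ref{SepHolo} at all: its ``proof'' is the citation to Grauert--Remmert [Satz 29], so there is no argument in the text to compare yours against, and your proposal is an attempt to reprove a result the paper deliberately black-boxes. As it stands the attempt has genuine gaps. The most concrete one is the appeal to ``flatness of $\coan{}\hookrightarrow\tilde{\mc{O}}$ in each factor'': the inclusion $\coan{N}\hookrightarrow\tilde{\mc{O}}_N$ is essentially never flat on a non-normal space (for the cusp $y^2=x^3$ one has $\tilde{\mc{O}}_{N,0}=\bb{C}\{t\}$ over $\coan{N,0}=\bb{C}\{t^2,t^3\}$, a finite torsion-free module that is not free, hence not flat). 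What your K\"unneth bookkeeping actually needs is exactness of the analytic \emph{external} tensor product $(-)\boxtimes(-)$ in each variable -- a true but different statement -- so the step that ``pins $f$ into $\coan{N}\,\hat\otimes\,\coan{M}$'' is unjustified as written. Moreover the descent itself, i.e.\ showing that vanishing of the slice classes in $\tilde{\mc{O}}_N/\coan{N}$ and $\tilde{\mc{O}}_M/\coan{M}$ kills the class of $f$ in $\mc{Q}$, is the entire content of the theorem beyond the smooth Hartogs theorem, and you only gesture at it: you still need (a) that the restriction of a weakly holomorphic $f$ to a slice $N\times\{q\}$ is well defined (i.e.\ that $f\circ\nu$ is constant on the fibres of the normalisation $\nu$, which uses that $f$ is a set-theoretic function), (b) compatibility of slice restriction with the two-step filtration of $\mc{Q}$ by $\coan{N}\boxtimes\mc{Q}_M$ and $\mc{Q}_N\boxtimes\tilde{\mc{O}}_M$, and (c) a ``vanishes on every slice $\Rightarrow$ vanishes'' lemma for sections of $\mc{Q}_N\boxtimes\tilde{\mc{O}}_M$ over the reduced factor $M$. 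None of these is automatic.

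Your boundedness step is also under-justified exactly where it is hard: the Baire argument is fine, but the ``standard Hartogs propagation'' of the bound from $K_1\times\mathrm{int}(E_{k_0})$ to a full neighbourhood invokes the Hartogs lemma for plurisubharmonic functions along the \emph{singular} slices $\{z\}\times M$, which cannot simply be quoted from the $\bb{C}^n$ theory. A cleaner route, consistent with the paper's own toolkit, is to pull $f$ back to a product of resolutions $\tilde N\times\tilde M\to N\times M$: the pullback is separately holomorphic on a product of manifolds, hence holomorphic by the classical Hartogs theorem; it is constant on the fibres of the proper surjective resolution because $f$ is a function downstairs; and properness then yields local boundedness of $f$, i.e.\ weak holomorphicity, exactly as in Proposition~\ref{PropMod}. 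With that replacement your outline becomes a credible skeleton, but the descent from weak to strong holomorphicity still has to be carried out in detail before this is a proof.
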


\begin{proof}
See~\cite[Satz 29]{SepHolo}.
\end{proof}

In order to apply this theorem one has to first show that the weakly holomorphic sections of Proposition~\ref{RelWeak} actually define set-theoretic maps, i.e. are actually continuous weakly holomorphic. One can do this by first establishing that in the absolute case there are continuous weakly holomorphic frames around the singular points - this is established in Proposition~\ref{AbsWeak}. One can then show that the sections constructed in Proposition~\ref{RelWeak} agree with the frames constructed in Proposition~\ref{AbsWeak} after restricting them to the fibers and continuity follows by Corollary~\ref{Czero}. One has thus obtained set-theoretic maps and can utilize Theorem~\ref{SepHolo} to reduced holomorphicity to the holomorphicity in the absolute case. Proposition~\ref{RelStrong} demonstrates precisely this application. All of these facts together yield two interesting results:
\begin{itemize}
\item Theorem~\ref{RelToAbs}: The Riemann-Hilbert correspondence holds for torsion-free sheaves and reduced locally trivial morphisms if and only if the sections in the absolute case (i.e. along the fibers) are holomorphic.
\item Theorem~\ref{MaxRH}: The Riemann-Hilbert correspondence holds for torsion-free sheaves and reduced locally trivial morphisms with maximal fibers.
\end{itemize}

Before diving into the arguments recall two important aspects of complex analytic spaces:

\begin{defn}
Let $M$ be a complex analytic space and let $j\in \bb{N}\setminus \left\{ 0 \right\}$. Then one recursively defines
\[\mathrm{sing}^j\left( M \right):=\mathrm{sing}\left( \mathrm{sing}^{j-1}\left( M \right) \right)\]
and sets $\mathrm{sing}^0\left( M \right)=M$. In particular, one has $\mathrm{sing}^1\left( M \right)=\mathrm{sing}\left( M \right)$.
\label{IterSing}
\end{defn}

\begin{thm}
Let $M$ be a reduced complex analytic space. Then there exists a proper modification $\psi\colon \tilde{M}\to M$ such that $\tilde{M}$ is a complex manifold and $\psi^{-1}\left( \mathrm{sing}\left( M \right) \right)$ is locally a normal crossing hypersurface. In particular, $\psi^{-1}\left( \mathrm{sing}\left( M \right) \right)$ is a maximal complex analytic space.
\label{Desing}
\end{thm}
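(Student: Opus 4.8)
The plan is to invoke Hironaka's theorem on resolution of singularities in its strong, embedded form, now in the complex analytic category (the analytic version being due to Aroca, Hironaka and Vicente, with functorial refinements by Bierstone and Milman). Concretely, for a reduced complex analytic space $M$ this produces a proper modification $\psi\colon \tilde{M}\to M$ with $\tilde{M}$ a complex manifold such that $\psi$ restricts to a biholomorphism over the regular locus $M\setminus \mathrm{sing}\left( M \right)$ and such that the exceptional set $\psi^{-1}\left( \mathrm{sing}\left( M \right) \right)$ is locally a simple normal crossing hypersurface. The first step would be to state this theorem precisely in the analytic setting and to isolate the two features that matter here: the properness of $\psi$ together with the biholomorphism off the singular locus, and the normal crossing structure of the exceptional divisor.

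Next I would verify that the resolution $\psi$ is a \emph{proper modification} in the precise sense introduced earlier. Taking $A:=\mathrm{sing}\left( M \right)$, which is a nowhere dense analytic subset by reducedness of $M$, the resolution theorem guarantees that $\psi$ is proper and that $\tilde{M}\setminus \psi^{-1}\left( A \right)\to M\setminus A$ is biholomorphic; since $\psi^{-1}\left( A \right)$ is a hypersurface in the manifold $\tilde{M}$ it is in particular nowhere dense, so all conditions of the definition are satisfied. Because $\psi$ is an isomorphism away from $A$, the set where the modification is non-trivial is exactly $\psi^{-1}\left( \mathrm{sing}\left( M \right) \right)$, and strong resolution equips precisely this set with the desired normal crossing structure.

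The concluding claim is then immediate: maximality is a local property, and it was already recorded in the preceding example that normal crossings are maximal complex analytic spaces (see~\cite[p.~297]{Umbrealla}); hence the locally normal-crossing hypersurface $\psi^{-1}\left( \mathrm{sing}\left( M \right) \right)$ is maximal. The main obstacle in this proof is therefore not any genuine argument but rather the correct invocation of resolution of singularities in the analytic rather than the algebraic category, and the bookkeeping ensuring that the exceptional divisor produced by the theorem coincides with $\psi^{-1}\left( \mathrm{sing}\left( M \right) \right)$ and carries the normal crossing structure. All the substantive work is packaged inside Hironaka's theorem; the remainder consists of matching its output to the notions of proper modification and maximality used in this paper.
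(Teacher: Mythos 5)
Your proposal matches the paper's proof, which simply cites the (analytic) desingularization theorem for the existence of $\psi$ and the reference \cite[p.~297]{Umbrealla} for the maximality of normal crossings. The only difference is that you spell out the bookkeeping (properness, nowhere density of the exceptional set, locality of maximality) that the paper leaves implicit.
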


\begin{proof}
See~\cite{desing} for the desingularization and for the maximality statement see~\cite[page 297]{Umbrealla}.
\end{proof}

Now one is in a position to adequately address the absolute situation and show that there exist well-behaved continuous weakly holomorphic frames.

\begin{prop}
Let $M$ be a reduced complex analytic space and $\left( \mc{O}_M^{\oplus r},\nabla \right)$ a free sheaf with flat connection. Then around every $p\in M$ there exists an open neighbourhood $U\subseteq M$ of $p$ and continuous weakly holomorphic sections $t_1,\dots,t_r$ on $U$ such that for every $i\in\left\{ 1,\dots,r \right\}$ the restriction $\rst{t_i}{U_{\mathrm{reg}}}$ is parallel with respect to $\nabla$ and that these $r$ holomorphic functions form a frame of $\coan{U_{\mathrm{reg}}}^{\oplus r}$.\\
Moreover, for every $j\in \bb{N}$ and $i\in \left\{ 1\dots,r \right\}$ the restrictions $\rst{t_i}{\mathrm{sing}^j\left( U \right)}$ are continuous weakly holomorphic on $\mathrm{sing}^j\left( U \right)$ and parallel on $\left( \mathrm{sing}^j\left( U \right) \right)_{\mathrm{reg}}$ with respect to $\rst{\nabla}{ \mathrm{sing}^j\left( U \right)}$.\\
Moreover, the value of the frames at $p$ can be freely chosen as long as they form a basis at that point.
\label{AbsWeak}
\end{prop}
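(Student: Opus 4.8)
The plan is to argue by induction on $\dim M$. If $p\in M_{\mathrm{reg}}$ the claim is immediate: near $p$ the space $M$ is a manifold and $\nabla$ is a flat connection on a holomorphic vector bundle, so a parallel frame attaining any prescribed basis value at $p$ exists and is holomorphic, while the assertions about $\mathrm{sing}^j(M)$ are vacuous since no singular points lie nearby. This also settles the base case $\dim M=0$, where the reduced space has no singular points. Hence assume $\dim M=n>0$, that the proposition holds for all reduced spaces of dimension $<n$, and that $p\in\mathrm{sing}(M)$.

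First I would restrict to the singular locus. The inclusion $\iota\colon\mathrm{sing}(M)\hookrightarrow M$ pulls $\coan{M}^{\oplus r}$ back to the free sheaf $\coan{\mathrm{sing}(M)}^{\oplus r}$ and, by Definition~\ref{pullbackconnec}, pulls $\nabla$ back to a flat connection $\iota^{*}\nabla$ on it. Since $\dim\mathrm{sing}(M)<n$, the induction hypothesis yields, on a neighbourhood of $p$ in $\mathrm{sing}(M)$, continuous weakly holomorphic parallel frames $\hat t_1,\dots,\hat t_r$ with $\hat t_i(p)=v_i$ a prescribed basis and satisfying all the iterated-singularity conditions, now for $\mathrm{sing}^{j}(\mathrm{sing}(M))=\mathrm{sing}^{j+1}(M)$ (Definition~\ref{IterSing}).

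Next I would transport these frames through a desingularization. Choose, by Theorem~\ref{Desing}, a proper modification $\psi\colon\tilde M\to M$ with $\tilde M$ smooth and $E:=\psi^{-1}(\mathrm{sing}(M))$ a normal crossing hypersurface, hence maximal. The composites $\tilde t_i:=\hat t_i\circ\rst{\psi}{E}$ are continuous and locally bounded on $E$, and holomorphic on $E_{\mathrm{reg}}$ (holomorphic wherever $\rst{\psi}{E}$ lands in the regular part of $\mathrm{sing}(M)$, hence everywhere on $E_{\mathrm{reg}}$ by Riemann extension), so they are continuous weakly holomorphic on $E$. The decisive step is that $E$ is maximal (Definition~\ref{Normal}): each $\tilde t_i$ is therefore strongly holomorphic on $E$, and since it is parallel for $\rst{\psi^{*}\nabla}{E}$ on the dense set $E_{\mathrm{reg}}$ it is parallel on all of the reduced space $E$. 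Now $\tilde M$ is smooth and $\psi^{*}\nabla$ is flat, so the absolute case of Corollary~\ref{ExtPara}, applied with $Y=E$, extends each parallel frame $\tilde t_i$ to a parallel (hence holomorphic) section $T_i$ of $\psi^{*}\nabla$ on a neighbourhood $V$ of $E$ in $\tilde M$; as $\psi$ is proper, $V\supseteq\psi^{-1}(U)$ for a suitable neighbourhood $U$ of $p$.

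It remains to descend. The point is that $T_i$ is constant on every fibre of $\psi$ over $U$: over $\mathrm{sing}(M)$ this holds because $\rst{T_i}{E}=\hat t_i\circ\rst{\psi}{E}$ is pulled back from the base, and over $U\cap M_{\mathrm{reg}}$ the fibres are single points. By Proposition~\ref{PropMod} the $T_i$ descend to continuous weakly holomorphic functions $t_i$ on $U$ with $t_i\circ\psi=T_i$; since $\psi$ is biholomorphic over $M_{\mathrm{reg}}$, these $t_i$ are parallel and form a frame on $U_{\mathrm{reg}}$, with $t_i(p)=\hat t_i(p)=v_i$. Finally $\rst{t_i}{\mathrm{sing}(M)}=\hat t_i$, so the iterated-singularity assertions for $t_i$ are exactly those already supplied for $\hat t_i$ by the induction hypothesis, which closes the induction. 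I expect the main obstacle to be precisely this crossing of the exceptional divisor: one must upgrade the frame inherited from $\mathrm{sing}(M)$ from merely continuous to genuinely holomorphic on $E$ before it can be extended as a parallel frame into $\tilde M$, and this is exactly where the maximality of the normal crossing divisor is indispensable. A secondary subtlety is confirming that $T_i$ is constant on all fibres of $\psi$ and not only over $p$, so that Proposition~\ref{PropMod} produces a function on $M$ rather than only on $M_{\mathrm{reg}}$; this is guaranteed automatically because the frame on $E$ was pulled back from the base $\mathrm{sing}(M)$.
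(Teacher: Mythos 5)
Your architecture is the paper's own: induction on dimension, restriction of the problem to $\mathrm{sing}\left( M \right)$, transport of the resulting frame through a desingularization whose exceptional set $E$ is a normal crossing (hence maximal) hypersurface, extension as a parallel frame via Corollary~\ref{ExtPara}, and descent via Proposition~\ref{PropMod}. However, there is a genuine gap at the decisive step, namely the holomorphicity (and parallelism) of $\tilde t_i=\hat t_i\circ\rst{\psi}{E}$ on $E$. You assert that $\tilde t_i$ is holomorphic wherever $\rst{\psi}{E}$ lands in $\left( \mathrm{sing}\left( M \right) \right)_{\mathrm{reg}}$ and then invoke Riemann extension on $E_{\mathrm{reg}}$. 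That requires $\psi^{-1}\left( \left( \mathrm{sing}\left( M \right) \right)_{\mathrm{reg}} \right)\cap E$ to be dense in $E$, and in general it is not: an irreducible local branch $X_l$ of the exceptional divisor may be contracted by $\psi$ into $\mathrm{sing}^2\left( M \right)$ or a deeper stratum (for instance a divisor lying over a single singular point of the curve $\mathrm{sing}\left( M \right)$ inside a surface). On such a branch the set where $\psi$ meets $\left( \mathrm{sing}\left( M \right) \right)_{\mathrm{reg}}$ is empty, so Riemann extension yields nothing there; the same defect undermines your claim that $\tilde t_i$ is parallel on a dense subset of $E$.

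This is exactly why the proposition carries the iterated-singularity clauses through the induction: you record them in your induction hypothesis but never use them at the point where they are indispensable. The repair, which is the paper's argument, is to write $E$ locally as a union of smooth connected branches $X_l$, let $i_l\geq 1$ be the largest integer with $\psi\left( X_l \right)\subseteq \mathrm{sing}^{i_l}\left( M \right)$, observe that $\psi^{-1}\left( \left( \mathrm{sing}^{i_l}\left( M \right) \right)_{\mathrm{reg}} \right)\cap X_l$ is dense in $X_l$ by maximality of $i_l$, and then use the inductive assertion that $\rst{\hat t_i}{\mathrm{sing}^{i_l}\left( M \right)}$ is continuous weakly holomorphic and parallel on $\left( \mathrm{sing}^{i_l}\left( M \right) \right)_{\mathrm{reg}}$ to conclude that $\tilde t_i$ is continuous on the manifold $X_l$ and holomorphic and parallel on a dense subset, hence holomorphic on all of $X_l$. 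Only after this does the maximality of the normal crossing $E$ upgrade $\tilde t_i$ to a strongly holomorphic section of $E$, and tameness (Corollary~\ref{ExtPara}) give parallelism from density. With that step corrected, the remainder of your proof (extension to a neighbourhood of $E$, constancy on the fibres of $\psi$, descent, and the free choice of the basis at $p$) goes through as you describe.
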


\begin{proof}
This result can be obtained by an induction on the dimension of $M$. To this end, first suppose that $M$ has dimension $1$ and thus its singular set has dimension $0$. Let $\psi\colon \tilde{M}\to M$ be a desingularization and note that $\psi$ can be taken to be finite (e.g. simply take the normalization of $M$). Moreover, the Proposition is clear at the regular points of $M$ and only the singular points need to considered. Therefore let $p\in \mathrm{sing}\left( M \right)$ be a singular point. Around every $q\in \psi^{-1}\left( \left\{ p \right\} \right)=\left\{ q_1,\dots q_k \right\}$ there exists a parallel frame of $\psi^*\nabla$ as $\tilde{M}$ is a complex manifold and moreover, one may assume that these frames have the same values at all points of $\psi^{-1}\left( \left\{ p \right\} \right)$. Therefore these frames define continuous weakly holomorphic functions on $M$. And by construction they are parallel and a frame away from the singularities. The singular points of $M$ are just isolated points and hence, the restrictions to the singular set are also holomorphic and parallel as these are vacuous statements.\\
Suppose now that the Proposition holds for analytic spaces of dimension $\leq n-1$. Let $M$ be of dimension $n$ and note that $\mathrm{sing}\left( M \right)$ is of dimension $\leq n-1$. By assumption there exist continuous weakly holomorphic functions $u_1,\dots,u_r$ on $\mathrm{sing\left( M \right)}$ which satisfy the Proposition on $\mathrm{sing}\left( M \right)$. Let $\psi\colon \tilde{M} \to M$ be a desingularization of $M$ such that $\psi^{-1}\left( \mathrm{sing}\left( M \right) \right)$ locally is a normal crossing hypersurface. Moreover, let $\iota\colon \mathrm{sing}\left( M \right)\to M$, $\iota'\colon \psi^{-1}\left( \mathrm{sing}\left( M \right) \right)\to \tilde{M}$ be the inclusions and let $\psi'\colon \psi^{-1}\left( \mathrm{sing}\left( M \right) \right)\to \mathrm{sing}\left( M \right)$ be the restriction of $\psi$.\\
It is now going to be argued that the pull-back frames $\psi'^*u_i$ are actually holomorphic on $\psi^{-1}\left( \mathrm{sing}\left( M \right) \right)$. By definition they are already continuous functions and the question of holomorphicity is a local question. Therefore, let $q\in \psi^{-1}\left( \mathrm{sing}\left( M \right) \right)$ and because the inverse image is a local normal crossing it follows that there exists an open neighbourhood $V\subseteq \tilde{M}$ around $q$ and finitely many smooth, connected complex hypersurfaces $X_l$ such that
\[V\cap \psi^{-1}\left( \mathrm{sing}\left( M \right) \right)=\bigcup_{l=1}^m X_l.\]
For each $l\in \left\{ 1,\dots,m \right\}$ consider $\rst{\psi}{X_l}\colon X_l \to M$ and let $i_l$ be the largest integer such that $\psi\left( X_l \right)\subseteq \mathrm{sing}^{i_l}\left( M \right)$. Note that $i_l\geq 1$ as one is considering the exceptional hypersurface in $\tilde{M}$. In particular, one has that $\rst{\psi}{X_l}$ always factors through $\mathrm{sing}^{i_l}\left( M \right)$. Moreover, it follows that $\psi^{-1}\left( \left( \mathrm{sing}^{i_l}\left( M \right) \right)_{\mathrm{reg}} \right)$ is dense in $X_l$ since by definition $\psi^{-1}\left( \mathrm{sing}^{i_l+1}\left( M \right) \right)$ is a proper analytic subset and thereby nowhere dense, since $X_l$ is smooth and connected. Recall, however that $\rst{u_i}{\left( \mathrm{sing}^{i_l}\left( M \right) \right)_{\mathrm{reg}}}$ is holomorphic and hence $\left( \rst{\psi}{X_l} \right)^*u_i$ is continuous on $X_l$ and holomorphic on a dense subset. However, $X_l$ is a complex manifold and thus $\left( \rst{\psi}{X_l} \right)^*u_i$ is holomorphic everywhere. Thereby, one has obtained that ${\psi'}^*u_i$ is continuous and holomorphic on a dense subset of $V\cap \psi^{-1}\left( \mathrm{sing}\left( M \right) \right)$, but by Theorem~\ref{Desing} one has that $\psi^{-1}\left( \mathrm{sing}\left( M \right) \right)$ is a maximal complex analytic space and thus ${\psi'}^*u_i$ is locally holomorphic on the exceptional hypersurface. So one has obtained that the pull-back frame to the exceptional hypersurface is holomorphic.\\
It then also follows by the tameness of ${\iota'}^*\psi^*\nabla$ (Corollary~\ref{ExtPara}) that the pull-back frames are parallel as they are parallel on a dense subset by assumption. Then also by Corollary~\ref{ExtPara} one has 
\[\ker{ {\iota'}^*\psi^*\nabla}={\iota'}^{-1}\ker{\psi^*\nabla}\]
and it follows that there exists an open neighbourhood $W\subseteq \tilde{M}$ around $\psi^{-1}\left( \mathrm{sing}\left( M \right) \right)$ and a parallel frame $\tilde{t}_i$ of $\psi^*\nabla$ on $W$ that extends ${\psi'}^*u_i$. By construction the frame $\tilde{t}_i$ is constant along the fibers of $\psi$ and thus these frames are induced by continuous weakly holomorphic sections on the open neighbourhood $U:=\psi\left( W \right)\subseteq M$ around $\mathrm{sing}\left( M \right)$\footnotemark. Moreover, away from the singularities these sections are parallel and by construction, they restrict to $u_j$ on $\mathrm{sing}\left( M \right)$. This completes the induction argument.\\
That the value of the frames at $p$ can be freely chosen to be any basis at that point is clear from the construction.
\end{proof}

\footnotetext{Continuity follows by Proposition~\ref{PropMod} and the openness of $\psi\left( W \right)$ follows from $\psi$ being proper and surjective, hence a quotient map and $\psi^{-1}\left( \psi\left( W \right) \right)=W$ is open.}

Observe that on simply connected reduced complex analytic spaces one always obtains these continuous weakly holomorphic frames globally. 

\begin{lem}
Let $M$ be a simply connected reduced complex analytic space and let $\left( \coan{M}^{\oplus r},\nabla \right)$ be a free sheaf with flat connection. Then the continuous weakly holomorphic frames constructed in Proposition~\ref{AbsWeak} exist globally on $M$.
\label{SimpConnec}
\end{lem}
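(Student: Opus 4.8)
The plan is to recognise the local frames furnished by Proposition~\ref{AbsWeak} as the local sections of a genuine \emph{local system on $M$ itself}, and then to invoke the standard fact that a local system on a simply connected space is trivial. The essential point — and the reason simple connectivity of $M$ suffices rather than that of the a priori larger manifold $M_{\mathrm{reg}}$ — is that Proposition~\ref{AbsWeak} already extends the parallel frames \emph{continuously across the singular locus}, so the relevant monodromy lives in $\pi_1(M)$ and not in $\pi_1(M_{\mathrm{reg}})$.

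Concretely, I would introduce the sheaf $\mc{L}$ of $\bb{C}$-vector spaces whose sections over an open set $U\subseteq M$ are the continuous weakly holomorphic tuples $U\to\bb{C}^r$ (viewed as sections of $\coan{M}^{\oplus r}$ over $U_{\mathrm{reg}}$) whose restriction to $U_{\mathrm{reg}}$ is parallel for $\nabla$. Since parallelism, continuity and weak holomorphicity are all stable under $\bb{C}$-linear combinations, $\mc{L}$ really is a sheaf of $\bb{C}$-vector spaces. Proposition~\ref{AbsWeak} says precisely that every point has a connected neighbourhood $U$ on which $\mc{L}$ admits $r$ sections $t_1,\dots,t_r$ that restrict to a $\nabla$-parallel frame of $\coan{U_{\mathrm{reg}}}^{\oplus r}$ and, moreover, take values forming a basis of the fibre at \emph{every} point of $U$ (including singular ones).

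Next I would check that $\mc{L}$ is locally constant of rank $r$, i.e. that $\rst{\mc{L}}{U}$ is identified with the constant sheaf $\underline{\bb{C}^r}$ via such a frame. For a connected open $W\subseteq U$ and $s\in\mc{L}(W)$, write $\rst{s}{W_{\mathrm{reg}}}=\sum_i c_i\,t_i$ with $c_i\in\coan{W_{\mathrm{reg}}}$; the relation $0=\nabla s=\sum_i (dc_i)\,t_i$ forces each $c_i$ to be locally constant on $W_{\mathrm{reg}}$. The delicate point — and the main obstacle of the whole argument — is that $W_{\mathrm{reg}}$ may be disconnected, its components being separated by $\mathrm{sing}(M)$, so a priori the $c_i$ could take different constant values on different components. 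Here the basis property at singular points is decisive: at a singular point lying in the closures of two adjacent components the values $t_i$ form a basis of the fibre, and continuity of $s$ forces the limiting coefficients coming from the two components to coincide. Chaining through the singular locus, using that $W$ is connected, shows the $c_i$ are globally constant on $W$, so $s\mapsto(c_1,\dots,c_r)$ identifies $\mc{L}(W)$ with $\bb{C}^r$ compatibly with restriction; hence $\rst{\mc{L}}{U}\cong\underline{\bb{C}^r}$.

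Finally, $M$ is connected (being simply connected) and, like every complex analytic space, locally contractible, hence locally path-connected and semilocally simply connected. A locally constant sheaf of $\bb{C}$-vector spaces of rank $r$ on such a space is classified by its monodromy representation $\pi_1(M)\to\mathrm{GL}_r(\bb{C})$, which is trivial since $\pi_1(M)=0$; therefore $\mc{L}$ is the constant sheaf and $\mc{L}(M)\cong\bb{C}^r$. A basis of $\mc{L}(M)$ provides global continuous weakly holomorphic sections $t_1,\dots,t_r$ that are parallel on $M_{\mathrm{reg}}$ and form a frame there. The remaining assertions of Proposition~\ref{AbsWeak} concerning the restrictions $\rst{t_i}{\mathrm{sing}^j(M)}$ are purely local and are inherited, since near each point the global frame agrees with a local frame produced by Proposition~\ref{AbsWeak}.
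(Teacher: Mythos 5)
Your proof is correct and follows essentially the same route as the paper: package the continuous weakly holomorphic parallel frames of Proposition~\ref{AbsWeak} into a rank-$r$ local system on $M$ itself (the paper checks local constancy of the change-of-frame matrices via the desingularization and continuity, you check local constancy of the coefficient functions via continuity and the pointwise basis property at singular points, which is the same mechanism), and then kill the monodromy using $\pi_1(M)=0$. No gap.
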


\begin{proof}
Note that the change-of-frame morphisms between the continuous weakly holomorphic frames obtained in Proposition~\ref{AbsWeak} are locally constant on a desingularization of $M$ and continuous weakly holomorphic on $M$, hence, they are locally constant. As such these frames together with their changes of frames naturally define a local system $V$ on $M$. Local systems are however in one-to-one correspondence with linear representations of the fundamental group (via parallel transport see e.g.~\cite[page 3]{deligne}) and hence the local system $V$ is trivial, i.e. isomorphic to the constant sheaf $\bb{C}^r$. As such, there exist $r$ globally defined sections that are parallel on $M_{\mathrm{reg}}$ and form a frame over the regular part of $M$.
\end{proof}

By restriction to the fibers it is now fairly straight forward to show that the weakly holomorphic sections obtained in Proposition~\ref{RelWeak} are actually continuous.

\begin{cor}
Continue the notation and setup from Proposition~\ref{RelWeak}. The weakly holomorphic functions $\left( \psi^*\theta \right)\left( s_i \right)$ are continuous weakly holomorphic on $U\times W$, after potentially shrinking $U\times W$ around $\left( p,q \right)$ so that $W$ is simply connected. In particular, they define (set-theoretic) maps $U\times W \to \bb{C}^r$.\\
Moreover, the weakly holomorphic sections defined by $\rst{s_i}{ \psi^{-1}\left(\left\{ n \right\}\times W\right)}$ on $\left\{ n \right\}\times W$ locally agree with the continuous weakly holomorphic sections obtained in Proposition~\ref{AbsWeak}.
\label{Czero}
\end{cor}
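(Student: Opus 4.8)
The plan is to reduce the asserted continuity of $(\psi^*\theta)(s_i)$ on $U\times W$ to a purely fibre-wise (absolute) statement and then to invoke Lemma~\ref{SimpConnec} (built on Proposition~\ref{AbsWeak}). By Proposition~\ref{PropMod}, a holomorphic function on the domain of the proper modification $\psi^{-1}(U\times W)$ descends to a \emph{continuous} weakly holomorphic function on $U\times W$ precisely when it is constant on every fibre of $\psi$. Since $\psi$ is the fibre-wise desingularization, a fibre of $\psi$ over $(n,b)\in U\times W$ is $\{n\}\times\psi_{\tilde M}^{-1}(b)$ and is therefore contained in the single slice $\{n\}\times\tilde M$. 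Hence it suffices to fix $n\in U$ and show that the restriction $\rst{s_i}{\psi^{-1}(\{n\}\times W)}$ is constant on the fibres of the desingularization $\psi_{\tilde M}\colon\tilde M\to M$ over $W$; running this over all $n\in U$ then yields constancy on every fibre of $\psi$.

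First I would restrict the relative data to the slice over $n$. Writing $\iota_n\colon\{n\}\times M\to N\times M$ for the inclusion, the relative connection $\nabla_f$ restricts to an \emph{absolute} flat connection $\iota_n^*\nabla_f$ on $\iota_n^*\mc{F}$, which by Lemma~\ref{ConnecLocFree} is locally free. After shrinking $W$ (keeping it simply connected) we may assume $\rst{\iota_n^*\mc{F}}{\{n\}\times W}\cong\coan{W}^{\oplus r}$. Applying Lemma~\ref{SimpConnec} to this free sheaf with flat connection on the simply connected $W$ produces \emph{global} continuous weakly holomorphic frames $\tau_1,\dots,\tau_r$ on $W$, parallel on $W_{\mathrm{reg}}$, and whose values at $q$ may be prescribed to be any basis of the fibre.

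The key step is to identify $\rst{s_i}{\psi^{-1}(\{n\}\times W)}$ with the pull-backs ${\psi''}^*\tau_i$, where $\psi''\colon\psi^{-1}(\{n\}\times W)\to W$ is the induced desingularization. Both families are parallel frames of $\psi^*\iota_n^*\nabla_f$ on the complex manifold $\psi^{-1}(\{n\}\times W)$, so there is an invertible change-of-frame matrix $(g_{ij})$ with $s_i=\sum_j g_{ij}\,{\psi''}^*\tau_j$; applying the connection and using that the ${\psi''}^*\tau_j$ are a parallel frame forces $dg_{ij}=0$, so the $g_{ij}$ are locally constant. By Proposition~\ref{RelWeak} the relation $\rst{s_i}{\psi^{-1}(U\times\{q\})}={\psi'}^*t_i$ shows that $s_i$ takes a single pulled-back value along $\psi_{\tilde M}^{-1}(q)$; choosing $\tau_i(q)$ equal to that common value makes $(g_{ij})$ the identity at every point of $\psi_{\tilde M}^{-1}(q)$. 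Shrinking $W$ to a small connected neighbourhood of $q$, the properness (hence closedness) of $\psi$ guarantees that every connected component of $\psi^{-1}(\{n\}\times W)$ meets $\psi_{\tilde M}^{-1}(q)$; since $(g_{ij})$ is locally constant and equals the identity there, it is the identity on each component, so $\rst{s_i}{\psi^{-1}(\{n\}\times W)}={\psi''}^*\tau_i$. In particular these restrictions are constant on the $\psi_{\tilde M}$-fibres and agree locally with the Proposition~\ref{AbsWeak} frames, which is exactly the second assertion.

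Combining the two parts, constancy on every fibre of $\psi$ makes $(\psi^*\theta)(s_i)$ continuous weakly holomorphic on $U\times W$ by Proposition~\ref{PropMod}, hence a genuine set-theoretic map $U\times W\to\bb{C}^r$, giving the first assertion. The hard part will be the matching step: one must ensure both that the change-of-frame matrix is honestly locally constant (which needs the fibre connection to be a bona fide flat connection on a manifold after desingularization) and that the normalization over $U\times\{q\}$ supplied by Proposition~\ref{RelWeak} propagates the identity across \emph{all} components of the preimage neighbourhood. The latter is precisely where the freedom to shrink $W$ and the properness of the fibre-wise desingularization $\psi$ are indispensable, since local constancy alone would not force constancy along the possibly disconnected fibres of a proper modification.
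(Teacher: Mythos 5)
Your argument is correct and follows essentially the same route as the paper: restrict to the slice $\{n\}\times W$, compare $s_i$ with the globally defined continuous weakly holomorphic frames from Proposition~\ref{AbsWeak} and Lemma~\ref{SimpConnec} normalized at $q$ to the common value that $s_i$ takes on the fibre $\psi^{-1}\left( \left( n,q \right) \right)$ (which exists by the last part of Proposition~\ref{RelWeak}), propagate the agreement from that central fibre to a saturated neighbourhood using properness, and conclude continuity via Proposition~\ref{PropMod}. The only cosmetic difference is in the propagation step: where you assert that every connected component of $\psi^{-1}\left( \{n\}\times W \right)$ meets the central fibre for small $W$ (a claim that needs slightly more than properness to justify), the paper instead notes that $s_i-\psi'^*t_i$ is parallel and vanishes on the entire fibre, hence vanishes on an open neighbourhood of it, which by closedness of $\psi$ contains a full preimage $\psi^{-1}\left( W' \right)$ --- a formulation that reaches the same conclusion while sidestepping any component-counting.
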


\begin{proof}
This statement is an immediate consequence of Proposition~\ref{AbsWeak}, but it is cumbersome to write out. The argument is as follows:\\
Note that for every $p  \in W$ the restriction $\rst{\mc{F}}{ \left\{ p \right\}\times W}  $ is a locally free sheaf with flat connection on $\left\{ p \right\}\times W$ and the sections $\rst{s_i}{ \psi^{-1}\left(\left\{ p \right\}\times W\right)}$ are parallel sections which are constant along the fiber of $\psi$ at $q\in W$. Let $t_i$ be the continuous weakly holomorphic sections around $q \in W$ constructed in Proposition~\ref{AbsWeak} (and guaranteed to exists globally on $W$ by Lemma~\ref{SimpConnec}) with the same value at $q$ as the weakly holomorphic sections $s_i$ (this makes sense because $s_i$ is constant along the fiber of $\psi$ at $q$). Now, denote by $\psi'\colon \psi^{-1}\left( \left\{ p \right\}\times W \right)\to \left\{ p \right\}\times W$ the restriction $\psi$. Then the sections ${\psi'}^*t_i$ are parallel holomorphic sections of ${\psi'}^*\rst{\nabla}{ \left\{ p \right\}\times W}$ just as the $s_i$ are. However, then ${\psi'}^*t_i -s_i$ is parallel and vanishes on $\psi'^{-1}\left( \left( p,q \right)\right)$. Parallel sections that vanish at a point vanish in entire open neighbourhoods and hence there exists an open neighbourhood $V\subseteq \psi'^{-1}\left( \left( p,q \right) \right)$ on which these frames are the same. However, $\psi'\left( V \right)$ contains an open neighbourhood of $\left( p,q \right)$\footnotemark and thus the weakly holomorphic sections $t_i$ and $\rst{s_i}{ \left\{ p \right\}\times W}$ agree in an open neighbourhood $\left( p,q \right)$ in $\left\{ p \right\}\times W$.\\
This shows that the sections $s_i$ are constant along the fibers of $\psi$ in an entire open neighbourhood of points $\left( a,q \right)$ for every $a\in U$. Hence, by shrinking $U\times W$ around $\left( p,q \right)$ one may assume that the sections $\left( \psi^*\theta \right)\left( s_i \right)$ are continuous weakly holomorphic on $U\times W$.
\end{proof}

\footnotetext{The morphism $\psi$ is closed and $V$ contains the entire fiber $\psi^{-1}\left( \left( p,q \right) \right)$. Therefore, by the same argument as in the proof of Proposition~\ref{RelWeak}, it follows that $V$ contains the preimage of an open neighbourhood of $\left( p,q \right)$.}

With this consideration in hand, one can now show that the relative Riemann-Hilbert Theorem for torsion-free sheaves follows from the absolute case. So far we have used an arbitrary embedding $\mc{F}\to \coan{N\times M}$ to obtain the weakly holomorphic frames, but in order to reduces the realtive case to the absolute case we will employ a specific embedding namely the canonical local embedding

\[\mc{F}\to {\mc{F}^{**}}\to \coan{N\times M}\]

which factors through the double dual. The reason for this is two-fold: Firstly, recall that the dual of a coherent sheaf naturally corresponds to the sheaf of sections of its associated linear fiber space (see e.g.~\cite[Section 1.6]{fischer}) and secondly the following Proposition shows that the dual sheaf of a coherent sheaf with flat connection is defined by a relative local system if the connections along the fibers have holomorphic parallel frames.

\begin{prop}
Let $f\colon N\times M \to N$ be the projection of reduced complex analytic spaces and let $\left( \mc{F},\nabla \right)$ be a torsion-free coherent $\coan{N\times M}$-sheaf with flat $f$-relative connection. Assume further that the connections $\rst{\nabla}{\left\{ p \right\}\times M}$ along the fibers of $f$ have local holomorphic frames for every $p\in N$ that are parallel on a dense subset of $\left\{ p\right\}\times M $.\\
Then for every point in $N\times M$ there exist an open neighbourhood $U\times W$ and a torsion-free $f$-relative local system $V$ such that
\[V\otimes_{f^{-1}\coan{N}} \coan{U\times W}\cong \mc{F}^*.\]
Moreover, the $f$-relative connection induced by the isomorphism above restricts to the dual connection\footnotemark of $\nabla$ over $U\times W_{\mathrm{reg}}$.
\label{DualConnec}
\end{prop}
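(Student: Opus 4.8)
The plan is to put on the dual sheaf $\mc{F}^{*}$ the dual relative connection $\nabla^{*}$, characterised by
$d_f\langle \phi,s\rangle = \langle \nabla^{*}\phi,s\rangle + \langle \phi,\nabla s\rangle$
for $\phi\in\mc{F}^{*}$, $s\in\mc{F}$, and to show that the weakly holomorphic parallel frames supplied by the existence results of Subsection~\ref{weakSol} are, \emph{for the dual sheaf}, genuinely holomorphic. Once a holomorphic parallel frame of $\left(\mc{F}^{*},\nabla^{*}\right)$ is available on a neighbourhood $U\times W$, the subsheaf $V:=\ker{\nabla^{*}}$ is locally the $f^{-1}\coan{N}$-module generated by that frame, hence a torsion-free $f$-relative local system, and the evaluation map $V\otimes_{f^{-1}\coan{N}}\coan{U\times W}\to\mc{F}^{*}$ is the desired isomorphism, whose induced connection $\id\otimes d_f$ agrees with $\nabla^{*}$ over the regular part.

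First I would record the reductions. As $\mc{F}$ is coherent it admits locally a presentation $\coan{N\times M}^{\oplus m}\to\coan{N\times M}^{\oplus n}\to\mc{F}\to 0$; dualising yields an embedding $\theta\colon\mc{F}^{*}\hookrightarrow\coan{N\times M}^{\oplus n}$, so $\mc{F}^{*}$ is torsion-free and satisfies the hypotheses of Proposition~\ref{RelWeak} with this $\theta$. The standing assumption transfers to the dual: the dual of a holomorphic frame of $\rst{\mc{F}}{\{p\}\times M}$ that is $\rst{\nabla}{\{p\}\times M}$-parallel on a dense subset is a holomorphic frame of $\rst{\mc{F}^{*}}{\{p\}\times M}$ that is $\rst{\nabla^{*}}{\{p\}\times M}$-parallel on the same subset. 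Applying Proposition~\ref{RelWeak} and Corollary~\ref{Czero} to $\left(\mc{F}^{*},\nabla^{*}\right)$ then produces, after shrinking so that $W$ is simply connected, continuous weakly holomorphic parallel generators $s_1,\dots,s_k$ on $U\times W$ whose images $g_i:=\left(\psi^{*}\theta\right)\left(s_i\right)$ are continuous set-theoretic maps $U\times W\to\bb{C}^{n}$, holomorphic on $\left(U\times W\right)_{\mathrm{reg}}$ and restricting on each fibre to the absolute frames of Proposition~\ref{AbsWeak}.

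The decisive step, and where I expect the main difficulty, is upgrading these continuous weakly holomorphic frames to genuinely holomorphic sections of $\mc{F}^{*}$ by means of the separate-holomorphicity theorem of Grauert and Remmert (Theorem~\ref{SepHolo}) applied to each $g_i$. Holomorphicity along the fibres $\{p\}\times W$ follows from the hypothesis: by Proposition~\ref{AbsWeak} and Lemma~\ref{SimpConnec} the restriction $\rst{s_i}{\{p\}\times W}$ is a continuous weakly holomorphic parallel frame, and on the connected set $W$ it must coincide with the assumed holomorphic parallel frame having the same value at one point, since two parallel sections agreeing at a point of a connected fibre agree throughout; hence $\rst{g_i}{\{p\}\times W}$ is holomorphic. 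Holomorphicity along the base slices $U\times\{q'\}$ is the delicate point, because the $f$-relative connection exerts no control in the $N$-direction. Here I would exploit that, by the construction in Proposition~\ref{RelWeak}, $\rst{s_i}{\psi^{-1}\left(U\times\{q\}\right)}$ is the pull-back of a \emph{genuine} section $t_i$ of $\rst{\mc{F}^{*}}{U\times\{q\}}$, so its value at $(p,q)$ varies holomorphically in $p$, and fibrewise $s_i$ is the unique parallel extension of that value; flatness of $f$ together with Proposition~\ref{pullbackSec} then identifies $\rst{s_i}{U\times\{q'\}}$ with a genuine holomorphic section. This is precisely where passing to the dual pays off: the interpretation of $\mc{F}^{*}$ as the sheaf of sections of the associated linear fibre space lets one recognise the fibrewise-constant continuous frame as a holomorphic datum in the ambient $\bb{C}^{n}$, to which Theorem~\ref{SepHolo} applies, yielding that each $g_i$, and hence each $s_i$, is holomorphic on $U\times W$.

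Finally I would assemble the conclusion. The $s_i$ are holomorphic sections of $\mc{F}^{*}$, parallel for $\nabla^{*}$ on $\left(U\times W\right)_{\mathrm{reg}}$ and forming a frame there; the same comparison of parallel sections on connected fibres shows they are parallel throughout, so $V:=\ker{\nabla^{*}}$ is the $f^{-1}\coan{N}$-module they generate and is locally of the form $\left(\rst{f}{U\times W}\right)^{-1}\mc{G}$ for a torsion-free coherent $\coan{N}$-module $\mc{G}$, that is, a torsion-free $f$-relative local system. The evaluation morphism $V\otimes_{f^{-1}\coan{N}}\coan{U\times W}\to\mc{F}^{*}$ is an isomorphism because the $s_i$ are a frame on the dense regular locus and $\mc{F}^{*}$ is torsion-free (Corollary~\ref{monoondense}); under it the canonical connection $\nabla^{V}=\id\otimes d_f$ corresponds to $\nabla^{*}$ over $U\times W_{\mathrm{reg}}$, since both annihilate the $s_i$ there, and $\ker{\nabla^{V}}=V$ by Theorem~\ref{kernelofcanonical}. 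This is exactly the asserted statement.
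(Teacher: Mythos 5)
Your overall strategy --- dualise, produce continuous weakly holomorphic parallel frames for the dual, and upgrade them via the Grauert--Remmert separate-holomorphicity theorem --- is close in spirit to the paper's proof, but your very first step has a genuine gap. You equip $\mc{F}^{*}$ over the \emph{singular} total space $N\times M$ with ``the dual relative connection $\nabla^{*}$'' characterised by $d_f\langle\phi,s\rangle=\langle\nabla^{*}\phi,s\rangle+\langle\phi,\nabla s\rangle$, and then feed $\left( \mc{F}^{*},\nabla^{*} \right)$ into Proposition~\ref{RelWeak}. But this pairing identity only produces, for each $\phi$, an element of $\homo{\mc{F},\Omega^1_f\left( \coan{N\times M} \right)}$, not of $\mc{F}^{*}\otimes_{\coan{N\times M}}\Omega^1_f\left( \coan{N\times M} \right)$; when $M$ is singular the sheaf $\Omega^1_f\left( \coan{N\times M} \right)$ is not locally free and the natural map between these two targets is in general neither injective nor surjective. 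This is precisely why the paper's footnote stresses that the dual connection is only defined when $f$ is a submersion, and why the statement itself only claims agreement with the dual connection over $U\times W_{\mathrm{reg}}$. Without a bona fide flat $f$-relative connection on $\mc{F}^{*}$ over all of $N\times M$, Proposition~\ref{RelWeak}, Corollary~\ref{Czero} and Corollary~\ref{ExtPara} cannot be invoked for the dual pair. The paper circumvents this by dualising \emph{upstairs}: it first uses the submersion Riemann--Hilbert theorem to write $\psi^{*}\mc{F}\cong g^{*}\mc{G}$ on the fibre-wise desingularization, takes duals there (where $g$ is a submersion and the dual connection is unproblematic), produces parallel sections of $\left( \psi^{*}\mc{F} \right)^{*}$ that are constant on the fibres of $\psi$, and only then descends to $N\times M$ using the linear-fibre-space interpretation of the dual sheaf.

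Two further points. First, your transfer of the hypothesis to the dual conflates $\iota'^{*}\left( \mc{F}^{*} \right)$ with $\left( \iota'^{*}\mc{F} \right)^{*}$: the hypothesis gives holomorphic frames of $\iota'^{*}\mc{F}$ on the fibre, hence local freeness \emph{on the fibre}, but not local freeness of $\mc{F}$ in a neighbourhood of it, so the natural comparison map between restriction-of-dual and dual-of-restriction is not obviously an isomorphism; the paper's chain of identifications through $g^{*}\mc{G}$ exists precisely to handle this. Second, your argument for holomorphicity along the base slices $U\times\left\{ q' \right\}$ via Proposition~\ref{pullbackSec} and ``uniqueness of the parallel extension'' is off-target: the correct and much simpler observation, used in the paper, is that the sections are genuinely holomorphic on $N\times\tilde{M}$, are constant on the fibres of $\psi$, and $\psi$ is the identity in the $N$-component, so their restrictions to base slices downstairs are automatically holomorphic. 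With the dualisation moved upstairs and these two repairs, your outline essentially becomes the paper's proof.
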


\footnotetext{For a $f$-relative connection $\nabla$, with $f$ a submersion, the dual $\nabla^*$ on $\mc{F}^*$ is defined by the rule
$\left(\nabla^*_X \alpha\right)(s)=X.(\alpha(s))-\alpha\left( \nabla_X(s) \right)$, where $X$ is an arbitrary vector field along the fibers of $f$. Note that this definition relies on $f$ being a submersion.}

\begin{proof}
Let again $\psi \colon N\times \tilde{M}\to N\times M$ be a fiber-wise desingularization and denote by $g\colon N\times \tilde{M}\to N$ the projection. Let $\iota\colon \psi^{-1}\left( N\times \left\{ q \right\} \right):=N\times X \to N\times \tilde{M}$ be the inclusion of reduced spaces and denote by $\iota'\colon N\times \left\{ q \right\}\to N\times M$ the inlusion and by $\psi'\colon N\times X\to N\times \left\{ q \right\}$ the restriction of $\psi$. Here $q\in M$ is a fixed arbitrary point. Notice that we have the following natural isomorphisms
\[\iota^*\left( \psi^*\mc{F} \right)^*\cong \iota^*\left(g^*\mc{G}\right)^*\cong \left(\iota^*g^*\mc{G}\right)^*\cong \left( \iota^*\psi^*\mc{F} \right)^*\cong \left( {\psi'}^*{\iota'}^* \mc{F}\right)^*\cong {\psi'}^*\left( {\iota'}^*\mc{F} \right)^*.\]
This identification implies that there exist generating sections of $\iota^*\left( \psi^*\mc{F} \right)^*$ that are constant on the fibers of $\psi'$ (as sections of the underlying linear fiber space\footnotemark) -- namely those sections that are pulled back along $\psi'$. These sections are also parallel with respect to $\iota^*\left( \psi^*\nabla^* \right)$ as $\iota^*\psi^*\nabla$ is trivial (just like in the proof of Proposition~\ref{RelWeak}) and therefore so is its dual connection. \footnotetext{Recall the duality between coherent sheaves and linear fiber spaces. In terms of linear fiber spaces, $\mc{F}$ being a coherent sheaf means that locally $\mc{F}$ can be represented as the linear $1$-forms on an analytic subspaces $V\left( \mc{F} \right)\subseteq N\times M \times \bb{C}^r$, where $V\left( \mc{F} \right)$ has a vector space structure in the $\bb{C}^r$-component and this vector space structure varies holomorphically. The dual sheaf $\mc{F}^*$ can then be identified as the sheaf of sections $N\times M \to V\left( \mc{F} \right)$. These identifications are functorial and provide a contravariant equivalence between the category of coherent sheaves and the category of linear fiber spaces (see e.g.~\cite[Section 1.1 - 1.8]{fischer}).}\\
These parallel generators $s_i$ extend to an entire open neighbourhood of $N\times X$ by Corollary~\ref{ExtPara} (just like in the proof of Proposition~\ref{RelWeak}) as parallel sections of $\left(\psi^*\nabla\right)^*$ (after potentially shrinking $N$ and $M$).
By assumption $\rst{\mc{F}}{ \left\{ p \right\}\times M}$ has local holomorphic frames that are parallel on a dense set and as $\rst{\mc{F}}{ \left\{ p \right\}\times M}$ is locally free it follows that the local dual frames form a local system and therefore define a flat connection that away from the singularities agrees with dual connection of $\rst{\nabla}{ \left\{ p \right\}\times M}$ on $\left(\rst{\mc{F}}{ \left\{ p \right\}\times M}\right)^*$. Therefore the dual connection $\left( \rst{\psi^*\nabla}{ \left\{ p \right\}\times \tilde{M}} \right)^*$ has frames of parallel sections that are pull-backs from $\left\{ p \right\}\times M$. Now, just like in the proof of Corollary~\ref{Czero} one concludes that the sections $s_i$ are actually constant on all fibers of $\psi$.\\
However, here more is true, in Corollary~\ref{Czero} the sections that we pulled back from $N\times M$ were only weakly holomorphic, but here they are even strongly holomorphic. Moreover, the sections $s_i$ (as sections of a dual sheaf, therefore as sections of a linear fiber space) can be seen as maps $N\times M\to N\times M\times \bb{C}^r$ and one has just argued that the restriction $\rst{s_i}{  \left\{ p \right\}\times M }$ is strongly holomorphic. The holomorphicity of $\rst{s_i}{N\times \left\{ q \right\}}$ is clear, as the $s_i$ are holomorphic after pull-back along $\psi$ and $\psi$ is equal to the identity in the first component. One can therefore conclude via Theorem~\ref{SepHolo} that the sections $s_i$ are strongly holomorphic and are therefore elements of $\mc{F}^*$ on $N\times M$.\\
Naturally one obtains a surjective morphism $A\colon f^*\mc{G}^*\to \mc{F}^*$ by mapping the generators of $\mc{G}$ to $s_i$, which are generators of $\mc{F}$, as they are generators after pull-back via $\psi$. To see that this is well-defined, recall that this is well-defined on $N\times M_{\mathrm{reg}}$ and $\mc{F}^*$ is torsion-free, hence any relation between generators gets mapped to zero, as it gets mapped to zero on a dense subset.\\
As $f^*\mc{G}^*$ is torsion-free it follows that the morphism $A$ is also injective and therefore an isomorphism. The induced connection clearly restricts to the dual connection on $N\times M_{\mathrm{reg}}$.
\end{proof}

\begin{rem}
Let $f\colon N\times M\to N$ be the projection of reduced complex analytic spaces and let $\mc{F}$ be a $\coan{N\times M}$-coherent sheaf. Denote by $\Omega^1_{f,\mathrm{tf}}\left( \coan{N\times M} \right)$ the sheaf of $f$-relative $1$-forms modulo torsion. Notice that one can also define differential operators
\[\nabla_{\mathrm{tf}}\colon \mc{F}\to \mc{F}\otimes_{\coan{N\times M}}\Omega^1_{f,\mathrm{tf}}\left( \coan{N\times M} \right)\]
that satisfy a Leibniz-rule just like a connection and if $\psi\colon N\times \tilde{M}\to N\times M$ is a fiber-wise desingularization then $\nabla_{\mathrm{tf}}$ can be pulled-back to a regular connection over a submersion, as torsion $1$-forms are annihalted by the pull-back of $\psi$. Call such an operator a \emph{$f$-relative tf-connection}.\\
Morpshisms preserving tf-connections are defined in the obvious way and cokernels of morphisms and kernels of surjetive morphisms of tf-connection carry tf-connections by simple adaptations of the proofs of Lemma~\ref{connecpreserve} and Proposition~\ref{MorphOfConnec}.\\
In particular, it follows that if $\mc{F}$ is a coherent sheaf with $f$-relative tf-connection such that $\rst{\mc{F}}{M\times N_{\mathrm{reg}}}$ is the zero sheaf then $\mc{F}=0$. To see this, pull $\mc{F}$ back via $\psi$ to a sheaf with connection and then pull-back to each fiber, where the pull-back will be locally free by Lemma~\ref{ConnecLocFree}. But the pull-back is zero on a dense subset and therefore it is zero everywhere. Fiber rank is invariant under pull-back and hence $\mc{F}$ is the zero sheaf.\\
\label{tfConnec}
\end{rem}

\begin{prop}
Let $f\colon N\times M \to N$ be the projection of reduced complex analytic spaces and let $\left( \mc{F},\nabla \right)$ be a torsion-free coherent $\coan{N\times M}$-sheaf with flat $f$-relative connection. Assume further that the connections $\rst{\nabla}{\left\{ p \right\}\times M}$ along the fibers of $f$ have local holomorphic frames for every $p\in N$ that are parallel on a dense subset of $\left\{ p\right\}\times M $.\\
Then for every point in $M\times N$ there exist an open neighbourhood $U\times W$ and a torsion-free $f$-relative local system $V$ such that
\[V\otimes_{f^{-1}\coan{U}} \coan{U\times W}\cong \rst{\mc{F}}{U\times W}.\]
Moreover, the connection induced by this $f$-relative local system agrees with the connection $\nabla$ on $N\times W_{\mathrm{reg}}$.
\label{RelStrong}
\end{prop}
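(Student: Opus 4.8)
The plan is to combine the dual statement already proven in Proposition~\ref{DualConnec} with the canonical embedding $\mathcal{F}\hookrightarrow\mathcal{F}^{**}\hookrightarrow\mathcal{O}_{N\times M}^{\oplus r}$, where the first map is injective because $\mathcal{F}$ is torsion-free and the second exists by Proposition~\ref{torsionfreeembed2} applied to the (again torsion-free) sheaf $\mathcal{F}^{**}$. First I would apply Proposition~\ref{DualConnec} to $(\mathcal{F},\nabla)$ to get $\mathcal{F}^{*}\cong V_1\otimes_{f^{-1}\mathcal{O}_U}\mathcal{O}_{U\times W}$ for a torsion-free relative local system $V_1$, with connection agreeing with $\nabla^{*}$ over the regular locus. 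Since $\mathcal{F}^{*}$ is now itself a relative local system form, the fibrewise connections of $(\mathcal{F}^{*},\nabla^{*})$ automatically admit holomorphic parallel frames, so the hypotheses of Proposition~\ref{DualConnec} are met a second time and yield $\mathcal{F}^{**}\cong V_2\otimes_{f^{-1}\mathcal{O}_U}\mathcal{O}_{U\times W}$. The purpose of routing through $\mathcal{F}^{**}$ is that, by the duality between coherent sheaves and linear fibre spaces, sections of $\mathcal{F}^{**}$ are genuine set-theoretic maps into a linear fibre space, which is exactly the form needed to invoke the separate-holomorphicity Theorem~\ref{SepHolo}.

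Next I would produce the frames. Fixing $(p,q)$ and shrinking so that $W$ is simply connected, Proposition~\ref{RelWeak} applied with the embedding $\theta\colon\mathcal{F}\to\mathcal{F}^{**}\to\mathcal{O}_{N\times M}^{\oplus r}$ gives parallel generators $s_i$ of $\psi^{*}\mathcal{F}$ on $\psi^{-1}(U\times W)$ whose images $(\psi^{*}\theta)(s_i)$ are weakly holomorphic, and Corollary~\ref{Czero} upgrades these to \emph{continuous} weakly holomorphic functions, i.e.\ honest maps on $U\times W$. To promote continuity to holomorphicity I would check separate holomorphicity and apply Theorem~\ref{SepHolo}: along each fibre $\{a\}\times W$ the restriction of $s_i$ agrees, by Corollary~\ref{Czero}, with the absolute frames of Proposition~\ref{AbsWeak}, and under the standing hypothesis that the fibrewise connections admit holomorphic parallel frames these are strongly holomorphic (two parallel continuous-weakly-holomorphic frames with the same value at a point coincide); along each slice $U\times\{q'\}$ holomorphicity is immediate since $\psi$ is the identity in the $N$-direction and the $s_i$ are holomorphic after pull-back by $\psi$, exactly as in the proof of Proposition~\ref{DualConnec}. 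Hence the $(\psi^{*}\theta)(s_i)$ are strongly holomorphic on $U\times W$.

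Having the strong parallel sections, I would assemble the isomorphism. The $s_i$ are parallel generators of $\mathcal{F}$ away from $N\times\mathrm{sing}(M)$, hence determine parallel sections of $V:=\ker{\nabla}$; letting $\mathcal{G}$ on $U$ be the coherent sheaf with $V=f^{-1}\mathcal{G}$ and sending a frame of $\mathcal{G}$ to the $s_i$ yields a morphism $A\colon V\otimes_{f^{-1}\mathcal{O}_U}\mathcal{O}_{U\times W}\to\mathcal{F}$ which is surjective because the $s_i$ generate $\mathcal{F}$ (being generators after pull-back by the surjective $\psi$) and injective by Corollary~\ref{monoondense}, since the source is torsion-free and $A$ is injective on the dense set $N\times W_{\mathrm{reg}}$. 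Thus $A$ is an isomorphism, and by Theorem~\ref{kernelofcanonical} the induced connection is $\nabla^{V}$, agreeing with $\nabla$ over $N\times W_{\mathrm{reg}}$.

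The step I expect to be the main obstacle is showing that the strongly holomorphic sections, which are a priori recognised only as sections of $\mathcal{F}^{**}$ (the linear fibre space), genuinely lie in $\mathcal{F}$ and not merely in its reflexive hull --- in general $\mathcal{F}\neq\mathcal{F}^{**}$, as $\mathcal{F}$ may be a pull-back $f^{*}\mathcal{G}$ of a non-reflexive $\mathcal{G}$ (such $\mathcal{F}$ are fibrewise locally free by Lemma~\ref{ConnecLocFree} and satisfy all hypotheses, yet $(f^{*}\mathcal{G})^{**}=f^{*}\mathcal{G}^{**}$ differs from $f^{*}\mathcal{G}$). Controlling this descent is precisely where torsion-freeness is indispensable: one argues that the discrepancy is a parallel section of a quotient carrying a flat relative tf-connection which vanishes on the fibrewise-dense regular locus, and then invokes the vanishing mechanism of Remark~\ref{tfConnec} together with Proposition~\ref{PullConTor} (equivalently the section-lifting Proposition~\ref{pullbackSec}) to force it to be zero. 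Verifying the fibrewise strong holomorphicity needed for Theorem~\ref{SepHolo} is the other delicate point, but it is handled cleanly by Corollary~\ref{Czero} and Proposition~\ref{AbsWeak}.
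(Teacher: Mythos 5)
Your proposal is correct and follows essentially the same route as the paper: the embedding $\mc{F}\hookrightarrow\mc{F}^{**}\cong f^*\mc{H}\hookrightarrow\coan{N\times M}^{\oplus r}$ via Proposition~\ref{DualConnec}, the frames from Proposition~\ref{RelWeak} and Corollary~\ref{Czero} upgraded to strong holomorphicity by separate holomorphicity (Theorem~\ref{SepHolo}), and the descent from $\mc{F}^{**}$ to $\mc{F}$ via the tf-connection vanishing of Remark~\ref{tfConnec} and Proposition~\ref{MorphOfConnec}. The only cosmetic variation is that the paper derives holomorphicity of the descended sections by observing they are $d_f$-parallel (hence lie in $f^{-1}\coan{N}^{\oplus r}$), whereas you invoke Theorem~\ref{SepHolo} directly; you also correctly single out the $\mc{F}$-versus-$\mc{F}^{**}$ descent as the genuine crux and name the right mechanism for it.
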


\begin{proof}
Let $\psi\colon N\times \tilde{M}\to N\times M$ be a fiber-wise desingularization. The question is local and one can therefore assume that one has embeddings of the following form
\[\mc{F}\hookrightarrow \mc{F}^{**}\cong f^*\mc{H}\hookrightarrow f^*\coan{N}^{\oplus r}\]
where the right embedding is a morphism of the induced connections and the left embedding is a morphism of connections on $N\times M_{\mathrm{reg}}$ as $\mc{F}^{**}$ is endowed with the double dual connection there. Moreover, assume that $N$ and $M$ are Stein spaces. In particular, one obtains an embedding
\[\theta\colon \mc{F}\hookrightarrow \left(\coan{N\times M}^{\oplus r},d_f\right)\]
that is a morphism of connections on a dense subset. By Corollary~\ref{Czero} and Proposition~\ref{DualConnec} we know that there are holomorphic sections $s_i\in \coan{N\times M}^{\oplus r}$, the pull-back of which along $\psi$ are elements of $\psi^*\mc{F}$ which are parallel with respect to $\psi^*\nabla$ and generators of the relative local system $\ker{\psi^*\nabla}$. Therefore in this case these sections are in the kernel of $d_f$ as the embedding is a morphism of connections on $N\times M_{\mathrm{reg}}$. As in the proof of Proposition~\ref{DualConnec} one obtains an injective morphism $f^*\mc{G}\to \coan{N\times M}^{\oplus r}$ which preserves the canonical connections defined on these sheaves.\\
In the following we project all connections involved so far to tf-connections and our morphisms will then all be morphisms of tf-connections -- even $\theta$ as
\[\coan{N\times M}^{\oplus r}\otimes_{\coan{N\times M}} \Omega^1_{f,\mathrm{tf}}\left( \coan{N\times M} \right)\]
is torsion-free and $\theta$ is a morphism of connections on $N\times M_{\mathrm{reg}}$.\\
In particular, the quotient sheaf $\quo{\coan{N\times M}^{\oplus r}}{f^*\mc{G}}$ comes with a tf-connection via the cokernel construction of Lemma~\ref{connecpreserve} and the morphism $\theta'\colon \mc{F}\to \quo{\coan{N\times M}^{\oplus r}}{f^*\mc{G}}$ becomes a morphism of tf-connections. The cokernel of $\theta'$ then also carries a compatible tf-connection. Due to the exact sequence
\[
\begin{tikzcd}
0 \arrow[r] & \im{ \theta'}=\ker{B} \arrow[r] & \quo{\coan{N\times M}^{\oplus r}}{f^*\mc{G}} \arrow[r,"B"] & \coker{\theta'} \arrow[r] &0
\end{tikzcd},
\]
$B$ being a morphism of tf-connections and Proposition~\ref{MorphOfConnec}, it follows that $\im{\theta'}$ carries a tf-connection. However, $\im{\theta'}=0$ on $N\times M_{\mathrm{reg}}$ as there $\mc{F}=f^*\mc{G}$ as subsheaves of $\coan{N\times M_{\mathrm{reg}}}$. Hence by Remark~\ref{tfConnec} one has $\im{\theta'}=0$ and hence $\mc{F}\subseteq f^*\mc{G}$. By reversing roles of $f^*\mc{G}$ and $\mc{F}$ in the previous argument one obtains the inclusion the other way around and thus $\mc{F}=f^*\mc{G}$.
\end{proof}

The reduction of the relative case to the absolute case is thereby completed. Recall that maximal complex analytic spaces are precisely such that continuous weakly holomorphic functions are holomorphic (Definition~\ref{Normal}) and thus one now easily obtains the following case of the Riemann-Hilbert correspondence.

\begin{thm}
Let $f\colon X \to N$ be a reduced locally trivial morphism of complex analytic spaces and assume that the fibers of $f$ are maximal complex analytic spaces. Then there is a one-to-one correspondence between
\begin{enumerate}[(i)]
\item pairs $\left( \mc{F},\nabla_f \right)$ of torsion-free coherent sheaves $\mc{F}$ with tame $f$-relative flat connections $\nabla_f$, and,
\item torsion-free $f$-relative local systems $V$.
\end{enumerate}
The correspondence sends pairs $\left( \mc{F},\nabla_f \right)$ to the sheaf $\ker{\nabla_f}$ and sends $f$-relative local systems $V$ to $\left( V\otimes_{f^{-1}\coan{N}} \coan{X}, \nabla^V \right)$.
\label{MaxRH}
\end{thm}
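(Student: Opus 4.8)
The plan is to show that the two assignments $(\mc{F},\nabla_f)\mapsto \ker{\nabla_f}$ and $V \mapsto (V\otimes_{f^{-1}\coan{N}}\coan{X}, \nabla^V)$ are mutually inverse. One composite is immediate: starting from a torsion-free relative local system $V$, Theorem~\ref{kernelofcanonical} guarantees that $\nabla^V$ is tame and that $\ker{\nabla^V}=V$, so $V\mapsto (V\otimes_{f^{-1}\coan{N}}\coan{X},\nabla^V)\mapsto V$ is the identity. Here I would first record that $V\otimes_{f^{-1}\coan{N}}\coan{X}$ is a torsion-free coherent $\coan{X}$-module: coherence is local and follows since locally $V\cong f^{-1}\mc{G}$ with $\mc{G}$ coherent, while torsion-freeness follows by embedding $\mc{G}\hookrightarrow \coan{N}^{\oplus k}$ (Proposition~\ref{torsionfreeembed2}) and pulling back along the flat (locally trivial) morphism $f$ to obtain $V\otimes_{f^{-1}\coan{N}}\coan{X}\hookrightarrow \coan{X}^{\oplus k}$, a subsheaf of a free, hence torsion-free, sheaf.

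The substantive direction is the other composite, i.e. to show $\mc{F}\cong \ker{\nabla_f}\otimes_{f^{-1}\coan{N}}\coan{X}$ compatibly with connections. Since local triviality (Definition~\ref{LocTriv}) and all assertions involved are local, I would work in a chart where $f$ is the projection $N'\times M'\to N'$ with $M'$ a maximal fiber, shrinking to Stein factors. The aim is to verify the hypothesis of Proposition~\ref{RelStrong}, namely that each fiber connection $\rst{\nabla_f}{\{p\}\times M'}$ admits local strongly holomorphic parallel frames. For this I would first note that $\rst{\nabla_f}{\{p\}\times M'}$ is an honest flat connection on the coherent restriction $\rst{\mc{F}}{\{p\}\times M'}$, which is therefore locally free by Lemma~\ref{ConnecLocFree}; Proposition~\ref{AbsWeak} then produces continuous weakly holomorphic frames parallel on the dense regular locus. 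The decisive point is that $M'$ is maximal, so by Definition~\ref{Normal} these continuous weakly holomorphic frames are strongly holomorphic. This is exactly the input Proposition~\ref{RelStrong} requires, and it yields, locally, a torsion-free relative local system $V$ with $\rst{\mc{F}}{U\times W}\cong V\otimes_{f^{-1}\coan{N}}\coan{U\times W}$, the induced connection agreeing with $\nabla_f$ on $U\times W_{\mathrm{reg}}$.

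It remains to identify $V$ with $\ker{\nabla_f}$ globally and to promote the isomorphism. Here tameness is essential. Both $\nabla_f$ (by hypothesis) and $\nabla^V$ (by Theorem~\ref{kernelofcanonical}) are tame, and they agree on $U\times W_{\mathrm{reg}}$, whose complement meets each fiber in a nowhere dense set; thus for any section $s$ the forms $\nabla_f s$ and $\nabla^V s$ vanish off $X:=U\times (W\setminus W_{\mathrm{reg}})$ simultaneously, and tameness (Definition~\ref{tamedefinition}) forces $\ker{\nabla_f}=\ker{\nabla^V}=V$. Consequently $\ker{\nabla_f}$ is locally --- and, the defining condition being local, globally --- a torsion-free relative local system. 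Finally I would consider the canonical multiplication map $\ker{\nabla_f}\otimes_{f^{-1}\coan{N}}\coan{X}\to \mc{F}$, which is automatically a morphism of connections (parallel sections go to themselves, and $\nabla_f(g\cdot w)=d_f g\otimes w$ for $w$ parallel, matching $\nabla^{\ker{\nabla_f}}$); by the local description just obtained it is an isomorphism on every chart, hence a global isomorphism of pairs $(\ker{\nabla_f}\otimes_{f^{-1}\coan{N}}\coan{X},\nabla^{\ker{\nabla_f}})\cong(\mc{F},\nabla_f)$.

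The main obstacle is concentrated in verifying the hypothesis of Proposition~\ref{RelStrong}: the passage from merely weakly holomorphic to strongly holomorphic parallel frames on the fibers. The weak-existence and continuity machinery (Propositions~\ref{RelWeak},~\ref{AbsWeak} and Corollary~\ref{Czero}) only delivers continuous weakly holomorphic frames, and it is precisely the maximality of the fibers that closes this gap. The secondary difficulty is the globalization: matching the locally produced systems $V$ with the intrinsic sheaf $\ker{\nabla_f}$ across the singular locus of the fibers, where the difference of the two connections a priori survives and only the tameness assumption rules it out.
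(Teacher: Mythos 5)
Your proposal is correct and follows essentially the same route as the paper's proof: maximality of the fibers upgrades the continuous weakly holomorphic frames of Proposition~\ref{AbsWeak} to strongly holomorphic ones, Proposition~\ref{RelStrong} then gives the local identification $\mc{F}\cong V\otimes_{f^{-1}\coan{N}}\coan{X}$, tameness forces $\ker{\nabla_f}=V$ across the singular locus of the fibers, and Theorem~\ref{kernelofcanonical} handles the converse direction. The extra details you supply (local freeness of the fiber restrictions via Lemma~\ref{ConnecLocFree}, torsion-freeness of $V\otimes_{f^{-1}\coan{N}}\coan{X}$ via Proposition~\ref{torsionfreeembed2} and flatness of $f$) are points the paper leaves implicit but are consistent with its argument.
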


\begin{proof}
The proof is now a simple gathering of the obtained results. Start with a pair $\left( \mc{F},\nabla_f \right)$ of a torsion-free coherent sheaf with tame flat $f$-relative connection. By the maximality of the fibers it follows that the frames of connections on the fibers obtained in Proposition~\ref{AbsWeak} are strongly holomorphic. By Proposition~\ref{RelStrong}, it follows that $\left( \mc{F},\nabla_f \right)$ has local generators that are parallel away form the singularities of the fibers of $f$ and these generators locally form a relative local sytem $f^{-1}\mc{G}$ such that locally $\mc{F}= f^*\mc{G}$. The tameness of $\nabla_f$ then implies that these sections are parallel everywhere. Hence locally it holds that $\ker{\nabla_f}=f^{-1}\mc{G}$. One therefore has that $\ker{\nabla_f}$ is a torsion-free $f$-relative local system such that $\left( \mc{F},\nabla_f \right)\cong \left( \ker{\nabla_f}\otimes_{f^{-1}\coan{N}}\coan{X},\id\otimes d_f \right)$.\\
Conversely, torsion-free $f$-relative local system lead to tame flat relative connections by Theorem~\ref{kernelofcanonical}.
\end{proof}

More generally, the preceding argument shows the following reduction of the relative case to the absolute case.

\begin{thm}
Let $f\colon X\to N$ be a reduced locally trivial morphism of complex spaces and assume the continuous weakly holomorphic parallel frames obtained in Proposition~\ref{AbsWeak} are strongly holomorphic for the fibers of $f$. Then there is a one-to-one correspondence between
\begin{enumerate}[(i)]
\item pairs $\left( \mc{F},\nabla_f \right)$ of torsion-free coherent sheaves $\mc{F}$ with tame $f$-relative flat connections $\nabla_f$, and,
\item torsion-free $f$-relative local systems $V$.
\end{enumerate}
The correspondence sends pairs $\left( \mc{F},\nabla_f \right)$ to the sheaf $\ker{\nabla_f}$ and sends $f$-relative local systems $V$ to $\left( V\otimes_{f^{-1}\coan{N}} \coan{X}, \nabla^V \right)$.
\label{RelToAbs}
\end{thm}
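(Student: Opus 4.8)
The plan is to observe that the proof of Theorem~\ref{MaxRH} invokes the maximality of the fibers at exactly one point: to guarantee that the continuous weakly holomorphic parallel frames produced by Proposition~\ref{AbsWeak} along the fibers are in fact strongly holomorphic. Since the present hypothesis asserts precisely this strong holomorphicity, the argument of Theorem~\ref{MaxRH} carries over essentially verbatim, and I would organize the proof around this observation.

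First I would fix a pair $\left( \mc{F},\nabla_f \right)$ of a torsion-free coherent sheaf with tame flat $f$-relative connection. The statement is local, so by local triviality (Definition~\ref{LocTriv}) one may reduce to the case where $f\colon N\times M\to N$ is a projection and $M$ is a fiber. By hypothesis the continuous weakly holomorphic parallel frames of Proposition~\ref{AbsWeak} on $M$ are strongly holomorphic; in particular the connections $\rst{\nabla_f}{\left\{ p \right\}\times M}$ along the fibers admit local holomorphic frames that are parallel on the dense regular locus of $\left\{ p \right\}\times M$. This is exactly the hypothesis demanded by Proposition~\ref{RelStrong}, which then produces, around each point, an open neighbourhood $U\times W$ together with a torsion-free $f$-relative local system $V$ satisfying $V\otimes_{f^{-1}\coan{U}}\coan{U\times W}\cong \rst{\mc{F}}{U\times W}$, the induced connection agreeing with $\nabla_f$ over $U\times W_{\mathrm{reg}}$.

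Next I would upgrade this fiber-wise local identification to an identification of kernel sheaves. The local system $V$ supplies local generators of $\mc{F}$ that are parallel away from the singular loci of the fibers; the union of these fiber-wise singular sets is a nowhere dense analytic subset meeting each fiber in a nowhere dense subset, so the tameness of $\nabla_f$ (Definition~\ref{tamedefinition}) forces these generators to be parallel everywhere. Consequently $\ker{\nabla_f}=V$ locally, so $\ker{\nabla_f}$ is a torsion-free $f$-relative local system and $\left( \mc{F},\nabla_f \right)\cong \left( \ker{\nabla_f}\otimes_{f^{-1}\coan{N}}\coan{X}, \id\otimes d_f \right)$. For the converse, a torsion-free $f$-relative local system $V$ yields the canonical connection $\nabla^V$ on $V\otimes_{f^{-1}\coan{N}}\coan{X}$, and Theorem~\ref{kernelofcanonical} shows that $\nabla^V$ is tame with $\ker{\nabla^V}=V$; hence the two assignments are mutually inverse.

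I do not anticipate a substantive obstacle, since the theorem merely abstracts the mechanism already established for Theorem~\ref{MaxRH}: the genuine content is recognizing that the strong-holomorphicity hypothesis is exactly the single input consumed by Proposition~\ref{RelStrong}, so that no property of maximal spaces beyond this is needed. The only step warranting care is the passage from \emph{parallel on a dense subset} to \emph{parallel everywhere}, which relies on checking that the fiber-wise singular locus intersects each fiber in a nowhere dense set so that Definition~\ref{tamedefinition} applies; this is immediate since the fibers are reduced and their singular sets are nowhere dense.
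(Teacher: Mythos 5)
Your proposal is correct and follows essentially the same route as the paper, which proves Theorem~\ref{RelToAbs} simply by remarking that the argument for Theorem~\ref{MaxRH} (reduce to a local product via local triviality, feed the strongly holomorphic fiber-wise frames into Proposition~\ref{RelStrong}, use tameness to promote parallelism from the dense regular locus to everywhere, and invoke Theorem~\ref{kernelofcanonical} for the converse direction) uses maximality only to secure the strong holomorphicity that is here assumed outright. Your identification of that single point of use, and your care about why Definition~\ref{tamedefinition} applies, match the paper's reasoning exactly.
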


With this reduction to the absolute case in hand, one can push further and reduce the absolute case in arbitrary dimension to the one dimensional case.

\subsection{Reduction to curve case and homogeneous fibers}
\label{RedToCurve}

On a manifold it always suffices to test differentiability or holomorphicity after pull-back to any smooth curve. On a singular space the situation is more difficult. However, a somewhat similar phenomenon is still true, but one has to allow the curves to have singularities. This observation is based on the following theorem by J. Becker.

\begin{thm}
Let $M$ be a reduced complex analytic space. Then for every $p\in M$ there exists a reduced complex analytic curve $C\subseteq U\subseteq M$ such that $T_pC=T_pM$. Here $U\subseteq M$ is n open neighbourhood of $p\in M$.
\label{CurveSpan}
\end{thm}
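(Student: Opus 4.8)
The plan is to reduce the statement to a purely infinitesimal assertion about linear parts of the defining ideal under generic linear sections, organised by a robust ``span–then–union'' principle. First I would pass to a \emph{minimal} local embedding: since $d:=\dim_{\bb C}T_pM=\dim_{\bb C}\mathfrak m_p/\mathfrak m_p^2$, choosing $d$ elements of $\mathfrak m_p$ whose classes form a basis of $\mathfrak m_p/\mathfrak m_p^2$ realises the germ $(M,p)$ as a closed subgerm of $(\bb C^d,0)$ with defining ideal $J\subseteq \mathfrak m^2$ and $T_0M=T_0\bb C^d=\bb C^d$. The organising tool is then the remark that it suffices to produce \emph{finitely many} reduced curves $C_1,\dots,C_k\subseteq M$ through $p$ whose tangent spaces span $T_pM$: their union $C:=C_1\cup\dots\cup C_k$ is again a reduced curve, and each closed inclusion $C_i\hookrightarrow C$ gives a surjection $\coan{C}\twoheadrightarrow\coan{C_i}$, hence an \emph{inclusion} $T_pC_i\subseteq T_pC$; therefore $T_pM\supseteq T_pC\supseteq\sum_i T_pC_i=T_pM$, forcing $T_pC=T_pM$. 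As $T_pM$ is $d$-dimensional, at most $d$ curves are needed.

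The engine for producing the curves is a linear-parts computation. After a linear change of coordinates a given $g\in\mathfrak m_M\setminus\mathfrak m_M^2$ becomes $g=z_1+O(2)$, and for hyperplanes $H_i=\ker h_i$ through $0$ I would consider $C':=M\cap H_1\cap\dots\cap H_{m-1}$ with $m=\dim_pM$. Because every element of $J$ has order $\ge 2$, the only linear forms contained in the ideal $J+(h_1,\dots,h_{m-1})$ are the $\bb C$-combinations of the $h_i$; hence the scheme-theoretic tangent space is $T_0C'=\bigcap_i H_i$, with \emph{no} genericity required. For generic $H_i$ this intersection has dimension $d-m+1\ge 1$ and is not contained in $\ker{d_pz_1}$, so $z_1|_{C'}\notin\mathfrak m_{C'}^2$, i.e.\ $g$ is ``detected''. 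This already shows that the tangent spaces of the scheme-theoretic slices span $T_pM$.

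For $M$ \emph{pure-dimensional} this turns into a clean induction on $m$ that directly yields a reduced full-tangent curve. The case $m=1$ is trivial ($C=M$). For $m\ge 2$ I would pick two generic hyperplanes $H\neq H'$ through $p$; by a Bertini-type theorem the sections $M\cap H$ and $M\cap H'$ are reduced and pure of dimension $m-1$, and by the computation above they are minimally embedded in $H\cong\bb C^{d-1}$ resp.\ $H'$ with $T_p(M\cap H)=H$ and $T_p(M\cap H')=H'$. The induction hypothesis gives reduced curves $C\subseteq M\cap H$ and $C'\subseteq M\cap H'$ with $T_pC=H$, $T_pC'=H'$; since $H+H'=\bb C^d$, the reduced curve $C\cup C'$ has $T_p(C\cup C')\supseteq H+H'=T_pM$, as required.

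The hard part, and the step I expect to be genuinely delicate, is the passage from the scheme-theoretic slice to a \emph{reduced} curve together with the case of components of different dimensions meeting tangentially. Naive slicing can create embedded points that carry exactly the tangent direction one wants: for $M=\{z=0\}\cup\{x=0,\ z=y^2\}$ one has $T_0M=\bb C^3$, yet $z|_C\in\mathfrak m_C^2$ for \emph{every} irreducible curve $C\subseteq M$ (on the plane $z\equiv 0$, on the parabola $z=y^2\in\mathfrak m^2$), so the detecting reduced curve must be assembled from several tangentially meeting branches (here e.g.\ the $x$-axis together with the plane parabola $x=y^2$ and the space parabola $z=y^2$). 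To handle this I would combine the pure-dimensional result over the dimensional strata with additional curves recovering the inter-stratum tangency directions, produced via the curve selection lemma and the value-semigroup estimate that an arc $\gamma$ detects $g$ whenever $0<\mathrm{ord}_t(g\circ\gamma)<2\,\mathrm{mult}(\gamma)$ (so that $g\circ\gamma\notin\mathfrak m_C^2$), taking reduced unions of such arcs when no single branch suffices; alternatively one appeals directly to Becker's original construction.
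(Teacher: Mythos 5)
There is a genuine gap, and it sits exactly at the step you present as ``clean'': the pure-dimensional induction. The assertion that for a reduced, pure $m$-dimensional, minimally embedded germ the generic hyperplane section through $p$ is reduced with $T_p(M\cap H)=H$ is false. Take $M\subseteq\bb{C}^4$ to be the union of the two planes $\{x=y=0\}$ and $\{u=v=0\}$, so $J=(xu,xv,yu,yv)\subseteq\mathfrak{m}^2$ and $T_0M=\bb{C}^4$. For any hyperplane $h=ax+by+cu+dv$ with $(a,b)\neq(0,0)\neq(c,d)$, the only linear parts of elements of $J+(h)$ are the scalar multiples of $h$, whereas the linear form $ax+by$ vanishes on both lines of the set-theoretic section and hence lies in the radical; so $J+(h)$ is not radical, the section acquires an embedded point at the origin, and its reduction (a pair of lines) has tangent space of dimension $2$ rather than $3=\dim H$. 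Thus neither the scheme-theoretic section (non-reduced) nor its reduction (tangent space too small) can feed the next step of your induction, and no single chain of hyperplane slices can produce the desired curve. The theorem is still true for this $M$ --- take the union of two generic lines in each plane --- but not by your argument. The embedded-point phenomenon you correctly diagnosed in your mixed-dimensional example $\{z=0\}\cup\{x=0,\,z=y^2\}$ therefore already occurs in the pure-dimensional case, so the portion of the proof you treat as established is not.

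For the remaining ``hard'' case your text offers only a sketch whose fallback is to ``appeal directly to Becker's original construction'', which concedes the proof. For comparison, the paper itself gives no argument here: it cites Becker [p.\,394, Theorem 1]. The lemma it does quote from Becker (Lemma~\ref{HyperSpan}) shows why his route succeeds where yours stalls: one takes \emph{countably many} codimension-one reduced subspaces $X_i$ through $p$ with dense union, together with the lower-dimensional part $M'$, and proves that finitely many of them already satisfy $T_p\bigl(\bigcup_{i\le k}X_i\cup M'\bigr)=T_pM$. The operative fact is that the tangent space of a union can strictly exceed the sum of the tangent spaces of its pieces --- your own three-branch example exhibits this, since the individual tangent lines span only a plane while the union has $T_0=\bb{C}^3$ --- so Becker never needs any single slice to be reduced with full tangent space. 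Your ``span-then-union'' principle $T_pC\supseteq\sum_iT_pC_i$ is correct but is only a lower bound, and it is provably insufficient as the organizing principle for this theorem.
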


\begin{proof}
See~\cite[page 394, Theorem 1]{Becker}
\end{proof}

One can utilize the preceding statement to take a curve spanning the tangent space of the graph of a continuous weakly holomorphic function and test holomorphicity simply on the projection of this curve. As the tangent space of a singular space still contains sufficient information to test for a morphism to be an immersion, however in general it is not enough to test for a local isomorphism. But a closed, bijective immersion is necessarily biholomorphic. The precise argument is presented in the proof of the following Proposition.

\begin{prop}
The continuous weakly holomorphic frames locally obtained in Proposition~\ref{AbsWeak} are stongly holomorphic on any reduced complex analytic space if and only if they are strongly holomorphic on any reduced complex analytic curve.
\label{RedToCurveGen}
\end{prop}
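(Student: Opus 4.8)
The plan is to prove the equivalence by reducing strong holomorphicity of a frame to the statement that the projection of its graph closure is biholomorphic, a property that by its nature can be tested on tangent spaces and hence, via J.~Becker's theorem (Theorem~\ref{CurveSpan}), on curves. The direction asserting that strong holomorphicity on all reduced complex analytic spaces implies strong holomorphicity on curves is immediate, since reduced complex analytic curves are themselves reduced complex analytic spaces. Thus the entire content lies in the converse.

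So I would assume that the frames of Proposition~\ref{AbsWeak} are strongly holomorphic on every reduced complex analytic curve, and let $t$ be such a continuous weakly holomorphic frame on a reduced complex analytic space $M$. By Proposition~\ref{ContGraph} the closure $\bar{\Gamma}\subseteq M\times \bb{C}^r$ of the graph of $t$ is analytic and the projection $\beta\colon \bar{\Gamma}\to M$ is a proper bijection. Recalling that a holomorphic function is precisely one whose graph projection is biholomorphic (the remark following Proposition~\ref{PropMod}), and that a proper, bijective immersion of reduced complex analytic spaces is biholomorphic, it suffices to show that $\beta$ is an immersion at every point $q\in \bar{\Gamma}$, i.e. that $d\beta_q$ is injective. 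To test this, I would apply Theorem~\ref{CurveSpan} to $\bar{\Gamma}$ at $q$ to obtain a reduced complex analytic curve $C\subseteq \bar{\Gamma}$ through $q$ with $T_qC=T_q\bar{\Gamma}$, and set $D:=\beta\left( C \right)\subseteq M$. By the proper mapping theorem $D$ is analytic, and since $\beta|_C$ is a proper bijection onto $D$, the set $D$ is a reduced complex analytic curve and $\beta|_C\colon C\to D$ a homeomorphism. Over $D\cap M_{\mathrm{reg}}$ the fibers of $\beta$ are single points lying on the graph of $t$, so $C$ there agrees with the graph of $t|_D$; taking closures and using that $\beta|_C$ is a proper bijection onto $D$ identifies $C$ with the graph closure $\bar{\Gamma}_{t|_D}$. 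Since $t|_D$ is a continuous weakly holomorphic frame of the connection restricted to $D$, parallel away from $\mathrm{sing}\left( D \right)$, the hypothesis makes it strongly holomorphic, so the projection $\beta|_C\colon C=\bar{\Gamma}_{t|_D}\to D$ is biholomorphic. Hence $d\left( \beta|_C \right)_q=d\beta_q|_{T_qC}$ is injective, and because $T_qC=T_q\bar{\Gamma}$ the map $d\beta_q$ is injective. As $q$ was arbitrary, $\beta$ is an immersion, therefore biholomorphic, and $t$ is strongly holomorphic on $M$.

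The step I expect to be the main obstacle is the careful identification of the Becker curve $C$ with the graph closure $\bar{\Gamma}_{t|_D}$, together with the verification that the curve-case hypothesis genuinely applies to $t|_D$: one must confirm that restricting the flat connection to $D$ turns $t|_D$ into exactly the kind of continuous weakly holomorphic parallel frame produced by Proposition~\ref{AbsWeak}, so that the assumption can legitimately be invoked. By contrast, the tangent-space bookkeeping (that injectivity of $d\beta_q$ is detected on $T_qC$) and the passage from ``proper, bijective immersion'' to ``biholomorphic'' are routine once the identification $C=\bar{\Gamma}_{t|_D}$ is in place.
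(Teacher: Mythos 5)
Your overall strategy is the same as the paper's: project the graph closure, use Becker's theorem to find a curve spanning the tangent space, push it down, invoke the curve case, and conclude that the projection is a proper bijective immersion, hence biholomorphic. However, the step you flag as "the main obstacle" — verifying that $t|_D$ is genuinely a frame of the type produced by Proposition~\ref{AbsWeak}, in particular that it is parallel away from $\mathrm{sing}\left( D \right)$ — is the actual mathematical content of the proposition, and your proposal asserts it without proof. The difficulty is that the Becker curve $C$ is taken through a point of $\bar{\Gamma}$ lying over $\mathrm{sing}\left( M \right)$, so irreducible components of $D=\beta\left( C \right)$ can lie entirely inside $\mathrm{sing}\left( M \right)$ (or deeper strata). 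On such a component, the fact that $t$ is parallel on $M_{\mathrm{reg}}$ gives no information at all, and your identification of $C$ with the graph closure of $t|_D$ "over $D\cap M_{\mathrm{reg}}$" is vacuous there. Without an argument here, the curve-case hypothesis simply cannot be applied.

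The paper closes this gap using the extra structure deliberately built into Proposition~\ref{AbsWeak}: the frames $t_i$ restrict to continuous weakly holomorphic frames on every iterated singular locus $\mathrm{sing}^j\left( U \right)$ that are parallel on $\left( \mathrm{sing}^j\left( U \right) \right)_{\mathrm{reg}}$. Decomposing the Becker curve into irreducible components $C^i_j$, one lets $k_{ij}$ be the largest integer with $\pi_i\left( C^i_j \right)\subseteq \mathrm{sing}^{k_{ij}}\left( U_i \right)$; since $C^i_j$ is one-dimensional, its image meets $\mathrm{sing}^{k_{ij}+1}\left( U_i \right)$ in at most a point, so $t_i$ restricted to $\pi_i\left( C^i_j \right)$ is parallel on a dense subset and hence away from the singularities of the image curve. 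This is what legitimizes invoking the curve-case hypothesis. Your proof needs this dimension count together with the iterated-singular-locus property; as written, the central implication is assumed rather than established.
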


\begin{proof}
The direction $\implies$ is a tautology.\\
Consider now the direction $\impliedby$: Continue the notation from Proposition~\ref{AbsWeak} and assume that $M$ has been shrunk such that the frames in question exist (this is valid as the question is local). Let $\pi_i\colon M_i\to M$ be the analytic projection from the graph of $t_i$. The morphism $\pi_i$ is biholomorphic away from $\mathrm{sing}\left( M \right)$, however, it is a homeomorphism everywhere. Let $p\in \pi_i^{-1}\left( \mathrm{sing}\left( M \right) \right)$ and assume there is a curve $C_i\subseteq U_i \subseteq M_i$ such that $T_pC_i=T_pM_i$ (this is possible by Theorem~\ref{CurveSpan}). One can moreover assume that $C_i=\bigcup_{j=1}^nC^i_j$ such that this union is the decomposition of $C_i$ into locally irreducible and connected components such that each $C^i_j$ contains $p$.\\
As the morphism $\pi_i$ is proper and $1$-to-$1$, it follows that $C_i':=\pi_i\left( C_i \right)\subseteq \pi_i\left( U_i \right)\subseteq M$ is a complex analytic curve in $M$. Moreover, $C_i$ is the graph of $t_i$ restricted to $C'_i$. The claim now is that this restriction is strongly holomorphic by assumption. To verify this claim, one only has to observe that the restriction is once again continuous weakly holomorphic on $C'_i$ and parallel away from $\mathrm{sing}\left( C'_i \right)$. Let $k_{ij}\in \bb{N}$ be the integers such that the image of
\[\rst{\pi_i}{C^i_j}\colon C^i_j \to \pi_i\left( U_i \right)\]
is contained in $\mathrm{sing}^{k_{ij}}\left( U_i \right)$ but not in $\mathrm{sing}^{k_{ij}+1}\left( U_i \right)$. This implies that
\[\pi_i\left( C_j^i \right)\cap \mathrm{sing}^{k_{ij}+1}\left( U_i \right)\in \left\{ \emptyset,\left\{ \mathrm{pt.} \right\} \right\},\]
as the $C_j^i$ are $1$-dimensional. However, recall that the frames $t_i$ were such that they are also parallel on $\mathrm{sing}^{k_{ij}}\left( U_i\right)_{\mathrm{reg}}$ for any $k_{ij}$. Hence, the restriction $\rst{t_i}{\pi_i\left( C_j^i \right)}$ is parallel on a dense subset and hence parallel away from the singularities. Therefore also $\rst{t_i}{C'_i}$ is parallel on a dense subset and hence parallel away from the singularities, as $C'_i = \bigcup_{j=1}^n \pi_i\left( C^i_j \right)$. However, by assumption, such frames on curves are strongly holomorphic and thus the restriction of $\pi_i$
\[\pi'_i\colon C_i \to C_i'\]
is biholomorphic. This implies the Jacobian
\[T_p\pi'_i\colon T_pC_i =T_pM_i\to T_pC'_i\subseteq T_pM_i\]
of $\pi'_i$ is bijective and therefore the Jacobian $T_p\pi_i$ is definitely injective. By~\cite[page 79, Proposition 2.4]{fischer} one has that $\pi_i$ is an immersion, however, $\pi_i$ is also a homeomorphism and thus $\pi_i$ is biholomorphic (\cite[page 3, Lemma 0.23]{fischer}). Therefore, the frames $t_i$ are strongly holomorphic.
\end{proof}

Proposition~\ref{RedToCurveGen} and Theorem~\ref{RelToAbs} now immediately imply the following reduction to the case of complex analytic curves.

\begin{thm}
Let $f\colon X\to N$ be a reduced locally trivial morphism of complex spaces and assume the continuous weakly holomorphic parallel frames obtained in Proposition~\ref{AbsWeak} are strongly holomorphic for all reduced complex analytic curves. Then there is a one-to-one correspondence between
\begin{enumerate}[(i)]
\item pairs $\left( \mc{F},\nabla_f \right)$ of torsion-free coherent sheaves $\mc{F}$ with tame $f$-relative flat connections $\nabla_f$, and,
\item torsion-free $f$-relative local systems $V$.
\end{enumerate}
The correspondence sends pairs $\left( \mc{F},\nabla_f \right)$ to the sheaf $\ker{\nabla_f}$ and sends $f$-relative local systems $V$ to $\left( V\otimes_{f^{-1}\coan{N}} \coan{X}, \nabla^V \right)$.
\label{RedToCurveGenCor}
\end{thm}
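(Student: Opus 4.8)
The plan is to deduce this statement directly from the two results that immediately precede it, namely the curve-reduction biconditional of Proposition~\ref{RedToCurveGen} and the reduction of the relative case to the absolute case recorded in Theorem~\ref{RelToAbs}. The entire content of the argument is that the hypothesis of the present theorem --- that the frames of Proposition~\ref{AbsWeak} are strongly holomorphic on every reduced complex analytic curve --- feeds exactly into the hypothesis required by Theorem~\ref{RelToAbs}, once the curve case is propagated to arbitrary dimension.

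Concretely, first I would invoke Proposition~\ref{RedToCurveGen}. Its statement is a biconditional: the continuous weakly holomorphic frames obtained in Proposition~\ref{AbsWeak} are strongly holomorphic on an arbitrary reduced complex analytic space if and only if they are strongly holomorphic on every reduced complex analytic curve. The hypothesis of the present theorem asserts precisely the right-hand side of this equivalence, so I immediately conclude the left-hand side: the frames are strongly holomorphic on every reduced complex analytic space. Next I would specialize this conclusion to the fibers of $f$. Since $f\colon X\to N$ is a \emph{reduced} locally trivial morphism (Definition~\ref{LocTriv}), every fiber of $f$ is a reduced complex analytic space, and hence the frames of Proposition~\ref{AbsWeak} are strongly holomorphic on each fiber. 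This is exactly the hypothesis under which Theorem~\ref{RelToAbs} establishes the claimed one-to-one correspondence, sending a pair $\left( \mc{F},\nabla_f \right)$ to $\ker{\nabla_f}$ and a torsion-free $f$-relative local system $V$ to $\left( V\otimes_{f^{-1}\coan{N}}\coan{X},\nabla^V \right)$. Applying Theorem~\ref{RelToAbs} therefore finishes the proof.

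There is essentially no obstacle here beyond bookkeeping: the only point worth checking is that the notion of ``strongly holomorphic frame'' invoked in the hypothesis is literally the one appearing in Proposition~\ref{RedToCurveGen} and in the absolute-case input of Theorem~\ref{RelToAbs}, so that the three statements chain together without any loss. In particular, one must ensure that the frames under consideration are precisely those constructed in Proposition~\ref{AbsWeak}, which is the common object referenced throughout, and that the reducedness of the fibers --- guaranteed by the definition of a reduced locally trivial morphism --- is what licenses applying the curve-reduction fiber by fiber.
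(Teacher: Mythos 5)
Your proposal is correct and matches the paper's own argument exactly: the paper deduces Theorem~\ref{RedToCurveGenCor} precisely by combining the biconditional of Proposition~\ref{RedToCurveGen} with Theorem~\ref{RelToAbs}, applied fiberwise. Your additional bookkeeping remarks about the reducedness of the fibers and the identification of the frames are consistent with the paper's (one-line) justification.
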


Now, one can fine-tune the preceding procedure to a special case, namely that of homogeneous complex analytic spaces.

\begin{defn}
Let $M\subseteq \bb{C}^n$ be a reduced complex analytic subspace. Then $M$ is called a \emph{homogeneous complex analytic space} if $p\in M$ implies that $\lambda\cdot p \in M$ for every $\lambda \in \bb{C}$.
\end{defn}

\begin{rem}
Note that a homogeneous complex analytic space of dimension $1$ is just the union of finitely many straight complex lines.
\end{rem}

In order to specialize this particular strategy to the homogeneous case, one first needs to recall how Theorem~\ref{CurveSpan} is proven. Namely, one obtains it from induction by this following statement about hypersurfaces spanning the tangent space.

\begin{lem}
Let $M$ be a reduced complex analytic space of dimension $r\geq 2$. Write $M=M'\cup M''$ where $\mathrm{dim}\left( M' \right)\leq r-1$ and $M''$ is pure $r$-dimensional and let $p\in M''$. Suppose that $X_i$ are countably many codimension $1$ reduced complex analytic subspaces of $M''$ such that for all $i\in \bb{N}$ one has $p\in X_i$ and
\[\bigcup_{i\in \bb{N}}X_i\subseteq M''\]
is dense in $M''$. Then there exists $k\in \bb{N}$ such that
\[T_p\left( \bigcup_{i=1}^kX_i\cup M' \right)=T_pM.\]
\label{HyperSpan}
\end{lem}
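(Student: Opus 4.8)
The plan is to translate the statement about tangent spaces into one about the \emph{linear parts} of the radical ideals of the finite unions, to obtain the finiteness of $k$ for free from a dimension count, and to extract the crucial equality $=T_pM$ from the density hypothesis by a limiting argument. Write $Y_k:=\bigcup_{i=1}^{k}X_i\cup M'$, a reduced closed subgerm of $M$ at $p$ with $Y_k\subseteq Y_{k+1}\subseteq M$. Each inclusion of reduced germs induces a surjection of local rings and hence an inclusion of Zariski tangent spaces, so
\[
T_pY_1\subseteq T_pY_2\subseteq\cdots\subseteq T_pM .
\]
Since $\dim_{\bb C}T_pM<\infty$, this increasing chain stabilises: there is $k_0$ with $T_pY_k=T_pY_{k_0}$ for all $k\ge k_0$. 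Thus the only thing left to prove is that the stabilised value equals $T_pM$; once this is known, $k=k_0$ does the job, and the component $M'$ needs no separate treatment because it is already absorbed into every $Y_k$.

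Next I would pass to the cotangent side. Writing $\fr a_k\subseteq\coan{M,p}$ for the ideal of $Y_k$, one has $\coan{Y_k,p}=\coan{M,p}/\fr a_k$, so the annihilator of $T_pY_k$ inside the cotangent space $\fr m_{M,p}/\fr m_{M,p}^2$ is exactly the image of $\fr a_k$. Hence $T_pY_k=T_pM$ is equivalent to $\fr a_k\subseteq\fr m_{M,p}^2$, i.e. to the vanishing of all \emph{linear parts} of germs in $\fr a_k$. The $\fr a_k$ form a decreasing chain of radical ideals, and density is used precisely here: since $\bigcup_kY_k$ is dense in $M$ and $M$ is reduced, a germ in $\coan{M,p}$ vanishing on $\bigcup_kY_k$ vanishes on $M$, so $\bigcap_k\fr a_k=0$. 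The problem is thereby reduced to the purely local claim that a decreasing chain of radical ideals in the reduced analytic local ring $\coan{M,p}$ with zero intersection eventually lies in $\fr m_{M,p}^2$. (When $p$ is a smooth point of $M''$ this is essentially immediate, since the ideal of a union of two \emph{distinct} codimension-one germs already has order at least two and hence full tangent space; the substance of the lemma is the singular case.)

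For the reduced claim I would argue by contradiction. The images of the $\fr a_k$ in the finite-dimensional space $\fr m_{M,p}/\fr m_{M,p}^2$ decrease and therefore stabilise to some subspace $V$; if $T_pY_{k_0}\ne T_pM$ then $V\ne0$. Fixing $0\ne\bar g\in V$, for every $k\ge k_0$ there is a germ $g_k\in\fr a_k$ whose linear part is $\bar g$; thus $g_k$ has a fixed nonzero differential direction $\bar g$ at $p$ yet vanishes on the set $Y_k$, which becomes dense in $M$ as $k\to\infty$. The goal is to manufacture from the $g_k$ a \emph{single} nonzero germ $g_\infty$ with linear part $\bar g$ that vanishes on $\overline{\bigcup_kY_k}=M$; such a $g_\infty$ would be a nonzero element of $\coan{M,p}$ vanishing on all of $M$, which is absurd. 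I would obtain $g_\infty$ by fixing a local embedding $M\subseteq\bb C^N$, choosing coordinates so that $\bar g$ is represented by a coordinate function, normalising the $g_k$ to have this fixed linear part, and passing to a convergent subsequence on a fixed polydisc.

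The \textbf{main obstacle} is exactly this last extraction: a priori the higher-order terms of the $g_k$ (equivalently, the ``curvature'' of the hypersurfaces they cut out) may be unbounded as $k\to\infty$, so one cannot naively invoke a normal-families argument, and the coarser information carried by the tangent \emph{cones} is genuinely insufficient — the tangent cones of all the $X_i$ can lie in a common hyperplane while the Zariski tangent space of their union is already the whole space (as the family of parabolas $\{y=c_ix^2\}$ shows). The key point will be to use the density of $\bigcup_kY_k$ together with the codimension-one and pure-dimensionality hypotheses to pin down the $g_k$ on a set large enough to force uniform bounds on a fixed neighbourhood, so that a subsequence of the normalised $g_k$ converges to the desired nonzero $g_\infty$; assembling this compactness argument rigorously is where the real work of the lemma lies. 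Once the reduced claim is established, $\fr a_{k_0}\subseteq\fr m_{M,p}^2$, whence $T_p\bigl(\bigcup_{i=1}^{k_0}X_i\cup M'\bigr)=T_pM$, as required.
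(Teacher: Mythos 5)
Your reductions are correct but they stop exactly where the lemma begins. The stabilisation of the increasing chain $T_pY_1\subseteq T_pY_2\subseteq\cdots$ inside the finite-dimensional $T_pM$, and the equivalence of $T_pY_k=T_pM$ with $\mathfrak{a}_k\subseteq\mathfrak{m}_{M,p}^2$, are routine; the entire content of the statement is that the stabilised subspace is all of $T_pM$, and for that you offer a programme rather than a proof. The proposed route -- normalise germs $g_k\in\mathfrak{a}_k$ to a common nonzero linear part and extract a convergent subsequence -- is exactly the step you yourself identify as the obstacle: the $g_k$ need not admit representatives on a common polydisc, let alone uniformly bounded ones, and no mechanism is supplied for producing such bounds. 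The codimension-one and pure-dimensionality hypotheses, which is where the leverage must come from, are never actually used in your argument. So this is a genuine gap, not a detail left to the reader. (For orientation: the paper does not prove this lemma either -- it is quoted from Becker with a page reference -- so there is no in-text argument to measure yours against; but as written your proposal establishes only the easy half.)

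Two further cautions. First, the ``purely local claim'' you reduce to (a decreasing chain of radical ideals of $\coan{M,p}$ with zero intersection eventually lies in $\mathfrak{m}_{M,p}^2$) has discarded all the geometric hypotheses, and it is far from clear that it holds in that generality; you should not present the lemma as equivalent to it. Second, your step $\bigcap_k\mathfrak{a}_k=0$ is not justified by the stated density: $\mathfrak{a}_k$ is the ideal of the \emph{germ} of $Y_k$ at $p$, which only records the local irreducible components through $p$, whereas the density hypothesis is global in $M''$. If the $X_i$ are allowed components missing $p$ -- the hypothesis only demands $p\in X_i$ -- then the union of the germs $(X_i)_p$ need not be dense near $p$: take $X_i=\{y=0\}\cup\{y=\epsilon_i\}$ in $\bb{C}^2$ with $\{\epsilon_i\}$ dense, where every $\mathfrak{a}_k=(y)$ and even the conclusion of the lemma as literally stated fails. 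Any correct proof must work with a local form of the density (all local components of the $X_i$ at $p$ pass through $p$ and their union is dense near $p$), which is what holds in the paper's application to hyperplane sections of homogeneous spaces and what Becker's formulation provides; your intersection claim silently assumes this.
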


\begin{proof}
See e.g.~\cite[page 396f]{Becker}.
\end{proof}

With this Lemma one can now specialize Proposition~\ref{RedToCurveGen} to homogeneous spaces and homogeneous curvees.

\begin{prop}
The continuous weakly holomorphic frames locally obtained in Proposition~\ref{AbsWeak} are strongly holomorphic on any homogeneous complex analytic space if and only if they are strongly holomorphic on any homogeneous complex analytic curve.
\label{RedCurve}
\end{prop}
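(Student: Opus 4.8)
The direction $\implies$ is immediate, since a homogeneous complex analytic curve is in particular a homogeneous complex analytic space. For $\impliedby$ the plan is to retrace the proof of Proposition~\ref{RedToCurveGen} almost verbatim, the sole difference being that the spanning curve produced there by Becker's theorem (Theorem~\ref{CurveSpan}) must be replaced by one whose image in $M$ is \emph{homogeneous}, so that the hypothesis on curves can be invoked. Concretely, I would fix a homogeneous $M$ and continuous weakly holomorphic frames $t_i$ as in Proposition~\ref{AbsWeak}, let $M_i$ be the closure of the graph of $t_i$, and $\pi_i\colon M_i\to M$ the projection, which is a homeomorphism and biholomorphic over $M_{\mathrm{reg}}$. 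Exactly as in Proposition~\ref{RedToCurveGen}, it then suffices to show that $\pi_i$ is an immersion at every $q\in\pi_i^{-1}\left(\mathrm{sing}\left( M \right)\right)$, for then $\pi_i$ is a homeomorphic immersion, hence biholomorphic, and $t_i$ is strongly holomorphic.

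The heart of the matter is to produce, at such a $q$ with $p:=\pi_i\left( q \right)$, a curve $C\subseteq M_i$ with $T_qC=T_qM_i$ whose image $\pi_i\left( C \right)$ is a homogeneous curve. I would deliberately \emph{not} apply Becker downstairs on $M$ and then lift, because a homogeneous curve spanning $T_pM$ need not lift to one spanning $T_qM_i$ (the graph can carry vertical tangent directions invisible to the base). Instead I would run the mechanism behind Becker's theorem, namely Lemma~\ref{HyperSpan}, directly \emph{upstairs} on $M_i$, cutting with the preimages $\pi_i^{-1}\left( H\cap M \right)$ of homogeneous hyperplane sections. Among the hyperplanes $H\subseteq\bb{C}^n$ through $0$ and $p$, each section $H\cap M$ is again homogeneous, and a countable dense family of such $H$ yields sections whose union is dense in $M$; transporting through the homeomorphism $\pi_i$, the hypersurfaces $\pi_i^{-1}\left( H\cap M \right)$ are of codimension one in $M_i$, all contain $q$, and have dense union. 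Thus Lemma~\ref{HyperSpan} applies and selects finitely many whose union, together with the lower-dimensional part, already spans $T_qM_i$. Iterating this reduction dimension by dimension — at each stage cutting the current subvariety by a further homogeneous hyperplane section through $0$ and $p$ — terminates in a curve $C\subseteq M_i$ with $T_qC=T_qM_i$; since $\pi_i\left( C \right)$ is by construction an iterated homogeneous hyperplane section of $M$ of dimension one, it is a union of finitely many lines through the origin, i.e. a homogeneous curve.

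With $C$ in hand the argument closes precisely as in Proposition~\ref{RedToCurveGen}. Writing $C':=\pi_i\left( C \right)$, the restriction $\rst{t_i}{C'}$ is continuous weakly holomorphic and, by the iterated-singularity parallelism built into Proposition~\ref{AbsWeak}, parallel away from $\mathrm{sing}\left( C' \right)$; it is therefore one of the frames to which the hypothesis applies and so is strongly holomorphic on the homogeneous curve $C'$. Consequently $\pi_i|_C\colon C\to C'$ is the projection off a holomorphic graph, hence biholomorphic, so $T_q\pi_i$ is bijective on $T_qC=T_qM_i$ and in particular injective. By the same references used in Proposition~\ref{RedToCurveGen}, $\pi_i$ is then an immersion at $q$, and being a homeomorphism it is biholomorphic, whence $t_i$ is strongly holomorphic.

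The step I expect to be the main obstacle is the hyperplane-section construction: one must verify that the homogeneous hyperplane sections through $0$ and $p$ genuinely satisfy the density and codimension hypotheses of Lemma~\ref{HyperSpan} upstairs on the \emph{non}-homogeneous graph $M_i$, and that the dimension-lowering iteration can be carried through while simultaneously keeping the image homogeneous and preserving the equality with $T_qM_i$ at every stage. Once this bookkeeping is in place, all remaining steps are identical to those in the proof of Proposition~\ref{RedToCurveGen}.
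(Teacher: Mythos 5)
Your $\implies$ direction and your decision to run the spanning argument upstairs on the graph $M_i$ (applying Lemma~\ref{HyperSpan} to preimages of homogeneous hyperplane sections) match the paper, and over the origin, where $q=\pi_i^{-1}\left( 0 \right)$, your construction is essentially the paper's. The genuine gap is your uniform treatment of a singular point $p\neq 0$ by cutting with hyperplanes through $0$ \emph{and} $p$. A homogeneous curve containing $p\neq 0$ must contain the line $\bb{C}p$, and in a neighbourhood of $p$ it is \emph{equal} to that line, since the finitely many other lines stay away from $p$. Because $\rst{t_i}{\bb{C}p}$ is holomorphic near $p$ (iterated-singularity property of Proposition~\ref{AbsWeak} plus Riemann extension on the smooth line), the germ of $\pi_i^{-1}\left( C' \right)$ at $q$ is then a smooth curve with one-dimensional Zariski tangent space, so $T_qC=T_qM_i$ is impossible whenever $\dim_q M_i\geq 2$: no homogeneous curve through $p\neq 0$ can span. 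The iteration you flag as ``the main obstacle'' really does break. Already for $M=C_0\times\bb{C}\subseteq\bb{C}^3$ with $C_0$ a union of at least two lines and $p=\left( 0,0,1 \right)$, every hyperplane containing $\bb{C}p$ meets $M$ either in a whole component $\ell_j\times\bb{C}$ (so not of codimension one) or exactly in the line $\bb{C}p$; the admissible sections therefore have union $\bb{C}p$, and the density hypothesis of Lemma~\ref{HyperSpan} already fails at the first step.

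The paper circumvents this by making the statement at $p\neq 0$ non-local: the hyperplane/curve reduction is carried out only at the origin, and strong holomorphicity is then \emph{propagated} to all other singular points via the blow-up of $\bb{C}^n$ at $0$. There $\tilde{M}\to\bb{P}\left( M \right)$ is a locally trivial line bundle, the pulled-back connection induces a relative connection along its fibers, Theorem~\ref{nonrelativetorelative} supplies relative parallel frames over a trivializing chart, and expressing $\psi^*t_i$ in such a frame shows that the frames defined near $0$ extend holomorphically along each line to a neighbourhood of any $p$; comparing with the Proposition~\ref{AbsWeak} frames at $p$, which differ by a constant matrix, completes the proof. Some such global mechanism is unavoidable; a purely local spanning argument at $p\neq 0$ cannot succeed.
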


\begin{proof}
The statement follows just like Proposition~\ref{RedToCurveGen} but one has to argue why the curves can be assumed to be homogeneous. The argument for this is presented by induction on dimension and by first arguing that the curve case yields holomorphicity around the origin.\\
Observe that if $M\subseteq \bb{C}^n$ is a homogeneous complex analytic space and one writes $M=M'\bigcup M''$ where $\mathrm{dim}\left( M' \right)\leq r-1$ and $M''$ is pure $r$-dimensional, then $M'$ and $M''$ are both homogeneous in $\bb{C}^n$.\\
Note that if $M\subseteq \bb{C}^n$ is a homogeneous complex analytic space, then one may assume that there does not exist a proper linear subspace $V\subset \bb{C}^n$ such that $M\subseteq V$, because if there is such a subspace then one may consider $M\subseteq V$ as the local homogeneous model. By iterating this one may assume that there is no such subspace, since any such subspace necessarily needs to have dimension greater equal to the embedding dimension of $M$. In such a local model one observes that this implies that for any $n-1$ dimensional linear subspace $W\subseteq \bb{C}^n$ the intersection
\[W\cap M''\subseteq W \cong \bb{C}^{n-1} \subseteq \bb{C}^n\]
is a homogeneous complex analytic space of dimension $r-1$ by~\cite[p. 102]{grauert}, as $M''$ and $W$ have non-empty intersection at $0\in \bb{C}^n$.\\
In particular, one may take $\left\{ W_k \right\}_{k\in \bb{N}}$ a countable family of linear $n-1$-dimensional subspaces of $\bb{C}^n$ such that $\bigcup_{k\in \bb{N}}W_k$ is dense in $\bb{C}^n$. Then the intersection $X_k:= W_k\cap M''$ is a countable family of $r-1$-dimensional homogeneous subspaces in $M''$ with dense union.\\
Recall the definitions in the proof of Proposition~\ref{RedToCurveGen} and let $\pi_i\colon M_i\to M$ be the analytic projection from the graph of the continuous weakly holomorphic section $t_i$ defined around $0\in M\subseteq \bb{C}^n$. Note that $M_i=\pi_i^{-1}\left( M' \right)\cup \pi_i^{-1}\left( M'' \right)$ with $\pi_i^{-1}\left( M' \right)$ of dimension smaller than $r$ and $\pi_i^{-1}\left( M'' \right)$ of pure dimension $r$ as $\pi_i$ is bijective. Moreover, the family of codimension $1$ subsets $\pi_i^{-1}\left( X_k \right)$ is dense in $\pi_i^{-1}\left( M'' \right)$ and hence by Lemma~\ref{HyperSpan} it follows that
\[T_{\pi_i^{-1}\left( 0 \right)}\left( \pi_i^{-1}\left( M' \right)\cup \bigcup_{k=1}^l \pi_i^{-1}\left( X_k \right) \right) = T_{\pi_i^{-1}\left( 0 \right)}M_i.\]
By the same argument as in the proof of Proposition~\ref{RedToCurveGen} it therefore suffices to verify the holomorphicity of $t_i$ on $M'\cup \bigcup_{k=1}^l X_k$, which is homogeneous of dimension $\leq r-1$, to obtain the holomorphicity of $t_i$ around $0\in M\subseteq \bb{C}^n$.\\
By iteration it therefore suffices to verify the holomorphicity of $t_i$ on homogeneous curves to obtain the holomorphicity of $t_i$ around $0\in M\subseteq \bb{C}^n$.\\
Now, it remains to argue that the frames $t'_i$ around other points $p\in \mathrm{sing}\left( M \right)$ are holomorphic if the frames $t_i$ are holomorphic around $0$. To this end, consider the blow-up $\psi'\colon\mathrm{Bl}_0\left( \bb{C}^n \right) \to \bb{C}^n$ of $\bb{C}^n$ at the origin and recall that the projection $f'\colon \mathrm{Bl}_0\left( \bb{C}^n \right)\to \bb{CP}^{n-1}$ is the tautological line bundle of $\bb{CP}^{n-1}$. In particular, it is a locally trivial bundle. Moreover, denote by $\psi\colon \tilde{M}\to M$ the restriction of $\psi'$ to $M$ and
\[\tilde{M}:=\mathrm{Closure}\left( \psi'^{-1}\left( M \right)\setminus \psi'^{-1}\left( 0 \right) \right)\subseteq \mathrm{Bl}_0\left( \bb{C}^n \right),\]
i.e. $\tilde{M}$ is the blow-up of $M$ at $0$. The image of $\tilde{M}$ under $f'$ is the projectivization $\bb{P}\left( M \right)$ of $M$ and is in particular an analytic subset. Denote the restriction of $f'$ by $f\colon \tilde{M}\to \bb{P}\left( M \right)$ and notice that $f$ remains a locally trivial line bundle.\\
Let $U\subseteq \bb{P}\left( M \right)$ be an open neighbourhood around $f\left( p \right)$ such that $f$ is equivalent to $U\times \bb{C}\to U$ over $U$. Notice that the connection $\psi^*\nabla$ on $\tilde{M}$ naturally defines a $f$-relative connection $\nabla_f$ simply by applying the quotient map. However, by Theorem~\ref{nonrelativetorelative} and the simply connectedness of $\bb{C}$, it follows that $\nabla_f$ is trivial on $U\times \bb{C}$ and that there exists a global $f$-relative parallel frame $s_i$ for $\nabla_f$ over $U\times \bb{C}$. But $\psi^*t_i$ is also parallel with respect to $\nabla_f$, as it is parallel with respect to $\nabla$. As such, there exist holomorphic functions $a_{ij}\in \coan{U}$ such that
\[\psi^*t_i = \sum_{j=1}^k \left(a_{ij}\circ f\right)\cdot s_j.\]
The right hand-side extends to $f^{-1}\left( U \right)$ and, therefore, so does the left hand-side. This implies that the frames $t_i$ around $0$ naturally extend to an open neighbourhood $W'$ of $0$ also containing $p$. This neighbourhood can be assumed to be connected and because the $t_i$ are parallel on $W'_{\mathrm{reg}}$ near $0$ they are parallel on $W'_{\mathrm{reg}}$. The extension of the frame $t_i$ constructed in this way is holomorphic on $W'$ if it is holomorphic around $0$ and as both frames $t_i$ and $t'_i$ are parallel on the regular part of an open neighbourhood of $p$, it follows that also $t'_i$ is holomorphic since these parallel frames only differ by a constant matrix.
\end{proof}

By once again employing the blow-up in a simple way, one obtains that the absolute case on homogeneous curves holds. The idea is that homogeneous curves are simply unions of finitely many straight lines through the origin. The blow-up at the origin then neatly arranges such lines next to each other as a fiber bundle over $\bb{CP}^{n-1}$. The connection on the homogeneous curve that one started with can now be viewed as connections along some fibers of this fiber bundle and one constructs a flat relative connection on this fiber bundle that restricts to these connections on the relevant fibers. Flat relative connections on fiber bundles with smooth fibers are however where the Riemann-Hilbert correspondence is already known to hold and one can leverage that fact to obtain the case of homogeneous curves.

\begin{prop}
Let $M\subseteq \bb{C}^n$ be a homogeneous complex analytic subspace of dimension $1$ with $n\geq 2$. Then the continuous weakly holomorphic frames obtained in Proposition~\ref{AbsWeak} are strongly holomorphic.
\label{HomCurve}
\end{prop}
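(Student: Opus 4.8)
The plan is to reduce to holomorphicity at the origin and then transport the problem to the blow-up of $\bb{C}^n$ at $0$, where the Relative Riemann--Hilbert Theorem for submersions (Theorem~\ref{relriemanhilbert}) can be applied. Write $M=\bigcup_{j=1}^m L_j$ as a union of finitely many distinct complex lines through $0$, which is the shape of a homogeneous curve by the preceding remark. Then $\mathrm{sing}\left( M \right)=\left\{ 0 \right\}$ and each $L_j$ is smooth, so the frame $t_i$ of Proposition~\ref{AbsWeak} is already holomorphic on $M\setminus\left\{ 0 \right\}$ and only strong holomorphicity at $0$ remains, i.e.\ that $t_i$ is the restriction of a holomorphic function on a neighbourhood $\Omega$ of $0$ in $\bb{C}^n$. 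Writing the flat connection as $\nabla=d+\omega$ with $\omega$ a matrix of $1$-forms on $M$, I would first lift $\omega$, over a small ball $\Omega\ni 0$, to a matrix $\tilde{\omega}$ of holomorphic $1$-forms on $\Omega$.

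Next I would set up the blow-up. Let $\psi'\colon \mathrm{Bl}_0\left( \bb{C}^n \right)\to \bb{C}^n$ be the blow-up at $0$, $\pi\colon \mathrm{Bl}_0\left( \bb{C}^n \right)\to \bb{CP}^{n-1}$ the tautological line bundle, $E':=\psi'^{-1}\left( \Omega \right)$, and $Z:=\psi'^{-1}\left( 0 \right)\cong \bb{CP}^{n-1}$ the exceptional divisor. Since the $L_j$ pass through $0$, their strict transforms $\tilde{L}_j$ are exactly the fibres $\pi^{-1}\left( \left[ v_j \right] \right)$ over the directions $\left[ v_j \right]$, and $\psi\colon \tilde{M}=\bigsqcup_j \tilde{L}_j\to M$ is a desingularization. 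On $\coan{E'}^{\oplus r}$ I would define the relative connection $\nabla_{\pi}:=d_{\pi}+A$, where $A$ is the relative part of $\psi'^{*}\tilde{\omega}$ in $\Omega^1_{\pi}\left( \coan{E'} \right)$-valued endomorphisms. Because the fibres of $\pi$ are $1$-dimensional we have $\Omega^2_{\pi}\left( \coan{E'} \right)=0$, so $\nabla_{\pi}$ is automatically flat; and restricting $A$ to $\tilde{L}_j$ recovers $\psi^{*}\left( \omega|_{L_j} \right)$, so that $\nabla_{\pi}|_{\tilde{L}_j}=\psi^{*}\left( \nabla|_{L_j} \right)$ and $\psi^{*}t_i|_{\tilde{L}_j}$ is parallel for $\nabla_{\pi}$.

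I would then exploit the geometry of the base. The map $\pi|_{E'}\colon E'\to \bb{CP}^{n-1}$ is a submersion with connected (disc) fibres, so by Theorem~\ref{relriemanhilbert} the sheaf $\ker{\nabla_{\pi}}$ is a relative local system; pushing it forward yields a local system of rank $r$ on $\bb{CP}^{n-1}$, which is trivial since $\bb{CP}^{n-1}$ is simply connected. Hence there is a global parallel frame $\hat{s}_1,\dots,\hat{s}_r$ of $\nabla_{\pi}$ on $E'$. The crucial input is that $Z\cong \bb{CP}^{n-1}$ is compact and connected, so the $\bb{C}^r$-valued holomorphic restrictions $\hat{s}_k|_Z$ are constant vectors $w_k$, forming a basis. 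Writing $\psi^{*}t_i|_{\tilde{L}_j}=\sum_k c^{(j)}_{ik}\,\hat{s}_k|_{\tilde{L}_j}$ (a relation of parallel frames on a fibre, hence with constant coefficients) and evaluating at the point of $\tilde{L}_j$ lying on $Z$ gives $t_i\left( 0 \right)=\sum_k c^{(j)}_{ik}w_k$; since $t_i\left( 0 \right)$ is the same for every branch by continuity of $t_i$, the coefficients $c^{(j)}_{ik}=:c_{ik}$ are independent of $j$. Thus $\sigma_i:=\sum_k c_{ik}\hat{s}_k$ is a single holomorphic section on $E'$ restricting to $\psi^{*}t_i$ on every $\tilde{L}_j$ and constant, equal to $t_i\left( 0 \right)$, on all of $Z$.

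Finally I would push down. As $\psi'\colon E'\to \Omega$ is a proper modification whose only non-trivial fibre is $Z$, and $\sigma_i$ is constant on $Z$, Proposition~\ref{PropMod} shows $\sigma_i$ descends to a continuous weakly holomorphic function $\check{s}_i$ on $\Omega$; since $\Omega$ is smooth and hence normal, $\check{s}_i$ is holomorphic, and by construction $\check{s}_i|_M=t_i$, so $t_i$ is strongly holomorphic. The main obstacle is the middle of the argument: building a flat relative connection on the line bundle that simultaneously restricts to the prescribed fibre connections and admits a global parallel frame, and then showing that the constants representing $\psi^{*}t_i$ in that frame agree over every relevant fibre. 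This is where the proof really turns, and it rests on the compactness and simple connectivity of the exceptional $\bb{CP}^{n-1}$ together with the a priori continuity of $t_i$ at $0$, which jointly force the branchwise jets of $t_i$ to be the jets of one ambient holomorphic function.
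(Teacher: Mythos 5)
Your proof is correct and follows essentially the same route as the paper: extend the connection to the ambient Stein space, pass to the blow-up at the origin, apply the relative Riemann--Hilbert theorem for the tautological line bundle over the simply connected $\bb{CP}^{n-1}$ to obtain a global relative parallel frame, and then match the branch-wise constants using continuity of the frame at $0$. The only real difference is the final descent: the paper pushes the global frame down to $\bb{C}^n\setminus\left\{ 0 \right\}$ and extends across the origin by Hartogs (codimension $\geq 2$), whereas you use constancy on the compact exceptional divisor together with Proposition~\ref{PropMod} and normality of the ball --- both are valid, and your explicit matching of the coefficients $c^{(j)}_{ik}$ at the origin makes precise a step the paper passes over with ``these parallel frames only differ by a constant matrix.''
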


\begin{proof}
Note that $M$ is simply a union of linear complex lines in $\bb{C}^n$ meeting in the origin. The continuous weakly holomorphic frame from Proposition~\ref{AbsWeak} therefore exists globally by construction and the only singular point of $M$ is the origin $0\in \bb{C}^n$. Recall that any locally free sheaf on $M$ is free as $M$ is Stein and contractible, because topological classification of fiber bundles is the same as the holomorphic classification on Stein spaces (see~\cite[Satz 6]{SteinClass}). Observe that the connection $\nabla$ can therefore be extended to
\[\nabla'\colon \coan{\bb{C}^n}^{\oplus r}\to \coan{\bb{C}^n}^{\oplus r}\otimes \Omega^1\left( \coan{\bb{C}^n} \right),\]
as $\bb{C}^n$ is also Stein. Denote by $\psi\colon \mathrm{Bl}_0\left( \bb{C}^n \right)\to \bb{C}^n$ the blow-up of $\bb{C}^n$ at the origin $0$ and by $f\colon \mathrm{Bl}_0\left( \bb{C}^n \right)\to \bb{CP}^{n-1}$ the locally trivial tautological line bundle. Then $\psi^*\nabla'$ naturally defines a $f$-relative connection $\nabla_f$ by simply applying the quotient map. Note that $\nabla_f$, restricted to the fibers over $\bb{P}\left( M \right)$ without the exceptional set, is simply $\nabla$ restricted to $M_{\mathrm{reg}}$ (as $M$ is just a union of straight complex lines and the fibers of $f$ are straight complex lines). By Theorem~\ref{nonrelativetorelative} and $\bb{C}$ being simply connected one obtains that the evaluation morphism
\[e\colon f_*\ker{\nabla_f} \to s^{-1}\ker{\nabla_f}\cong s^* \coan{\mathrm{Bl}_0\left( \bb{C}^n \right)}, t\mapsto s^{-1}t\]
is locally on $\bb{CP}^{n-1}$ an isomorphism and therefore globally. Here, $s\colon \bb{CP}^{n-1}\to \mathrm{Bl}_0\left( \bb{C}^n \right)$ is the zero section.\\
There exists a global $f$-relative parallel frame $s'_i$ on $\mathrm{Bl}_0\left( \bb{C}^n \right)$ with respect to $\nabla_f$. The morphism $\psi$ is an isomorphism away from $0\in \bb{C}^n$ and as $\left\{ 0 \right\}\subseteq \bb{C}^n$ has codimension greater equal $2$ it follows that the sections $s'_i$ are pull-backs of holomorphic $s_i$ on $\bb{C}^n$.\\
Moreover, restricted to $M$ the sections $\rst{s_i}{M}$ are parallel with respect to $\nabla$ on $M_{\mathrm{reg}}$. As these parallel frames only differ by a constant matrix it follows that the frames $t_i$ constructed in Proposition~\ref{AbsWeak} are also strongly holomorphic on $M$.
\end{proof}

Combining Proposition~\ref{HomCurve} with Proposition~\ref{RedCurve} and Theorem~\ref{RelToAbs} yields the relative Riemann-Hilbert correspondence for homogeneous fibers and torsion-free sheaves.

\begin{thm}
Let $f\colon N\times M\to N$ be the projection of reduced complex spaces and assume that $M\subseteq \bb{C}^n$ is a homogeneous complex analytic space. Then there is a one-to-one correspondence between
\begin{enumerate}[(i)]
\item pairs $\left( \mc{F},\nabla_f \right)$ of torsion-free coherent sheaves $\mc{F}$ with tame $f$-relative flat connections $\nabla_f$, and,
\item torsion-free $f$-relative local systems $V$.
\end{enumerate}
The correspondence sends pairs $\left( \mc{F},\nabla_f \right)$ to the sheaf $\ker{\nabla_f}$ and sends $f$-relative local systems $V$ to $\left( V\otimes_{f^{-1}\coan{N}} \coan{X}, \nabla^V \right)$.
\label{HomCor}
\end{thm}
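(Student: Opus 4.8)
The plan is to obtain this statement as an essentially immediate corollary of the reduction to the curve case, Theorem~\ref{RelToAbs}, once the hypothesis on the fibers has been verified. The crucial observation is that every fiber of the projection $f\colon N\times M\to N$ is a copy of the \emph{same} homogeneous complex analytic space $M\subseteq\bb{C}^n$, and that $f$, being a projection of reduced spaces, is a reduced locally trivial morphism. Consequently Theorem~\ref{RelToAbs} applies the moment one knows that the continuous weakly holomorphic parallel frames furnished by Proposition~\ref{AbsWeak} are in fact strongly holomorphic on the fiber $M$ itself. So the whole task reduces to checking that strong holomorphicity for homogeneous fibers.

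First I would invoke Proposition~\ref{RedCurve}, which reduces the question of strong holomorphicity of the frames of Proposition~\ref{AbsWeak} on an arbitrary homogeneous space to the corresponding question on homogeneous complex analytic \emph{curves}, i.e.\ on finite unions of straight complex lines through the origin. This is the homogeneous refinement of the curve-spanning argument built on Becker's Theorem~\ref{CurveSpan} and Lemma~\ref{HyperSpan}, arranged so that the spanning subspaces, and ultimately the spanning curves, may themselves be taken homogeneous. Next I would apply Proposition~\ref{HomCurve}, which establishes precisely that on a one-dimensional homogeneous subspace of $\bb{C}^n$ those frames are strongly holomorphic, via the blow-up of $\bb{C}^n$ at the origin and the classical relative Riemann--Hilbert correspondence on the tautological line bundle over $\bb{CP}^{n-1}$. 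Chaining these two together shows the frames are strongly holomorphic on all of $M$.

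With the hypothesis of Theorem~\ref{RelToAbs} now verified for the homogeneous fiber $M$, that theorem yields the claimed one-to-one correspondence in the direction $(\mc{F},\nabla_f)\mapsto\ker{\nabla_f}$, producing a torsion-free $f$-relative local system whose canonical connection recovers $(\mc{F},\nabla_f)$. For the converse direction, that a torsion-free relative local system $V$ gives a tame flat connection $\nabla^V$ on $V\otimes_{f^{-1}\coan{N}}\coan{X}$ with $\ker{\nabla^V}=V$, one appeals directly to Theorem~\ref{kernelofcanonical}, so the two assignments are mutually inverse.

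As for where the difficulty actually lies: essentially all of it has been front-loaded into Propositions~\ref{HomCurve} and~\ref{RedCurve}, so the theorem itself is a clean synthesis rather than a fresh argument. The only point demanding care is the matching of hypotheses—confirming that the product projection with homogeneous $M$ is a reduced locally trivial morphism all of whose fibers equal the homogeneous space $M$, and that the fiberwise frames of Proposition~\ref{AbsWeak} are exactly the objects whose strong holomorphicity the two preceding propositions guarantee. I expect no genuine obstacle at this final stage; the substantive work was the homogeneous curve case and the homogeneous reduction to it.
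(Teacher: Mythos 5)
Your proposal matches the paper's own proof, which is exactly the combination of Proposition~\ref{RedCurve}, Proposition~\ref{HomCurve}, and Theorem~\ref{RelToAbs} (with Theorem~\ref{kernelofcanonical} supplying the converse direction, as in the proof of Theorem~\ref{MaxRH}). The argument is correct and takes essentially the same route.
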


This concludes the discussion on relative Riemann-Hilbert theorems for morphisms with singular fibers. The next section establishes how one can connect the previous discussion to the study of real analytic pseudo-holomorphic structures and obtain Newlander-Nirenberg type Theorems in this way.

\section{Newlander-Nirenberg Theorem and $\bar{\partial}$-operators}
\label{trans}

On vector bundles over complex manifolds it is a standard fact, that a holomorphic structure is equivalent to an integrable generalised $\bar{\partial}$-operator (see~\cite{malgrange}).\footnotemark{}\footnotetext{Similar theorems also hold for more general sheaves of modules over smooth functions over complex manifolds, see~\cite{pali} and~\cite{weinkove}.}
 One already knows, by Section~\ref{pq}, that $\left( 0,1 \right)$-forms can be thought of as relative differential forms on the complexification. One can use this to show that real analytic pseudo-holomorphic structures (generalised $\bar{\partial}$-operators) can be seen as relative connections on the complexification with respect to the canonical fibration.\\
This allows one to solve real analytic Newlander-Nirenberg Theorems on sheaves over complex analytic spaces by solving the corresponding Relative Riemann-Hilbert Theorems on the canonical fibration of the complexification.

\begin{defn}
Let $M$ be a complex analytic space and $\mc{F}$ a $\corean{M}$-module. Then an $\coan{M}$-linear morphism
\[\bar{\partial}_{\mc{F}}\colon \mc{F}\to \mc{F}\otimes_{\corean{M}} \Omega^{0,1}M\]
such that
\[\bar{\partial}_{\mc{F}}\left( f\cdot s \right) = \bar{\partial}f\otimes s + f \cdot \bar{\partial}s,\]
for all $f\in \corean{M}$ and $s\in \mc{F}$, is called \emph{pseudo-holomorphic structure}.\\
In other words, $\bar{\partial}_{\mc{F}}$ is a relative connection with respect to the canonical morphism $\left( M,\corean{M} \right)\to \left( M,\coan{M} \right)$.\\
The pseudo-holomorphic structure is called \emph{flat} or \emph{integrable} if it is flat as a relative connection.
\end{defn}

For the upcoming proof it is necessary to extend the definiton of derivations to incorporate more general sheaves as their domain.

\begin{defn}
Let $M$ be a $\bb{K}$-ringed space and let $\mc{F}$ and $\mc{G}$ be two $\mc{A}_M$-modules. Then a \emph{first-order differential operator from $\mc{G}$ to $\mc{F}$} is a $\bb{K}$-linear morphism $\delta \colon \mc{G}\to \mc{F}$ such that the morphism
\[ \left[ \delta,f \right]\colon\rst{\mc{G}}{U} \to \rst{\mc{F}}{U},\; s \mapsto \delta\left( f\cdot s \right) - f \delta\left( s \right)\]
is $\mc{A}_M$-linear for every $f\in \mc{A}_M\left( U \right)$ and $U\subseteq M$ open. Denote the sheaf of such operators by $\mathrm{Diff}^{\left( 1 \right)}\left( \mc{G},\mc{F} \right)$.
\end{defn}

\begin{rem}
Note that this clearly extends the definition of a derivation and is a sensible notion of first-order differential operators between general sheaves. Also, connections over analytic spaces are evidently first-order differential operators.
\end{rem}

To begin with, it is investigated how first-order differential operators and derivations are related.

\begin{lem}
Let $M$ be a $\bb{K}$-analytic space and $\mc{F}$ be an $\mc{A}_M$-module. Then the following statements hold:
\begin{enumerate}[(i)]
\item $\mathrm{Diff}^{\left( 1 \right)}\left( \mc{A}_M,\mc{F} \right)\cong \homo{\mc{A}_M,\mc{F}}\oplus \mathrm{Der}\left( \mc{A}_M,\mc{F} \right)$.
\item $\mathrm{Diff}^{\left( 1 \right)}\left( \mc{A}_M^{\oplus n}, \mc{F} \right) \cong \mathrm{Diff}^{\left( 1 \right)}\left({\mc{A}_M,\mc{F}}\right)^{\oplus n}$.
\end{enumerate}
\label{DiffSplit}
\end{lem}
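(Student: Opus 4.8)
The plan is to establish both isomorphisms by writing down explicit, mutually inverse maps and checking directly that they land in the correct classes of operators; since every construction is natural in the sections involved, it commutes with restriction and therefore glues to an isomorphism of sheaves, indeed of $\mc{A}_M$-modules.

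For part (i) the key device is evaluation at the unit section. Given a first-order differential operator $\delta\colon \mc{A}_M \to \mc{F}$, I would set $m := \delta(1)$ and let $h_m\colon \mc{A}_M \to \mc{F}$ be multiplication by $m$, that is $f \mapsto f\cdot m$; this is manifestly $\mc{A}_M$-linear, hence an element of $\homo{\mc{A}_M,\mc{F}}$, and it satisfies $[h_m,f]=0$ for all $f$. I would then show that $D := \delta - h_m$ is a derivation: since $[h_m,f]=0$ one has $[D,f]=[\delta,f]$, which is $\mc{A}_M$-linear by hypothesis, and $D(1)=0$. Evaluating the $\mc{A}_M$-linear morphism $[D,f]$ on $g = g\cdot 1$ then gives $[D,f](g) = g\cdot[D,f](1) = g\cdot D(f)$, i.e. $D(fg) = f\,D(g) + g\,D(f)$, the Leibniz rule. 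Conversely, any module homomorphism and any derivation are first-order operators, so the assignment $\delta \mapsto (h_{\delta(1)},\,\delta - h_{\delta(1)})$ and the sum map $(h,D)\mapsto h+D$ are inverse to each other; uniqueness of the decomposition follows because evaluation at $1$ forces $h = h_{\delta(1)}$. A short computation ($h_{am}=a\,h_m$ and $aD$ again a derivation) shows the maps respect the $\mc{A}_M$-module structure.

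For part (ii) I would split a first-order operator on $\mc{A}_M^{\oplus n}$ along the standard generators $e_1,\dots,e_n$. Given $\delta\colon \mc{A}_M^{\oplus n}\to \mc{F}$, define $\delta_i\colon \mc{A}_M\to \mc{F}$ by $\delta_i(f) := \delta(f e_i)$; the identity $[\delta_i,g](f) = [\delta,g](f e_i)$ together with the $\mc{A}_M$-linearity of $[\delta,g]$ shows each $\delta_i$ is first-order. Conversely, from a tuple $(\delta_1,\dots,\delta_n)$ I would assemble $\delta(f_1,\dots,f_n) := \sum_i \delta_i(f_i)$ and verify that $[\delta,g] = \sum_i[\delta_i,g]$ is $\mc{A}_M$-linear. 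Because $(f_1,\dots,f_n)=\sum_i f_i e_i$ and all operators are additive, these constructions are mutually inverse, and they are plainly $\mc{A}_M$-linear.

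The computations are all routine; the only point requiring genuine care is the heart of part (i): verifying that a first-order operator with $D(1)=0$ is \emph{exactly} a derivation. This hinges on correctly using the $\mc{A}_M$-linearity of the commutator $[D,f]$ and evaluating it at the unit, which is precisely where the first-order condition is converted into the Leibniz rule. The remaining subtlety is purely bookkeeping, namely confirming that all maps commute with restriction, so that the stated isomorphisms hold as isomorphisms of sheaves of $\mc{A}_M$-modules rather than merely on sections over a fixed open set.
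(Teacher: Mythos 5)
Your proof is correct and follows essentially the same route as the paper: part (i) subtracts the multiplication operator $f\mapsto f\cdot\delta(1)$ and converts the $\mc{A}_M$-linearity of the commutator, evaluated at the unit, into the Leibniz rule, and part (ii) splits along the standard generators exactly as in the paper's maps $B$ and $C$. No gaps; the additional remarks on $\mc{A}_M$-linearity and compatibility with restriction are routine and consistent with the paper's argument.
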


\begin{proof}
\begin{enumerate}[(i)]
\item First of all, note that both, $\homo{\mc{A}_M,\mc{F}}$ and $\mathrm{Der}\left( \mc{A}_M,\mc{F} \right)$, are natural subsheaves of the first-order differential operators. Let $\delta \in \mathrm{Diff}^{\left( 1 \right)}\left( \mc{A}_M,\mc{F} \right)$. Consider the following morphism
\[\delta'\colon \mc{A}_M \to \mc{F},\; f\mapsto \delta\left( f \right)-f\cdot \delta\left( 1 \right)\]
and note that it is clearly a first-order differential operator. Hence,
\[ \left[ \delta',g \right]\left( f \right) = f\cdot\left[ \delta',g \right]\left( 1 \right),\]
but expanding this out yields:
\begin{align*}
\left[ \delta',g \right]\left( f \right) &= \delta'\left( gf \right) - g\delta'\left( f \right)\\
f\left[ \delta',g \right]\left( 1 \right) &= f\delta'\left( g \right)-fg\delta'\left( 1 \right) =f\delta'\left( g \right) -fg \delta\left( 1 \right) + fg\delta\left( 1 \right)\\
&= f\delta'\left( g \right).
\end{align*}
Showing that $\delta'$ is a derivation. Moreover, the morphism
\[A\colon \mathrm{Diff}^{\left( 1 \right)}\left( \mc{A}_M,\mc{F} \right) \to \mathrm{Der}\left( \mc{A}_M,\mc{F} \right),\; \delta \mapsto \delta'\]
is such that $A\left( \delta \right)=\delta$ for $\delta\in \mathrm{Der}\left( \mc{A}_M,\mc{F} \right)$ and $A\left( \delta \right)=0$ if and only if $\delta \in \homo{\mc{A}_M,\mc{F}}$. Hence, $A$ induces the desired splitting.
\item Let $\delta \in \mathrm{Diff}^{\left( 1 \right)}\left( \mc{A}_{M}^{\oplus n}, \mc{F} \right)$ and then consider the morphism
\[B\colon  \mathrm{Diff}^{\left( 1 \right)}\left( \mc{A}_{M}^{\oplus n}, \mc{F} \right) \to  \mathrm{Diff}^{\left( 1 \right)}\left( \mc{A}_{M}, \mc{F} \right)^{\oplus n},\; \delta \mapsto \left( \delta\circ \iota_1,\dots,\delta\circ\iota_n \right),\]
where $\iota_i\colon \mc{A}_M\to \mc{A}_M^{\oplus n}$ denotes the embedding of the $i$-th component. Conversely, consider the map
\[C\colon  \mathrm{Diff}^{\left( 1 \right)}\left( \mc{A}_{M}, \mc{F} \right)^{\oplus n} \to  \mathrm{Diff}^{\left( 1 \right)}\left( \mc{A}_{M}^{\oplus n}, \mc{F} \right),\; \left( \delta_1,\dots,\delta_n \right)\mapsto \sum_{i=1}^n \delta_i.\]
As first-order differential operators are in particular $\bb{K}$-linear maps, it follows that $B\circ C = \id$ and $C\circ B =\id$. Hence, $B$ and $C$ yield the desired isomorphism.
\end{enumerate}
\end{proof}

The preceding lemma allows one to realise that on analytic spaces first-order differential operators are determined in an open neighbourhood by their action on the stalks. This mirrors similar results for homomorphisms and derivations. 

\begin{lem}
Let $A\colon \mc{A}_M^{\oplus n}\to \mc{G}$ be an epimorphism of coherent $\mc{A}_M$-modules where $M$ is an analytic space and let $\mc{H}$ be another finitely generated $\mc{A}_M$-module. Then the induced morphism
\[\mathrm{Diff}^{\left( 1 \right)}\left({\mc{G},\mc{H}}\right)\to \mathrm{Diff}^{\left( 1 \right)}\left({\mc{A}_M^{\oplus n},\mc{H}}\right),\; \delta \mapsto \delta \circ A\]
is injective.\\
Also, the evaluation morphism to stalks is an isomorphism in this case, i.e. the canonical morphism
\[\mathrm{Diff}^{\left( 1 \right)}\left( \mc{G},\mc{H} \right)_p \to \mathrm{Diff}^{\left( 1 \right)}\left( \mc{G}_p, \mc{H}_p \right)\]
is an isomorphism for every $p\in M$.
\label{InjDerv}
\end{lem}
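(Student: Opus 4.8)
The plan is to prove the two assertions in turn, using the natural splitting of Lemma~\ref{DiffSplit} to settle the case of a free module first and then bootstrapping to the coherent case by presenting both $\mc{G}$ and $\ker{A}$ as quotients of free modules.

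For the injectivity, I would argue directly on stalks. Since $A$ is an epimorphism, every stalk map $A_p\colon \mc{A}_{M,p}^{\oplus n}\to \mc{G}_p$ is surjective. If $\delta\circ A=0$ for some $\delta\in \mathrm{Diff}^{\left( 1 \right)}\left( \mc{G},\mc{H} \right)$, then for any germ $t_p\in \mc{G}_p$ I choose $s_p$ with $A_p(s_p)=t_p$ and compute $\delta_p(t_p)=\left(\delta\circ A\right)_p(s_p)=0$; hence $\delta_p=0$ for every $p$, so $\delta=0$. The identical computation applies to the stalk map $A_p$, so the analogous precomposition $\mathrm{Diff}^{\left( 1 \right)}\left( \mc{G}_p,\mc{H}_p \right)\hookrightarrow \mathrm{Diff}^{\left( 1 \right)}\left( \mc{A}_{M,p}^{\oplus n},\mc{H}_p \right)$ is injective as well.

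For the stalk isomorphism in the free case $\mc{G}=\mc{A}_M^{\oplus n}$, I would invoke Lemma~\ref{DiffSplit}: part (ii) decomposes $\mathrm{Diff}^{\left( 1 \right)}\left( \mc{A}_M^{\oplus n},\mc{H} \right)$ naturally as $\mathrm{Diff}^{\left( 1 \right)}\left( \mc{A}_M,\mc{H} \right)^{\oplus n}$, and part (i) splits each summand as $\mathrm{Hom}\left( \mc{A}_M,\mc{H} \right)\oplus \mathrm{Der}\left( \mc{A}_M,\mc{H} \right)$. Because both splittings are given by explicit natural morphisms, they commute with passage to stalks and match the analogous decomposition over the local ring $\mc{A}_{M,p}$. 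It then suffices to treat the summands separately: the $\mathrm{Hom}$-part is $\mathrm{Hom}\left( \mc{A}_M,\mc{H} \right)\cong \mc{H}$, for which the stalk map is trivially an isomorphism, and the $\mathrm{Der}$-part is an isomorphism on stalks by the consequence of Proposition~\ref{dervgermsgermsderv} recorded for analytic spaces (here the finite generation of $\mc{H}$ is used). Reassembling the summands yields that $\mathrm{Diff}^{\left( 1 \right)}\left( \mc{A}_M^{\oplus n},\mc{H} \right)_p\to \mathrm{Diff}^{\left( 1 \right)}\left( \mc{A}_{M,p}^{\oplus n},\mc{H}_p \right)$ is an isomorphism.

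To pass to the coherent $\mc{G}$, I would first observe that precomposition with $A$ identifies $\mathrm{Diff}^{\left( 1 \right)}\left( \mc{G},\mc{H} \right)$ with the subsheaf of those $\delta\in \mathrm{Diff}^{\left( 1 \right)}\left( \mc{A}_M^{\oplus n},\mc{H} \right)$ that annihilate $\ker{A}$: such a $\delta$ factors $\bb{K}$-linearly through the quotient $\mc{G}=\mc{A}_M^{\oplus n}/\ker{A}$, and the factored map $\tilde\delta$ is again first-order since $[\tilde\delta,f]\circ A=[\delta,f]$ together with surjectivity of $A$ forces $[\tilde\delta,f]$ to be $\mc{A}_M$-linear. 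The same holds stalk-wise for $A_p$. Placing these identifications into the square
\[
\begin{tikzcd}
\mathrm{Diff}^{\left( 1 \right)}\left( \mc{G},\mc{H} \right)_p \arrow[r] \arrow[d,hook] & \mathrm{Diff}^{\left( 1 \right)}\left( \mc{G}_p,\mc{H}_p \right) \arrow[d,hook] \\
\mathrm{Diff}^{\left( 1 \right)}\left( \mc{A}_M^{\oplus n},\mc{H} \right)_p \arrow[r,"\cong"] & \mathrm{Diff}^{\left( 1 \right)}\left( \mc{A}_{M,p}^{\oplus n},\mc{H}_p \right)
\end{tikzcd}
\]
with the free-case isomorphism along the bottom and the injections along the sides, injectivity of the top arrow is immediate. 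For surjectivity I take $\eta\in \mathrm{Diff}^{\left( 1 \right)}\left( \mc{G}_p,\mc{H}_p \right)$; its image $\eta\circ A_p$ lands in the image of the bottom isomorphism, hence is represented by a genuine operator germ $\tilde\xi$ near $p$, and the task is to show $\tilde\xi$ annihilates $\ker{A}$ on a whole neighbourhood and not merely at $p$. This propagation is the \emph{main obstacle}, since vanishing of a first-order operator on a generating set of $\ker{A}$ does not by itself give vanishing on all of $\ker{A}$. I would resolve it by coherence: as $\mc{G}$ and $\mc{A}_M^{\oplus n}$ are coherent and the structure sheaf of an analytic space is coherent, $\ker{A}$ is coherent and admits near $p$ a free presentation $\beta\colon \mc{A}_M^{\oplus m}\twoheadrightarrow \ker{A}$. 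The restriction $\rst{\tilde\xi}{\ker{A}}$ is itself a first-order operator into $\mc{H}$, so $\rst{\tilde\xi}{\ker{A}}\circ\beta$ is an operator on a free module whose germ at $p$ vanishes (because $\tilde\xi_p$ kills the stalk of $\ker{A}$); by the free case already proven, evaluation at $p$ is injective, whence $\rst{\tilde\xi}{\ker{A}}\circ\beta=0$ in a neighbourhood, and since $\beta$ is an epimorphism there, the injectivity from the first step forces $\rst{\tilde\xi}{\ker{A}}=0$ near $p$. Thus $\tilde\xi$ descends to a differential operator $\mc{G}\to\mc{H}$ mapping to $\eta$, which gives surjectivity and completes the isomorphism.
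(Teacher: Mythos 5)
Your proposal is correct, and the first part (injectivity via surjectivity of $A$ on stalks, respectively on Stein sets) and the free case (splitting $\mathrm{Diff}^{\left( 1 \right)}$ via Lemma~\ref{DiffSplit} and invoking Proposition~\ref{dervgermsgermsderv}) coincide with the paper's argument. Where you genuinely diverge is in the crucial propagation step for surjectivity of the stalk evaluation: showing that the local representative $\tilde{\xi}$ of $c^{-1}\left( d\left( \eta \right) \right)$ annihilates $\kernel\left( A \right)$ on a whole neighbourhood of $p$, not merely on the stalk. The paper does this by a hands-on computation: it restricts $\tilde{\xi}$ to $\kernel\left( A \right)$, shows each commutator $\left[ \delta',f \right]$ vanishes near $p$, reduces to the coordinate functions $x_i - x_i\left( q \right)$ generating the maximal ideals, iterates a Taylor-type expansion to place $\delta'\left( \sum_i f_i s_i \right)$ in $\bigcap_i m_{M,q}^i\cdot \mc{H}$, and concludes by Krull intersection. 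You instead bootstrap from the free case: since $\kernel\left( A \right)$ is coherent it admits a local free presentation $\beta\colon \mc{A}_M^{\oplus m}\twoheadrightarrow \kernel\left( A \right)$, the composite $\rst{\tilde{\xi}}{\kernel\left( A \right)}\circ \beta$ is a first-order operator on a free module whose stalk evaluation at $p$ vanishes (as $A\circ\beta=0$), so by the already-established injectivity of evaluation on free modules it vanishes on a neighbourhood, and the injectivity statement from the first part, applied to the epimorphism $\beta$, kills $\rst{\tilde{\xi}}{\kernel\left( A \right)}$ itself. This is a clean, self-contained reduction with no circularity (the free case and the precomposition injectivity are both available before this step), and it trades the paper's explicit Krull-intersection computation for one more application of coherence; both routes deliver the same descent of $\tilde{\xi}$ to the desired operator $\mc{G}\to\mc{H}$ hitting $\eta$.
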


\begin{proof}
Consider $\delta \in \mathrm{Diff}^{\left( 1 \right)}\left({\mc{G},\mc{H}}\right)$ and suppose that $\delta \circ A = 0$. This means that if $U\subseteq M$ is an open Stein set in $M$, then $A\left( U \right)$ is surjective and hence $\delta\left( U \right)=0$. As Stein sets form a basis of the topology of $M$ it follows that $\delta =0$ and hence the induced morphism is injective.\\
For the second claim, note that one has the following commutative diagram for every $p\in M$:
\[
\begin{tikzcd}
\mathrm{Diff}^{\left( 1 \right)}\left( \mc{G},\mc{H} \right)_p \arrow[d,"a"] \arrow[r,hook,"b"] & \mathrm{Diff}^{\left( 1 \right)}\left( \mc{A}_M^{\oplus n}, \mc{H} \right)_p \arrow[d,hook,two heads,"c"] \\
\mathrm{Diff}^{\left( 1 \right)}\left( \mc{G}_p,\mc{H}_p \right) \arrow[r,hook,"d"] & \mathrm{Diff}^{\left( 1 \right)}\left( \mc{A}_{M,p}^{\oplus n}, \mc{H}_p \right)
\end{tikzcd}.
\]
The vertical arrow on the right is an isomorphism by Lemma~\ref{DiffSplit} and Proposition~\ref{dervgermsgermsderv}. As the diagram is commutative it follows that the vertical arrow on the left is a monomorphism.\\
Now, let $\delta^p\in \mathrm{Diff}^{\left( 1 \right)}\left( \mc{G}_p,\mc{H}_p \right)$ and consider $\tilde{\delta}^p:= c^{-1}\left( d\left( \delta^p \right) \right)\in\mathrm{Diff}^{\left( 1 \right)}\left( \mc{A}_M^{\oplus n}, \mc{H} \right)_p$ and denote by $\tilde{\delta}\in \mathrm{Diff}^{\left( 1 \right)}\left( \mc{A}_M^{\oplus n}, \mc{H} \right)$ a local representative. One may of course restrict $\tilde{\delta}$ to $\ker{A}$ and with that one obtains a first-order differential operator
\[\delta':= \rst{\tilde{\delta}}{\ker{A}}\colon \ker{A}\to \mc{H}.\]
Hence, for every $f\in \mc{A}_M$ one has that
\[ \left[ \delta',f \right]\colon \ker{A} \to \mc{H}\]
is a linear homomorphism. However, for all $s\in \ker{A}(U)$, where $p\in U$, one has
\begin{align*}
\left[ \delta',f \right]\left( s \right)_p&=\delta'\left( f\cdot s \right)_p - f\cdot \delta'\left( s \right)_p\\
&= \tilde{\delta}^p\left( f\cdot s \right) - f\cdot\tilde{\delta}^p\left( s \right)\\
&= \delta^p\left( A\left( f\cdot s \right)_p \right) - f\cdot\delta^p\left( A\left( s \right)_p \right)=0.
\end{align*}
This shows that $\left[ \delta',f \right]$ is zero in an open neighbourhood $U_f$ of $p$ for every $f\in \mc{A}_M$. In particular, one may choose a local model of $\iota\colon M \to W$ around $p$ and denote by $x_1,\dots,x_k$ the standard coordinates on $W$. Note that the equivalence classes $g_1^q:=\left[ x_1-x_1\left( q \right) \right],\dots,g_k^q:=\left[x_k-x_k\left( q \right)\right]$, for $q\in M$, generate the maximal ideal $m_{M,q}$ of $M$ at $q$. Consider then the open subset $V:=\bigcap_{i=1}^k U_{g_i^p}$ and note that for every $s\in \ker{A}(U)$, where $ U\subseteq V$ is open, one has
\[\delta'\left( g^p_i\cdot s \right)=g^p_i\delta'\left( s \right)\implies \delta'\left( g_i^q\cdot s \right)=g_i^q\delta'\left( s \right),\]
since $g_i^q=g_i^p + x_i\left( p \right)-x_i^q\left( q \right)$. Let $s_i$ be generators of $\ker{A}$ on $V$ by shrinking the open neighbourhood of $p$, if necessary, and note that
\[\delta'\left( s_i \right)_p=0\]
as before. However, then one may assume that
\[\delta'\left( s_i \right)=0\]
holds in the entire open neighbourhood $V$ by once again shrinking $V$, if necessary. Now, for every $q\in V$ consider 
\begin{align*}
\delta'\left( \sum_i f_i \cdot s_i \right)&=\sum_i \delta'\left( \left(f_i-f_i\left( q \right) \right)\cdot s_i\right) + \sum_i f_i\left( q \right)\delta'\left( s_i \right)\\
&=\sum_i\delta'\left( \sum_j g^q_jf^{1,j}_i \cdot s_i \right)\\
&= \sum_{i,j} g^q_j\delta'\left( f_i^{1,j}\cdot s_i \right),
\end{align*}
where $\sum_j g^q_jf^{1,j}_i=f_i-f_i\left( q \right)$. By iterating this argument one sees that sections of the form
\[\delta'\left( \sum_i f_i \cdot s_i \right)\]
have a germ at $q$ that is contained in $\bigcap_i m_{M,q}^i\cdot \mc{H}$. By Krull intersection this implies that such sections are zero. As the sections $s_i$ are generators of $\ker{A}$ on $V$ it follows that $\delta'$ is just zero on $V$.\\
Recall, that $\delta'$ was just $\tilde{\delta}$ restricted to $\ker{A}$ and therefore $\tilde{\delta}$ vanishes on $\ker{A}$ in an open neighbourhood $p$. Thus, one can pass $\tilde{\delta}$ to the quotient which is $\mc{G}$. Hence, one obtains a differential operator
\[\delta\colon \mc{G}\to \mc{H}\]
such that $b\left( \delta_p \right)= \tilde{\delta}= c^{-1}\left( d\left( \delta^p \right) \right)$. Hence,
\[d\left( a\left( \delta_p \right) \right)=c\left( b\left( \delta_p \right) \right)= d\left( \delta^p \right)\]
and since $d$ is a monomorphism, it follows that $a\left( \delta_p \right)=\delta^p$.
\end{proof}

\begin{rem}
It is clear that if $\delta\colon \mc{F} \to \mc{G}$ is a $\bb{C}$-linear first-order differential operator between coherent $\corean{M}$-sheaves on a real analytic space $M$, then $\delta$ is the same as viewing it as a first-order differential operator between coherent $\rean{M}$-sheaves that is also $\bb{C}$-linear.
\end{rem}

With Lemma~\ref{InjDerv}, one can now show that pseudo-holomorphic structures can be realised as topological restrictions of relative connections on the complexification.

\begin{thm}
Let $\left( M,\coan{M} \right)$ be a $\bb{C}$-analytic space and let $\Phi\colon M^{\bb{C}}\to M$ be the canonical fibration. Suppose that $\mc{F}$ is a coherent $\corean{M}$-module and let $\nabla\colon \mc{F}\to \mc{F}\otimes_{\corean{M}} \Omega^{0,1}M$ be a pseudo-holomorphic structure. Then there exists a coherent $\coan{M^{\bb{C}}}$-module $\tilde{\mc{F}}$ and a relative connection 
\[\nabla_{\Phi}\colon \tilde{\mc{F}}\to \tilde{\mc{F}}\otimes_{\coan{M^{\bb{C}}}} \Omega^1_{\Phi}\left( \coan{M^{\bb{C}}} \right)\]
such that $\iota^{-1}\nabla_{\Phi}= \nabla$, where $\iota\colon M \to M^{\bb{C}}$ denotes the inclusion, after potentially shrinking $M^{\bb{C}}$.\\
If $\nabla$ is integrable, then, after potentially shrinking $M^{\bb{C}}$, the relative connection $\nabla_{\Phi}$ may be assumed to be integrable as well.
\label{pseudocomplex}
\end{thm}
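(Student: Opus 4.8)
The plan is to construct $\tilde{\mc{F}}$ and $\nabla_{\Phi}$ germ-wise along $M$ and then glue, exploiting that first-order differential operators between coherent sheaves are rigidly determined by their stalks (Lemma~\ref{InjDerv}). First I would produce the module. Since $\mc{F}$ is $\corean{M}$-coherent and the complexification gives $\iota^{-1}\coan{M^{\bb{C}}}\cong\corean{M}$, the extension result \cite[I.2.8]{realan} (already used in Proposition~\ref{Rel10Dif}) yields, after shrinking $M^{\bb{C}}$ around $M$, a coherent $\coan{M^{\bb{C}}}$-module $\tilde{\mc{F}}$ with $\iota^{-1}\tilde{\mc{F}}\cong\mc{F}$. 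By Proposition~\ref{Rel10Dif} one also has $\iota^{-1}\Omega^1_{\Phi}\left(\coan{M^{\bb{C}}}\right)\cong\Omega^{0,1}M$ with $\iota^{-1}d_{\Phi}$ corresponding to $\bar{\partial}_M$, so that $\iota^{-1}\bigl(\tilde{\mc{F}}\otimes_{\coan{M^{\bb{C}}}}\Omega^1_{\Phi}\left(\coan{M^{\bb{C}}}\right)\bigr)\cong\mc{F}\otimes_{\corean{M}}\Omega^{0,1}M$. Thus the source and target of the sought $\nabla_{\Phi}$ restrict along $\iota$ to the source and target of $\nabla$.

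Next, the connection. Fix $p\in M$. Under $\coan{M^{\bb{C}},p}\cong\corean{M,p}$ and the identifications above, the stalk $\nabla_p$ is a germ of relative connection; in particular it is a germ of first-order differential operator $\tilde{\mc{F}}_p\to\left(\tilde{\mc{F}}\otimes\Omega^1_{\Phi}\left(\coan{M^{\bb{C}}}\right)\right)_p$ whose symbol is $d_{\Phi}$ and which is $\left(\Phi^{-1}\coan{M}\right)_p$-linear (recall $\coan{M}\subseteq\corean{M}=\ker{\bar{\partial}_M}$ and $\tilde{\Phi}_p=B_p$ from Theorem~\ref{canonicalfibration}). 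By Lemma~\ref{InjDerv} this germ is the stalk of a genuine first-order operator $\nabla^p_{\Phi}$ on a neighbourhood $U_p$ of $p$. To see $\nabla^p_{\Phi}$ is in fact a relative connection, I would consider its Leibniz defect $f\mapsto\theta_f$, with $\theta_f(s):=[\nabla^p_{\Phi},f](s)-(d_{\Phi}f)\otimes s$. A direct computation gives $\theta_{fg}=g\theta_f+f\theta_g$, so $f\mapsto\theta_f$ is a derivation of $\coan{M^{\bb{C}}}$ into the coherent module $\homo{\tilde{\mc{F}},\tilde{\mc{F}}\otimes\Omega^1_{\Phi}\left(\coan{M^{\bb{C}}}\right)}$; equivalently it is a section of $\der{\coan{M^{\bb{C}}},\homo{\tilde{\mc{F}},\tilde{\mc{F}}\otimes\Omega^1_{\Phi}\left(\coan{M^{\bb{C}}}\right)}}\cong\homo{\Omega^1\left(\coan{M^{\bb{C}}}\right),\homo{\tilde{\mc{F}},\tilde{\mc{F}}\otimes\Omega^1_{\Phi}\left(\coan{M^{\bb{C}}}\right)}}$, which is coherent. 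Its germ at $p$ vanishes because $\nabla_p$ is a connection germ, hence it vanishes on a neighbourhood; shrinking $U_p$, the operator $\nabla^p_{\Phi}$ is a relative connection with $\iota^{-1}\nabla^p_{\Phi}=\nabla$ near $p$ (again by Lemma~\ref{InjDerv}, since both sides share the germ $\nabla_p$).

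Finally, gluing and integrability. On overlaps $U_p\cap U_{p'}$ the difference $\nabla^p_{\Phi}-\nabla^{p'}_{\Phi}$ is $\coan{M^{\bb{C}}}$-linear, a section of the coherent sheaf $\homo{\tilde{\mc{F}},\tilde{\mc{F}}\otimes\Omega^1_{\Phi}\left(\coan{M^{\bb{C}}}\right)}$ whose germ vanishes all along $M$, hence which vanishes in a neighbourhood of $M$. Passing to a locally finite cover and forming the open set on which all the $\nabla^p_{\Phi}$ agree --- exactly as in the gluing argument of Theorem~\ref{canonicalfibration} via \cite[II.9.5]{bredon} --- these operators glue, after shrinking $M^{\bb{C}}$, to a global relative connection $\nabla_{\Phi}$ with $\iota^{-1}\nabla_{\Phi}=\nabla$. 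For integrability I would use that the curvature $F^{\nabla_{\Phi}}=\nabla^{(1)}_{\Phi}\circ\nabla_{\Phi}$ is $\coan{M^{\bb{C}}}$-linear, hence a section of the coherent sheaf $\homo{\tilde{\mc{F}},\tilde{\mc{F}}\otimes\Omega^2_{\Phi}\left(\coan{M^{\bb{C}}}\right)}$; under the stalk identifications of Proposition~\ref{Rel10Dif} its germ along $M$ agrees with $F^{\nabla}=0$, so it vanishes on a neighbourhood of $M$, and a final shrinking of $M^{\bb{C}}$ makes $\nabla_{\Phi}$ integrable. The main obstacle will be the second step: Lemma~\ref{InjDerv} extends the germ only to a first-order operator, not automatically to a connection, so the crux is packaging the failure of the Leibniz rule as the coherent-sheaf section $f\mapsto\theta_f$ whose vanishing at the germ propagates to a neighbourhood; this same coherence-plus-germ-rigidity mechanism then also drives the gluing and the integrability step.
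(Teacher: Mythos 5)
Your proposal is correct and follows essentially the same route as the paper: extend $\mc{F}$ to a coherent $\coan{M^{\bb{C}}}$-module via \cite{realan}, use Proposition~\ref{Rel10Dif} to identify source and target, extend the operator germ-wise along $M$ by the rigidity of Lemma~\ref{InjDerv}, glue over a locally finite cover exactly as in Theorem~\ref{canonicalfibration}, and kill the curvature near $M$ by germ-rigidity. The one step where you diverge is the verification of the Leibniz rule: you package the defect $f\mapsto\theta_f$ as a derivation into the coherent sheaf $\homo{\tilde{\mc{F}},\tilde{\mc{F}}\otimes_{\coan{M^{\bb{C}}}}\Omega^1_{\Phi}\left( \coan{M^{\bb{C}}} \right)}$ and let coherence propagate its vanishing from the stalk at $p$, whereas the paper chooses a free presentation $A\colon \coan{V}^{\oplus k}\to \rst{\tilde{\mc{F}}}{V}$ on a Stein neighbourhood, builds an auxiliary relative connection on the free module and transfers the Leibniz rule through $A$ by another application of Lemma~\ref{InjDerv} --- both mechanisms are valid and rest on the same coherence-plus-germ-rigidity principle.
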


\begin{proof}
The existence of $\tilde{\mc{F}}$ is given by e.g.~\cite[I.2.8]{realan} and by Proposition~\ref{Rel10Dif} it follows that $\nabla$ is equivalent to a first-order differential operator
\[\nabla\colon\iota^{-1} \tilde{F} \to \iota^{-1}\left( \tilde{\mc{F}}\otimes_{\coan{M^{\bb{C}}}} \Omega^1_{\phi}\left( \coan{M^{\bb{C}}} \right) \right).\]
By Lemma~\ref{InjDerv} one obtains for every $p\in M$ an open neighbourhood $U^p\subseteq M^{\bb{C}}$ of $p$ and a first-order differential operator
\[\nabla^p\colon \rst{\tilde{\mc{F}}}{U^p}\to \rst{ \tilde{\mc{F}} }{U^p}\otimes_{\coan{U^p}} \Omega^1_{\phi}\left( \coan{U^p} \right) \]
such that $\nabla^p_p=\nabla_p$. Again by Lemma~\ref{InjDerv} one has $\iota^{-1}\nabla^p=\rst{\nabla}{U_p\cap M}$, after potentially shrinking $U^p$ around $p$. By paracompactness of $M^{\bb{C}}$ one may assume that $\left\{ U^{p_i} \right\}_{i\in I}$ is a locally finite subcover of the $U^p$'s and one can consider the set
\[W:= \left\{ q\in M^{\bb{C}}\mid q\in U^{p_i}\cap U^{p_j} \implies \nabla^{p_i}_q=\nabla^{p_j}_q \right\}.\]
This set is non-empty as $M\subseteq W$ and it is open because the covering was assumed to be locally finite and one can apply Lemma~\ref{InjDerv}. Over this open subset $W$ the first-order differential operators all germ-wise agree and thus glue to a first-order differential operator $\nabla_{\Phi}$ such that $\iota^{-1}\nabla_{\Phi}=\nabla $.\\
It remains to be shown that this differential operator is a $\Phi$-relative connection. To this end, let $q\in M$ and let $A\colon \coan{V}^{\oplus k} \to \rst{\tilde{\mc{F}}}{V}$ be an epimorphism, where $V\subseteq M^{\bb{C}}$ is a Stein open neighbourhood of $q$. Define a $\Phi$-relative connection $\nabla'$ on $\coan{V}^{\oplus k}$ such that
\[A\otimes \id \left( \nabla'\left(e_i  \right) \right) = \nabla_{\Phi}\left( A\left( e_i \right) \right),\]
here $e_i$ denotes the $i$-th basis element, and then extending by imposing the $d_{\Phi}$-Leibniz rule for connections. Note that the previous construction is of course possible because the domain of definition of $A$ is a free module and $V$ is a Stein open subset and therefore $A\otimes\id\left( V \right)$ is surjective. Because $\nabla$ is a relative connection (or: pseudo-holomorphic structure) it follows that
\[\iota^{-1}\left( A\otimes \id \circ \nabla' \right)=\nabla \circ \iota^{-1}A.\]
Then, by Lemma~\ref{InjDerv}, one has $A\otimes \id \circ \nabla' =\nabla_{\Phi} \circ A$, after shrinking $V$ if necessary. Now, it is clear that $\nabla_{\Phi}$ satisfies the $d_{\Phi}$-Leibniz rule, because locally for $f\cdot s = f\cdot A\left( t \right)$ one has
\begin{align*}
\nabla_{\Phi}\left( f\cdot s \right) &= \nabla_{\Phi}\left( A\left( f\cdot t \right) \right)\\
&=A\otimes \id\left( \nabla'\left( f\cdot t \right) \right)\\
&= A\otimes\id\left( t\otimes d_{\Phi}f + f\nabla'\left( t \right) \right)= s\otimes d_{\Phi}f + f\nabla_{\Phi}\left( s \right).
\end{align*}
This equality holds locally and thus also globally. Hence, the first part of the statement is shown.\\
Note that since $\iota^{-1}\nabla_{\Phi}=\nabla$ it is clear that $\nabla_{\Phi}$ is flat/integrable around $M\subseteq M^{\bb{C}}$ if and only if $\nabla$ is integrable.
\end{proof}

One has already observed that some \emph{tameness} condition has to be imposed in order to answer the relevant questions in this work. The definition below explains what tame should mean in the context of pseudo-holomorphic structures.

\begin{defn}
Let $M$ be a complex analytic space and $\mc{F}$ a coherent $\corean{M}$-module with a pseudo-holomorphic structure $\bar{\partial}_{\mc{F}}$. Then the operator $\bar{\partial}_{\mc{F}}$ is called \emph{tame} if the extension to a relative connection on the complexification given by Thereom~\ref{pseudocomplex} may be assumed to be tame.
\end{defn}

\begin{rem}
Note that this definition of tameness is not really a satisfactory one, as it seems somewhat artificial to have to complexify a given pseudo-holomorphic structure first and then test for tameness. However, the author is not aware of a satisfactory way around this, in this context. It should be the subject of future research how and when a flat pseudo-holomorphic structure is tame without employing the complexification.
\label{remtame}
\end{rem}

One can then immediately utilize the complexification and the relative connection associated to a pseudo-holomorphic structure to obtain a Newlander-Nirenberg-type Theorem for real analytic tame integrable pseudo-holomorphic structures by means of utilizing Theorem~\ref{MaxRH}.

\begin{thm}
Let $M$ be a maximal complex analytic space and $\left( \mc{F},\bar{\partial}_{\mc{F}} \right)$ a torsion-free coherent $\corean{M}$-module with tame integrable pseudo-holomorphic structure. Then $\mc{G}:=\ker{\bar{\partial}_{\mc{F}}}$ is a torsion-free coherent $\coan{M}$-module and $\mc{F}=\mc{G}\otimes_{\coan{M}}\corean{M}$.\\
In other words, there is a 1-to-1 correspondence between torsion-free coherent $\coan{M}$-modules and torsion-free coherent $\corean{M}$-modules equipped with a tame integrable pseudo-holomorphic structure.
\label{holomorpseudo}
\end{thm}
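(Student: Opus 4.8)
The plan is to complexify the data, apply the relative Riemann--Hilbert theorem (Theorem~\ref{MaxRH}) on the canonical fibration $\Phi\colon M^{\bb{C}}\to M$, and then restrict back to $M$ along the section $\iota$. First I would invoke Theorem~\ref{pseudocomplex}: since $\bar{\partial}_{\mc{F}}$ is integrable and tame, there exist (after shrinking $M^{\bb{C}}$ around $M$) a coherent $\coan{M^{\bb{C}}}$-module $\tilde{\mc{F}}$ and a tame flat $\Phi$-relative connection $\nabla_{\Phi}$ on $\tilde{\mc{F}}$ with $\iota^{-1}\nabla_{\Phi}=\bar{\partial}_{\mc{F}}$, using Proposition~\ref{Rel10Dif} to identify $\iota^{-1}\Omega^1_{\Phi}\left(\coan{M^{\bb{C}}}\right)\cong\Omega^{0,1}M$ and $\iota^{-1}d_{\Phi}$ with $\bar{\partial}_M$. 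To apply Theorem~\ref{MaxRH} to $\left(\tilde{\mc{F}},\nabla_{\Phi}\right)$ I must verify its hypotheses. The morphism $\Phi$ is reduced locally trivial by Theorem~\ref{canonicalfibration}, being locally the projection $U\times\bar{U}\to U$; its fibers are copies of the conjugate space $\bar{M}$, and I would observe that $\bar{M}$ is maximal precisely because $M$ is --- conjugation is an anti-holomorphic homeomorphism intertwining $\coan{\bar{M}}$ with the conjugate of $\coan{M}$, and it preserves both weak and strong holomorphicity, so Definition~\ref{Normal} transfers.

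The key point is torsion-freeness of $\tilde{\mc{F}}$. Here I would use the defining stalk isomorphism of the complexification, $\coan{M^{\bb{C}},\iota(p)}\cong\corean{M,p}$ (Construction~\ref{conjugatecomplex}), together with the extension property $\iota^{-1}\tilde{\mc{F}}\cong\mc{F}$, which at stalks reads $\tilde{\mc{F}}_{\iota(p)}\cong\mc{F}_p$ compatibly with these ring isomorphisms. Since $\mc{F}$ is torsion-free over $\corean{M}$, each $\mc{F}_p$ is a torsion-free $\corean{M,p}$-module, hence $\tilde{\mc{F}}_{\iota(p)}$ is a torsion-free $\coan{M^{\bb{C}},\iota(p)}$-module. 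Thus the coherent torsion subsheaf $\mc{T}_{\tilde{\mc{F}}}$ has vanishing stalk at every point of $M$; its support is closed and disjoint from $M$, so after shrinking $M^{\bb{C}}$ around $M$ the module $\tilde{\mc{F}}$ is torsion-free. I expect this transfer of torsion-freeness across the totally real slice $M$ to be the main subtlety, since a regular element of $\coan{M^{\bb{C}}}$ need not restrict to a regular element of $\corean{M}$; the stalk isomorphism is exactly what circumvents this difficulty.

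With the hypotheses in place, Theorem~\ref{MaxRH} yields a torsion-free $\Phi$-relative local system $V:=\ker{\nabla_{\Phi}}$ with $\tilde{\mc{F}}\cong V\otimes_{\Phi^{-1}\coan{M}}\coan{M^{\bb{C}}}$. Finally I would restrict along $\iota$, which is a section of $\Phi$ (so $\Phi\circ\iota=\id_M$, as recorded in the proof of Theorem~\ref{canonicalfibration}). Using exactness of $\iota^{-1}$, the identity $\iota^{-1}\coan{M^{\bb{C}}}\cong\corean{M}$, and the fact that $\iota^{-1}$ commutes with tensor products, this gives $\mc{G}:=\ker{\bar{\partial}_{\mc{F}}}=\iota^{-1}\ker{\nabla_{\Phi}}=\iota^{-1}V$, a torsion-free coherent $\coan{M}$-module (locally $\iota^{-1}\Phi^{-1}\mc{G}'=\mc{G}'$), together with $\mc{F}=\iota^{-1}\tilde{\mc{F}}\cong\mc{G}\otimes_{\coan{M}}\corean{M}$.

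For the converse direction of the asserted bijection, given a torsion-free coherent $\coan{M}$-module $\mc{G}$ I would set $\mc{F}=\mc{G}\otimes_{\coan{M}}\corean{M}$ with $\bar{\partial}=\id\otimes\bar{\partial}_M$. Faithful flatness of the stalk maps $\tilde{\Phi}_p$ of $\coan{M}\to\corean{M}$ (Definition~\ref{canconnec}) together with Proposition~\ref{torsionfreeembed2} gives torsion-freeness and coherence, while extending $\mc{F}$ to $\Phi^{-1}\mc{G}\otimes_{\Phi^{-1}\coan{M}}\coan{M^{\bb{C}}}$ equipped with the canonical relative connection $\nabla^{\Phi^{-1}\mc{G}}$ exhibits integrability and, by Theorem~\ref{kernelofcanonical}, tameness of $\bar{\partial}$. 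A direct check that $\ker\left(\id\otimes\bar{\partial}_M\right)=\mc{G}$, combined with the computation $\mc{G}\otimes_{\coan{M}}\corean{M}=\mc{F}$ from the first part, then shows the two assignments are mutually inverse, establishing the stated one-to-one correspondence.
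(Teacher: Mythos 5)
Your proposal is correct and follows essentially the same route as the paper: complexify via Theorem~\ref{pseudocomplex}, apply Theorem~\ref{MaxRH} to the canonical fibration $\Phi\colon M^{\bb{C}}\to M$, restrict along the section $\iota$, and use faithful flatness of $\corean{M}$ over $\coan{M}$ for the converse. The paper's proof simply asserts that the extension $\tilde{\mc{F}}$ may be taken torsion-free and tacitly uses that the fibers of $\Phi$ are maximal, whereas you verify both points (via the stalk isomorphism $\coan{M^{\bb{C}},\iota(p)}\cong\corean{M,p}$ and the conjugation argument for $\bar{M}$); these verifications are correct and fill genuine gaps in the paper's exposition.
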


\begin{proof}
Applying Theorem~\ref{pseudocomplex} puts the question into the situation of Theorem~\ref{MaxRH}. Denote by $\nabla$ the extension of $\bar{\partial}_{\mc{F}}$ to $\mc{H}$ on $M^{\bb{C}}$ where $\mc{H}$ denotes a torsion-free coherent $\coan{M^{\bb{C}}}$-module such that $\iota^{-1}\mc{H}\cong \mc{F}$. Therefore,
\[\mc{F}\cong \iota^{-1}\mc{H} \cong \iota^{-1}\ker{\nabla}\otimes_{\coan{M}} \corean{M}.\]
Now, $\iota^{-1}\ker{\nabla}\subseteq \ker{\bar{\partial}_{\mc{F}}}$. Let $s\in \ker{\bar{\partial}_{\mc{F}}}$. Then there exists $t \in \mc{H}$ such that $\iota^{-1}t=s$. Hence,
\[0=\bar{\partial}_{\mc{F}}\left( s \right) = \iota^{-1}\nabla\left( \iota^{-1}t \right) = \iota^{-1}\left( \nabla t \right).\]
This implies that $\nabla \left( t \right)$ is zero around $M$ in $M^{\bb{C}}$ and hence $\iota^{-1}\ker{\nabla}= \ker{\bar{\partial}_{\mc{F}}}$. Notice here that $\ker{\nabla}$ is a torsion-free coherent $\Phi^{-1}\coan{M}$-sheaf and hence $\ker{\bar{\partial}_{\mc{F}}}$ is a torsion-free coherent $\coan{M}$-sheaf, since $\Phi\circ \iota=\id_M$ as set-theoretic maps.\\
For the statement about the 1-to-1 correspondence, note that $\corean{M}$ is faithfully flat over $\coan{M}$ (as it is a flat ring extension of local rings). This implies that if $\mc{H}$ is a torsion-free coherent $\coan{M}$-module, then $\nabla':=\id\otimes \bar{\partial}$ is an integrable pseudo-holomorphic structure. It is tame as one can view it as $\iota^{-1}\Phi^*\mc{H}$ with relative connection $\iota^{-1}\left( \id\otimes d_{\Phi} \right)$, which is tame. Moreover,
\[\mc{H}\hookrightarrow \ker{\nabla'},\]
but when one tensors this with $\corean{M}$ over $\coan{M}$ the morphism above is an isomorphism. As $\corean{M}$ is faithfully flat over $\coan{M}$, it follows that $\mc{H}=\ker{\nabla'}$.
\end{proof}

\begin{rem}
Notice that the reasoning in the proof of Theorem~\ref{holomorpseudo} does not a priori work for a homogeneous complex analytic space, as it is unclear and in general false that the extended relative connection obtained from the pseudo-holomorphic structure is defined on the entirety of $M\times \bar{M}$. As such one can not easily apply Theorem~\ref{HomCor}.
\end{rem}

\section{Conclusion and further questions}
\label{Conc}

In this paper the classical Relative Riemann-Hilbert correspondence for relative complex analytic flat connections on submersions was extended to the case of torsion-free coherent sheaves over locally trivial morphisms between reduced spaces with maximal and homogeneous fibers (Theorem~\ref{MaxRH} and Theorem~\ref{HomCor}). It was also shown that real analytic $\left( 0,1 \right)$-forms and real analytic pseudo-holomorphic structures on complex analytic spaces have well-behaved interpretations on the complexification (Theorem~\ref{pseudocomplex}). This interpretation of such structures can then be leveraged to prove a Newlander-Nirenberg-type theorem for integrable generalised $\bar{\partial}$-operators on torsion-free sheaves over maximal complex analytic spaces (Theorem~\ref{holomorpseudo}). A couple of open questions remain:
\begin{enumerate}[(1)]
\item How can the methods developed here be adapted to non-reduced image spaces and coherent sheaves with torsion?
\item How can the methods developed here be adapted to deal with more general fibers?
\item What is a more immediate and intuitive characterisation of tame pseudo-holomorphic structures (see Remark~\ref{remtame})?
\item How does this work and the developed methods fit into the more general theory of $\mc{D}$-modules?
\end{enumerate}

\bibliographystyle{unsrt}
\bibliography{bib}
\end{document}